\DeclareMathOperator{\per}{\text{per}}
\DeclareMathOperator{\gen}{\text{child}}
\DeclareMathOperator{\send}{\text{end}}
\DeclareMathOperator{\ts}{\text{ts}}
\DeclareMathOperator{\width}{\text{width}}
\DeclareMathOperator{\length}{\text{length}}
\title{Gliders on the Stranded Cellular Automata Model}
\author{Alexa Renner\footnote{renneram@rose-hulman.edu; Department of Mathematics, Rose-Hulman Institute of Technology, 5500 Wabash Ave., Terre Haute, IN 47803, USA.}}
\date{January 2026}
\newtheorem{counter}{counter}[section]
\theoremstyle{definition}
\newtheorem{definition}[counter]{Definition}
\newtheorem*{definition*}{Definition}
\theoremstyle{plain}
\newtheorem{theorem}[counter]{Theorem}
\newtheorem{lemma}[counter]{Lemma}
\newtheorem{corollary}[counter]{Corollary}
\newtheorem{proposition}[counter]{Proposition}
\theoremstyle{remark}
\begin{document}
\maketitle
\begin{abstract}
    The Stranded Cellular Automata (SCA) model consists of a grid of cells which can each contain between zero and two strands apiece and two turning rules that control when strands turn and when they cross. While patterns on this model have been studied previously, such research has not needed an algebraic description of the model. We provide a formal algebraic definition of patterns on the model, define gliders on the model in a way which is semi-compatible with definitions of gliders in other cellular automata models, and classify all 1- and 2-stranded gliders on this model. In addition, we prove an equivalence of two classes of gliders and design an algorithm to generate all such elements of that class.
\end{abstract}

\hfill\break
\textbf{Keywords:} Cellular Automata, Stranded Cellular Automata, Gliders

\hfill\break
\textbf{Mathematics Subject Classification:} 37B15
\tableofcontents

\section{Introduction}
 Cellular automata have been used to model everything from computation to natural phenomena to fiber arts \cite{mathBasis}, \cite{hh}. Cellular automata are discrete models of a system evolving over time. They contain cells, which can each be in any one of a finite number of states, and these cells are arranged into generations \cite{hh}. These generations are arranged into some structure, in our case, a grid. 
 
 The Stranded Cellular Automata (SCA) model, introduced in \cite{hh}, is a model consisting of a grid of cells which can contain between zero and two strands apiece, a set of initial conditions on that grid, and two cellular automata that determine how those strands will evolve in time. The first controls whether a strand in a given cell will turn or not in the cell immediately adjacent to and above it (the turning rule), and the second controls which strand will cross over the other strand when two strands cross (the crossing rule). This model was originally designed to model fiber arts \cite{hh}.

 The first step of our project will be to describe patterns on a grid of cells, like the one below, algebraically to remove any ambiguity over what is meant by a ``strand" or a ``generation" and to provide a framework in which to prove results rigorously. 
 \begin{center}
    \includegraphics[scale = 1]{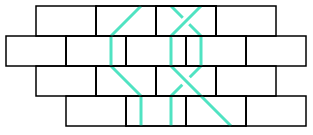}
\end{center}
Intuitively, we refer to the cyan lines as ``strands" and each horizontal row as a ``generation" so we can see the woven pattern evolving over several generations. We will make this precise in \ref{sec:Construction}.

   The SCA model has been used to model friendship bracelet weaving patterns, braids, and to analyze the complexity of woven patterns. In \cite{chan}, Chan examined individual braids made from cables woven together, and represented specific braids by using different turning and crossing rules for different positions on the grid and produced a software for generating SCA patterns given a turning rule, crossing rule, and initial conditions. In \cite{yang}, Yang examined the complexity of woven patterns on a cylindrical grid; and in \cite{loyd}, Loyd modeled friendship bracelet patterns, knots, and investigated the reversibility of rules on the SCA model.
 
 Intuitively, a glider is a pattern of cells in a cellular automaton that repeats indefinitely under some rule and moves in some direction across the landscape. While there has been much interest in gliders, especially in the context of Conway's Game of Life, the research done on them has been largely recreational \cite{informalGliders}, although there has been some more serious research \cite{goucher}, \cite{martinez}. We give an algebraic construction of the SCA model in Section \ref{sec:Construction} and provide background on SCA patterns in Section \ref{sec:SCAFacts}. We rigorously define gliders on the SCA model along with related concepts in Section \ref{sec:Gliders}. In Sections \ref{sec:OneStrand} and \ref{sec:TwoStrands}, we give a  complete classification of all 1- and 2-stranded gliders on the SCA model, respectively. We prove a general result about certain classes of gliders in Section \ref{sec:Pure}, and we generalize the method of classification used in Section \ref{sec:TwoStrands} to classify a certain class of gliders in Section \ref{sec:decidabilityAndPure}.
\subsection{Notation}
In this subsection, we collect some notation and conventions which we will use frequently:
\begin{itemize}
    \item We use the convention $0\in\mathbb{N}$.
    \item $\mathbb{N}_{\geq k}:$ Set of all natural numbers greater than or equal to $k$.
    \item $H_n:$ Set of all grid patterns on $n$ strands.
    \item $G_n:$ Set of all continuous grid patterns on $n$ strands.
    \item $\delta_i:$ Generation $i$ of some grid pattern.
    \item $\gen(A, B):$ Refers to the child of adjacent cells $A, B$.
    \item $\per(g):$ Refers to the period or the shortest repeating part of $g$.
    \item $\length(g):$ The number of generations in a grid pattern $g$ if $g$ is finite, $\infty$ otherwise.
    \item $\width(\delta_i):$ The number of cells between the first cell that contains a strand in $\delta_i$ and the last strand that contains a strand in $\delta_i$, inclusive.
    \item $a\circ b:$ $a$ composed with $b$.
    \item $a^\infty:$ The grid pattern $a$ composed with itself infinitely many times.
    \item $\operatorname{Speed}(g):$ For $g$ a pattern with $\per(g)$ defined, the number of indices $g$ moves to the left from the first generation to the last generation of $\per(g)$ over $\length(\per(g))$. This is treated as a symbol, not a number.
    \item $\operatorname{type}(x_j^{(k)}) = x$ where $x = n, s, l, $ or $r$.
    \item A cell is of the form $[x_j^{(k)}, y_{j+1}^{(r)}]$ or $\overline{[r_j^{(k)}, l_{j+1}^{(r)}]}$
    \begin{itemize}
        \item A cell of the form $[r_j^{(k)}, l_{j+1}^{(r)}]$ denotes a crossing where the rightmost strand in the cell goes over the leftmost strand.
        \item A cell of the form $\overline{[r_j^{(k)}, l_{j+1}^{(r)}]}$ denotes a crossing where the leftmost strand in the cell goes over the rightmost strand.
    \end{itemize}
    \item A generation is a concatenation of cells, it must be of the form $[w_j^{(k)}, x_{j+1}^{(r)}][y_{j+2}^{(h)}, z_{j+3}^{(m)}]\dots$, possibly also including cells of the form $\overline{[r_j^{(k)}, l_{j+1}^{(r)}]}$.
    \item $\frac{X}{YZ}$: A bit in a turning rule where $YZ$ describes the initial condition and $X$ describes the result.
    \item $XX0XX1XX0:$ A generic turning (crossing) rule with bit 2 0, bit 5 1, bit 8 0. This represents all turning (crossing) rules with bit 2 0, bit 5 1, bit 8 0.
    \item $g_k^l:$ The $k-$stranded left subpattern of $g$.
    \item $g_k^r:$ The $k-$stranded right subpattern of $g$.
\end{itemize}
\section{Grid Patterns}\label{sec:Construction}
\subsection{Construction}
We adopt the convention $0\in\mathbb{N}$. Let $\mathcal{S}=\{s, r, l, n\}$ be a set of symbols. Fix $n\in\mathbb{N}^+$. We define $\mathcal{C}$ to be the set consisting of the following eight elements, which we call \textbf{cell types}.
\begin{itemize}
    \item $[n, n]$, which can be visualized as 
    \begin{center}
    \includegraphics[scale = 1]{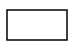}
\end{center}
    \item $[r, l]$, which represents
    \begin{center}
    \includegraphics[scale = 1]{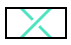}
\end{center}
    \item $\overline{[r,l]}$, which represents \begin{center}
    \includegraphics[scale = 1]{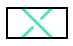}
\end{center}
    \item $[s, n]$, which we visualize as
    \begin{center}
    \includegraphics[scale = 1]{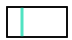}
\end{center}
    \item $[n, s]$, which represents
     \begin{center}
    \includegraphics[scale = 1]{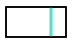}
\end{center}
    \item $[s,s]$, which we use to represent
     \begin{center}
    \includegraphics[scale = 1]{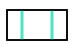}
\end{center}
    \item $[r, n]$ which represents
    \begin{center}
    \includegraphics[scale = 1]{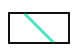}
\end{center}
    \item $[n, l]$ which we visualize as
    \begin{center}
    \includegraphics[scale = 1]{scaStuff/sca820_12.png}
\end{center}
\end{itemize}
We call symbols $s, l, r$ \textbf{strands}. We say a cell type ``contains a strand" if the cell type $[x, y]$ does not have $x = y = n$, and ``contains two strands" if the cell type $[x, y]$ has $x = y = s$.  Let $f:\mathbb{Z}\to\mathcal{C}$ be a function such that $0$ is the least $j\in\mathbb{Z}$ such that its image is a cell type containing a strand, and such that the sum of the number of strands in $f[\mathbb{Z}]$ is $n$. We call $f$ a \textbf{row}, which can be visualized as follows:
 \begin{center}
    \includegraphics[scale = 1]{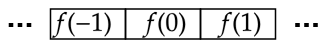}
\end{center}
An example of a row is the function:
\begin{align*}
    f(x) = [n, n]\text{ if }x < 0\text{ or }x > 1\\
    f(0) = [s, s]\\
    f(1) = [n, l]\\
\end{align*}
We can visualize $f$ as:
\begin{center}
    \includegraphics[scale = 1]{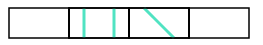}
\end{center}

Denote the space of all such functions by $\mathcal{F}$.

Now, take some list (not necessarily finite) of elements of $\mathcal{F}, [f_1,f_2,\dots]$. Define a symbol set $\mathcal{T}$ as containing the following symbols:
\begin{align*}
    s_j, r_j, l_j, n_j\text{ such that }j\in\mathbb{Z}
\end{align*}
We define a set $\mathcal{D}$ as follows:
\begin{align*}
    \mathcal{D} = \left\{[x_j, y_{j+1}]|x_j, y_{j+1}\in\mathcal{T}, j\in\mathbb{Z}\right\}\cup\{\overline{[r_j, l_{j+1}]}|r_j, l_{j+1}\in\mathcal{T}, j\in\mathbb{Z}\}
\end{align*}
An element of $\mathcal{D}$ will represent a place in an ordering of elements of $\mathcal{S}$ with respect to a row $f$. 
We will make this more precise later in the report.

Let $\chi:\mathcal{S}\to\{0,1\}$ be the map such that
\begin{align*}
    \chi(x) = 1\text{ if }x = s, r,\text{ or }l\\
    \chi(x) = 0\text{ otherwise}
\end{align*}
Intuitively, $\chi$ tells us whether a element of $\mathcal{S}$ is a strand or not. We define a few functions as follows in the following, mutually exclusive cases:

\hfill\break
\textbf{Case 1:} $f(0) = [x, y]$ is such that $x = s, r,$ or $l$
\hfill\break
Define $g_f:\mathbb{Z}\to\mathcal{D}$ as follows: for each $z\in\mathbb{Z}$, if $f(z) = [a, b]$, let $g_f(z) = [a_{2z + 1}, b_{2z + 2}]$.

We now define a function $r_f:\mathbb{Z}\to \mathcal{S}$: by 
\begin{align*}
    r_f(z) = a\text{ if }g_f\left(\left\lfloor \frac{z - 1}{2}\right\rfloor\right) = [a_{z}, b_{z + 1}]\text{ and }z\text{ is odd }\\
    r_f(z) = b\text{ if }g_f\left(\left\lfloor \frac{z - 1}{2}\right\rfloor\right) = [a_{z-1}, b_{z}]\text{ and }z\text{ is even }
\end{align*}

\hfill\break
\textbf{Case 2:} $f(0) = [x, y]$ is such that $x\neq s$
\hfill\break
Define $g_f:\mathbb{Z}\to\mathcal{D}$ as follows: for each $z\in\mathbb{Z}$, if $f(z) = [a, b]$, let $g_f(z) = [a_{2z}, b_{2z + 1}]$.

We define a function $r_f:\mathbb{Z}\to \mathcal{S}$ as follows:
\begin{align*}
    r_f(z) = a\text{ if }g_f\left(\left\lfloor \frac{z}{2}\right\rfloor\right) = [a_{z}, b_{z + 1}]\text{ and }z\text{ is even }\\
    r_f(z) = b\text{ if }g_f\left(\left\lfloor\frac{z}{2}\right\rfloor\right) = [a_{z-1}, b_z]\text{ and }z\text{ is odd}
\end{align*}
\hfill\break
In both cases, $g_f$ assigns a notation of ``position" to elements of $\mathcal{S}$ in the output of $f$ relative to each other. $r_f$ uses this idea of ``position" to recover specific strands or $n$ given an integer. We call $z$ the \textbf{position} of $a_z$ in $f(k) = [a_z, b_{z + 1}]$ or $f(k) = [b_{z-1}, a_z]$. For example, $g_f$ where $f$ was as in our previous example would be 
\begin{align*}
    g_f(x) = [n^{\left(\varnothing\right)}_{2x + 1}, n^{\left(\varnothing\right)}_{2x + 2}]\text{ for }x < 0, x > 1\\
    g_f(0) = [s_1, s_2]\\
    g_f(1) = [n_3, l_4]
\end{align*}
We can visualize this as follows:
\begin{center}
    \includegraphics[scale = 0.6]{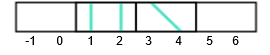}
\end{center}

Let $\mathcal{U}$ be the symbol set containing symbols $s_j^{(k)}, r_j^{(k)}, l_j^{(k)}, n_j^{\left(\varnothing\right)}$ for all $j, k\in\mathbb{Z}$. We now define a set $\mathcal{E}$ over $\mathcal{U}$. Let $\mathcal{E}$ be the set containing, for all $j\in\mathbb{Z}, k\in\mathbb{N}^+$:
\begin{itemize}
    \item $[n_j^{\left(\varnothing\right)}, n^{\left(\varnothing\right)}_{j+1}]$
    \item $[s_j^{(k)}, n^{\left(\varnothing\right)}_{j+1}]$
    \item $[n_j^{\left(\varnothing\right)}, s_{j+1}^{(k)}]$
    \item $[s_j^{(k)}, s_{j+1}^{(k+1)}]$
    \item $[r_j^{(k)}, n^{\left(\varnothing\right)}_{j+1}]$
    \item $[n_j^{\left(\varnothing\right)}, l_{j+1}^{(k)}]$
    \item $[r_j^{(k)}, l_{j+1}^{(k+1)}]$
    \item $\overline{[r_j^{(k)}, l_{j+1}^{(k+1)}]}$
\end{itemize}
One can think of $\mathcal{E}$ as cells that have both a position in the ordering induced by a row for their elements and a numbering of the strands. This notion will be made more precise later.

Let $\#_f:\mathbb{Z}\to\mathbb{Z}$ be defined as $\#_f(z) = 0$ for $z\leq 0$ and $\#_f(z) = \sum_{j=1}^z\chi\circ r_f(j)$. One can think of $\#_f(z)$ as counting the strands in the image of $f$ preceding some position. We call $s_j, r_j,$ and $l_j$ \textbf{strands} for any $j\in\mathbb{Z}$. We now define the function $h_f:\mathbb{Z}\to\mathcal{E}$ as:
\begin{align*}
    \text{if }g_f(z) = [x_j, y_{j+1}]\text{ and }x_j, y_j\text{ are strands then }h_f(z) = [x_j^{(\#_f(j))}, y_{j+1}^{(\#_f(j + 1))}] \\
    \text{if }g_f(z) = [x_j, y_{j+1}]\text{ and }x_j \text{ is a strand but } y_{j+1} = n_{j+1}\text{ then }h_f(z) = [x_j^{(\#_f(j))}, n_{j+1}^{\left(\varnothing\right)}]\\
     \text{if }g_f(z) = [x_j, y_{j+1}]\text{ and }y_j \text{ is a strand but } x_j =n_j\text{ then }h_f(z) = [n_j^{\left(\varnothing\right)}, y_{j+1}^{(\#_f(j+1))}]\\
     \text{if }g_f(z) = [x_j, y_{j+1}]\text{ and }x_j =  n_j, y_{j+1} = n_{j+1}\text{ then }h_f(z) = [n_j^{\left(\varnothing\right)}, n_{j+1}^{\left(\varnothing\right)}]
\end{align*}
As an example, consider the $f$ used as an example row earlier. Then,
\begin{align*}
    h_f(x) = [n^{\left(\varnothing\right)}_{2x + 1}, n^{\left(\varnothing\right)}_{2x + 1}]\text{ if }x < 0, x > 1\\
    h_f(0) = [s_1^{(1)}, s_2^{(2)}]\\
    h_f(1) = [n_3^{\left(\varnothing\right)}, l_4^{(3)}]
\end{align*}
We can visualize this as follows:
\begin{center}
    \includegraphics[scale = 0.6]{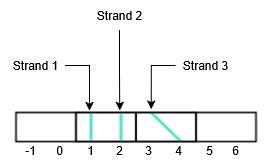}
\end{center}
We will also refer to the symbols $s_j^{(k)}, r_j^{(k)}, l_j^{(k)}$ for $j, k\in\mathbb{Z}$ as strands. We will further extend notation to say that the position of $x_j^{(k)}$ appearing in the image of $h_f$ is $j$. $h_f$ can be thought of as a description of the row $f$ with the additional information of where the strands are in relation to each other and to the first strand. Let $\mathcal{G}$ be the set of $h_f$ such that $f\in\mathcal{F}$. We call an an element of $\mathcal{G}$ a \textbf{generation}. Let $H_n$ be the space of (not necessarily finite) lists over $\mathcal{G}$. Let the list $h = [\delta_1,\delta_2,\dots]\in H_n$. Notice that we now refer to some $h_f\in\mathcal{G}$ where $h_f$ is the $i$th element of the list as $\delta_i$. We do this to make the notation that will be used in following sections clearer. We call $h$ finite if $|h| < \infty$ and $h$ infinite otherwise. If $h$ is finite, define $\length(h) = |h|$. If $h$ is infinite, $\length(h) = \infty$. Define $A = \{1, 2, \dots, z - 1\}$ if $h = [\delta_1,\delta_2,\dots, \delta_z]$ and $A = \mathbb{N}$ if $h$ is not finite. Define $\operatorname{child}_h:\mathcal{E}\times\mathcal{E}\to \mathcal{E}$ such that:
\begin{center}
    For each $i \in A$, if $\delta_i(0) \neq [n^{\left(\varnothing\right)}_{0},l_1^{(1)}], [s_1^{(1)},t]$ for some $t\in\mathcal{E}$, for all $k\in\mathbb{Z}$:
\begin{align*}
    \operatorname{child}(\delta_i(k), \delta_i(k + 1)) = \delta_{i+1}(k+1)
\end{align*}
    And if $\delta_i(0) = [n^{\left(\varnothing\right)}_{0},l_1^{(1)}]$ or $ [s_1^{(1)},t]$ for some $t\in\mathcal{E}$, then for all $k\in\mathbb{Z}$:
\begin{align*}
    \operatorname{child}(\delta_i(k), \delta{}_i(k + 1)) = \delta_{i+1}(k)
\end{align*}
\end{center}
We will often omit the subscript $h$ on $\operatorname{child}_h$ when the list $h$ in question is clear from context. Given an element $h\in H_n, h = [\delta_1,\delta_2,\dots]$, we call $\delta_i$ generation $i$ of $h$ and we call $h$ a \textbf{grid pattern}. We will sometimes refer to a grid pattern of finite length as a ``finite grid pattern" and a grid pattern of infinite length as an ``infinite grid pattern". For convenience, we will often refer to $C_{i, k} = \delta_i(k)$ as a \textbf{cell} of $\delta_i$, and will picture $\delta_i = C_{i,0}C_{i,1}\dots C_{i, m}$ for the last $m$ for which $\delta_i(m)$ contains a strand. To say a cell $C$ is equal to $\delta_i(k)$ for some $k$, we will abuse notation and write $C\in\delta_i$. Notice that $\operatorname{child}(C_{i, k}, C_{i, k+1}) = C_{i+1, k}$.

We now think of grid patterns geometrically. Consider $h\in H_n$ such that $h =[\delta_1,\delta_2,\dots]$, $\delta_1(0)$ is not one of the following:
\begin{itemize}
    \item $[s_1^{(1)}, t]$
    \item $[n_0^{\left(\varnothing\right)}, l_1^{(1)}]$
\end{itemize}
and such that $\delta_2(0)$ is one of the following, for some $t\in\mathcal{E}$
\begin{itemize}
    \item $[s_1^{(1)}, t]$
    \item $[n^{\left(\varnothing\right)}_0, l_1^{(1)}]$
\end{itemize}
Then, we may visualize $h$ as follows: 
\begin{center}
    \includegraphics[scale = 1]{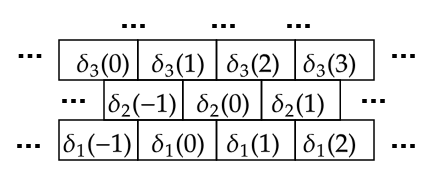}
\end{center}
With the cell notation, this becomes:
\begin{center}
    \includegraphics[scale = 1]{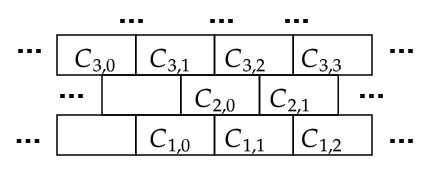}
\end{center}
Notice that, given cells $A, B$, we have the following geometric interpretation of the child function:
\begin{center}
    \includegraphics[scale = .7]{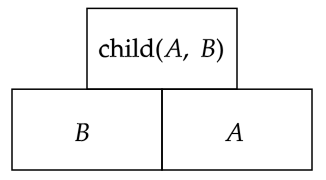}
\end{center}
We say the cell $[r_j^{(m)}, l_{j+1}^{(m+1)}]$ contains a \textbf{Z-crossing}, and that the cell $\overline{[r_j^{(m)}, l_{j+1}^{(m+1)}]}$ contains an \textbf{S-crossing}. Throughout this report, when it is not important whether a cell contains a $S-$ or $Z-$ crossing, we will refer to an arbitrary crossing as being of the form $[x_j^{(k)}, y_{j+1}^{(k+1)}]$ where $x = r, y = l$. We will often refer to Z- and S- crossings as \textbf{crossings} when it is unimportant whether the cell in question contains a Z- or S-crossing.

\subsection{Continuity}
We now aim to define some notion of continuity. Recall that given a grid pattern $g$, $C_{k, 0}$ denotes the first (or ``leftmost" in the geometric interpretation) cell which contains a strand in generation $k$ of $g$. Let $g\in H_n$. If $g$ is finite, let $A = \{1, 2,\dots, \length(g)-1\}$ and if $g$ is infinite, let $A = \mathbb{N}^+$. We use the notation $[\cdot, \cdot]$ to mean a cell with some unknown elements of $\mathcal{U}$ filling the positions $\cdot$, possibly including an overbar. We will enumerate the conditions for continuity, alongside a few examples illustrating each one. We say $g$ is \textbf{continuous} if, for all $i\in A$, $\gen(C_{i, k}, C_{i, k + 1}) = [y, z], y,z\in\mathcal{U}$ where
\begin{itemize}
    \item $[y, z]$ contains two strands if and only if $C_{i, k} = [r_j^{(t)}, \cdot]$ or $C_{i, k} = [\cdot, s_j^{(t)}]$ and $C_{i, k+1} = [\cdot, l_j^{(t)}]$ or $C_{i, k+1} = [s_j^{(t)}, \cdot]$.
    \begin{center}
        \includegraphics[]{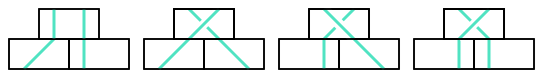}
    \end{center}
    \item $[y, z]$ has $y\neq n_j^{\left(\varnothing\right)}$ for any $j$ and $z = n_{j+1}^{\left(\varnothing\right)}$ for some $j$ if and only if $C_{i, k} = [r_y^{(t)}, \cdot]$ or $C_{i, k} = [\cdot, s_y^{(t)}]$ for some $y, t$ and $C_{i, k+1} \neq [s_y^{(t)}, \cdot]$ and $C_{i,k+1}\neq [\cdot, l_y^{(t)}]$ for some $y, t$.
    \begin{center}
        \includegraphics[]{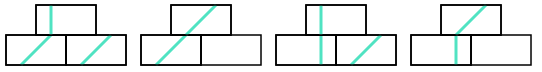}
    \end{center}
    \item $[y, z]$ has $z\neq n_{j+1}^{\left(\varnothing\right)}$  and $y = n_j^{\left(\varnothing\right)}$ for some $j$ if and only if $C_{i, k} \neq [r_y^{(t)},\cdot]$ or $C_{i, k} \neq [\cdot, s_y^{(t)}]$ and $C_{i, k+1} = [s_y^{(t)}, \cdot]$ or $C_{i, k+1} = [\cdot, l_y^{(t)}]$ for some $y, t$.
    \begin{center}
        \includegraphics[]{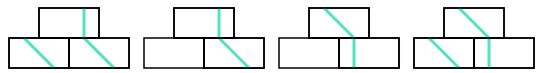}
    \end{center}
    \item $[y, z] = [n_j^{\left(\varnothing\right)}, n^{\left(\varnothing\right)}_{j+1}]$ for some $j\in\mathbb{Z}$ if and only if $C_{i, k} \neq[r_y^{(t)}, \cdot], [\cdot, s_y^{(t)}]$ for some $y, t$ and $C_{i, k+1} \neq [s_y^{(t)},\cdot],[\cdot, l_y^{(t)}]$ for some $y, t$.
    \begin{center}
        \includegraphics[]{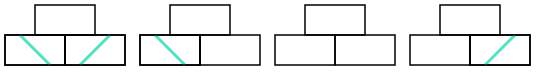}
    \end{center}
\end{itemize}
Finally, we define, for all $n\in\mathbb{N}^+$:
\begin{align*}
    G_n = \left\{h|h\in H_n\text{ such that }h\text{ is continuous}\right\}
\end{align*}
We call $G_n$ the set of continuous grid patterns, and because we will often only be concerned with continuous grid patterns, we will sometimes refer to elements of $G_n$ as grid patterns. It will be made clear from context whether a given grid pattern is continuous or not.

As an example of an element of $G_3$, consider the grid pattern
\begin{align*}
    g = [\delta_1,\delta_2,\delta_3, \delta_4]\\
    \delta_1 = [s_1^{(1)}, s_2^{(2)}][n^{\left(\varnothing\right)}_3, l_4^{(3)}]\\
    \delta_2 = [n^{\left(\varnothing\right)}_0, l_1^{(1)}]\overline{[r_2^{(2)}, l_3^{(3)}]}\\
    \delta_3 = [n^{\left(\varnothing\right)}_0, s_1^{(1)}][n^{\left(\varnothing\right)}_2, s_3^{(2)}][s_4^{(3)}, n^{\left(\varnothing\right)}_5]\\
    \delta_4 = [r_1^{(1)}, n^{\left(\varnothing\right)}_2][r_3^{(2)}, l_4^{(3)}]
\end{align*}
This grid pattern is continuous, and has geometric interpretation:
\begin{center}
    \includegraphics[scale = 1]{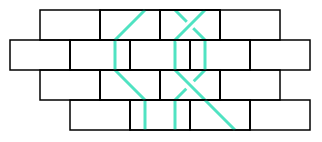}
\end{center}
Notice that all of the strands continue from one generation to the next, justifying the name continuous. Now, consider the grid pattern, which is an element of $H_2$
\begin{align*}
    g = [\delta_1,\delta_2,\delta_3]\\
    \delta_1 = [n^{\left(\varnothing\right)}_0, r_1^{(1)}][n^{\left(\varnothing\right)}_2, l_3^{(2)}]\\
    \delta_2 = \overline{[r_1^{(1)}, l_2^{(2)}]}\\
    \delta_3 = [s_1^{(1)}, n^{\left(\varnothing\right)}_2][s_3^{(2)},n^{\left(\varnothing\right)}_4]
\end{align*}
This grid pattern is not continuous, as can be seen from the geometric interpretation:
\begin{center}
    \includegraphics[scale = 1]{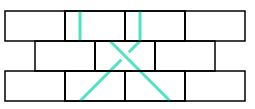}
\end{center}


We now define the notion of \textbf{composition} of grid patterns $\circ:H_n\times H_n\to H_n$. Let $h, g\in H_n$ such that $h$ is a finite grid pattern. Then, $h = [\delta_1,\delta_2,\dots, \delta_k], g = [\delta_1',\delta_2',\dots]$ for some $k\in\mathbb{N}$. Let the infinite list $[\tau_1,\tau_2,\dots] = [\delta_1,\delta_2,\dots, \delta_k,\delta_1',\delta_2',\dots]$ be such that $\tau_i = \delta_i$ if $1\leq i\leq k$ and $\tau_i = \delta_{i-k}'$ for $i > k$. Then, $[\tau_1,\tau_2,\dots]\in H_n$. Let $h\circ g = [\tau_1,\tau_2,\dots]$. For an example of composition of grid patterns, consider the patterns 
\begin{align*}
    g = [\delta_1, \delta_2]\\
    \delta_1 = [s_1^{(1)}, n^{\left(\varnothing\right)}_2][s_3^{(2)}, n^{\left(\varnothing\right)}_4]\\
    \delta_2 = [n^{\left(\varnothing\right)}_0, l_1^{(1)}][n^{\left(\varnothing\right)}_2, l_3^{(2)}]
\end{align*}
\begin{center}
    \includegraphics[scale = 1]{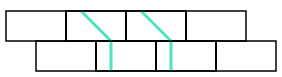}
\end{center}
and
\begin{align*}
    h = [\delta_1']\\
    \delta_1' = [n^{\left(\varnothing\right)}_0, s_1^{(1)}][n^{\left(\varnothing\right)}_2, l_3^{(2)}]
\end{align*}
\begin{center}
    \includegraphics[scale = 1]{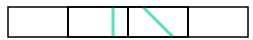}
\end{center}
The composition is:
\begin{align*}
    h\circ g = [\tau_1,\tau_2,\tau_3]\\
    \tau_1 = [s_1^{(1)}, n^{\left(\varnothing\right)}_2][s_3^{(2)}, n^{\left(\varnothing\right)}_4]\\
    \tau_2 = [n^{\left(\varnothing\right)}_0, l_1^{(1)}][n^{\left(\varnothing\right)}_2, l_3^{(2)}]\\
    \tau_3 = [n^{\left(\varnothing\right)}_0, s_1^{(1)}][n^{\left(\varnothing\right)}_2, l_3^{(2)}]
\end{align*}
\begin{center}
    \includegraphics[scale = 1]{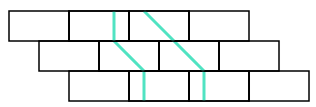}
\end{center}

 We call a finite grid pattern $s = [\delta_1,\dots, \delta_k]$ a \textbf{sublist} of a not necessarily infinite grid pattern $g = [\delta_1',\delta_2',\dots],$ if there exists a $i\leq \operatorname{length}(g)$ such that, for all $1\leq j\leq k, \delta_j = \cdots[\cdots]\delta_{i + j-1}'[\cdots]\cdots$. Notice that by the definition of a child function, this implies $\operatorname{child}_s$ is the restriction of $\operatorname{child}_g$ to $[\delta_{i}',\delta_{i+1}',\dots,\delta_{i + k-1}']$. 

We denote the composition of $a$ with itself $h$ times by $a^h$. We now define a partial function $\per: H_n\to H_n$. If $g$ is the composition of some sublist $a$ with itself $h$ times for $h > 1$ and for all $b$ such that there exists $y\in\mathbb{N}$ with $b^y = g,$ $\length(b)\geq \length(a)$, we write $g = a^h$ and say that $g$ is a \textbf{finite repeating list} and $a$ is a \textbf{repeating part} of $g$. We denote $a$ by $\per(g)$. We define a \textbf{periodic list} $g\in G_n$ to be a list $g$ that is a list $a$ composed with itself infinitely many times such that there exists no shorter sublist $b$ with $b^\infty  = g$. We write $g = a^\infty$, and say that $a$ is a \textbf{period} of $g$, denoted $\per(g)$. If no such $a$ exists, $\per$ is not defined on $g$. Notice that $\per(g)$ is unique as all grid patterns equal to $\per(g)$ must be the same length and have $\delta_1$ as its first generation where $g = [\delta_1,\dots]$.

 Finally, we define the width of a generation. Let $\delta_j$ be a generation of a grid pattern $g$, and let $y$ be the greatest $y$ such that $C_{j, y}$ contains a strand. We define the \textbf{width} of $\delta_j$ to be $\width(\delta_j) = y + 1$. If $\{\width(\delta_j):\delta_j\in g\}$ for a grid pattern $g$ is bounded above in $\mathbb{Z}$, we define $\width(g) = \max_{\delta_j\in g} \width(\delta_j)$.

\section{SCA Facts}\label{sec:SCAFacts}
The Stranded Cellular Automata (SCA) model is a grid of cells, with each cell in one of finitely many states. It employs two cellular automatons, one called the \textbf{turning rule} and the other called the \textbf{crossing rule} \cite{hh}. The turning rule determines whether strands will turn or go straight in a given cell given the information of whether the strands in that cell's neighbor cells turn or go straight. An example is below, similar to that of \cite{hh}:
\begin{center}
    \includegraphics[scale = 1.5]{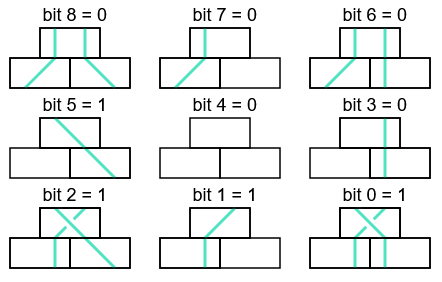}
\end{center}
\begin{center}
    \textbf{Figure 5:} Turning rule rule 39
\end{center}

Similarly, the crossing rule of a grid pattern is a cellular automata that determines whether the crossing of two strands is an $S-$ or $Z-$ crossing if those strands cross \cite{hh}. An example from \cite{hh} is below:
\begin{center}
    \includegraphics[scale = 1.5]{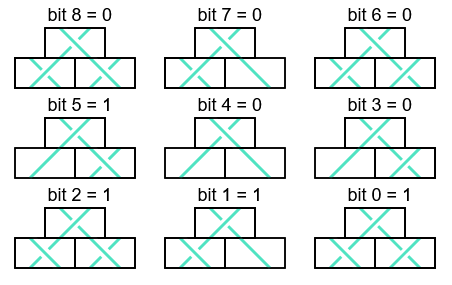}
\end{center}
\begin{center}
    \textbf{Figure 6:} Crossing rule 39
\end{center}

For any crossing or turning rule, we may write the rule as $s_8s_7s_6s_5s_4s_3s_2s_1s_0$ with $s_i$ corresponding to bit $i$. By referring to ``bit $i$", we refer to $s_i$. We call the ``initial conditions" of a bit in the turning rule $b$ the bottom row of cells in the part of figure 5 corresponding to $b$.

We adopt the convention that bit four in the turning rule is always 0, that is, the empty cell is non-turning. We will find it useful to refer to bits of a given turning rule of an SCA pattern by:
\begin{itemize}
    \item The initial conditions for bit 0 will be referred to as $SS$. If bit 0 is 1, it will be referred to as $\frac{T}{SS}$. If it is 0, it will be referred to as $\frac{S}{SS}$.
     \item The initial conditions for bit 1 will be referred to as $SE$. If bit 0 is 1, it will be referred to as $\frac{T}{SE}$. If it is 0, it will be referred to as $\frac{S}{SE}$.
     \item The initial conditions for bit 2 will be referred to as $ST$. If bit 0 is 1, it will be referred to as $\frac{T}{ST}$. If it is 0, it will be referred to as $\frac{S}{ST}$.
    \item The initial conditions for bit 3 will be referred to as $ES$. If bit 3 is 1, it will be referred to as $\frac{T}{ES}$. If it is 0, it will be referred to as $\frac{S}{ES}$.
    \item The initial conditions for bit 5 will be referred to as $ET$. If bit 0 is 1, it will be referred to as $\frac{T}{ET}$. If it is 0, it will be referred to as $\frac{S}{ET}$.
    \item The initial conditions for bit 6 will be referred to as $TS$. If bit 0 is 1, it will be referred to as $\frac{T}{TS}$. If it is 0, it will be referred to as $\frac{S}{TS}$.
    \item The initial conditions for bit 7 will be referred to as $TE$. If bit 0 is 1, it will be referred to as $\frac{T}{TE}$. If it is 0, it will be referred to as $\frac{S}{TE}$.
    \item The initial conditions for bit 8 will be referred to as $TT$. If bit 0 is 1, it will be referred to as $\frac{T}{TT}$. If it is 0, it will be referred to as $\frac{S}{TT}$.
\end{itemize}
A turning (crossing) rule of a pattern is a turning (crossing) rule under which the pattern is the result of the rule applied to the first generation of the pattern for however many generations the pattern has. We now define a \textbf{turning configuration} of the (or corresponding to the) cells $A, B$ and a $\gen(A, B)\neq[n_j^{\left(\varnothing\right)},n^{\left(\varnothing\right)}_{j+1}]$. The turning configuration is a pair of cells $A'$ $B'$ that result from deleting the strands in $A$ and $B$ that do not continue into $\gen(A, B)$. The following is a visual:
\begin{center}
    \includegraphics[scale = 0.9]{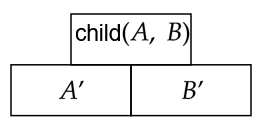}
\end{center}
\begin{center}
    \textbf{Figure 7:} Turning configuration
\end{center}
We will often refer to a turning configuration as a configuration. A turning configuration is said to produce a bit $B$ if all turning rules of the SCA pattern that consists of only the turning configuration have bit $B$. The following is an example:

Consider the pattern $[\delta_1,\delta_2]$ where $\delta_1 = [s_1^{(1)}, s_2^{(2)}][r_3^{(3)}, l_4^{(4)}],\delta_2 = [n_0, s_1^{(1)}]\overline{[r_2^{(2)}, l_3^{(3)}]}[s_4^{(4)}, n_5]$.
\begin{center}
    \includegraphics[scale = 1.2]{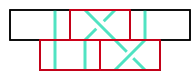}
\end{center}
The turning configuration corresponding to the cells outlined in red is:
\begin{center}
    \includegraphics[scale = 1.2]{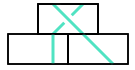}
\end{center}
Crossing configurations are defined similarly. We will write our turning rules for certain patterns as a binary expansion where each bit that does not have any bearing on the pattern is replaced with an ``X". We call this a \textbf{generic turning (crossing) rule}. Notice that a pattern has $X$ in some position of its generic turning rule if and only if there exist two turning rules of the pattern, one with a bit 1 and one with bit 0. Additionally, a generic turning (crossing) rule of a pattern has a bit $m\in\{0,1\}$ in position $i$ if and only if the turning (crossing) configuration for bit $m$ in position $i$ occurs somewhere in the pattern.

\section{Gliders}\label{sec:Gliders}
 We introduce some new terminology and notation. 
Let $g = [\delta_1,\delta_2,\cdots,\delta_m]$ be a finite continuous grid pattern. We first define the \textbf{strand-number} of a strand in generation $\delta_i$. If a strand $x_j^{(k)}$ is contained in some cell of $\delta_i$, we say that $x_j^{(k)}$ is strand $k$ in generation $i$.  We now define the notion of an \textbf{end index} of a strand $x_j^{(k)}$ in generation $\delta_i$, as follows:
\begin{itemize}
    \item If $x = s$, define the end index $y$ of $x_j^{(k)}$ to be $0$.
    \item If $x = l$ and $x_j^{(k)}$ is not such that $[r_{j-1}^{(k-1)}, x_j^{(k)}]$ or $\overline{[r_{j-1}^{(k-1)}, x_j^{(k)}]}$ is a cell in $\delta_i$, define the end index $y$ of $x_j^{(k)}$ to be $1$.
    \item If $x = l$ and $x_j^{(k)}$ is such that $[r_{j-1}^{(k-1)}, x_j^{(k)}]$ or $\overline{[r_{j-1}^{(k-1)}, x_j^{(k)}]}$ is a cell in $\delta_i$, define the end index $y$ of $x_j^{(k)}$ to be $0$.
    \item If $x = r$ and and $x_j^{(k)}$ is not such that $[x_j^{(k)}, l_{j+1}^{(k+1)}]$ or $\overline{[x_j^{(k)}, l_{j+1}^{(k+1)}]}$ is a cell in $\delta_i$, define the end index $y$ of $x_j^{(k)}$ to be $-1$.
    \item If $x = r$ and $x_j^{(k)}$ is such that $[x_j^{(k)}, l_{j+1}^{(k+1)}]$ or $\overline{[x_j^{(k)}, l_{j+1}^{(k+1)}]}$ is a cell in $\delta_i$, define the end index $y$ of $x_j^{(k)}$ to be $0$.
\end{itemize}
The end index measures how far the strand moves to the left in generation $\delta_i$.
It is measured relative to the starting position of the strand $x_j^{(k)}$ in generation $\delta_i$.  We notate this as $\operatorname{end}_i(k) = y$. Again, this is well-defined for all $1\leq k\leq n$. Let $g$ be a grid pattern. Then, the \textbf{global end index} of $g$ is $\sum_{k=1}^m\left(\send_k\left(t_1^{(1)}\right)\right)$ where $t_1^{(1)}$ is the strand at index 1 in generation $\delta_k$. The global end index measures how far the pattern travels to the left of the starting position of strand 1 in generation $\delta_1$. It is measured relative to the starting position of strand 1 in $\delta_1$.

Let $g$ be a pattern with $\per(g) = a$. The \textbf{speed} of $g = a^\infty$ is $\frac{d}{\length(a)}$ where $d$ is the global end index of $a$; we denote this by $\operatorname{Speed}(g)$. Note that $\frac{d}{\length(a)}$ is a symbol and not a rational number, as it conveys the information of the number of positions the glider $g$ moved to the left in exactly $\length(\per(g))$ generations. We do use the notation 1 to denote $\frac{a}{a}$ and $-1$ to denote $\frac{-a}{a}$ for positive $a$. Similarly, we use $0$ to denote $\frac{0}{a}$ for any $a\in\mathbb{N}^+$. As an example, consider the following grid pattern:
\begin{center}
    \includegraphics[]{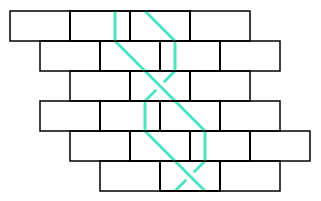}
\end{center}
This grid pattern has speed $\frac{1}{3}$. 

Suppose $a = [\delta_1,\dots,\delta_k]$. Then, we define a \textbf{chain} in $a$ to be a sequence of cells $C_{1, r_1},\dots, C_{k, r_{k}}$ of $a$ such that there exist strands in $C_{1, w}, C_{1, w'}$ for $w < r_1 < w'$ and such that, for each $1\leq i\leq r_{k-1}$, at least one of the following holds:
\begin{itemize}
    \item $\gen\left(C_{i, r_i}, C_{i, r_i + 1}\right) = C_{i+1, r_{i+1}}$\\
    \item $\gen\left(C_{i, r_i - 1}, C_{i, r_i}\right) = C_{i+1, r_{i+1}}$
\end{itemize}
If no element of a chain contains a strand, call that chain a \textbf{null chain}. We define an $n-$\textbf{stranded repeating SCA pattern} on the SCA model to be an SCA pattern $g = a^\infty$ such that $a$ contains no null chains. We define an $n-$\textbf{stranded glider} to be a $n-$stranded repeating SCA pattern with nonzero speed. Intuitively, one can think of an SCA glider as a periodic pattern that moves across the grid over time and stays roughly in one piece.

This definition differs slightly from the one given on \cite[Page 142]{mathBasis}, which would allow patterns such as $([[n^{\left(\varnothing\right)}_0,l_1^{(1)}][n^{\left(\varnothing\right)}_2,l_3^{(2)}][n^{\left(\varnothing\right)}_4,n^{\left(\varnothing\right)}_5][n^{\left(\varnothing\right)}_6,l_7^{(3)}]])^\infty$, pictured below:
\begin{center}
    \includegraphics[scale = 0.9]{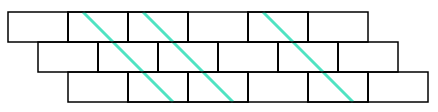}
\end{center}
to be gliders. Because these patterns are, in some sense, built from two separate patterns that do not affect each other, we do not call these patterns gliders.

In studying gliders, we are concerned with deriving turning rules from SCA patterns. As such, we will mostly consider generic turning rules. This allows us to capture all turning rules of a glider in one expansion, however, we will often just refer to it as ``the turning rule" for a given pattern.
\begin{definition}
    Given a finite list of generations $[\delta_1,\delta_2,\dots,\delta_k]$, a \textbf{cyclic permutation} of that list is $[\delta_{i+1},\dots, \delta_k,\delta_1,\dots, \delta_i]$ where $1\leq i\leq k$.
\end{definition}
\begin{definition}
    Given a glider $g$, a \textbf{shift} of $g$ is $a^\infty$ where $a$ is any cyclic permutation of $\per(g)$.
\end{definition}
For example, 
\begin{align*}
    [[n_0, s_1^{(1)}][n_2, l_3^{(2)}], [s_1^{(1)}, s_2^{(2)}],[n_0, l_1^{(1)}][s_2^{(2)}, n_3]]^\infty\text{ is a shift of }[[s_1^{(1)}, s_2^{(2)}],[n_0, l_1^{(1)}][s_2^{(2)}, n_3], [n_0, s_1^{(1)}][n_2, l_3^{(2)}]]^\infty
\end{align*}
    \begin{center}
        \includegraphics[]{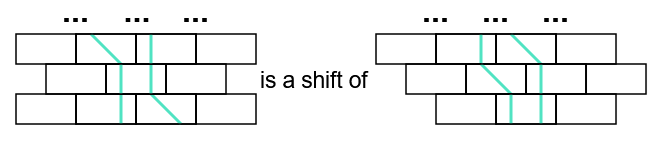}
    \end{center}
We introduce the notion for a \textbf{subpattern}. Define a function $\text{type}:\mathcal{E}\to \mathcal{S}$ by, for all $j,k\in\mathbb{Z}$:
\begin{align*}
    \text{type}(n_j^{\left(\varnothing\right)}) = n\\
    \text{type}(s_j^{(k)}) = s\\
    \text{type}(l_j^{(k)}) = l\\
    \text{type}(r_j^{(k)}) = r\\
\end{align*}
The type function returns the ``kind" of strand (or lack of strand) an element of $\mathcal{E}$. In this way it ``forgets" information.

 We call a (not necessarily infinite) grid pattern $s = [\delta_1,\dots], \delta_i = C_{i, 1}\dots C_{i, r_i}$ a subpattern of a grid pattern $g = [\delta_1',
\dots],\delta_i' = C_{i, 1}'\dots C_{i, v_i}'$ with $\length(s)\leq\length(g)$ if there exists an $i\in\mathbb{N}$ such that for all $1 \leq j \leq \length(s)$ if $\length(s)$ is finite and for all $1\leq j<\infty$ otherwise, there exist $C_{i, j}, k_j\in\mathbb{N}$ such that $C_{j, 1} = [w,x], C_{j + i, C_{i, j}}' = [y, z]$ for some $w, x, y, z\in\mathcal{E}$ and at least one of the following holds:
\begin{itemize}
    \item[1.] $\text{type}(w) = \text{type}(y)$ and $\text{type}(x) = \text{type}(z)$.
    \item[2.] $\text{type}(w) = n$ and $\text{type}(x) = \text{type}(z)$.
    \item[3.] $\text{type}(w) = \text{type}(x) = n$.
\end{itemize}
and such that $C_{j, r_j} = [w,x], C_{j + i, k_j}' = [y, z]$ for some $w, x, y, z\in\mathcal{E}$ and at least one of the following holds:
\begin{itemize}
    \item[1.] $\text{type}(w) = \text{type}(y)$ and $\text{type}(x) = \text{type}(z)$.
    \item[2.] $\text{type}(w) = \text{type}(y)$ and $\text{type}(x) = n$.
    \item[3.] $\text{type}(w) = \text{type}(x) = n$
\end{itemize}
Intuitively, a subpattern of $g$ is a continuous grid pattern obtained by deleting some number of strands in each generation of $g$. An example is as follows. The pattern:
\begin{center}
    \includegraphics[]{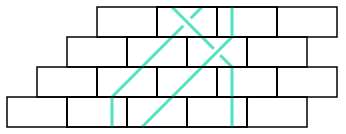}
\end{center}
has a maximal subpattern:
\begin{center}
    \includegraphics[]{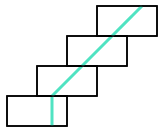}
\end{center}
If an SCA pattern contains no crossings, then we are justified in referring to the subpattern that is the $k$th strand in each generation as the $k$th strand of the pattern. In general, if $g$ is an SCA pattern, let the $k$th strand be the maximal continuous subpattern which contains one strand in each generation such that the first generation contains strand $k$. For example, the above example gives the first strand of the pictured subpattern. If $g = a^\infty$, we call the speed of strand $k$ the speed of the subpattern containing only strand $k$.
We now introduce some shorthand which will be used frequently throughout this section. We say two strands, $s$ and $t$, \textbf{interact} in generation $\delta_i$ if there exists a turning configuration in $\delta_i$ involving strands $s$ and $t$. By ``We have $B$" for some turning rule bit $B$ when it is clear from context that we are referring to a glider $g$ we mean ``There exists a turning rule of $g$ which contains the bit $B$".


\section{One-Stranded Gliders}\label{sec:OneStrand}
First note that we need not consider any crossing rules as there are no crossings in 1-stranded SCA patterns. Because gliders repeat by necessity, it suffices to consider the patterns generated by the following initial conditions. There are 4 conditions that we need to consider: 
\begin{center}
    \includegraphics[scale=1]{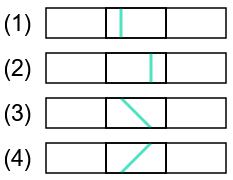}
\end{center}
\textit{Case 1:} $\delta_1$ is:
\begin{center}
    \includegraphics[scale=0.9]{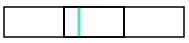}
\end{center}
We must consider the case where $a_1 = \frac{T}{ES}$ and $a_2 = \frac{S}{ES}$. 

\hfill\break
$\bm{a_1 = \frac{S}{ES}}$

$\delta_2 = [n_0, s_1^{(1)}]$
\begin{center}
    \includegraphics[scale=0.9]{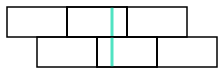}
\end{center}
so we must consider $b_1 = \frac{S}{SE}, b_2 = \frac{T}{SE}$.

\hfill\break
$\bm{b_1 = \frac{S}{SE}}$

$ \delta_3 = [s_1^{(1)}, n_2]$
\begin{center}
    \includegraphics[scale=0.9]{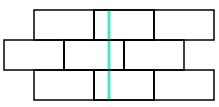}
\end{center}
so we have the repeating SCA pattern $g_1 = ([\delta_1, \delta_2])^\infty$ with speed 0 and turning rule $X0X00XXXX$.

\hfill\break
$\bm{b_2 = \frac{T}{SE}}$

 $ \delta_3 = [r_1^{(1)}, n_2]$
 \begin{center}
    \includegraphics[scale=0.9]{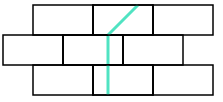}
\end{center}
 so we must consider $c_1 = \frac{T}{TE}, c_2 = \frac{S}{TE}$. 

 \hfill\break
 $\bm{c_1 = \frac{T}{TE}}$
 
 $\delta_4 = [r_1^{(1)}, n_2] = \delta_3$
 \begin{center}
    \includegraphics[scale=0.9]{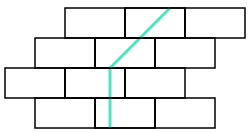}
\end{center}
 so the glider $g_2 = ([\delta_3])^\infty$ has speed $-1$ and turning rule $XXXX0XX1X$.
 
 \hfill\break
 $\bm{c_2 = \frac{S}{SE}}$
 
 $\delta_4 = [s_1^{(1)}, n_2] = \delta_1$
  \begin{center}
    \includegraphics[scale=0.9]{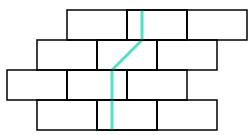}
\end{center}
 so we have a glider $g_3 = ([\delta_1, \delta_2, \delta_3])^\infty$ with speed $-\frac{1}{3}$ and turning rule $X1X00XX0X$. 
 
 \hfill\break
 $\bm{a_2 = \frac{T}{ES}}$
 
 $\delta_2 = [n_0, l_1^{(1)}]$
 \begin{center}
    \includegraphics[scale=0.9]{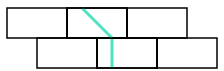}
\end{center}
 so we must consider $d_1 = \frac{T}{ET}, d_2 = \frac{S}{ET}$.
 
 \hfill\break
 $\bm{d_1 = \frac{T}{ET}}$
 
$\delta_3 = [n_0, l_1^{(1)}] = \delta_2$
 \begin{center}
    \includegraphics[scale=0.9]{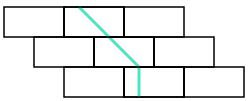}
\end{center}
so we have the glider $g_4 = ([\delta_3])^\infty$ with speed $1$ and turning rule $XXXX01XXX$. 

\hfill\break
$\bm{d_2 = \frac{S}{ET}}$

$\delta_3 = [n_0, s_1^{(1)}]$
 \begin{center}
    \includegraphics[scale=0.9]{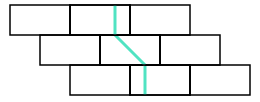}
\end{center}
and we now must consider $e_1 = \frac{T}{SE}, e_2 = \frac{S}{SE}$.

\hfill\break
$\bm{e_1 = \frac{T}{SE}}$

$\delta_4 = [r_1^{(1)}, n_2]$
 \begin{center}
    \includegraphics[scale=0.9]{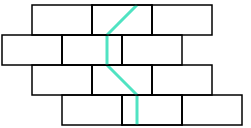}
\end{center}
so we must consider $f_1 = \frac{T}{TE}, f_2 = \frac{S}{TE}$.

\hfill\break
$\bm{f_1 = \frac{T}{TE}}$

$\delta_5 = [r_1^{(1)}, n_2] = \delta_4$
 \begin{center}
    \includegraphics[scale=0.9]{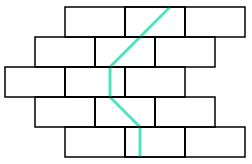}
\end{center}
so we get the glider $g_2$.

\hfill\break
$\bm{f_2 = \frac{S}{TE}}$

We have $\delta_5 = [s_1^{(1)}, n_2] = \delta_1$
 \begin{center}
    \includegraphics[scale=0.9]{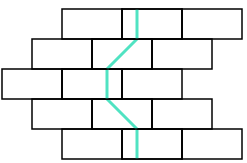}
\end{center}
for a repeating SCA pattern $g_5 = ([\delta_1, \delta_2, \delta_3, \delta_4])^\infty$ with speed 0 and turning rule $X1X100X0X$. 

\hfill\break
$\bm{e_2 = \frac{S}{SE}}$

$\delta_1 = [s_1^{(1)}, n_2], \delta_2 = [n_0, l_1^{(1)}], \delta_3 = [n_0, s_1^{(1)}], \delta_4 = [s_1^{(1)}, n_2] = \delta_1$
 \begin{center}
    \includegraphics[scale=0.9]{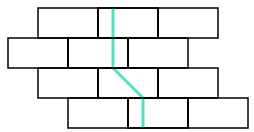}
\end{center}
so we have a glider $g_6 = ([\delta_1, \delta_2, \delta_3])^\infty$ with speed $\frac{1}{3}$ and turning rule $X0X100XXX$. 

\hfill\break
\textit{Case 2}: We start with $\delta_1$
\begin{center}
    \includegraphics[scale=0.9]{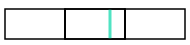}
\end{center}
We immediately must consider $a_1 = \frac{T}{SE}$ and $a_2 = \frac{S}{SE}$.

\hfill\break
$\bm{a_1 = \frac{T}{SE}}$

$\delta_2 = [r_1^{(1)}, n_2]$
\begin{center}
    \includegraphics[scale=0.9]{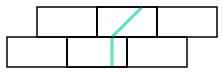}
\end{center}
so we must consider $b_1 = \frac{T}{TE}$ and $b_2 = \frac{S}{TE}$.

\hfill\break
$\bm{b_1 = \frac{T}{TE}}$

$\delta_3 = [r_1^{(1)}, n_2] = \delta_2$
\begin{center}
    \includegraphics[scale=0.9]{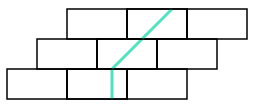}
\end{center}
so we have the glider $g_2$ 

\hfill\break
$\bm{b_2 = \frac{S}{TE}}$

$\delta_3 = [s_1^{(1)}, n_2]$
\begin{center}
    \includegraphics[scale=0.9]{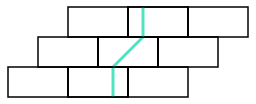}
\end{center}
See case 1.

\hfill\break
$\bm{a_2 = \frac{S}{SE}}$

In this case, $\delta_2 = [s_1^{(1)}, n_2]$
\begin{center}
    \includegraphics[scale=0.9]{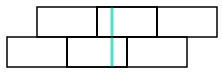}
\end{center}       
so see case 1.

\hfill\break
\textit{Case 3}: $\delta_1$ is
\begin{center}
    \includegraphics[scale=0.9]{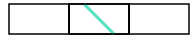}
\end{center}
We now must consider $a_1 = \frac{T}{ET}, a_2 = \frac{S}{TE}$.

\hfill\break
$\bm{a_1 = \frac{T}{ET}}$

$\delta_2 = [n_0, l_1^{(1)}] = \delta_1$
\begin{center}
    \includegraphics[scale=0.9]{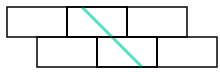}
\end{center}
so we have the glider $g_4$. 

\hfill\break
$\bm{a_2 = \frac{S}{ET}}$

$\delta_2 = [n_0, s_1^{(1)}]$,
\begin{center}
    \includegraphics[scale=0.9]{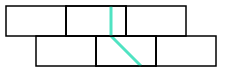}
\end{center}
See case 2.

\hfill\break
\textit{Case 4}: We have $\delta_1$
\begin{center}
    \includegraphics[scale=0.9]{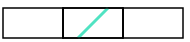}
\end{center}
We must consider $a_1 = \frac{T}{TE}, a_2 = \frac{T}{TE}$. 

\hfill\break
$\bm{a_1 = \frac{T}{TE}}$

$\delta_2 = [r_1^{(1)}, n_2] = \delta_1$,
\begin{center}
    \includegraphics[scale=0.9]{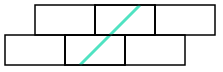}
\end{center}
so we get $g_2$.

\hfill\break
$\bm{a_2 = \frac{S}{TE}}$

$\delta_2 = [n_0, s_1^{(1)}]$
\begin{center}
    \includegraphics[scale=0.9]{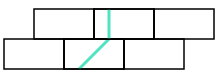}
\end{center}
so consider case 2.

Thus, we have proved the following classification result:
\begin{theorem}\thlabel{1StrandClassification}
    Let $g$ be a 1-stranded SCA pattern. Then $g$ is a repeating SCA pattern if and only if:
    \begin{itemize}
        \item[1.] $g = ([[s_1^{(1)},n_2],[n_0, s_1^{(1)}]])^\infty$ with speed 0 and turning rule $X0X00XXXX$.
    \begin{center}
    \includegraphics[scale=0.9]{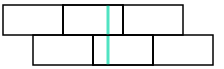}
\end{center}        
        \item[2.] $g = ([[r_1^{(1)}, n_2]])^\infty$ with speed $-1$ and turning rule $XXXX0XX1X$.
        \begin{center}
    \includegraphics[scale=0.9]{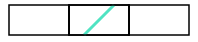}
\end{center}        
        \item[3.] $g = ([[n_0, s_1^{(1)}], [r_1^{(1)}, n_2],[s_1^{(1)}, n_2]])^\infty$ has speed $-\frac{1}{3}$ and turning rule $X1X00XX0X$.
        \begin{center}
    \includegraphics[scale=0.9]{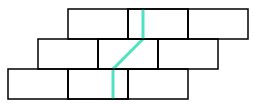}
\end{center}        
        \item[4.] $g = ([[n_0, l_1^{(1)}]])^\infty$ has speed $1$ and turning rule $XXXX01XXX$.
        \begin{center}
    \includegraphics[scale=0.9]{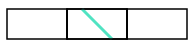}
\end{center}        
        \item[5.] $g = ([[s_1^{(1)}, n_2],[n_0, l_1^{(1)}], [n_0,s_1^{(1)}]])^\infty$ has speed $\frac{1}{3}$ and turning rule $X0X100XXX$.
\begin{center}
    \includegraphics[scale=0.9]{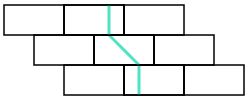}
\end{center}        
        \item[6.] $g = ([[s_1^{(1)},n_2],[n_0,l_1^{(1)}],[n_0,s_1^{(1)}],[r_1^{(1)},n_2]])^\infty$ has speed 0 and turning rule $X1X100X0X$.
\begin{center}
    \includegraphics[scale=0.9]{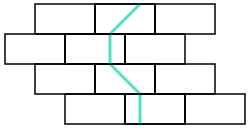}
\end{center}        
    \end{itemize}
\end{theorem}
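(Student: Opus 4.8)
The plan is to prove both directions of the biconditional by an exhaustive finite case analysis, and the whole argument rests on one structural observation. A $1$-stranded continuous grid pattern has no crossings (a crossing requires two strands), so in each generation exactly one cell contains a strand, and after the normalization built into the construction of Section~\ref{sec:Construction} that generation is one of exactly four: $[s_1^{(1)},n_2]$, $[n^{\left(\varnothing\right)}_0,s_1^{(1)}]$, $[n^{\left(\varnothing\right)}_0,l_1^{(1)}]$, and $[r_1^{(1)},n_2]$. Hence a $1$-stranded grid pattern is just a sequence over this four-letter alphabet, and $\delta_{i+1}$ is determined from $\delta_i$ by a single bit of the turning rule: the bit of the turning configuration formed by the lone strand of $\delta_i$ together with its (empty) horizontal neighbour, which is one of $ES$, $SE$, $ET$, $TE$ (bits $3,1,5,7$) according to the type and framing of that strand. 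Since bit $4$ is fixed to $0$, the only freedom at each step is the value, $0$ or $1$, of the one relevant bit, and the ``no null chain'' condition is automatic because the single strand is present in every generation.

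For the forward direction I would build the tree of all possible evolutions. Starting from each of the four generations above in turn, branch on the value of the bit governing the transition to the next generation; each branch produces a new generation which is again one of the four letters. Continue each branch until it either (a) revisits a generation already produced along that branch, yielding a finite cycle and hence a repeating pattern whose period is the portion of the branch lying inside the cycle, or (b) reaches a generation whose continuation has already been analysed under an earlier starting case. Because there are only four possible generations, every branch must terminate one of these two ways after a few steps, so the tree is finite; the starting cases $[n^{\left(\varnothing\right)}_0,s_1^{(1)}]$, $[n^{\left(\varnothing\right)}_0,l_1^{(1)}]$, $[r_1^{(1)},n_2]$ each reduce after one step to a situation handled when starting from $[s_1^{(1)},n_2]$ (or directly reproduce a period-$1$ cycle). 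The cycles encountered are exactly the six patterns $g_1,\dots,g_6$ of the statement, and along the way one reads off, for each pattern, which bits are forced (to $0$ or $1$) by the configurations that actually occur in it, leaving the rest as $X$; this produces the stated generic turning rules. The reverse direction is then immediate: each of the six listed patterns is, by construction, produced around its period by its stated turning rule (this is precisely what the enumeration verified) and has no null chain, so each is a repeating SCA pattern. One caveat to record in the statement: the period of a $1$-stranded repeating pattern is determined only up to cyclic permutation, i.e.\ up to a shift, so the six items should be read as the six possible periods up to shift.

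I expect the only real difficulty to be bookkeeping rather than any conceptual obstruction: one must check that the case tree is genuinely exhaustive, that every branch closes into a cycle or merges with a previously analysed branch (so the enumeration cannot silently miss a pattern), and that no turning configuration is forced to be simultaneously $0$ and $1$ within a single one of the six patterns (which cannot happen here, since each period exercises only a handful of the eight configurations). The finiteness of the four-letter state space is what makes the argument terminate, and once that is in hand the classification is a matter of carefully walking the tree and tallying the forced bits.
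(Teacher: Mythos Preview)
Your proposal is correct and follows essentially the same approach as the paper: the paper's proof of \thref{1StrandClassification} is precisely the finite case tree you describe, starting from each of the four possible one-cell generations and branching on the single relevant turning-rule bit ($ES$, $SE$, $ET$, or $TE$) until a cycle is closed or a previously analysed generation is reached. Your caveat about shifts is also apt; the paper makes this explicit for the $2$-stranded classification (\thref{2StrandClassification}) but leaves it implicit here.
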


\section{Two-Stranded Gliders}\label{sec:TwoStrands}
In this section, we aim to classify all 2-stranded SCA repeating patterns, which will also classify all 2-stranded gliders. We aim to prove a general classification result in section \ref{sec:decidabilityAndPure}, and the classification of two-stranded gliders will inform our proof of this result. Recall that a 2-strand SCA repeating pattern is of the form $g = a^\infty$ such that $a$ contains no null chains. If a generic crossing rule is $XXXXXXXXX$, we call it \textbf{trivial}. We first classify all 2-stranded gliders with speed 1 or -1 in \thref{speed2cor}. Recall that we write a cell as $[\cdot,\cdot]$ and a generation as a string of cells, $[\cdot,\cdot][\cdot,\cdot]\cdots$.
\begin{lemma}\thlabel{glidersSpeedctwoCase}
    The 2-stranded pattern $g = [[n_0,l_1^{(1)}][n_2,l_3^{(2)}]]^\infty$ is the unique 2-stranded speed $1$ glider. The 2-stranded pattern $g = [[r_1^{(1)}, n_2][r_3^{(2)}, n_4]]^\infty$ is the unique 2-stranded speed $-1$ glider.
\end{lemma}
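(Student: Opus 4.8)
The plan is to argue that a speed $1$ glider on two strands forces both strands individually to have speed $1$, and then to invoke the one-stranded classification (\thref{1StrandClassification}) to pin down each strand. First I would observe that if $g = a^\infty$ with global end index of $a$ equal to $\length(a)$ (which is what speed $1$ means for two strands, since the global end index is the sum over generations of $\send_k(t_1^{(1)})$ and each $\send$ summand lies in $\{-1,0,1\}$), then in \emph{every} generation both the bottom strand must turn $r$-ward... more precisely, the per-generation contribution to the global end index is at most $1$, with equality only when generation $\delta_i$ has its index-$1$ strand satisfying $\send_i(1) = 1$, i.e.\ that strand is of type $l$ and is not the right partner of a crossing. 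So speed $1$ forces $\send_i(1) = 1$ for all $i$, hence strand $1$ is, in each generation, a lone ``$n_0, l_1$''-type cell or the left part of something; in particular strand $1$ never crosses and never goes straight and never turns the other way, so the subpattern consisting of strand $1$ is exactly the one-stranded glider $([[n_0,l_1^{(1)}]])^\infty$ of case 4 of \thref{1StrandClassification}.

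Next I would handle the second strand. Since strand $1$ moves left by one index every generation and the pattern is continuous and has no null chains, strand $2$ must stay adjacent to strand $1$ (a null chain would appear otherwise, as the two strands would drift apart); continuity then forces strand $2$ to also be an ``$l$''-type strand in each generation with $\send_i(2) = 1$, so that it tracks strand $1$. Here I would need to rule out crossings: a crossing cell $[r_j^{(k)}, l_{j+1}^{(k+1)}]$ has the $r$-strand with end index $0$, which would drop the global end index below $\length(a)$ if that $r$-strand is strand $1$, and if it is strand $2$ then in the next generation the strands have swapped roles and one checks the bookkeeping still cannot sustain speed $1$. So the crossing rule is irrelevant, and strand $2$ is also a pure ``$l$'' strand; putting the two adjacent $l$-strands together in each generation gives exactly $\delta_i = [n_0,l_1^{(1)}][n_2,l_3^{(2)}]$, so $g = [[n_0,l_1^{(1)}][n_2,l_3^{(2)}]]^\infty$, and uniqueness follows because every choice along the way was forced. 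The speed $-1$ statement is the mirror image: speed $-1$ forces $\send_i(1) = -1$ in every generation, which forces every strand to be an ``$r$''-type lone cell with no crossing and no straight segment, giving $\delta_i = [r_1^{(1)},n_2][r_3^{(2)},n_4]$; I would simply remark that the argument is symmetric under the left-right reflection that swaps $r \leftrightarrow l$ and reverses cell order, rather than repeating it.

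The main obstacle I anticipate is the careful accounting that speed $1$ really does force $\send_i(k) = 1$ for \emph{both} strands in \emph{every} generation, and in particular excluding the possibility that a generation ``borrows'' — e.g.\ one strand turns straight ($\send = 0$) in some generation while compensating elsewhere. This cannot happen over a finite period because each of the $\length(a)$ generations contributes at most $1$ to a total that must equal $\length(a)$, so every contribution is exactly $1$; but I would need to state precisely what ``the per-generation contribution to the global end index'' is for a two-stranded pattern (it is $\send_i(1)$, the end index of the index-$1$ strand of $\delta_i$, not a sum over both strands) and then separately argue that once strand $1$ is pinned down, continuity plus the no-null-chain condition pins down strand $2$. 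The subtlety is that the global end index as defined tracks only strand $1$ of each generation, and strand $1$ of $\delta_{i+1}$ need not be the continuation of strand $1$ of $\delta_i$ once crossings are in play — so the cleanest route is probably to first show no crossings can occur in a speed $\pm 1$ glider (a crossing pins one of the two strands to end index $0$ in that generation, and one shows this cannot be absorbed), after which ``strand $1$'' is unambiguous and the one-stranded classification applies verbatim to each strand.
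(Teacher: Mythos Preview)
Your proposal is correct and follows essentially the same approach as the paper's proof: both first pin down strand~$1$ via the counting observation that speed~$1$ forces every per-generation contribution to the global end index to equal~$1$ (hence the leftmost strand turns left in every generation), then use the no-null-chain condition to force strand~$2$ to be adjacent and likewise turning left, with crossings ruled out along the way. The paper carries this out with direct geometric case analysis on the cell containing strand~$2$ rather than invoking \thref{1StrandClassification}, but the logical structure is the same; your self-identified obstacle about crossings potentially reshuffling strand labels is handled in the paper exactly as you suggest, by observing that once every generation begins with $[n_0,l_1^{(1)}]$ the only two strands cannot share a cell and hence cannot cross.
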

\begin{proof}
    We only prove the result for speed 1 gliders as the statement for speed -1 gliders follows analogously. One can see that $g$ is a 2-stranded speed 1 glider. 
\begin{center}
    \includegraphics[]{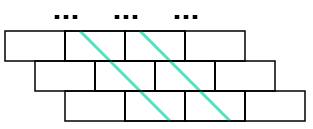}
\end{center}
    Suppose $g$ is a 2-stranded glider with speed 1. Let $a$ be the period of $g$. 
Then, $[n_0, l_1^{(1)}]$
\begin{center}
\includegraphics[]{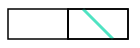}
\end{center}
occurs in some generation $j$ of $g$ since $g$ is traveling to the left, and the speed $\frac{a}{d}$ means that $g$ moves $a$ positions to the left in $|\per(g)|$ generations. Thus, we have two cases:
\begin{center}
\includegraphics[]{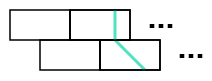}
\end{center}
or
\begin{center}
\includegraphics[]{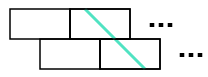}
\end{center}
In the first case, notice that $g$ can move at most $\length(a) - 1$ cells to the left in $\length(a)$ generations, so $\operatorname{Speed}(g)\neq 1,$ a contradiction. Thus, the second case holds, and we have shown that for all generations $\delta_i$, $\delta_i = [n_0, l_1^{(1)}]\dots$. Thus, all turning rules of $g$ have $\frac{T}{ET}$. Since we've been considering the first strand, $g$ contains no crossings. Now, notice that by the fact that gliders cannot contain null chains, there exists some $i$ for which $\delta_i$ is one of the following:
\begin{center}
\includegraphics[]{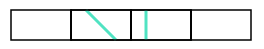}\\
\includegraphics[]{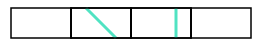}\\
\includegraphics[]{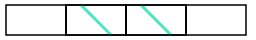}
\end{center}
In the first and second case, notice that the second strand of the pattern can move at most $\length(a) - 1$ cells to the left in $\length(a)$ generations. In the first case, this means that the first two cells of generation $\delta_{i + \length(a)}$ must be one of the following:
\begin{center}
\includegraphics[]{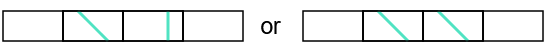}
\end{center}
a contradiction. In the second case, this means that the first two cells of generation $\delta_{i+\length(a)}$ must be:
\begin{center}
\includegraphics[]{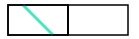}
\end{center}
a contradiction by the same argument as above. Thus, the third case holds. All turning rules of $g$ must have $\frac{T}{TT}$ or $\delta_{i + 1}$ would be:
\begin{center}
    \includegraphics[]{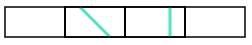}
\end{center}
a contradiction. Thus, if $g$ is a 2-stranded speed 1 glider, $g = [[n_0,l_1^{(1)}][n_2,l_3^{(2)}]]^\infty$. 
\end{proof}
\begin{corollary}\thlabel{speed2cor}
    Let $g\in G_2$ be an SCA pattern. $g$ is a repeating SCA pattern of speed 1 or -1 if and only if:
    \begin{enumerate}
        \item $g = [[n_0, l_1^{(1)}][n_2, l_3^{(2)}]]^\infty,$ which has turning rule $XXXX01XXX$, a trivial crossing rule, and speed 1.
\begin{center}
    \includegraphics[scale=0.9]{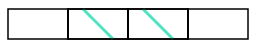}
\end{center}
        \item $g = [[r_1^{(1)}, n_2][r_3^{(2)}, n_4]]^\infty$, which has turning rule $XXXX0XX1X,$ a trivial crossing rule, and speed -1.
\begin{center}
    \includegraphics[scale=0.9]{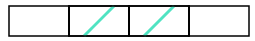}
        \end{center}
    \end{enumerate}
\end{corollary}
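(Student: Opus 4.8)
The plan is to obtain the corollary as an essentially immediate consequence of \thref{glidersSpeedctwoCase} together with a short verification of the rule data. The first observation is the trivial one that a repeating SCA pattern of speed $1$ or $-1$ is, by definition, a glider, since its speed is nonzero; hence "$2$-stranded repeating SCA pattern of speed $\pm 1$" and "$2$-stranded glider of speed $\pm 1$" name the same objects. The forward direction is then exactly \thref{glidersSpeedctwoCase}: if $g \in G_2$ is a repeating SCA pattern of speed $1$ it must equal $[[n_0,l_1^{(1)}][n_2,l_3^{(2)}]]^\infty$, and if it has speed $-1$ it must equal $[[r_1^{(1)}, n_2][r_3^{(2)}, n_4]]^\infty$. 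The backward direction is the content of the opening lines of that lemma's proof: each displayed pattern is continuous, its period contains no null chain, and its global end index over the length of its period is $\pm 1$, so it is indeed a repeating SCA pattern of the stated speed.

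It then remains only to pin down the turning and crossing rules. For the crossing rule, neither period contains a cell of the form $[r_j^{(k)}, l_{j+1}^{(k+1)}]$ or $\overline{[r_j^{(k)}, l_{j+1}^{(k+1)}]}$, so no crossing configuration occurs anywhere in $g$; by the characterisation of generic rules in Section~\ref{sec:SCAFacts}, every bit of the generic crossing rule is therefore $X$, i.e.\ the crossing rule is trivial. For the turning rule of $[[n_0,l_1^{(1)}][n_2,l_3^{(2)}]]^\infty$ I would enumerate the turning configurations that actually occur in the (short, fully explicit) period: every strand present is of type $l$ and so turns in every cell it occupies, so the only nonempty configurations arising have turning strands in their initial conditions, and the empty cell $[n_0^{\left(\varnothing\right)},n_1^{\left(\varnothing\right)}]$ contributes the always-$0$ bit $4$; reading off precisely which of these appear gives the generic rule $XXXX01XXX$, with every other bit an $X$ because the corresponding configuration is absent from the period. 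The speed $-1$ case is identical with the role of $l$ played by $r$, giving $XXXX0XX1X$. This is the same turning-rule computation that already appears inside the proof of \thref{glidersSpeedctwoCase}, and it agrees with the one-stranded gliders $g_4$ and $g_2$ of \thref{1StrandClassification}.

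I expect the only mildly delicate point to be the bookkeeping in the previous paragraph: one must check not merely that the listed bits are correct but that \emph{every} other bit is genuinely an $X$, i.e.\ that no additional turning configuration is concealed in the period. This is routine precisely because the period has length one and is written out explicitly, so a finite check over its two non-empty cells, together with their neighbours and children, suffices; no idea beyond those already used in the proof of \thref{glidersSpeedctwoCase} is required.
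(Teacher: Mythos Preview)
Your proposal is correct and matches the paper's approach: the corollary is stated without proof precisely because it is meant to be an immediate consequence of \thref{glidersSpeedctwoCase}, and your argument---identify speed $\pm 1$ repeating patterns with speed $\pm 1$ gliders, invoke the lemma for uniqueness, and read off the rule data from the explicit period---is exactly that. The only thing to note is that the turning-rule computation you describe is not actually written out inside the proof of \thref{glidersSpeedctwoCase} itself (that proof argues uniqueness, not the rule set), so your verification paragraph is genuinely filling in a small gap the paper leaves implicit rather than repeating something already there.
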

We will prove that any 2-stranded repeating SCA pattern with a strand with speed $1$ or $-1$, respectively, is a speed $1$ or $-1$ glider, respectively. Recall that speed $\pm1$ specifically means speed $\frac{\pm1}{1}$.
\begin{lemma}\thlabel{speedCCriteria}
    Let $g$ be a two-stranded repeating SCA pattern. If there exists a $N\in\mathbb{N}$ such that for all $k\geq N$, the first strand in $\delta_k$ turns left or for all $k\geq N$, the second strand in $\delta_k$ turns left, $g$ has speed 1. If there exists a $N\in\mathbb{N}$ such that for all $k\geq N$, the first strand in $\delta_k$ turns right or for all $k\geq N$, the second strand in $\delta_k$ turns right, $g$ has speed -1.
\end{lemma}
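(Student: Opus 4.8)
The plan is to prove the ``turns left $\Rightarrow$ speed $1$'' half; the ``turns right $\Rightarrow$ speed $-1$'' half is the mirror image ($r\leftrightarrow l$, end index $+1\leftrightarrow-1$, ``left''$\leftrightarrow$``right''), so I would write out only the left case. Throughout, ``strand $i$ turns left in $\delta_k$'' is read as ``strand $i$ has type $l$ in $\delta_k$ with end index $+1$''; by the definition of end index this is the same as having type $l$ while not lying in a crossing cell, since the type-$l$ slot of $[r_j^{(1)},l_{j+1}^{(2)}]$ and of $\overline{[r_j^{(1)},l_{j+1}^{(2)}]}$ has end index $0$.

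Two preliminary observations do most of the work. \emph{Periodicity.} Writing $a=\per(g)$ and $p=\length(a)$, the pattern $g=a^\infty$ is $p$-periodic, so any feature of a single generation $\delta_k$---which strands turn left, whether $\delta_k$ contains a crossing, the relative positions of the two strands---depends only on $k\bmod p$; hence if a hypothesis holds for all $k\ge N$ it holds for all $k\ge 1$, in particular throughout $\per(g)$. \emph{No crossings.} The only crossing cells available in a two-stranded pattern are $[r_j^{(1)},l_{j+1}^{(2)}]$ and $\overline{[r_j^{(1)},l_{j+1}^{(2)}]}$, each using both strands; as just noted the type-$l$ slot there has end index $0$, so a strand turning left in $\delta_k$ is not part of a crossing in $\delta_k$. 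Combined with periodicity, the assumption that one of the two strands turns left in every generation therefore forces $g$ to be crossing-free.

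Suppose first that the first strand turns left in every generation. It is strand $1$, hence the leftmost strand, hence exactly the strand whose end index is summed in the definition of the global end index $d$ of $a$; since that end index equals $+1$ in each of $\delta_1,\dots,\delta_p$, we get $d=\sum_{k=1}^{p}1=p$, so $\operatorname{Speed}(g)=d/p=1$.

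Finally, suppose the second strand turns left in every generation. Then $g$ is crossing-free, so in each generation the two strands occupy distinct slots with a well-defined gap $w_k\ge1$ (the difference of their positions), and continuity together with the definition of end index gives $w_{k+1}=w_k+\send_k(\text{first})-\send_k(\text{second})$. Here $\send_k(\text{second})=+1$ for every $k$ while $\send_k(\text{first})\le1$ always, so $(w_k)$ is non-increasing; being also $p$-periodic, it must be constant, forcing $\send_k(\text{first})=+1$ for all $k$. Thus the first strand turns left in every generation and the previous paragraph gives $\operatorname{Speed}(g)=1$. I expect the main obstacle to be precisely this last reduction---establishing the gap recursion carefully and confirming that ``crossing-free'' really follows from the hypothesis; the remaining content is the elementary counting above. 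One could alternatively finish either case by remarking that, once $\operatorname{Speed}(g)=1$ is known, \thref{glidersSpeedctwoCase} identifies $g$ as the unique speed-$1$ glider, but that is stronger than what the lemma asks.
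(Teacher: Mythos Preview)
Your proof is correct and takes a genuinely different route from the paper's. The paper argues the first-strand case by invoking the no-null-chain condition to find a generation where the two strands are close, then case-analyses the possible local configurations to conclude that $g$ must in fact equal $([n_0,l_1^{(1)}][n_2,l_3^{(2)}])^\infty$; for the second-strand case it first rules out crossings by a separate picture-based contradiction and then says the rest ``is nearly identical to the first case.'' By contrast, you bypass the null-chain hypothesis entirely: the first-strand case is a one-line computation of the global end index, and the second-strand case is reduced to the first via the gap recursion $w_{k+1}=w_k+\send_k(1)-\send_k(2)$ together with periodicity, which is cleaner and actually proves the lemma for arbitrary periodic two-stranded SCA patterns, not only repeating ones. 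The price you pay is that you do not identify $g$ explicitly, whereas the paper's argument does; but as you note, that is more than the lemma asks. One small point worth flagging: your reading of ``turns left'' as ``type $l$ with end index $+1$'' is automatically satisfied for strand~$1$ (the leftmost strand can only have type $l$ in a cell $[n,l]$), so there is no loss there; for strand~$2$ it is a genuine restriction relative to ``type $l$'', but the paper's own proof also has to eliminate the crossing case separately, and every application of the lemma in the paper is in a context where the turning strand sits in a cell $[n,l]$, so your hypothesis suffices.
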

\begin{proof}
    We only prove the result for the cases where there exists a $N\in\mathbb{N}$ such that for all $k\geq N$, the first strand in $\delta_k$ turns left or for all $k\geq N$, the second strand in $\delta_k$ turns left; as the other cases follow analogously. Suppose there exists a $N\in\mathbb{N}$ such that for all $k\geq N$, the first strand in $\delta_k$ turns left. Because $g$ is a glider, there exists some $\delta_k, k\geq N$ such that one of the following holds:
\begin{center}
    \includegraphics[]{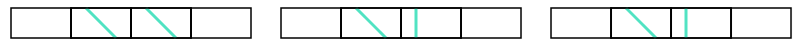}
\end{center}
    Notice that $\delta_k = \delta_{k + \length(\per(g))}$, so in cases two and three, strand 2 must travel $\length(\per(g))$ positions to to the right in $\length(\per(g)) - 1$ generations, a contradiction. In case 1, notice that strand 2 must turn in every generation between $k$ and $\length(\per(g))$, so $\delta_{k + 1} = [n_0,l_1^{(1)}][n_2,l_3^{(2)}].$ Thus, $g = ([n_0,l_1^{(1)}][n_2,l_3^{(2)}])^\infty$ because $g$ is a repeating SCA pattern, so $g$ has speed 1.

    Now suppose there exists a $N\in\mathbb{N}$ such that for all $k\geq N$, the second strand in $\delta_k$ turns left. If $g$ contains a crossing, we have one of the following
\begin{center}
    \includegraphics[]{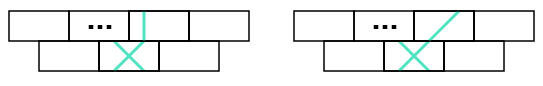}
\end{center}
    both contradictions, so $g$ can contain no crossings. From here, the proof is nearly identical to first case.
\end{proof}
Recall that if a turning rule 
\begin{lemma}\thlabel{notspeedc2strands}
    Suppose $g$ is a 2-stranded repeating SCA pattern that does not have speed $1$ or $-1$. Then there exist turning rules of $g$ with bits $\frac{S}{ET}, \frac{S}{TE}$.
\end{lemma}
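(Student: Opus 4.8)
The plan is to argue by contrapositive: if $g$ is a 2-stranded repeating SCA pattern and every turning rule of $g$ lacks one of $\frac{S}{ET}$ or $\frac{S}{TE}$, then $g$ has speed $1$ or $-1$. The key observation is that lacking $\frac{S}{ET}$ in a generic turning rule of $g$ means the configuration producing $\frac{S}{ET}$ never occurs in $g$, i.e. whenever a strand is in a ``turned-left, against an end'' configuration ($ET$ initial conditions), it turns rather than going straight — so $g$ has bit $5$ equal to $1$. Similarly, lacking $\frac{S}{TE}$ forces bit $7$ to be $1$: a strand turning right against an end always turns again. So I would split into two cases according to which of $\frac{S}{ET}$, $\frac{S}{TE}$ is absent from every turning rule of $g$ (at least one must be, under the negated hypothesis).

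Consider the case where $g$ has no $\frac{S}{ET}$; I claim this forces speed $1$ via \thref{speedCCriteria}. I would track the first strand of $g$ (or, if $g$ has a crossing, I need to be a little careful — but as in the earlier lemmas, a crossing involving strand $1$ turning left can be handled, and in fact I expect to reduce to the no-crossing subpattern for strand $1$). The idea: once strand $1$ turns left in some generation $\delta_k$ (which must happen, since $g$ is a glider and hence a repeating pattern with no null chains, so strands cannot go straight forever without the pattern failing to repeat or violating width bounds), the $ET$ configuration recurs in $\delta_{k+1}$ unless something to its left changes. Because bit $5 = 1$, strand $1$ keeps turning left in every subsequent generation, so by \thref{speedCCriteria} (first strand turns left for all $k \geq N$) $g$ has speed $1$. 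The symmetric case — no $\frac{S}{TE}$, forcing bit $7 = 1$ — gives, once strand $1$ turns right once it turns right forever, hence speed $-1$ by \thref{speedCCriteria}.

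The main obstacle is establishing that a strand that turns left (resp. right) against an end configuration actually \emph{stays} in that configuration in the next generation so that bit $5$ (resp. bit $7$) keeps firing. When strand $1$ turns left in $\delta_k$, in $\delta_{k+1}$ its left-neighbor cell is empty (it is the leftmost strand, by the indexing convention that strand $1$ starts at position $1$ or $0$), so the configuration is again of $ET$ type — this is the crux and needs the continuity definition and the child function's geometric behavior (the leftward shift) invoked carefully. I also need to rule out the degenerate possibility that strand $1$ never turns at all: if strand $1$ goes straight in every generation, then since the pattern must repeat and cannot have a null chain, strand $2$ must do all the ``work,'' but then one reruns the argument with the roles of the two strands swapped (strand $2$'s left-neighbor need not be empty, so here a crossing could intervene — but as in \thref{speedCCriteria}'s second half, a crossing configuration leads quickly to a contradiction with the no-null-chain / repetition constraints). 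Once these configuration-persistence facts are nailed down, the lemma follows by directly citing \thref{speedCCriteria}.
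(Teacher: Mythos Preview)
Your contrapositive reduction is correct in spirit, and the easy half of the argument --- once the \emph{leftmost} strand is in an $ET$ configuration it stays there and \thref{speedCCriteria} gives speed $1$ --- is exactly how the paper opens. The gap is in the other half, which is where essentially all of the work lies.

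The assumption that all turning rules of $g$ have $\frac{T}{ET}$ only tells you that the $ET$ configuration occurs \emph{somewhere} in $g$; it does not force that configuration onto strand $1$. In a $2$-stranded pattern the strand sitting in an $ET$ configuration (empty cell to its immediate left) can perfectly well be strand $2$, with strand $1$ farther to the left. Your proposal asserts that strand $1$ must eventually turn left, but this is not established and is in fact false under the hypotheses: in the paper's Case~1 (where some turning rule has $\frac{T}{ES}$), strand $1$ is confined to the cell types $[n_0,s_1^{(1)}]$ and $[r_1^{(1)},n_2]$ and never turns left at all. Your fallback --- ``if strand $1$ goes straight in every generation \dots\ rerun the argument with the roles swapped'' --- does not cover this, both because strand $1$ may turn \emph{right} rather than go straight, and because once you try to run the persistence argument on strand $2$, its left neighbor is no longer guaranteed to be empty, so $\frac{T}{ET}$ no longer fires automatically. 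The hand-wave that ``a crossing configuration leads quickly to a contradiction'' is not enough here; what actually happens is a genuine case analysis.

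The paper's proof addresses exactly this: after disposing of the case where the $ET$ configuration sits on the leftmost strand (and noting that crossings are then impossible), it splits on whether some turning rule has $\frac{T}{ES}$ or $\frac{S}{ES}$ and traces strand $2$'s interaction with strand $1$ over several generations, in each branch deriving a contradiction --- sometimes with the assumed bits, sometimes with continuity, and only sometimes by producing speed $\pm 1$. So the contrapositive is not proved by showing speed $1$ outright; it is proved by showing the hypothesis set is inconsistent. Your outline would need to be supplemented with this interaction analysis to go through. (A small side point: the lemma is about repeating SCA patterns, not gliders, so speed $0$ is allowed and the no-null-chain condition does not by itself force any strand to turn.)
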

\begin{proof}
    For contradiction, suppose otherwise. We only prove the result in the case where all turning rules of $g$ have $\frac{T}{ET}$, as the case where all turning rules have bit $\frac{T}{TE}$ is analogous. Then, there exist generations $i$ and $i + 1$ in which
    \begin{center}
    \includegraphics[scale=0.9]{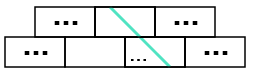}
\end{center}
    occurs. If the strand pictured is the first strand in generation $i$, then the cells to the left of the strand pictured must be empty, so $g$ must have speed $1$. This is a contradiction. $g$ also cannot contain a crossing for this reason, so we may call the first strand in the first generation the first strand throughout all of $g$. We now consider two cases:

    \textbf{Case 1:} Some turning rule of $g$ has $\frac{T}{ES}$. In this case the cells $[s_1^{(1)},...], [n_0, l_1^{(1)}]$ cannot occur in $g$. Thus, only the cells $[n_0, s_1^{(1)}], [r_1^{(1)}, n_2]$ can describe the first strand of $g$ in any given generation. If every generation contained $[n_0, s_1^{(1)}]$, $g$ would not be continuous, so there exists some generation $i$ for which $[r_1^{(1)}, n_2]$ occurs. $g$ does not have speed $-1$, so there exists a least generation $i + k > 1$ for which strand 1 is not given by $[r_1^{(1)}, n_2]$. Thus, $\delta_{i+k-1}, \delta_{i+k}$ must contain:
   \begin{center}
    \includegraphics[scale=0.9]{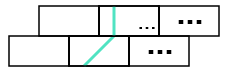}
\end{center}
    This contradicts the fact that $g$ cannot contain $[s_1^{(1)},...]$.

    \textbf{Case 2:} Some turning rule of $g$ has $\frac{S}{ES}$. By our assumption for contradiction, there exists some generation $\delta_i$ for which $[n^{\left(\varnothing\right)}_{j-1}, l_j^{(m)}]$ occurs for some $m\in\{1,2\}$. If $m = 1$, by \thref{speedCCriteria} $g$ would have speed 1, a contradiction, so suppose $m = 2$. There must be some $k\in\mathbb{N}_{\geq2}$ for which the second strand in $\delta_k$ turns left but the second strand in $\delta_{k-1}$ does not turn left, or else the second strand would turn left in all generations of $g$, a contradiction to \thref{speedCCriteria}. Thus, the options for $\delta_{k-1}$ are as follows:
\begin{center}
    \includegraphics[]{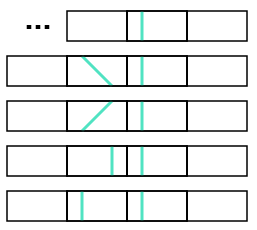}
\end{center}
    In the first case, because strand 2 turns left in the next generation, we get that all turning rules of $g$ have $\frac{T}{ES}$, a contradiction. In the second case, $g$ has speed 1 by the fact that $g$ has the rule $\frac{T}{ET}$ and \thref{speedCCriteria}. In the third and fourth cases, because strand 2 turns left in the next generation, $g$ contains a crossing, a contradiction. In the fifth case, by $\frac{S}{ES}$, we have:
\begin{center}
    \includegraphics[]{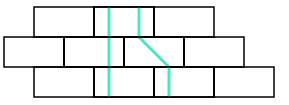}
\end{center}
    Thus, all rules of $g$ have $\frac{S}{ST}, \frac{T}{SS}$. Now, consider $\delta_{k+2}$. Strand 2 either turns or does not turn. If strand 2 does not turn, we get:
\begin{center}
    \includegraphics[]{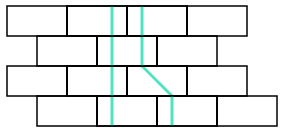}
\end{center}
    Thus, $\delta_{k + 3}$ contains a crossing because we have the rule $\frac{T}{SS}$, so suppose strand 2 turns in generation $\delta_{k+2}$
\begin{center}
    \includegraphics[]{scaStuff/sca11-18_6.png}
\end{center}
    If strand 2 turns in generation $\delta_{k + 3}$ then $g$ has speed -1 by \thref{speedCCriteria}, so we only must consider the behavior of strand 1 in $\delta_{k + 3}$. By $\frac{S}{ST}$, we have:
\begin{center}
    \includegraphics[]{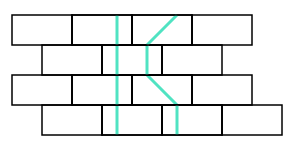}
\end{center}
Thus, this pattern repeats in $g$ for all future generations, so because $g$ is periodic, $g = [\delta_{k-1},\dots,\delta_{k+3}]^\infty$. Thus, there exist turning rules of $g$ with the bit $\frac{S}{ET}$, a contradiction to our assumption.
\end{proof}
We aim to classify all 31 2-stranded repeating SCA patterns in \thref{2StrandClassification}. To prove the classification result at the end of this section, we first consider the case where the glider has speed $1$ or $-1$. If the glider has speed $1$, then it must be $([[n_0, l_1^{(1)}][n_2, l_3^{(2)}]])^\infty$ with turning rule $XXXX01XX1$ by \thref{speed2cor}. If it has speed $-1$, it must be $([[r_1^{(1)}, n_2][r_3^{(2)}, n_4]])^\infty$ with turning rule $XXXX0XX11$ by \thref{speed2cor}.

We may assume that the bits $\frac{S}{TE}, \frac{S}{ET}$ appear in our rules throughout the classification by \thref{notspeedc2strands}. As every repeating SCA pattern must have the strands within four positions of each other during at least one point in its period, it suffices to classify all gliders arising from a first generation that contains one of the following configurations:
\begin{center}
    \includegraphics[scale=0.8]{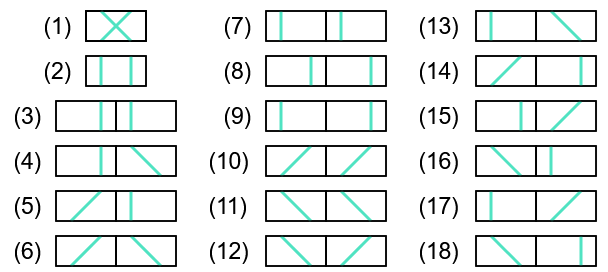}
\end{center}
To classify all 2-stranded gliders and SCA repeating patterns, we consider one case for each configuration listed above. We split each case into two parts: The first part assuming the bit $\frac{T}{ES}$, necessary for a positive-speed glider, and the second part assuming the bit $\frac{S}{ES}$. Within each part, we consider the different possibilities for bits in the generic turning rule. In each part, we set up a binary tree of assumptions for the possible bits of the turning rule, where letters $a, b, c,\dots$ indicate the depth of the tree at any given point and subscripts denote the branch taken at the vertex at the level of the letter. For example, the following tree depicts part 1 case 1:

\[\begin{tikzcd}
	&& \bullet \\
	{\bm{a_1}} & {} && {} & {\bm{a_2}} \\
	& {\bm{b_1}} && {} & {} &&& {\bm{b_2}} \\
	{\bm{c_1}} & {} & {\bm{c_2}} &&& {\bm{d_1}} &&&& {\bm{d_2}} \\
	&&&& {\bm{e_1}} && {\bm{e_2}} && {\bm{f_1}} && {\bm{f_2}} \\
	&&&&&&& {\bm{h_1}} && {\bm{h_2}}
	\arrow[no head, from=1-3, to=2-1]
	\arrow[no head, from=1-3, to=2-5]
	\arrow[no head, from=2-5, to=3-2]
	\arrow[no head, from=2-5, to=3-8]
	\arrow[no head, from=3-2, to=4-1]
	\arrow[no head, from=3-2, to=4-3]
	\arrow[no head, from=3-8, to=4-6]
	\arrow[no head, from=3-8, to=4-10]
	\arrow[no head, from=4-6, to=5-5]
	\arrow[no head, from=4-6, to=5-7]
	\arrow[no head, from=4-10, to=5-9]
	\arrow[no head, from=4-10, to=5-11]
	\arrow[no head, from=5-9, to=6-8]
	\arrow[no head, from=5-9, to=6-10]
\end{tikzcd}\]
We may also assume that all crossings are a generic crossing which we will refer to as an $X-$crossing, as there is exactly one bit in the crossing rule which can possibly be used with 2-stranded SCA patterns. Thus, if a 2-stranded glider or repeating SCA pattern $g$ with a crossing exists, then a 2-stranded glider or SCA pattern identical to $g$ except with the crossings replaced by S-crossing exists and a 2-stranded glider or repeating SCA pattern identical to $g$ except with the crossings replaced by Z-crossing exists. There exists no 2-stranded SCA pattern with both S- and Z-crossings, as that would violate bit 4 of the crossing rule. Notice that every 2-stranded repeating pattern must be a shift of a 2-stranded repeating pattern found via this process.

\hfill\break
\textbf{Case 1}: We start with $\delta_1 = [r_1^{(1)}, l_2^{(2)}]$.
\begin{center}
    \includegraphics[scale=0.95]{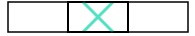}
\end{center}
\hfill\break
\textit{Part 1:} We have $\frac{S}{TE}, \frac{S}{ET}$, so $\delta_2 = [n_0, s_1^{(1)}][s_2^{(2)}, n_3]$:
\begin{center}
    \includegraphics[scale=0.6]{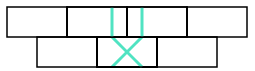}
\end{center}
so consider $a_1 = \frac{T}{SS}$ and $a_2 = \frac{S}{SS}$. 

\hfill\break
$\bm{a_1 = \frac{T}{SS}}$:

$\delta_3 = \delta_1$, so we have a glider $g_1 = ([\delta_1, \delta_2])^\infty$ with speed 0 and turning rule $1XXX00X0X$. 

\hfill\break
$\bm{a_2 = \frac{S}{SS}}$

$ \delta_3 = [s_1^{(1)}, s_2^{(2)}]$
\begin{center}
    \includegraphics[scale=0.6]{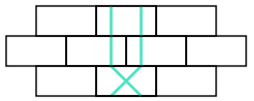}
\end{center}
so we need to consider $b_1 = \frac{T}{SE}, b_2 = \frac{S}{SE}$.

\hfill\break
$\bm{b_1 = \frac{T}{SE}}$

$\delta_4 = [n_0, l_1^{(1)}][r_{2}^{(2)}, n_3]$:
\begin{center}
    \includegraphics[scale=0.9]{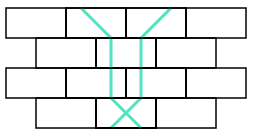}
\end{center}
and because we have $\frac{S}{TE}, \frac{S}{ET}$, $\delta_5 = [n_0, s_1^{(1)}][n_2, n_3][s_4^{(2)}, n_5], \delta_6 = [r_1^{(1)}, n_2][n_3, l_4^{(2)}]$:
\begin{center}
    \includegraphics[scale=0.6]{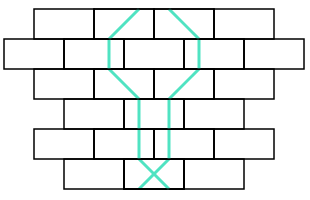}
\end{center}
so we must consider $c_1 = \frac{T}{TT}, c_2 = \frac{S}{TT}$.

\hfill\break
$\bm{c_1 = \frac{T}{TT}}$

$\delta_7 = \delta_1$, so we have $g_2 = ([\delta_1, \delta_2, \delta_3, \delta_4, \delta_5])^\infty$ with speed 0 and turning rule $01X100X01$. 

\hfill\break
$\bm{c_2 = \frac{S}{TT}}$

$\delta_7 = [s_1^{(1)}, s_2^{(2)}] = \delta_3$
\begin{center}
    \includegraphics[scale=0.6]{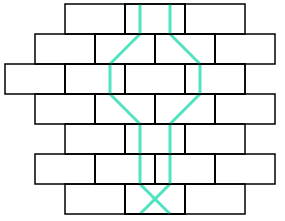}
\end{center}
so we have $g_3 = ([\delta_3, \delta_4, \delta_5, \delta_6])^\infty$ with speed 0 and turning rule $X1X100X00$. 

\hfill\break
$\bm{b_2 = \frac{S}{SE}}$

$\delta_1 = [r_1^{(1)}, l_2^{(2)}]$. We have $\frac{S}{TE}, \frac{S}{ET}$, so $\delta_2 = [n_0, s_1^{(1)}][s_2^{(2)}, n_3]$
\begin{center}
    \includegraphics[scale=0.6]{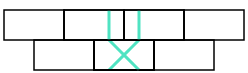}
\end{center}
and we have $a_2$, so $\delta_3 = [s_1^{(1)}, s_2^{(2)}]$
\begin{center}
    \includegraphics[scale=0.6]{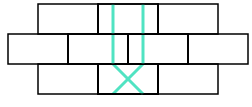}
\end{center}
Because we have $b_2$, $\delta_4 = [n_0, l_1^{(1)}][s_2^{(2)}, n_3]$
\begin{center}
    \includegraphics[scale=0.6]{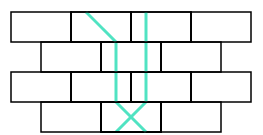}
\end{center}
We now need to consider $d_1 = \frac{T}{TS}, d_2 = \frac{S}{TE}$. 

\hfill\break
$\bm{d_1 = \frac{T}{TS}}$

We have $\delta_5 = [n_0, s_1^{(1)}][n_2, r_3^{(2)}]$
\begin{center}
    \includegraphics[scale=0.6]{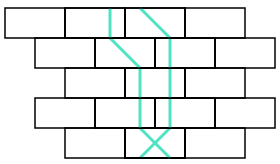}
\end{center}
so we must consider $e_1 = \frac{T}{ST}, e_2 = \frac{S}{ST}$. 

\hfill\break
$\bm{e_1 = \frac{T}{ST}}$

We get $\delta_6 = [r_1^{(1)}, l_2^{(2)}] = \delta_1$
\begin{center}
    \includegraphics[scale=0.6]{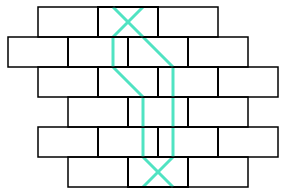}
\end{center}
so $g_4 = ([\delta_1, \delta_2, \delta_3, \delta_4, \delta_5])^\infty$ with speed $\frac{1}{5}$ and turning rule $00110010X$. 

\hfill\break
$\bm{e_2 = \frac{S}{ST}}$
We have $a_2, b_2, d_1, e_2$, so $\delta_6 = [s_1^{(1)}, s_2^{(2)}] = \delta_3$
\begin{center}
    \includegraphics[scale=0.6]{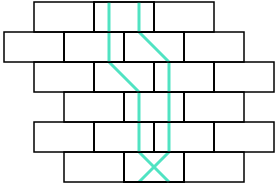}
\end{center}
and $g_5 = ([\delta_3, \delta_4, \delta_5])^\infty$ with speed $\frac{1}{3}$, initial condition $[s_1^{(1)}, s_2^{(2)}]$ and turning rule $X001001XX$.

\hfill\break
$\bm{d_2 = \frac{S}{TS}}$

We now consider $a_2, b_2, d_2$, where $\delta_1 = [r_1^{(1)}, l_2^{(2)}], \delta_2 = [n_0, s_1^{(1)}][s_2^{(2)}, n_3], \delta_3 = [s_1^{(1)}, s_2^{(2)}], \delta_4 = [n_0, l_1^{(1)}][s_2^{(2)}, n_3], \delta_5 = [n_0, s_1^{(1)}][n_2, s_3^{(2)}], \delta_6 = [s_1^{(1)}, n_2][s_3^{(2)}, n_4],$$ \delta_7 = [n_0, l_1^{(1)}][n_2, s_3^{(2)}],$\\
$ \delta_8 = [n_0, s_1^{(1)}][n_2, n_3][s_4^{(2)}, n_5], \delta_9 = [s_1^{(1)}, n_2][n_3, l_4^{(2)}]$
\begin{center}
    \includegraphics[scale=0.6]{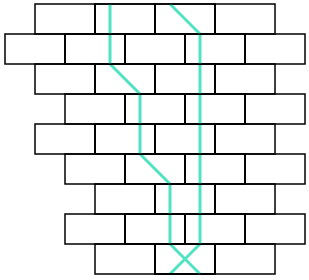}
\end{center}
Notice that we now need to consider $f_1 = \frac{T}{ST}, f_2 = \frac{S}{ST}$.

\hfill\break
$\bm{f_1 = \frac{T}{ST}}$

 $\delta_{10} = [n_0, l_1^{(1)}][n_2, l_3^{(2)}]$
 \begin{center}
    \includegraphics[scale=0.6]{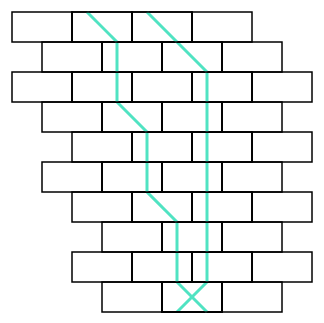}
\end{center}
so we must consider $h_1 = \frac{S}{TT}, h_2 = \frac{T}{TT}$. 

\hfill\break
$\bm{h_1 = \frac{S}{TT}}$

$a_2, b_2, d_2, f_1$ gives $\delta_{11} = [n_0, s_1^{(1)}][n_2, s_3^{(2)}] = \delta_5$
\begin{center}
    \includegraphics[scale=0.6]{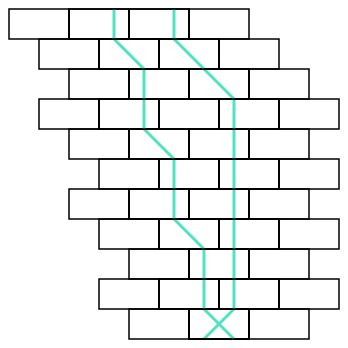}
\end{center}
for a glider $g_6 = ([\delta_5, \delta_6, \delta_7, \delta_8, \delta_9, \delta_{10}])^\infty$ with speed $\frac{2}{6},$ initial condition $[n_0, s_1^{(1)}][n_2, s_3^{(2)}]$ and turning rule $001100XX0$. 

\hfill\break
$\bm{h_2 = \frac{T}{TT}}$

We now consider $a_2, b_2, d_2, g_1, h_2$ in which $\delta_i$ is the same as in $a_2, b_2, d_2, g_1, h_1$ for $1\leq i\leq 10$, but $\delta_{11} = [n_0, s_1^{(1)}][n_2, l_3^{(2)}]$ and $\delta_{12} = [r_1^{(1)}, l_2^{(2)}] = \delta_1$
\begin{center}
    \includegraphics[scale=0.6]{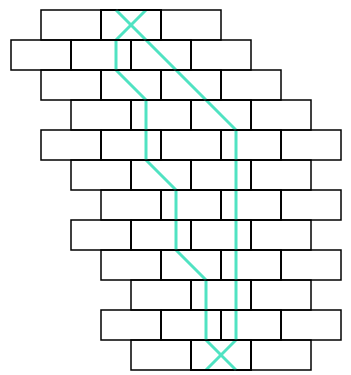}
\end{center}
for a glider $g_7 = ([\delta_1, \delta_2, \delta_3, \delta_4, \delta_5, \delta_6, \delta_7, \delta_8, \delta_9, \delta_{10}, \delta_{11}])^\infty$ with speed $\frac{3}{11}$, initial condition $[r_1^{(1)}, l_2^{(2)}]$, and turning rule $001100001$.

\hfill\break
$\bm{f_2 = \frac{S}{ST}}$

We now must consider $a_2, b_2, d_2, f_2$. We get $\delta_i$ the same as in the case with rules $a_2, b_2, d_2, g_1, h_2$ for $1\leq i\leq 9$, but $\delta_{10} = [n_0, l_1^{(1)}][n_2, s_3^{(2)}] = \delta_{7}$
\begin{center}
    \includegraphics[scale=0.6]{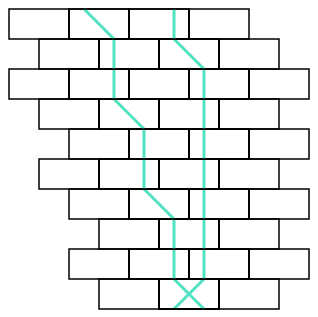}
\end{center}
for a glider $g_8 = ([[n_0, l_1^{(1)}][n_2, s_3^{(2)}], $$[n_0, s_1^{(1)}][n_2, n_3][s_4^{(2)}, n_5],$$ [s_1^{(1)}, n_2][n_3, l_4^{(2)}]])^\infty$
\hfill\break
with speed $\frac{1}{3}$, initial condition $[n_0, l_1^{(1)}][n_2, s_3^{(2)}]$, and turning rule $X00100XXX$.

\hfill\break
\textit{Part 2:}

Now we enter the second part of this case, where we instead assume $\frac{S}{ES}$. We have $\delta_2 = [n_0, s_1^{(1)}][s_2^{(2)}, n_3]$, so we must consider $a_1 = \frac{T}{SS}, a_2 = \frac{S}{SS}$. 

\hfill\break
$\bm{a_1 = \frac{T}{SS}}$

With $a_1, \delta_3 = \delta_1$, which is $g_1$. 

\hfill\break
$\bm{a_2 = \frac{S}{SS}}$

$\delta_3 = [s_1^{(1)}, s_2^{(2)}]$
\begin{center}
    \includegraphics[scale=0.6]{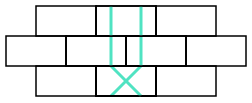}
\end{center}
so we must consider $b_1 = \frac{T}{SE}, b_2 = \frac{S}{SE}$. 

\hfill\break
$\bm{b_1 = \frac{T}{SE}}$

With $a_2, b_1, \delta_4 = [n_0, s_1^{(1)}][r_2^{(2)}, n_3]$
\begin{center}
    \includegraphics[scale=0.6]{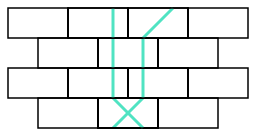}
\end{center}
We must consider $c_1 = \frac{S}{ST}, c_2 = \frac{T}{ST}$. 

\hfill\break
$\bm{c_1 = \frac{S}{ST}}$

For $a_2, b_1, c_1,$ $\delta_5 = [s_1^{(1)}, n_2][s_3^{(2)}, n_4], \delta_6 = [n_0, s_1^{(1)}][n_2, s_3^{(2)}], \delta_7 = [s_1^{(1)}, n_2][r_3^{(2)}, n_4],$ $\delta_8 = [n_0, s_1^{(1)}][n_2, n_3][s_4^{(2)}, n_5], \delta_9 = [r_1^{(1)}, n_2][n_3, s_4^{(2)}]$
\begin{center}
    \includegraphics[scale=0.6]{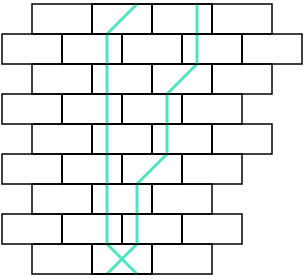}
\end{center}
so we must consider $d_1 = \frac{T}{TS}, d_2 = \frac{S}{TS}$.

\hfill\break
$\bm{d_1 = \frac{T}{TS}}$

We get $\delta_{10} = [r_1^{(1)}, n_2][r_3^{(2)}, n_4]$
\begin{center}
    \includegraphics[scale=0.6]{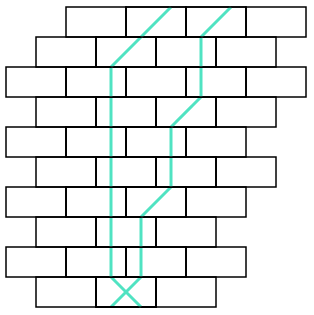}
\end{center}
so consider $e_1 = \frac{T}{TT}, e_2 = \frac{S}{TT}$.

\hfill\break
$\bm{e_1 = \frac{T}{TT}}$

 $\delta_{11} = [r_1^{(1)}, n_2][s_3^{(2)}, n_4], \delta_{12} = \delta_1$
 \begin{center}
    \includegraphics[scale=0.6]{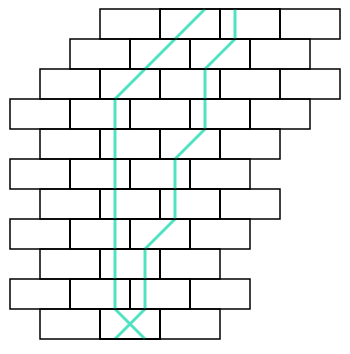}
\end{center}
 Thus, $g_9 = ([\delta_1, \delta_2, \cdots, \delta_{11}])^\infty$ is a glider with speed $-\frac{3}{11}$ and turning rule $010000101$. 
 
 \hfill\break
 $\bm{e_2 = \frac{S}{TT}}$

 $\delta_i$ is identical to the $e_1$ case for $1\leq i\leq 10$ but $\delta_{11} = \delta_5$
 \begin{center}
    \includegraphics[scale=0.6]{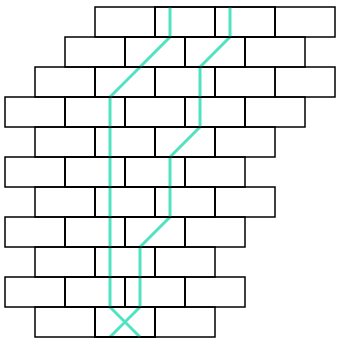}
\end{center}
so $g_{10} = ([\delta_5, \delta_6,\cdots, \delta_{10}])^\infty$ is a glider with speed $-\frac{2}{6}$ and turning rule $01X00X100$. 

\hfill\break
$\bm{d_2 = \frac{S}{TS}}$

$\delta_i$ the same as in the $e_2$ case for $1\leq i \leq 9$ and $\delta_{10} = \delta_7$
\begin{center}
    \includegraphics[scale=0.6]{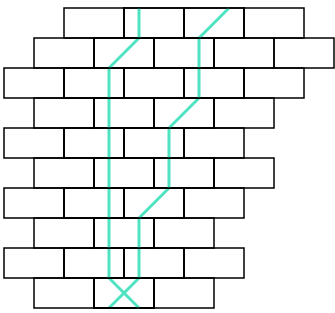}
\end{center}
so $g_{11} = ([\delta_7, \delta_8, \delta_9])^\infty$ is a glider with speed $-\frac{1}{3}$ and turning rule $X1X00X00X$. 

\hfill\break
$\bm{c_2 = \frac{T}{ST}}$

$\delta_i$ is the same as in the $d_2$ case for $1\leq i\leq 4$ but $\delta_5 = [r_1^{(1)}, n_2][s_3^{(2)}, n_4]$
\begin{center}
    \includegraphics[scale=0.6]{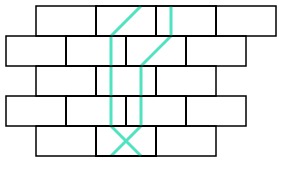}
\end{center}
so we must consider $f_1 = \frac{T}{TS}, f_2 = \frac{S}{TS}$.

\hfill\break
$\bm{f_1 = \frac{T}{TS}}$

We have $\delta_6 = \delta_1$ 
\begin{center}
    \includegraphics[scale=0.6]{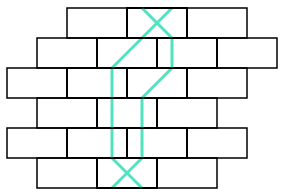}
\end{center}
for $g_{12} = ([\delta_1,\cdots,\delta_5])^\infty$ a glider with speed $-\frac{1}{5}$ and turning rule $01100010X$. 

\hfill\break
$\bm{f_2 = \frac{S}{TS}}$

$\delta_i$ is the same as in the $f_1$ case for $1\leq i\leq 5$ but $\delta_6 = [s_1^{(1)}, s_2^{(2)}] = \delta_3$
\begin{center}
    \includegraphics[scale=0.6]{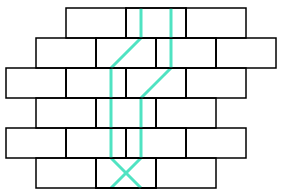}
\end{center}
so the glider $g_{13} = ([\delta_3, \delta_4, \delta_5])^\infty$ has speed $-\frac{1}{3}$ and turning rule $X1100X00X$. 

\hfill\break
$\bm{b_2 = \frac{S}{SE}}$

$\delta_2 = [n_0, s_1^{(1)}][s_2^{(2)}, n_3], \delta_3 = [s_1^{(1)}, s_2^{(2)}]$
\begin{center}
    \includegraphics[scale=0.6]{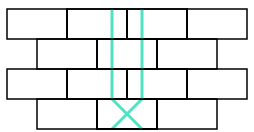}
\end{center}
and $\delta_4 = \delta_2$ for a SCA pattern $g_{14} = ([\delta_2, \delta_3])^\infty$ with speed 0 and turning rule $00X00XXXX$.\\

\hfill\break
\textbf{Case 2}: We now turn to patterns beginning with the initial condition $\delta_1 = [s_1^{(1)}, s_2^{(2)}]$. 
\begin{center}
    \includegraphics[scale=0.6]{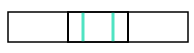}
\end{center}
\hfill\break
\textit{Part 1:} $\frac{T}{ES}$. 

We immediately must consider $a_1 = \frac{T}{SE}, a_2 = \frac{S}{SE}$.

\hfill\break
$\bm{a_1 = \frac{T}{SE}}$

$\delta_2 = [n_0, l_1^{(1)}][r_2^{(2)}, n_3], \delta_3 = [n_0, s_1^{(1)}][n_2, n_3][s_4^{(2)}, n_5], \delta_4 = [r_1^{(1)}, n_2][n_3, l_4^{(2)}]$
\begin{center}
    \includegraphics[scale=0.6]{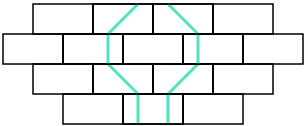}
\end{center}
We have two cases for $\delta_5: \delta_5 = [s_1^{(1)}, s_2^{(2)}]$ in which case we have the SCA pattern $g_3$, or $\delta_5 = [r_1^{(1)}, l_2^{(2)}]$, in which case see case 1. 

\hfill\break
$\bm{a_2 = \frac{S}{SE}}$

$\delta_1 = [s_1^{(1)}, s_2^{(2)}], \delta_2 = [n_0, l_1^{(1)}][s_2^{(2)}, n_3]$
\begin{center}
    \includegraphics[scale=0.6]{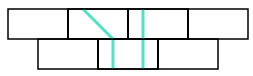}
\end{center}
so we must consider $b_1 = \frac{T}{TS}, b_2 = \frac{S}{TS}$.

\hfill\break
$\bm{b_1 = \frac{T}{TS}}$

$\delta_3 = [n_0, s_1^{(1)}][n_2, l_3^{(2)}]$
\begin{center}
    \includegraphics[scale=0.6]{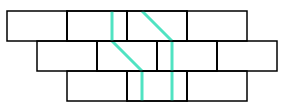}
\end{center}
Note that there are two options for $\delta_4: \delta_4 = [s_1^{(1)}, s_2^{(2)}]$ or $\delta_4 = [r_1^{(1)}, l_2^{(2)}]$. If we have $\delta_4: \delta_4 = [s_1^{(1)}, s_2^{(2)}]$, then we get the glider $g_5$,and if $\delta_4 = [r_1^{(1)}, l_2^{(2)}]$, see case 1.

\hfill\break
$\bm{b_2 = \frac{S}{TS}}$

We have $\delta_2 = [n_0, l_1^{(1)}][s_2^{(2)}, n_3], \delta_3 = [n_0, s_1^{(1)}][n_2, s_3^{(2)}]$
\begin{center}
    \includegraphics[scale=0.6]{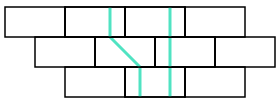}
\end{center}
so we need to consider $c_1 = \frac{T}{SS}, c_2 = \frac{S}{SS}$.

\hfill\break
$\bm{c_1 = \frac{T}{SS}}$

$\delta_4 = [r_1^{(1)}, n_2][s_3^{(2)}, n_4], \delta_5 = [s_1^{(1)}, s_2^{(2)}]$
\begin{center}
    \includegraphics[scale=0.6]{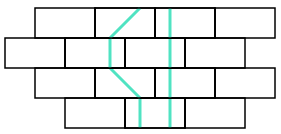}
\end{center}
so $g_{15} = ([[s_1^{(1)}, s_2^{(2)}], [n_0, l_1^{(1)}][r_2^{(2)}, n_3], [n_0, s_1^{(1)}][n_2, n_3][s_4^{(2)}, n_5], [r_1^{(1)}, n_2][n_3, l_4^{(2)}]])^\infty$ is a SCA pattern with speed 0, initial condition $[s_1^{(1)}, s_2^{(2)}]$, and turning rule $10X1000XX$. 

\hfill\break
$\bm{c_2 = \frac{S}{SS}}$

$\delta_2 = [n_0, l_1^{(1)}][s_2^{(2)}, n_3], \delta_3 = [n_0, s_1^{(1)}][n_2, s_3^{(2)}], \delta_4 = [s_1^{(1)}, n_2][s_3^{(2)}, n_4], \delta_5 = [n_0, l_1^{(1)}][n_2, s_3^{(2)}],$
 
$ \delta_6 = [n_0, s_1^{(1)}][n_2, n_3][s_4^{(2)}, n_5], \delta_7 = [s_1^{(1)}, n_2][n_3, l_4^{(2)}]$
\begin{center}
    \includegraphics[scale=0.6]{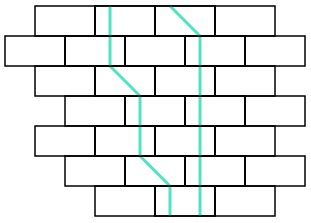}
\end{center}
We must now consider $d_1 = \frac{T}{ST}, d_2 = \frac{S}{ST}$. 

\hfill\break
$\bm{d_1 = \frac{T}{ST}}$

$\delta_8 = [n_0, l_1^{(1)}][n_2, l_3^{(2)}]$
\begin{center}
    \includegraphics[scale=0.6]{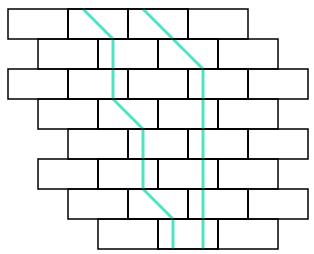}
\end{center}
and we need to consider $e_1 = \frac{T}{TT}, e_2 = \frac{S}{TT}$. 

\hfill\break
$\bm{e_1 = \frac{T}{TT}}$

We get $\delta_9 = [n_0, s_1^{(1)}][n_2, l_3^{(2)}]$ and $\delta_{10} = [r_1^{(1)}, l_2^{(2)}]$
\begin{center}
    \includegraphics[scale=0.6]{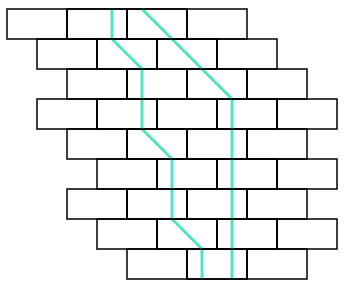}
\end{center}
so see case 1.

\hfill\break
$\bm{e_2 = \frac{S}{TT}}$

We have $\delta_i$ is the same as in the $e_1$ case for $1\leq i\leq 8$, but $\delta_9 = [n_0, s_1^{(1)}][n_2, s_3^{(2)}] = \delta_3$
\begin{center}
    \includegraphics[scale=0.6]{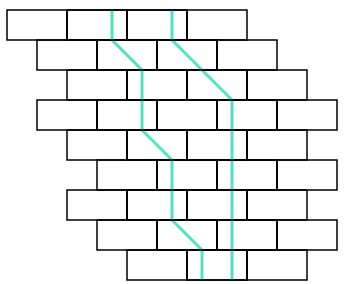}
\end{center}
so we have glider $g_6$. 

\hfill\break
$\bm{d_2 = \frac{S}{ST}}$

We get $\delta_i$ is the same as in the $e_1$ case for $1\leq i\leq 7$, but $\delta_8 = [n_0, l_1^{(1)}][n_2, s_3^{(2)}] = \delta_5$
\begin{center}
    \includegraphics[scale=0.6]{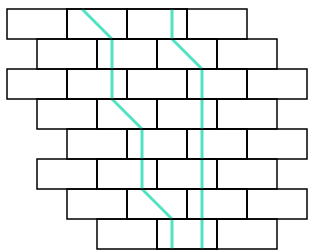}
\end{center}
Thus, this produces $g_8$. 

\hfill\break
\textit{Part 2:} $\frac{S}{ES}$. 

We first must consider $a_1 = \frac{T}{SE}, a_2 = \frac{S}{SE}$.

\hfill\break
$\bm{a_1 = \frac{T}{SE}}$

If we have $a_1, \delta_2 = [n_0, s_1^{(1)}][r_2^{(2)}, n_3]$
\begin{center}
    \includegraphics[scale=0.6]{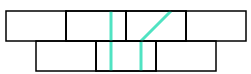}
\end{center}
so we must consider $b_1 = \frac{T}{ST}, b_2 = \frac{S}{ST}$. 

\hfill\break
$\bm{b_1 = \frac{T}{ST}}$

We get $\delta_3 = [r_1^{(1)}, n_2][s_3^{(2)}, n_4]$
\begin{center}
    \includegraphics[scale=0.6]{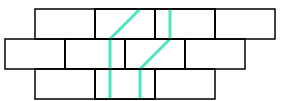}
\end{center}
We now must consider $c_1 = \frac{T}{TS}, c_2 = \frac{S}{TS}$.

\hfill\break
$\bm{c_1 = \frac{T}{TS}}$

$\delta_4 = [r_1^{(1)}, l_2^{(2)}]$, see case 1. 

\hfill\break
$\bm{c_2 = \frac{S}{TS}}$

$\delta_i$ the same as in the $c_1$ case for $1\leq i\leq 3$ with $\delta_4 = \delta_1$, which is the glider $g_{13}$. 

\hfill\break
$\bm{b_2 = \frac{S}{ST}}$

We have $\delta_2 = [n_0, s_1^{(1)}][n_2, s_3^{(2)}]$
\begin{center}
\includegraphics[scale=0.6]{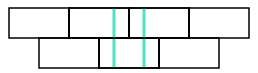}
\end{center}
so we have two options for $\delta_3$: $\delta_3 = \delta_1$, in which case we have a shift of $g_{14}$ or $\delta_3 = [r_1^{(1)}, l_2^{(2)}]$ in which case consider case 1. 

\hfill\break
$\bm{a_2 = \frac{S}{SE}}$

$\delta_2$ is the same as in the $a_1, b_2$ case, so the same results hold. Thus, we have completed the case where $\delta_1 = [s_1^{(1)}, s_2^{(2)}]$.\\

\hfill\break
\textbf{Case 3}: We now consider the case where $\delta_1 = [n_0, s_1^{(1)}][s_2^{(2)}, n_3]$
\begin{center}
    \includegraphics[scale=0.6]{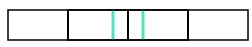}
\end{center}
Note that there are two options for $\delta_2$: $\delta_2 = [r_1^{(1)}, l_2^{(2)}]$, in which case see case 1, or $\delta_2 = [s_1^{(1)}, s_2^{(2)}]$, in which case see case 2. 
\textbf{Case 4} ($\delta_1 = [n_0, s_1^{(1)}][n_2, l_3^{(2)}]$),
\begin{center}
    \includegraphics[scale=0.6]{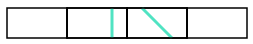}
\end{center}
\textbf{Case 5} ($\delta_1 = [r_1^{(1)}, n_2][s_3^{(2)}, n_4]$),
\begin{center}
    \includegraphics[scale=0.6]{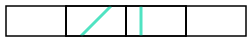}
\end{center}
and \textbf{Case 6} ($\delta_1 = [r_1^{(1)}, n_2][n_3, l_4^{(2)}]$)
\begin{center}
    \includegraphics[scale=0.6]{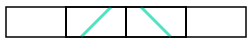}
\end{center}
follow similarly.\\

\hfill\break
\textbf{Case 7}: 
\hfill\break
We now consider the case where $\delta_1 = [s_1^{(1)}, n_2][s_3^{(2)}, n_4]$.
\begin{center}
    \includegraphics[scale=0.6]{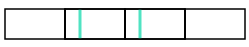}
\end{center}
\hfill\break
\textit{Part 1:}  Suppose $\frac{T}{ES}$. We must consider $a_1 = \frac{T}{SS}, a_2 = \frac{S}{SS}$

\hfill\break
$\bm{a_1 = \frac{T}{SS}}$

With $a_1, \delta_2 = [n_0, l_1^{(1)}][n_2, l_3^{(2)}]$
\begin{center}
    \includegraphics[scale=0.6]{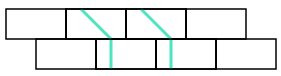}
\end{center}
As such, we need to consider $b_1 = \frac{T}{TT}, b_2 = \frac{S}{TT}$. 

\hfill\break
$\bm{b_1 = \frac{T}{TT}}$

$\delta_3 = [n_0, s_1^{(1)}][n_2, l_3^{(2)}]$
\begin{center}
    \includegraphics[scale=0.6]{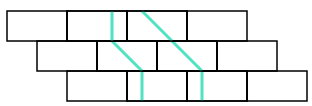}
\end{center}
Then, $\delta_4 = [r_1^{(1)}, l_2^{(2)}]$, in which case consider case 1, or $\delta_4 = [s_1^{(1)}, s_2^{(2)}]$, in which case consider case 2. 

\hfill\break
$\bm{b_2 = \frac{S}{TT}}$

$\delta_i$ is as in the $b_1$ case for $1\leq i\leq 2$, $\delta_3 = [n_0, s_1^{(1)}][n_2, s_3^{(2)}], \delta_4 = [r_1^{(1)}, n_2][s_3^{(2)}, n_4]$
\begin{center}
    \includegraphics[scale=0.6]{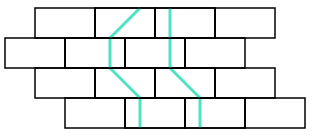}
\end{center}
There are two options for $\delta_5: \delta_5 = [r_1^{(1)}, l_2^{(2)}]$ in which case see case 1 or $\delta_5 = [s_1^{(1)}, s_2^{(2)}]$ in which case see case 2.

\hfill\break
$\bm{a_2 = \frac{S}{SS}}$

Now consider $a_2$. We have $\delta_2 = [n_0, l_1^{(1)}][n_2, s_3^{(2)}]$
\begin{center}
    \includegraphics[scale=0.6]{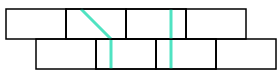}
\end{center}
so we must consider $c_1 = \frac{T}{SE}, c_2 = \frac{S}{SE}$.

\hfill\break
$\bm{c_1 = \frac{T}{SE}}$

$\delta_3 = [n_0, s_1^{(1)}][n_2, n_3][s_4^{(2)}, n_5]$ and $\delta_4 = [r_1^{(1)}, n_2][n_3, l_4^{(2)}]$
\begin{center}
    \includegraphics[scale=0.6]{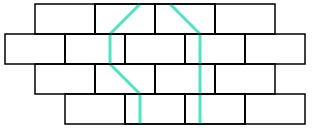}
\end{center}
so see case 6. 

\hfill\break
$\bm{c_2 = \frac{S}{SE}}$

We have $\delta_i$ the same as in the $c_1$ case for $1\leq i\leq 3$, and $\delta_4 = [s_1^{(1)}, n_2][n_3, l_4^{(2)}]$
\begin{center}
    \includegraphics[scale=0.6]{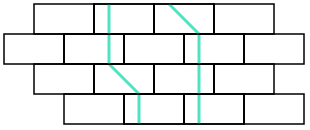}
\end{center}
so we must consider $d_1 = \frac{T}{ST}, d_2 = \frac{S}{ST}$. 

\hfill\break
$\bm{d_1 = \frac{T}{ST}}$

 $\delta_5 = [n_0, l_1^{(1)}][n_2, l_3^{(2)}]$
 \begin{center}
    \includegraphics[scale=0.6]{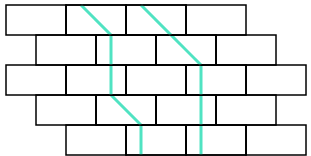}
\end{center}
 so we must consider $e_1 = \frac{T}{TT}, e_2 = \frac{S}{TT}$.
 
\hfill\break
$\bm{e_1 = \frac{T}{TT}}$

$\delta_6 = [n_0, s_1^{(1)}][n_2, l_3^{(2)}]$ and $\delta_7 = [r_1^{(1)}, l_2^{(2)}]$
\begin{center}
    \includegraphics[scale=0.6]{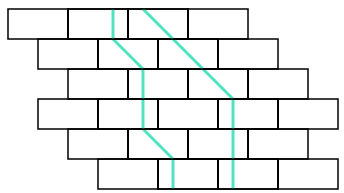}
\end{center}
so see case 1. 

\hfill\break
$\bm{e_2 = \frac{S}{TT}}$

$\delta_i$ is the same as in the $e_1$ case for $1\leq i\leq 5$. $\delta_6 = [n_0, s_1^{(1)}][n_2, s_3^{(2)}]$
\begin{center}
    \includegraphics[scale=0.6]{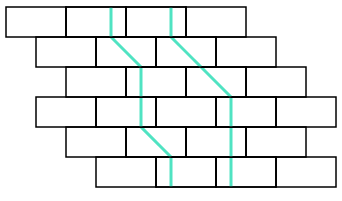}
\end{center}
so $\delta_7 = [s_1^{(1)}, n_2][s_3^{(2)}, n_4] = \delta_1$
\begin{center}
    \includegraphics[scale=0.6]{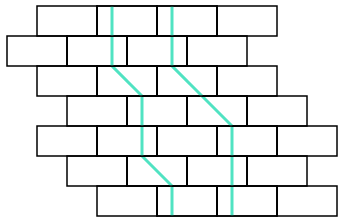}
\end{center}
which is a shift of the glider $g_6$. 

\hfill\break
$\bm{d_2 = \frac{S}{ST}}$

$\delta_i$ is the same as in the $e_2$ case for $1\leq i\leq 4$, but $\delta_5 = [n_0, l_1^{(1)}][n_2, s_3^{(2)}] = \delta_2$
\begin{center}
    \includegraphics[scale=0.6]{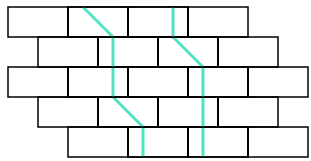}
\end{center}
for the glider $g_8$. 

\hfill\break
\textit{Part 2:} We now assume $\frac{S}{ES}$. Thus, we must consider $a_1 = \frac{T}{SS}, a_2 = \frac{S}{SS}$.

\hfill\break
$\bm{a_1 = \frac{T}{SS}}$

$\delta_2 = [n_0, s_1^{(1)}][n_2, l_3^{(2)}]$
\begin{center}
    \includegraphics[scale=0.6]{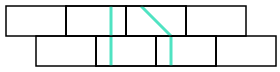}
\end{center}
so see case 4. 

\hfill\break
$\bm{a_2 = \frac{S}{SS}}$

$\delta_2 = [n_0, s_1^{(1)}][n_2, s_3^{(2)}]$
\begin{center}
    \includegraphics[scale=0.6]{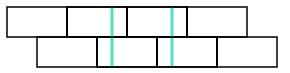}
\end{center}
so we must consider $b_1 = \frac{T}{SE}, b_2 = \frac{S}{SE}$. 

\hfill\break
$\bm{b_1 = \frac{T}{SE}}$

We have $\delta_3 = [s_1^{(1)}, n_2][r_3^{(2)}, n_4], \delta_4 = [n_0, s_1^{(1)}][n_2, n_3][s_4^{(2)}, n_5], \delta_5 = [r_1^{(1)}, n_2][n_3, s_4^{(2)}]$ 
\begin{center}
    \includegraphics[scale=0.6]{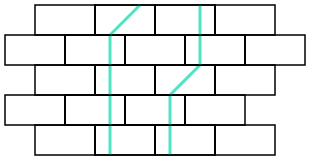}
\end{center}
so we must consider $c_1 = \frac{T}{TS}, c_2 = \frac{S}{TS}$. 

\hfill\break
$\bm{c_1 = \frac{T}{TS}}$

$\delta_6 = [r_1^{(1)}, n_2][r_3^{(2)}, n_4]$ 
\begin{center}
    \includegraphics[scale=0.6]{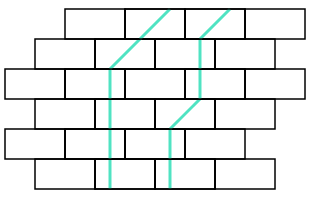}
\end{center}
so consider $d_1 = \frac{T}{TT}, d_2 = \frac{S}{TT}$.

\hfill\break
$\bm{d_1 = \frac{T}{TT}}$

We have $\delta_7 = [r_1^{(1)}, n_2][s_3^{(2)}, n_4]$
\begin{center}
    \includegraphics[scale=0.6]{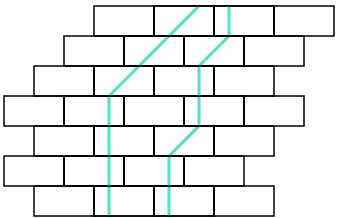}
\end{center}
so see case 5. 

\hfill\break
$\bm{d_2 = \frac{S}{TT}}$

$\delta_i$ is the same as in the $d_1$ case for $1\leq i\leq 6, \delta_7 = \delta_1$
\begin{center}
    \includegraphics[scale=0.6]{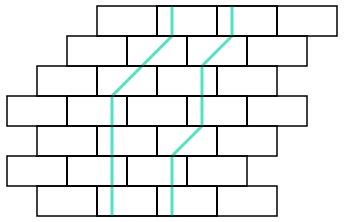}
\end{center}
for a glider $g_{16} = ([\delta_1, \cdots, \delta_6])^\infty$ with speed $-\frac{2}{6}$ and turning rule $01X00X100$. 

\hfill\break
$\bm{c_2 = \frac{S}{TS}}$

$\delta_i$ is the same as in the $d_2$ case for $1\leq i\leq 5$ but $\delta_6 = \delta_3$
\begin{center}
    \includegraphics[scale=0.6]{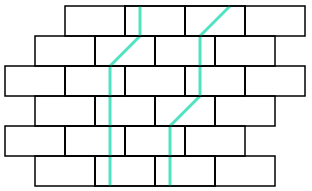}
\end{center}
for the glider $g_{11}$.

\hfill\break
$\bm{b_2 = \frac{S}{SE}}$

$\delta_2= [n_0, s_1^{(1)}][n_2, s_3^{(2)}], \delta_3 =\delta_1$
\begin{center}
    \includegraphics[scale=0.6]{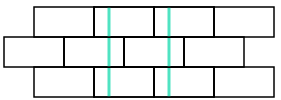}
\end{center}
for an SCA pattern $g_{17} = ([\delta_1, \delta_2])^\infty$ with speed 0 and turning rule $00X00XXXX$. This concludes case 7.\\

\hfill\break
\textbf{Case 8}: 
\hfill\break
We consider the case where $\delta_1 = [n_0, s_1^{(1)}][n_2, s_3^{(2)}]$
\begin{center}
    \includegraphics[scale=0.6]{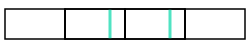}
\end{center}
\hfill\break
\textit{Part 1:} Assume $\frac{T}{ES}$. We immediately must consider $a_1 = \frac{T}{SE}, a_2 = \frac{S}{SE}$ and $b_1 = \frac{T}{SS}, b_2 = \frac{S}{SS}$.

\hfill\break
$\bm{a_1 = \frac{T}{SE}, b_1 = \frac{T}{SS}}$

$\delta_2 = [r_1^{(1)}, n_2][r_3^{(2)}, n_4]$
\begin{center}
    \includegraphics[scale=0.6]{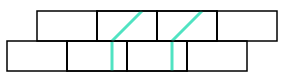}
\end{center}
so we must consider $c_1 = \frac{T}{TT}, c_2 = \frac{S}{TT}$.

\hfill\break
$\bm{c_1 = \frac{T}{TT}}$

We have $\delta_3 = [r_1^{(1)}, n_2][s_3^{(2)}, n_4]$
\begin{center}
    \includegraphics[scale=0.6]{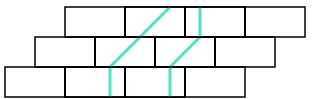}
\end{center}
so consider case 5. 

\hfill\break
$\bm{c_2 = \frac{S}{TT}}$

$\delta_i$ is the same as in the $c_2$ case for $1\leq i\leq 2$ and $\delta_3 = [s_1^{(1)}, n_2][s_3^{(1)}, n_4]$
\begin{center}
    \includegraphics[scale=0.6]{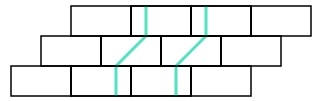}
\end{center}
so consider case 7.

\hfill\break
$\bm{a_2 = \frac{S}{SE}, b_1 = \frac{T}{SS}}$

$\delta_2 = [r_1^{(1)}, n_2][s_3^{(1)}, n_4]$ 
\begin{center}
    \includegraphics[scale=0.6]{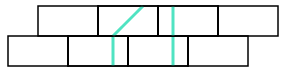}
\end{center}
so consider case 5.

\hfill\break
$\bm{a_1 = \frac{T}{SE}, b_2 = \frac{S}{SS}}$

$\delta_2 = [s_1^{(1)}, n_2][r_3^{(2)}, n_4], \delta_3 = [n_0, l_1^{(1)}][n_2, n_3][s_4^{(2)}, n_5], \delta_4 = [n_0, s_1^{(1)}][n_2, n_3][n_4, l_5^{(2)}], \delta_5 = [r_1^{(1)}, n_2][n_3, s_4^{(2)}]$
\begin{center}
    \includegraphics[scale=0.6]{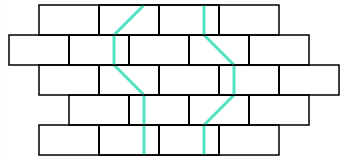}
\end{center}
so we must consider $d_1 = \frac{S}{TS}$ and $d_2 = \frac{T}{TS}$.

\hfill\break
$\bm{d_1 = \frac{S}{TS}}$

We get $\delta_6 = [s_1^{(1)}, n_2][r_3^{(2)}, n_4] = \delta_2$
\begin{center}
    \includegraphics[scale=0.6]{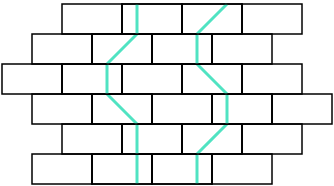}
\end{center}
for the speed 0 SCA pattern $g_{18} = ([[s_1^{(1)}, n_2][r_3^{(2)}, n_4], [n_0, l_1^{(1)}][n_2, n_3][s_4^{(2)}, n_5],$

$ [n_0, s_1^{(1)}][n_2, n_3][n_4, l_5^{(2)}],$

$[r_1^{(1)}, n_2][n_3, s_4^{(2)}]])^\infty$ with turning rule $X1X10000X$ and initial condition $[s_1^{(1)}, n_2][r_3^{(2)}, n_4]$.

\hfill\break
$\bm{d_2 = \frac{T}{TS}}$

$\delta_i$ is the same as in the $d_1$ case for $1\leq i\leq 5$, and $\delta_6 = [r_1^{(1)}, n_2][r_3^{(2)}, n_4]$
\begin{center}
    \includegraphics[scale=0.6]{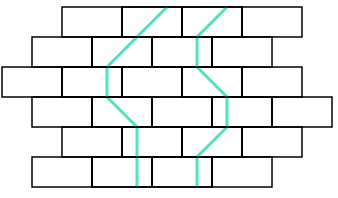}
\end{center}
so consider $e_1 = \frac{T}{TT}, e_2 = \frac{S}{TT}$.

\hfill\break
$\bm{e_1 = \frac{T}{TT}}$

 $\delta_7 = [r_1^{(1)}, n_2][s_3^{(2)}, n_4]$ 
 \begin{center}
    \includegraphics[scale=0.6]{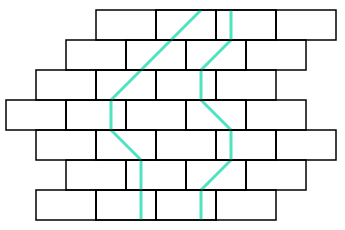}
\end{center}
 so consider case 5. 

\hfill\break
$\bm{e_2 = \frac{S}{TT}}$
 
$\delta_i$ is the same as in the $e_1$ case for $1\leq i\leq 6$ but $\delta_7 = [s_1^{(1)}, n_2][s_3^{(2)}, n_4]$
\begin{center}
    \includegraphics[scale=0.6]{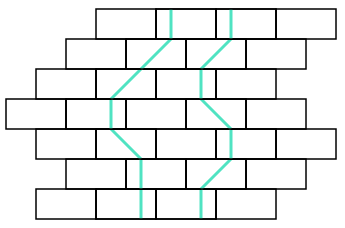}
\end{center}
so consider case 7.

\hfill\break
$\bm{a_2 = \frac{S}{SE}, b_2 = \frac{S}{SS}}$

 We get $\delta_2 = [s_1^{(1)}, n_2][s_3^{(2)}, n_4]$
 \begin{center}
    \includegraphics[scale=0.6]{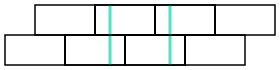}
\end{center}
 so consider case 7.

\hfill\break
\textit{Part 2:} We now suppose $\frac{S}{ES}$. We must immediately consider $a_1 = \frac{T}{SE}, a_2 = \frac{S}{SE}, b_1 = \frac{T}{SS}, b_2 = \frac{S}{SS}$.

\hfill\break
$\bm{a_1 = \frac{T}{SE}, b_1 = \frac{T}{SS}}$

$\delta_2 = [r_1^{(1)}, n_2][r_3^{(2)}, n_4]$
 \begin{center}
    \includegraphics[scale=0.6]{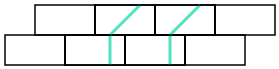}
\end{center}
so we need to consider $c_1 = \frac{T}{TT}, c_2 = \frac{S}{TT}$.

\hfill\break
$\bm{c_1 = \frac{T}{TT}}$

$\delta_3 = [r_1^{(1)}, n_2][s_3^{(2)}, n_4]$
 \begin{center}
    \includegraphics[scale=0.6]{scaStuff/sca731_23.png}
\end{center}
so consider case 5. 

\hfill\break
$\bm{c_2 = \frac{S}{TT}}$

$\delta_3 = [s_1^{(1)}, n_2][s_3^{(2)}, n_4]$
 \begin{center}
    \includegraphics[scale=0.6]{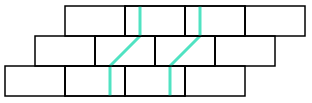}
\end{center}
so consider case 7. 

\hfill\break
$\bm{a_1 = \frac{T}{SE}, b_2 = \frac{S}{SS}}$

$\delta_2 = [s_1^{(1)}, n_2][r_3^{(2)}, n_4], \delta_3 = [n_0, s_1^{(1)}][n_2, n_3][s_4^{(2)}, n_5], \delta_4 = [r_1^{(1)}, n_2][n_3, s_4^{(2)}]$
 \begin{center}
    \includegraphics[scale=0.6]{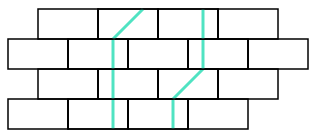}
\end{center}
so we must consider $d_1 = \frac{T}{TS}, d_2 = \frac{S}{TS}$. 

\hfill\break
$\bm{d_1 = \frac{T}{TS}}$

$\delta_5 = [r_1^{(1)}, n_2][r_3^{(2)}, n_4]$
 \begin{center}
    \includegraphics[scale=0.6]{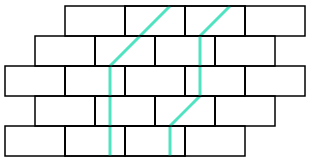}
\end{center}
so consider $e_1 = \frac{T}{TT}, e_2 = \frac{S}{TT}$.

\hfill\break
$\bm{e_1 = \frac{T}{TT}}$

We get $\delta_6 = [r_1^{(1)}, n_2][s_3^{(2)}, n_4]$
 \begin{center}
    \includegraphics[scale=0.6]{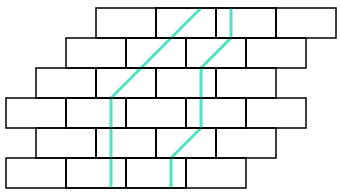}
\end{center}
so consider case 5.

\hfill\break
$\bm{e_2 = \frac{S}{TT}}$

$\delta_i$ is the same as in the $e_1$ case for $1\leq i\leq 5$ so $\delta_6 = [s_1^{(1)}, n_2][s_3^{(2)}, n_4]$
 \begin{center}
    \includegraphics[scale=0.6]{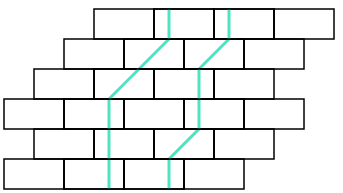}
\end{center}
See case 7. 

\hfill\break
$\bm{d_2 = \frac{S}{TS}}$

 $\delta_i$ the same as in the $e_2$ case for $1\leq i\leq 4$ but $\delta_5 = \delta_2$
  \begin{center}
    \includegraphics[scale=0.6]{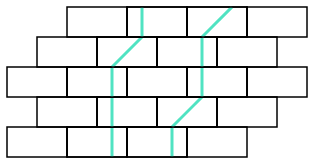}
\end{center}
 for $g_{11}$.
 \hfill\break
$\bm{a_2 = \frac{S}{SE}, b_1 = \frac{T}{SS}}$
 
 If we have $a_2, b_1$ we get $\delta_2 = [r_1^{(1)}, n_2][s_3^{(2)}, n_4]$
  \begin{center}
    \includegraphics[scale=0.6]{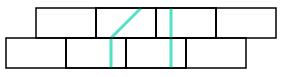}
\end{center}
 so see case 5. 

 \hfill\break
$\bm{a_1 = \frac{S}{SE}, b_2 = \frac{S}{SS}}$

This yields $\delta_2 = [s_1^{(1)}, n_2][s_3^{(2)}, n_4]$
 \begin{center}
    \includegraphics[scale=0.6]{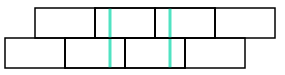}
\end{center}
so see case 7.\\

\hfill\break
\textbf{Case 9}: Let $\delta_1 = [s_1^{(1)}, n_2][n_3, s_4^{(2)}]$.
\begin{center}
    \includegraphics[scale=0.6]{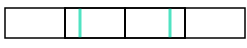}
\end{center}
\hfill\break
\textit{Part 1:} Suppose $\frac{T}{ES}$. We immediately must consider $a_1 = \frac{T}{SE}$ and $a_2 = \frac{S}{SE}$.

\hfill\break
$\bm{a_1 = \frac{T}{SE}}$

$\delta_2 = [n_0, l_1^{(1)}][n_2, n_3][r_4^{(2)}, n_5], \delta_3 = [n_0, s_1^{(1)}][n_2, n_3][n_4, n_5][s_6^{(1)}, n_7],$ 

$\delta_4 = [r_1^{(1)}, n_2][n_3, n_4][n_5, l_6^{(2)}], \delta_5 = [s_1^{(1)}, n_2][n_3, s_4^{(2)}] = \delta_1$
\begin{center}
    \includegraphics[scale=0.6]{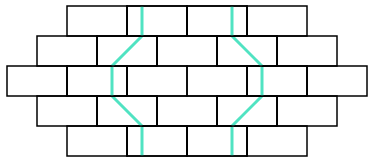}
\end{center}
This is the speed 0 SCA pattern $g_{19} = ([[s_1^{(1)}, n_2][n_3, s_4^{(2)}], [n_0, l_1^{(1)}][n_2, n_3][r_4^{(2)}, n_5],$\\
$ [n_0, s_1^{(1)}][n_2, n_3][n_4, n_5][s_6^{(1)}, n_7], $
$[r_1^{(1)}, n_2][n_3, n_4][n_5, l_6^{(2)}]])^\infty$ with turning rule $X1X100X0$ and initial condition $[s_1^{(1)}, n_2][n_3, s_4^{(2)}]$. 

\hfill\break
$\bm{a_2 = \frac{S}{SE}}$

We have $\delta_2 = [n_0, l_1^{(1)}][n_2, n_3][s_4^{(2)}, n_5], \delta_3 = [n_0, s_1][n_2, n_3][n_4,l_5^{(2)}], \delta_4 = [s_1^{(1)}, n_2][n_3, s_4^{(2)}] = \delta_1$
\begin{center}
    \includegraphics[scale=0.6]{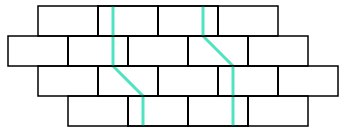}
\end{center}
Thus, we have the speed $\frac{1}{3}$ glider $g_{20} = ([[s_1^{(1)}, n_2][n_3, s_4^{(2)}],$\\
$ [n_0, l_1^{(1)}][n_2, n_3][s_4^{(2)}, n_5], [n_0, s_1][n_2, n_3][n_4,l_5^{(2)}]])^\infty$ with turning rule $X0X100XXX$ and initial condition $[s_1^{(1)}, n_2][n_3, s_4^{(2)}]$.

\hfill\break
\textit{Part 2:} If we have $\frac{S}{ES}$, we immediately must consider $a_1 = \frac{T}{SE}, a_2 = \frac{S}{SE}$.

\hfill\break
$\bm{a_1 = \frac{T}{SE}}$

For $a_1, \delta_2 = [n_0, s_1^{(1)}][n_2, n_3][r_4^{(2)}, n_5], \delta_3 = [r_1^{(1)}, n_2][n_3, n_4][s_5^{(2)}, n_6], \delta_4 = \delta_1$
\begin{center}
    \includegraphics[scale=0.6]{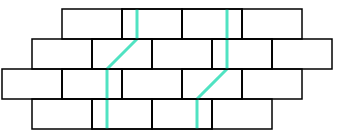}
\end{center}
for a glider $g_{21} = ([\delta_1,\cdots,\delta_3])^\infty$ with speed $-\frac{1}{3}$ and turning rule $X1X00XX0X$. 

\hfill\break
$\bm{a_2 = \frac{S}{SE}}$

$\delta_2 = [n_0, s_1^{(1)}][n_2, n_3][s_4^{(2)}, n_5], \delta_3 = \delta_1$
\begin{center}
    \includegraphics[scale=0.6]{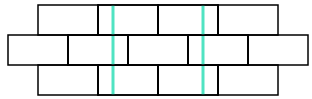}
\end{center}
so we have a SCA pattern $g_{22} = ([\delta_1, \delta_2])^\infty$ with speed 0 and turning rule $X0X00XXXX$.\\

\hfill\break
\textbf{Case 10:} Let $\delta_1 = [r_1^{(1)}, n_2][r_3^{(2)}, n_4]$.
\begin{center}
    \includegraphics[scale=0.95]{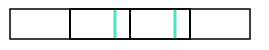}
\end{center}
We must consider $a_1 = \frac{T}{TT}, a_2 = \frac{S}{TT}$.

\hfill\break
$\bm{a_1 = \frac{T}{TT}}$

In the case of $a_1, \delta_2 = [r_1^{(1)}, n_2][s_3^{(2)}, n_4]$
\begin{center}
    \includegraphics[scale=0.6]{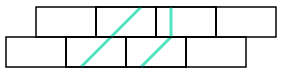}
\end{center}
so consider case 5. 

\hfill\break
$\bm{a_2 = \frac{S}{TT}}$

If we have $a_2, \delta_2 = [s_1^{(1)}, n_2][s_3^{(2)}, n_4]$
\begin{center}
    \includegraphics[scale=0.6]{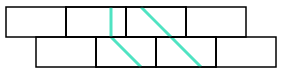}
\end{center}
so consider case 7.\\

\hfill\break
\textbf{Case 11}: Let $\delta_1 = [n_0, l_1^{(1)}][n_2, l_3^{(2)}]$. 
\begin{center}
    \includegraphics[scale=0.95]{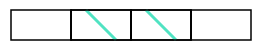}
\end{center}
We must consider $a_1 = \frac{T}{TT}, a_2 = \frac{S}{TT}$.

\hfill\break
$\bm{a_1 = \frac{T}{TT}}$

We have $\delta_2 = [n_0, s_1^{(1)}][n_2, l_3^{(2)}]$
\begin{center}
    \includegraphics[scale=0.6]{scaStuff/sca82_7.png}
\end{center}
so consider case 4. 

\hfill\break
$\bm{a_2 = \frac{S}{TT}}$

$\delta_2 = [n_0, s_1^{(1)}][n_2, s_3^{(2)}]$
\begin{center}
    \includegraphics[scale=0.6]{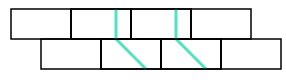}
\end{center}
 so consider case 8.\\

\hfill\break
\textbf{Case 12:} Let $\delta_1 = [n_0, l_1^{(1)}][r_2^{(2)}, n_3]$
\begin{center}
    \includegraphics[scale=0.6]{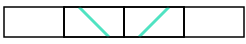}
\end{center}
\hfill\break
\textit{Part 1:} Assume $\frac{T}{ES}$.
Then $\delta_2 = [n_0, s_1^{(1)}][n_2, n_3][s_4^{(2)}, n_5]$.
\begin{center}
    \includegraphics[scale=0.6]{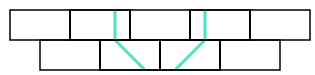}
\end{center}
We must immediately consider $a_1 = \frac{T}{SE}, a_2 = \frac{S}{SE}$. 

\hfill\break
$\bm{a_1 = \frac{T}{SE}}$

$\delta_3 = [r_1^{(1)}, n_2][n_3, l_4^{(2)}]$
\begin{center}
    \includegraphics[scale=0.6]{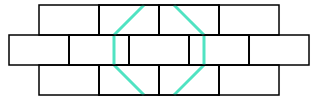}
\end{center}
so consider case 6. 

\hfill\break
$\bm{a_2 = \frac{S}{SE}}$

For $1\leq i\leq 2$, $\delta_i$ is as in the $a_1$ case and $\delta_3 = [s_1, n_2][n_3, l_4^{(2)}]$
\begin{center}
    \includegraphics[scale=0.6]{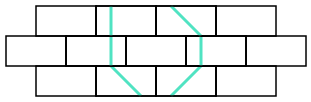}
\end{center}
so we must consider $b_1 = \frac{T}{ST}, b_2 = \frac{S}{ST}$. 

\hfill\break
$\bm{b_1 = \frac{T}{ST}}$

$\delta_4 = [n_0, l_1^{(1)}][n_2, l_3^{(2)}]$
\begin{center}
    \includegraphics[scale=0.6]{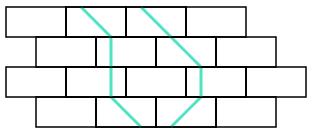}
\end{center}
so consider case 11. 

\hfill\break
$\bm{b_2 = \frac{S}{ST}}$

$\delta_4 = [n_0, l_1^{(1)}][n_2, s_3^{(2)}], \delta_5 = [n_0, s_1^{(1)}][n_2, n_3][s_4^{(2)}, n_5] = \delta_2$
\begin{center}
    \includegraphics[scale=0.6]{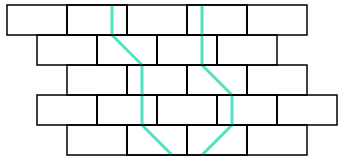}
\end{center}
so we have the glider $g_8$. 

\hfill\break
\textit{Part 2:} Assume $\frac{S}{ES}$. $\delta_2 = [n_0, s_1^{(1)}][n_2, n_3][s_4^{(2)}, n_5]$
\begin{center}
    \includegraphics[scale=0.6]{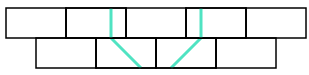}
\end{center}
We must immediately consider $a_1 = \frac{T}{SE}, a_2 = \frac{S}{SE}$.

\hfill\break
$\bm{a_1 = \frac{T}{SE}}$

$\delta_3 = [r_1^{(1)}, n_2][n_3, s_4^{(2)}]$
\begin{center}
    \includegraphics[scale=0.6]{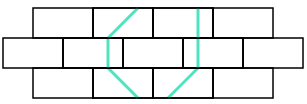}
\end{center}
so we must consider $b_1 = \frac{T}{TS}, b_2 = \frac{S}{TS}$.

\hfill\break
$\bm{b_1 = \frac{T}{TS}}$

$\delta_5 = [r_1^{(1)}, n_2][r_3^{(2)}, n_4]$ 
\begin{center}
    \includegraphics[scale=0.6]{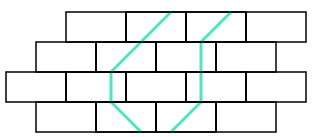}
\end{center}
see case 10. 

\hfill\break
$\bm{b_2 = \frac{S}{TS}}$

$\delta_i$ is the same as the $b_1$ case for $1\leq i\leq 3$, but $\delta_4 = [s_1^{(1)}, n_2][r_3^{(2)}, n_4], \delta_5 = \delta_2$
\begin{center}
    \includegraphics[scale=0.6]{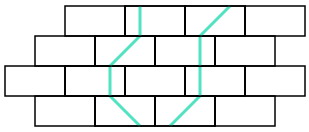}
\end{center}
for the glider $g_{11}$.

\hfill\break
$\bm{a_2 = \frac{S}{SE}}$

Finally, $\delta_2 = [n_0, s_1^{(1)}][n_2, n_3][s_4^{(2)}, n_5], \delta_3 = [s_1^{(1)}, n_2][n_3, s_4^{(2)}]$
\begin{center}
    \includegraphics[scale=0.6]{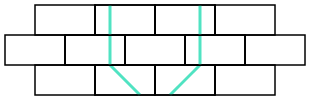}
\end{center}
so see case 9.\\

\hfill\break
\textbf{Case 13:} Let $\delta_1 = [s_1^{(1)}, n_2][n_3, l_4^{(2)}]$.
\begin{center}
    \includegraphics[scale=0.6]{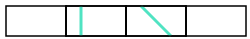}
\end{center}
\textit{Part 1:} Assume $\frac{T}{ES}$. We immediately must consider $a_1 = \frac{T}{ST}, a_2 = \frac{S}{ST}$.

\hfill\break
$\bm{a_1 = \frac{T}{ST}}$

$\delta_2 = [n_0, l_1^{(1)}][n_2, l_3^{(2)}]$
\begin{center}
    \includegraphics[scale=0.6]{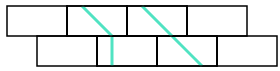}
\end{center}
so see case 11.

\hfill\break
$\bm{a_2 = \frac{S}{ST}}$

$\delta_2 = [n_0, l_1^{(1)}][n_2, s_3^{(2)}]$
\begin{center}
    \includegraphics[scale=0.6]{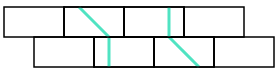}
\end{center}
We must consider $b_1 = \frac{T}{SE}, b_2 = \frac{S}{SE}$.

\hfill\break
$\bm{b_1 = \frac{T}{SE}}$

 We have $\delta_3 = [n_0, s_1^{(1)}][n_2, n_3][r_4^{(1)}, n_5], \delta_4 = [r_1^{(1)}, n_2][n_3, n_4][s_5^{(2)}, n_6], \delta_5 = [s_1^{(1)}, n_2][n_2, l_4^{(2)}] = \delta_1$
 \begin{center}
    \includegraphics[scale=0.6]{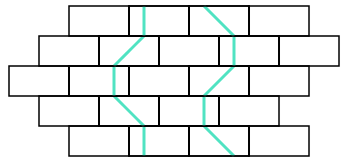}
\end{center}for a SCA pattern $g_{23} = ([[s_1^{(1)}, n_2][n_3, l_4^{(2)}], [n_0, l_1^{(1)}][n_2, s_3^{(2)}], $\\
$[n_0, s_1^{(1)}][n_2, n_3][r_4^{(1)}, n_5], [r_1^{(1)}, n_2][n_3, n_4][s_5^{(2)}, n_6]])^\infty$ with speed 0, turning rule $X10100X0X$, and initial condition $[s_1^{(1)}, n_2][n_3, l_4^{(2)}]$. 

\hfill\break
$\bm{b_2 = \frac{S}{SE}}$

$\delta_2 = [n_0, l_1^{(1)}][n_2, s_3^{(2)}], \delta_3 = [n_0, s_1^{(1)}][n_2, n_3][s_4^{(2)}, n_5], \delta_4 = [s_1^{(1)}, n_2][n_3, l_4^{(2)}] = \delta_1$ 
\begin{center}
    \includegraphics[scale=0.6]{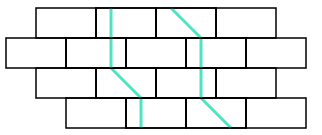}
\end{center}
for the glider $g_{24} = ([[s_1^{(1)}, n_2][n_3, l_4^{(2)}], [n_0, l_1^{(1)}][n_2, s_3^{(2)}],$\\
$[n_0, s_1^{(1)}][n_2, n_3][s_4^{(2)}, n_5]])^\infty$ that has speed $\frac{1}{3}$, turning rule $X00100XXX$, and initial condition $[s_1^{(1)}, n_2][n_3, l_4^{(2)}]$.

\hfill\break
\textit{Part 2:} Assume $\frac{S}{ES}$. We must consider $a_1 = \frac{T}{ST}, a_2 = \frac{S}{ST}$.

\hfill\break
$\bm{a_1 = \frac{T}{ST}}$

$\delta_2 = [n_0, s_1^{(1)}][n_2, l_3^{(2)}]$
\begin{center}
    \includegraphics[scale=0.6]{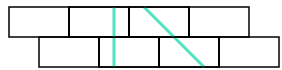}
\end{center}
 so see case 4.

 \hfill\break
 $\bm{a_2 = \frac{S}{ST}}$
 
$\delta_2 = [n_0, s_1^{(1)}][n_2, s_3^{(2)}]$
\begin{center}
    \includegraphics[scale=0.6]{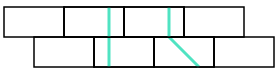}
\end{center}
 so see case 8.\\

\hfill\break
\textbf{Case 14}: Let $\delta_1 = [r_1^{(1)}, n_2][n_3, s_4^{(2)}]$
\begin{center}
    \includegraphics[scale=0.6]{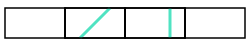}
\end{center}
\textit{Part 1:} Assume $\frac{T}{ES}$. We must consider $a_1 = \frac{T}{TS}, a_2 = \frac{S}{TS}, b_1 = \frac{T}{SE}, b_2 = \frac{S}{SE}$.

\hfill\break
$\bm{a_1 = \frac{T}{TS}, b_1 = \frac{T}{SE}}$

If we have $a_1, b_1$ we get $\delta_2 = [r_1^{(1)}, n_2][r_3^{(2)}, n_4]$
\begin{center}
    \includegraphics[scale=0.6]{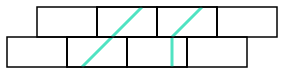}
\end{center}
See case 10. 

\hfill\break
$\bm{a_1 = \frac{T}{TS}, b_2 = \frac{S}{SE}}$

We have $\delta_2 = [r_1^{(1)}, n_2][s_3^{(2)}, n_4]$
\begin{center}
    \includegraphics[scale=0.6]{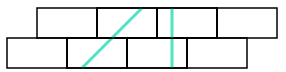}
\end{center}
so see case 5. 

\hfill\break
$\bm{a_2 = \frac{S}{TS}, b_2 = \frac{S}{SE}}$

$\delta_2 = [s_1^{(1)}, n_2][s_3^{(2)}, n_4]$
\begin{center}
    \includegraphics[scale=0.6]{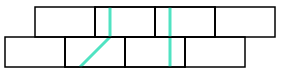}
\end{center}
See case 7. 

\hfill\break
$\bm{a_2 = \frac{S}{TS}, b_1 = \frac{T}{SE}}$

$\delta_2 = [s_1^{(1)}, n_2][r_3^{(1)}, n_4], \delta_3 = [n_0, l_1^{(1)}][n_2, n_3][s_4^{(2)}, n_5], \delta_4 = [n_0, s_1^{(1)}][n_2, n_3][n_4, l_5^{(2)}],$ \\
$ \delta_5 = [r_1^{(1)}, n_2][n_3, s_4^{(2)}] = \delta_1$
\begin{center}
    \includegraphics[scale=0.6]{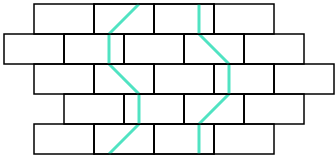}
\end{center}
$g_{25}=([r_1^{(1)}, n_2][n_3, s_4^{(2)}], [s_1^{(1)}, n_2][r_3^{(2)}, n_4], [n_0, l_1^{(1)}][n_2, n_3][s_4^{(2)}, n_5], [n_0, s_1^{(1)}][n_2, n_3][n_4, l_5^{(2)}]])^\infty$ with speed 0, turning rule $X1X10000X$, and initial condition $[r_1^{(1)}, n_2][n_3, s_4^{(2)}]$.

\hfill\break
\textit{Part 2:} Assume $\frac{T}{ES}$. We have $a_1 = \frac{T}{TS}, a_2 = \frac{S}{TS}, b_1 = \frac{T}{SE}, b_2 = \frac{S}{SE}$.

\hfill\break
$\bm{a_1 = \frac{T}{TS}, b_1 = \frac{T}{SE}}$

$\delta_2 = [r_1^{(1)}, n_2][r_3^{(2)}, n_4]$ 
\begin{center}
    \includegraphics[scale=0.6]{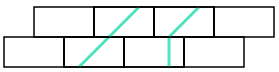}
\end{center}
See case 10. 

\hfill\break
$\bm{a_1 = \frac{T}{TS}}, b_2 = \frac{S}{SE}$

$\delta_2 = [r_1^{(1)}, n_2][s_3^{(2)}, n_4]$
\begin{center}
    \includegraphics[scale=0.6]{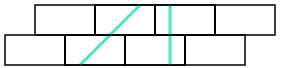}
\end{center}
See case 5. 

\hfill\break
$\bm{a_2 = \frac{S}{TS}, b_1 = \frac{T}{SE}}$

We have $\delta_2 = [s_1^{(1)}, n_2][r_3^{(2)}, n_4], \delta_3 = [n_0, s_1^{(1)}][n_2, n_3][s_4^{(2)}, n_5], \delta_4 = \delta_1$
\begin{center}
    \includegraphics[scale=0.6]{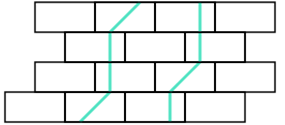}
\end{center}
which is the glider $g_{11}$.

\hfill\break
$\bm{a_2 = \frac{S}{TS}, b_2 = \frac{S}{SE}}$

$\delta_2 = [s_1^{(1)}, n_2][s_3^{(2)}, n_4]$
\begin{center}
    \includegraphics[scale=0.6]{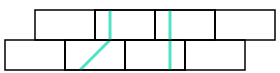}
\end{center}
See case 7.

\hfill\break
\textbf{Case 15:} Let $\delta_1 = [n_0, s_1^{(1)}][r_2^{(2)}, n_3]$
\begin{center}
    \includegraphics[scale=0.6]{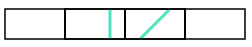}
\end{center}
We must consider $a_1 = \frac{T}{ST}, a_2 = \frac{S}{ST}$.

\hfill\break
$\bm{a_1 = \frac{T}{ST}}$

$\delta_2 = [r_1^{(1)}, n_2][s_3^{(2)}, n_4]$
\begin{center}
    \includegraphics[scale=0.6]{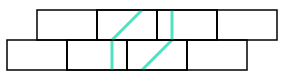}
\end{center}
See case 5. 

\hfill\break
$\bm{a_2 = \frac{S}{ST}}$

$\delta_2 = [s_1^{(1)}, n_2][s_3^{(2)}, n_4]$
\begin{center}
    \includegraphics[scale=0.6]{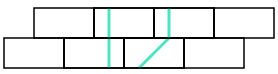}
\end{center}
See case 7.

\hfill\break
\textbf{Case 16:} Let $\delta_1 = [n_0, l_1^{(1)}][s_2^{(2)}, n_3]$. 
\begin{center}
    \includegraphics[scale=0.6]{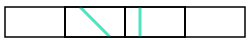}
\end{center}
We consider $a_1 = \frac{T}{TS}, a_2 = \frac{S}{TS}$.

\hfill\break
$\bm{a_1 = \frac{T}{TS}}$

$\delta_2 = [n_0, s_1^{(1)}][n_2, l_3^{(2)}]$
\begin{center}
    \includegraphics[scale=0.6]{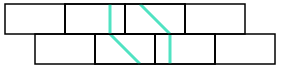}
\end{center}
so examine case 4. 

\hfill\break
$\bm{a_2 = \frac{S}{TS}}$

$\delta_2 = [n_0, s_1^{(1)}][n_2, s_3^{(2)}]$
\begin{center}
    \includegraphics[scale=0.6]{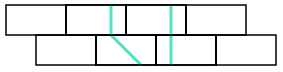}
\end{center}
See case 8.

\hfill\break
\textbf{Case 17:} Let $\delta_1 = [s_1^{(1)}, n_2][r_3^{(2)}, n_4]$
\begin{center}
    \includegraphics[scale=0.6]{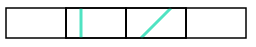}
\end{center}
\textit{Part 1:} Assume $\frac{T}{ES}$. We must consider $a_1 = \frac{T}{SE}, a_2 = \frac{S}{SE}$.

\hfill\break
$\bm{a_1 = \frac{T}{SE}}$

$\delta_2 = [n_0, l_1^{(1))}][n_2, n_3][s_4^{(2)}, n_5], \delta_3 = [n_0, s_1^{(1)}][n_2, n_3][n_4, l_5^{(2)}], \delta_4 = [r_1^{(1)}, n_2][n_3, s_4^{(2)}]$
\begin{center}
    \includegraphics[scale=0.6]{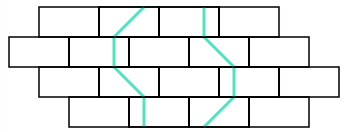}
\end{center}
See case 14. 

\hfill\break
$\bm{a_2 = \frac{S}{SE}}$

$\delta_2 = [n_0, l_1^{(1)}][n_2, n_3][s_4^{(1)}, n_5], \delta_3 = [n_0, s_1^{(1)}][n_2, n_3][n_4, l_5^{(2)}],$ $ \delta_4 = [s_1^{(1)}, n_2][n_3, s_4^{(2)}]$
\begin{center}
    \includegraphics[scale=0.6]{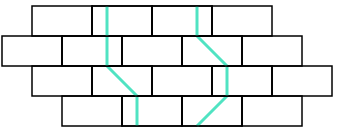}
\end{center}
so consider case 9.

\hfill\break
\textit{Part 2:} Assume $\frac{S}{ES}$. $\delta_2 = [n_0, s_1^{(1)}][n_2, n_3][s_4^{(2)}, n_5]$
\begin{center}
    \includegraphics[scale=0.6]{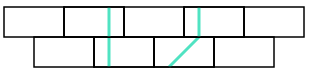}
\end{center}
Then, we must consider $a_1 = \frac{T}{SE}, a_2 = \frac{S}{SE}$.

\hfill\break
$\bm{a_1 = \frac{T}{SE}}$

$\delta_3 = [r_1^{(1)}, n_2][n_3, s_4^{(2)}]$
\begin{center}
    \includegraphics[scale=0.6]{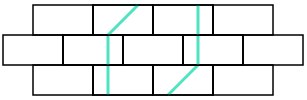}
\end{center}
so consider case 14. 

\hfill\break
$\bm{a_2 = \frac{S}{SE}}$

$ \delta_3 = [s_1^{(1)}, n_2][n_3, s_4^{(2)}]$
\begin{center}
    \includegraphics[scale=0.6]{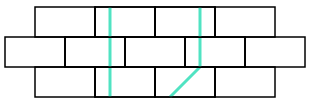}
\end{center}
See case 9.

\hfill\break
\textbf{Case 18:} Let $\delta_1 = [n_0, l_1^{(1)}][n_2, s_3^{(2)}]$
\begin{center}
    \includegraphics[scale=0.6]{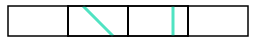}
\end{center}
\textit{Part 1:} Assume $\frac{T}{ES}$. We must consider $a_1 = \frac{T}{SE}, a_2 = \frac{S}{SE}$. 

\hfill\break
$\bm{a_1 = \frac{T}{SE}}$

$\delta_2 = [n_0, s_1^{(1)}][n_2, n_3][r_4^{(2)}, n_5], \delta_3 = [r_1^{(1)}, n_2][n_3, n_4][s_5^{(1)}, n_6]$ and $\delta_4 = [s_1^{(1)}, n_2][n_3, l_4^{(2)}]$
\begin{center}
    \includegraphics[scale=0.6]{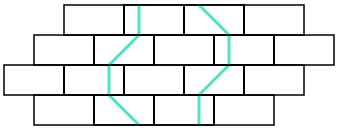}
\end{center}
so see case 13. 

\hfill\break
$\bm{a_2 = \frac{S}{SE}}$

We have $\delta_2 = [n_0, s_1^{(1)}][n_2, n_3][s_4^{(2)}, n_5], \delta_3 = [s_1^{(1)}, n_2][n_3, l_4^{(2)}]$
\begin{center}
    \includegraphics[scale=0.6]{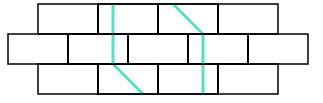}
\end{center}
See case 13.\\

\hfill\break
\textit{Part 2:} Assume $\frac{S}{ES}$. Then we must consider $a_1 = \frac{T}{SE}, a_2 = \frac{S}{SE}$.

\hfill\break
$\bm{a_1 = \frac{T}{SE}}$

$\delta_2 = [n_0, s_1^{(1)}][n_2, n_3][r_4^{(2)}, n_5], \delta_3 = [r_1^{(1)}, n_2][n_3, n_4][s_5^{(2)}, n_6], \delta_4 = [s_1^{(1)}, n_2][n_3, s_4^{(2)}]$
\begin{center}
    \includegraphics[scale=0.6]{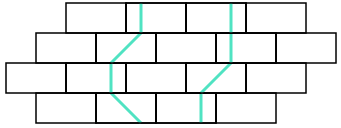}
\end{center}
Consider case 9. 

\hfill\break
$\bm{a_2 = \frac{S}{SE}}$

$\delta_2 = [n_0, s_1^{(1)}][n_2, n_3][s_4^{(2)}, n_5], \delta_3 = [s_1^{(1)}, n_2][n_3, s_4^{(2)}]$
\begin{center}
    \includegraphics[scale=0.6]{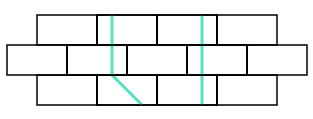}
\end{center}
so consider case 9.

\hfill\break
This classification proves the following result:
\begin{theorem}\thlabel{2StrandClassification}
    Let $g\in G_2$ be a SCA pattern. $g$ is a repeating SCA pattern if and only if it is a shift of one of:
    \begin{itemize}
        \item[1] $([[r_1^{(1)}, l_2^{(2)}], [n_0, s_1^{(1)}][s_2^{(2)}, n_3]])^\infty$, which has turning rule $1XXX00X0X$, crossing rule\\
        $XXXX1XXXX$, and speed 0.
        \begin{center}
    \includegraphics[scale=0.6]{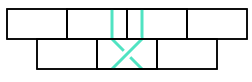}
        \end{center}
        \item[2] $([\overline{[r_1^{(1)}, l_2^{(2)}]}, [n_0, s_1^{(1)}][s_2^{(2)}, n_3]])^\infty$, which has turning rule $1XXX00X0X$, crossing rule\\
        $XXXX0XXXX$, and speed 0.
        \begin{center}
    \includegraphics[scale=0.6]{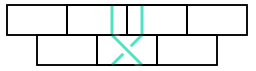}
        \end{center}
        \item[3] $([[r_1^{(1)}, l_2^{(2)}], [n_0, s_1^{(1)}][s_2^{(2)}, n_3], [s_1^{(1)}, s_2^{(2)}], [n_0, l_1^{(1)}][r_2^{(2)}, n_3], [n_0, s_1^{(1)}][n_2, n_3][s_4^{(2)}, n_5],$\\
        $ [r_1^{(1)}, n_2][n_3, l_4^{(2)}]])^\infty$, which has turning rule $01X100X01$, crossing rule $XXXX1XXXX$, and speed 0.
        \begin{center}
    \includegraphics[scale=0.6]{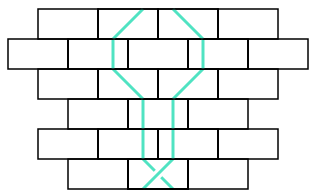}
        \end{center}
         \item[4] $([\overline{[r_1^{(1)}, l_2^{(2)}]}, [n_0, s_1^{(1)}][s_2^{(2)}, n_3], [s_1^{(1)}, s_2^{(2)}], [n_0, l_1^{(1)}][r_2^{(2)}, n_3], [n_0, s_1^{(1)}][n_2, n_3][s_4^{(2)}, n_5],$\\
        $ [r_1^{(1)}, n_2][n_3, l_4^{(2)}]])^\infty$, which has turning rule $01X100X01$, crossing rule $XXXX0XXXX$, and speed 0.
        \begin{center}
    \includegraphics[scale=0.6]{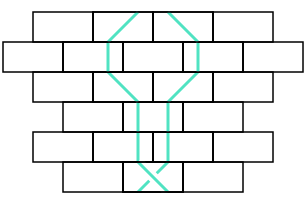}
        \end{center}
        \item[5] $([[s_1^{(1)}, s_2^{(2)}], [n_0, l_1^{(1)}][r_2^{(2)}, n_3], [n_0, s_1^{(1)}][n_2, n_3][s_4^{(2)}, n_5], [r_1^{(1)}, n_2][n_3, l_4^{(2)}]])^\infty$, which has turning rule $X1X100X00$, trivial crossing rule, and speed 0.
        \begin{center}
    \includegraphics[scale=0.6]{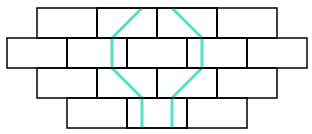}
        \end{center}
        \item[6] $([[s_1^{(1)}, s_2^{(2)}], [n_0, l_1^{(1)}][s_2^{(2)}, n_3], [n_0, s_1^{(1)}][n_2, s_3^{(2)}], [r_1^{(1)}, n_2][s_3^{(2)}, n_4]])^\infty$, which has turning rule $10X1000XX$, trivial crossing rule, and speed 0.
        \begin{center}
    \includegraphics[scale=0.6]{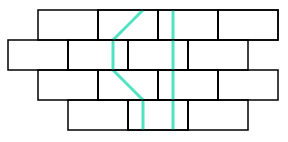}
        \end{center}
        \item[7] $([[s_1^{(1)}, s_2^{(2)}], [n_0, s_1^{(1)}][r_2^{(2)}, n_3], [s_1^{(1)}, n_2][s_3^{(2)}, n_4], [n_0, s_1^{(1)}][n_2, l_4^{(2)}]])^\infty$, which has turning rule $11000XX0X$, trivial crossing rule, and speed 0.
        \begin{center}
    \includegraphics[scale=0.6]{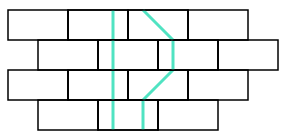}
        \end{center}
        \item[8] $([[s_1^{(1)}, n_2][r_3^{(2)}, n_4], [n_0, l_1^{(1)}][n_2, n_3][s_4^{(2)}, n_5], [n_0, s_1^{(1)}][n_2, n_3][n_4, l_5^{(2)}], [r_1^{(1)}, n_2][n_3, s_4^{(2)}]])^\infty$, which has turning rule $X1X10000X$, trivial crossing rule, and speed 0.
        \begin{center}
    \includegraphics[scale=0.6]{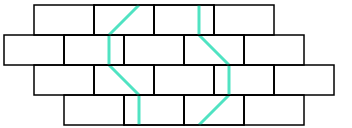}
        \end{center}
        \item[9] $([[s_1^{(1)}, n_2][n_3, l_4^{(2)}], [n_0, l_1^{(1)}][n_2, s_3^{(2)}], [n_0, s_1^{(1)}][n_2, n_3][r_4^{(2)}, n_5], [r_1^{(1)}, n_2][n_3, n_4][s_5^{(2)}, n_6]])^\infty$, which has turning rule $X10100X0X$, trivial crossing rule and speed 0.
        \begin{center}
    \includegraphics[scale=0.6]{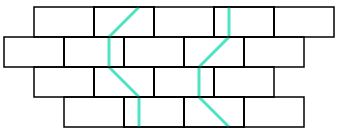}
        \end{center}
        \item[10] $([[s_1^{(1)}, n_2][n_3, s_4^{(2)}], [n_0, l_1^{(1)}][n_2, n_3][r_4^{(2)}, n_5], [n_0, s_1^{(1)}][n_2, n_3][n_4, n_5][s_6^{(2)}, n_7],$\\
        $ [r_1^{(1)}, n_2][n_3, n_4][n_5, l_6^{(2)}]])^\infty$, which has turning rule $X1X100X0$, trivial crossing rule, and speed 0.
        \begin{center}
    \includegraphics[scale=0.6]{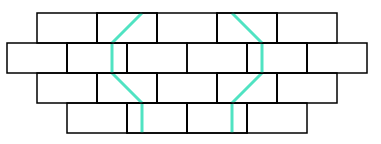}
        \end{center}
        \item[11] $([[n_0, s_1^{(1)}][n_2, s_3^{(2)}], [s_1^{(1)}, n_2][s_3^{(2)}, n_4]])^\infty$, which has turning rule $00X00XXXX$, trivial crossing rule and speed 0.
        \begin{center}
    \includegraphics[scale=0.6]{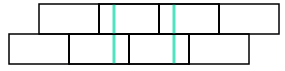}
        \end{center}
        \item[12] $([[n_0, s_1^{(1)}][s_2^{(2)}, n_3], [s_1^{(1)}, s_2^{(2)}]])^\infty$, which has turning rule $00X00XXXX$, trivial crossing rule and speed 0.
        \begin{center}
    \includegraphics[scale=0.6]{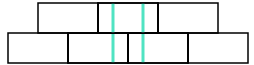}
        \end{center}
        \item[13] $([[s_1^{(1)}, n_2][n_3, s_4^{(2)}], [n_0, s_1^{(1)}][n_2, n_3][s_4^{(2)}, n_5]])^\infty$, which has turning rule $X0X00XXXX$, trivial crossing rule, and speed 0.
        \begin{center}
    \includegraphics[scale=0.6]{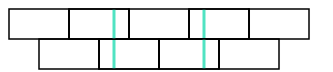}
        \end{center}
        \item[14] $([[r_1^{(1)}, l_2^{(2)}], [n_0, s_1^{(1)}][s_2^{(2)}, n_3], [s_1^{(1)}, s_2^{(2)}], [n_0, l_1^{(1)}][s_2^{(2)}, n_3], [n_0, s_1^{(1)}][n_2, l_3^{(2)}]])^\infty$, which has turning rule $00110010X$, crossing rule $XXXX1XXXX$, and speed $\frac{1}{5}$.
        \begin{center}
    \includegraphics[scale=0.6]{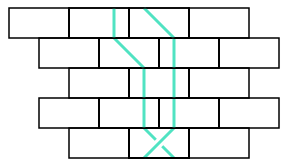}
        \end{center}
        \item[15] $([\overline{[r_1^{(1)}, l_2^{(2)}]}, [n_0, s_1^{(1)}][s_2^{(2)}, n_3], [s_1^{(1)}, s_2^{(2)}], [n_0, l_1^{(1)}][s_2^{(2)}, n_3], [n_0, s_1^{(1)}][n_2, l_3^{(2)}]])^\infty$, which has turning rule $00110010X$, crossing rule $XXXX0XXXX$, and speed $\frac{1}{5}$.
        \begin{center}
    \includegraphics[scale=0.6]{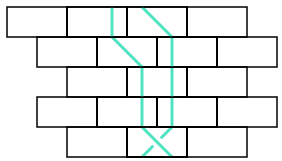}
        \end{center}
        \item[16] $([[r_1^{(1)}, l_2^{(2)}], [n_0, s_1^{(1)}][s_2^{(2)}, n_3], [s_1^{(1)}, s_2^{(2)}], [n_0, s_1^{(1)}][r_2^{(2)}, n_3], [r_1^{(1)}, n_2][s_3^{(2)}, n_4]])^\infty$, which has turning rule $01100010X$, crossing rule $XXXX1XXXX$, and speed $-\frac{1}{5}$.
        \begin{center}
    \includegraphics[scale=0.6]{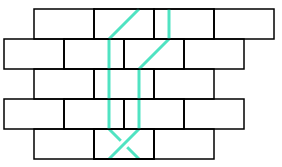}
        \end{center}
        \item[17] $([\overline{[r_1^{(1)}, l_2^{(2)}]}, [n_0, s_1^{(1)}][s_2^{(2)}, n_3], [s_1^{(1)}, s_2^{(2)}], [n_0, s_1^{(1)}][r_2^{(2)}, n_3], [r_1^{(1)}, n_2][s_3^{(2)}, n_4]])^\infty$, which has turning rule $01100010X$, crossing rule $XXXX0XXXX$, and speed $-\frac{1}{5}$.
        \begin{center}
    \includegraphics[scale=0.6]{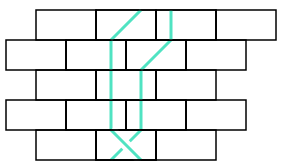}
        \end{center}
        \item[18] $([[s_1^{(1)}, s_2^{(2)}], [n_0, l_1^{(1)}][s_2^{(2)}, n_3], [n_0, s_1^{(1)}][n_2, l_3^{(2)}]])^\infty$, which has turning rule $X001001XX$, crossing rule $XXXXXXXX$, and speed $\frac{1}{3}$.
        \begin{center}
    \includegraphics[scale=0.6]{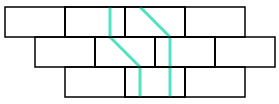}
        \end{center}
        \item[19] $([[n_0, l_1^{(1)}][n_2, s_3^{(2)}], [n_0, s_1^{(1)}][n_2, n_3][s_4^{(2)}, n_5], [s_1^{(1)}, n_2][n_3, l_4^{(2)}]])^\infty$, which has turning rule\\
        $X00100XXX$, trivial crossing rule, and speed $\frac{1}{3}$.
        \begin{center}
    \includegraphics[scale=0.6]{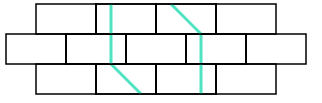}
        \end{center}
        \item[20] $([[s_1^{(1)}, n_2][n_3, s_4^{(2)}], [n_0, l_1^{(1)}][n_2, n_3][s_4^{(2)}, n_5], [n_0, s_1^{(1)}][n_2, n_3][n_4, l_5^{(2)}]])^\infty$, which has turning rule $X0X100XXX$, trivial crossing rule, and speed $\frac{1}{3}$.
        \begin{center}
    \includegraphics[scale=0.6]{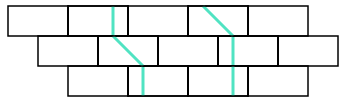}
        \end{center}
        \item[21] $([[s_1^{(1)}, n_2][r_3^{(2)}, n_4], [n_0, s_1^{(1)}][n_2, n_3][s_4^{(2)}, n_5], [r_1^{(1)}, n_2][n_3, s_4^{(2)}]])^\infty$, which has turning rule\\
        $X1X00X00X$, trivial crossing rule, and speed $-\frac{1}{3}$.
        \begin{center}
    \includegraphics[scale=0.6]{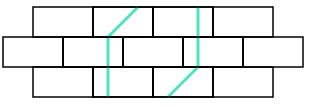}
        \end{center}
        \item[22] $([[s_1^{(1)}, s_2^{(2)}], [n_0, s_1^{(1)}][r_2^{(2)}, n_3], [r_1^{(1)}, n_2][s_3^{(2)}, n_4]])^\infty$, which has turning rule $X1100X00X$, trivial crossing rule, and speed $-\frac{1}{3}$.
        \begin{center}
    \includegraphics[scale=0.6]{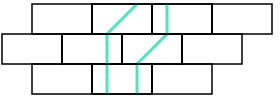}
        \end{center}
        \item[23] $([[s_1^{(1)}, n_2][n_3, s_4^{(2)}], [n_0, s_1^{(1)}][n_2, n_3][r_4^{(2)}, n_5], [r_1^{(1)}, n_2][n_3, n_4][s_5^{(2)}, n_6]])^\infty$, which has turning rule $X1X00XX0X$, trivial crossing rule, and speed $-\frac{1}{3}$.
        \begin{center}
    \includegraphics[scale=0.6]{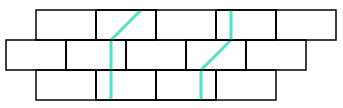}
        \end{center}
        \item[24] $([[n_0, s_1^{(1)}][n_2, s_3^{(2)}], [s_1^{(1)}, n_2][s_3^{(2)}, n_4], [n_0, l_1^{(1)}][n_2, s_3^{(2)}], [n_0, s_1^{(1)}][n_2, n_3][s_4^{(2)}, n_5], $\\
        $ [s_1^{(1)}, n_2][n_3, l_4^{(2)}], [n_0, l_1^{(1)}][n_2, l_3^{(2)}]])^\infty$, which has turning rule $001100XX0$, trivial crossing rule and speed $\frac{2}{6}$.
        \begin{center}
    \includegraphics[scale=0.6]{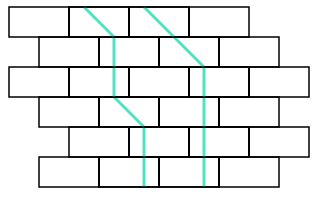}
        \end{center}
        \item[25] $([[s_1^{(1)}, n_2][s_3^{(2)}, n_4], [n_0, s_1^{(1)}][n_2, s_3^{(2)}], [s_1^{(1)}, n_2][r_3^{(2)}, n_4], [n_0, s_1^{(1)}][n_2, n_3][s_4^{(2)}, n_5],$\\
        $ [r_1^{(1)}, n_2][n_3, s_4^{(2)}], [r_1^{(1)}, n_2][r_3^{(2)}, n_4]])^\infty$, which has turning rule $01X00X100$, crossing rule $XXXXXXXX$, and speed $-\frac{2}{6}$.
        \begin{center}
    \includegraphics[scale=0.6]{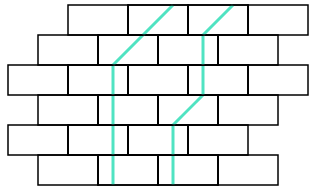}
        \end{center}
        \item[26] $([[r_1^{(1)}, l_2^{(2)}], [n_0, s_1^{(1)}][s_2^{(2)}, n_3], [s_1^{(1)}, s_2^{(2)}], [n_0, l_1^{(1)}][s_2^{(2)}, n_3], [n_0, s_1^{(1)}][n_2, s_3^{(2)}], [s_1^{(1)}, n_2][s_3^{(2)}, n_4],$\\
        $ [n_0, l_1^{(1)}][n_2, s_3^{(2)}], [n_0, s_1^{(1)}][n_2, n_3][s_4^{(2)}, n_5], [s_1^{(1)}, n_2][n_3, l_4^{(2)}], [n_0, l_1^{(1)}][n_2, l_3^{(2)}],$\\
        $ [n_0, s_1^{(1)}][n_2, l_3^{(2)}]])^\infty$, which has turning rule $001100001$, crossing rule $XXXX1XXXX$, and speed $\frac{3}{11}$.
        \begin{center}
    \includegraphics[scale=0.6]{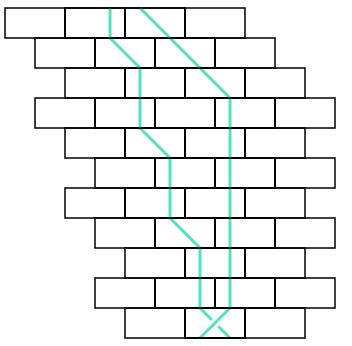}
        \end{center}
        \item[27] $([\overline{[r_1^{(1)}, l_2^{(2)}]}, [n_0, s_1^{(1)}][s_2^{(2)}, n_3], [s_1^{(1)}, s_2^{(2)}], [n_0, l_1^{(1)}][s_2^{(2)}, n_3], [n_0, s_1^{(1)}][n_2, s_3^{(2)}], [s_1^{(1)}, n_2][s_3^{(2)}, n_4],$\\
        $ [n_0, l_1^{(1)}][n_2, s_3^{(2)}], [n_0, s_1^{(1)}][n_2, n_3][s_4^{(2)}, n_5], [s_1^{(1)}, n_2][n_3, l_4^{(2)}], [n_0, l_1^{(1)}][n_2, l_3^{(2)}],$\\
        $ [n_0, s_1^{(1)}][n_2, l_3^{(2)}]])^\infty$, which has turning rule $001100001$, crossing rule $XXXX0XXXX$, and speed $\frac{3}{11}$.
        \begin{center}
    \includegraphics[scale=0.6]{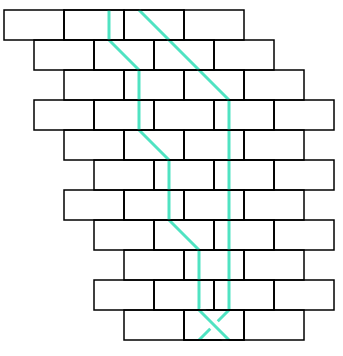}
        \end{center}
        \item[28] $([[r_1^{(1)}, l_2^{(2)}], [n_0, s_1^{(1)}][s_2^{(2)}, n_3], [s_1^{(1)}, s_2^{(2)}], [n_0, s_1^{(1)}][r_2^{(2)}, n_3], [s_1^{(1)}, n_2][s_3^{(2)}, n_4], [n_0, s_1^{(1)}][n_2, s_3^{(2)}], $ \\
        $ [s_1^{(1)}, n_2][r_3^{(2)}, n_4], [n_0, s_1^{(1)}][n_2, n_3][s_4^{(2)}, n_5], [r_1^{(1)}, n_2][n_3, s_4^{(2)}],$        $[r_1^{(1)}, n_2][r_3^{(2)}, n_4], $\\
        $ [r_1^{(1)}, n_2][s_3^{(2)}, n_4]])^\infty$ which has turning rule $010000101$, crossing rule $XXXX1XXXX$, and speed $-\frac{3}{11}$.
        \begin{center}
    \includegraphics[scale=0.6]{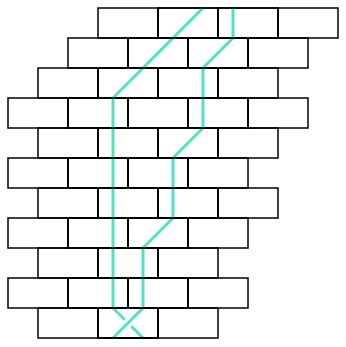}
        \end{center}
        \item[29] $([\overline{[r_1^{(1)}, l_2^{(2)}]}, [n_0, s_1^{(1)}][s_2^{(2)}, n_3], [s_1^{(1)}, s_2^{(2)}], [n_0, s_1^{(1)}][r_2^{(2)}, n_3], [s_1^{(1)}, n_2][s_3^{(2)}, n_4], [n_0, s_1^{(1)}][n_2, s_3^{(2)}], $ \\
        $ [s_1^{(1)}, n_2][r_3^{(2)}, n_4], [n_0, s_1^{(1)}][n_2, n_3][s_4^{(2)}, n_5], [r_1^{(1)}, n_2][n_3, s_4^{(2)}],$        $[r_1^{(1)}, n_2][r_3^{(2)}, n_4], $\\
        $ [r_1^{(1)}, n_2][s_3^{(2)}, n_4]])^\infty$ which has turning rule $010000101$, crossing rule $XXXX0XXXX$, and speed $-\frac{3}{11}$.
        \begin{center}
    \includegraphics[scale=0.6]{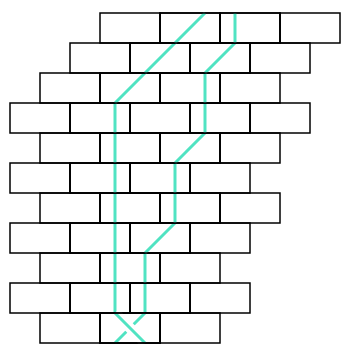}
        \end{center}
        \item[30] $([[n_0,l_1^{(1)}], [n_2,l_3^{(2)}]])^\infty$ which has turning rule $XXXX01XXX$, trivial crossing rule, and speed $1$.
\begin{center}
    \includegraphics[scale=0.9]{scaStuff/sca815_6.png}
        \end{center}
        \item[31] $([[r_1^{(1)}, n_2],[r_3^{(2)}, n_4]])^\infty$ which has turning rule $XXXX0XX1X$, trivial crossing rule, and speed $-1$.
\begin{center}
    \includegraphics[scale=0.9]{scaStuff/sca815_7.png}
        \end{center}
    \end{itemize}
\end{theorem}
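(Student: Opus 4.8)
The plan is to establish the biconditional, with essentially all of the work in the ``only if'' direction; the ``if'' direction is a finite verification. For each of the thirty-one listed patterns $g$ one checks directly that $\operatorname{child}$ carries the last generation of the displayed period back to its first generation (so $g$ is periodic with the claimed $\per(g)$), that the displayed turning and crossing rules are the generic turning and crossing rules of $g$, and that $\per(g)$ contains no null chain; the pictures accompanying each item make all three checks immediate.

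For the ``only if'' direction, let $g = a^\infty$ be a $2$-stranded repeating SCA pattern. First handle the extreme speeds: by \thref{speed2cor}, if $\operatorname{Speed}(g) = 1$ or $-1$ then $g$ is a shift of item~$30$ or item~$31$, so assume $\operatorname{Speed}(g) \neq \pm 1$. Then \thref{notspeedc2strands} permits us to assume that some turning rule of $g$ carries both $\frac{S}{ET}$ and $\frac{S}{TE}$. This is the key finiteness input: it forces a strand that has just turned to go straight in the next generation, so once the two strands are separated their short-term evolution is completely determined, and \thref{speedCCriteria} controls the cases in which one strand turns the same way indefinitely. Next, since $\per(g) = a$ has no null chain, the two strands cannot evolve independently forever; they must share a turning configuration in some generation of the period, hence, after replacing $g$ by a shift, in $\delta_1$. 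Enumerating the continuous two-stranded generations of bounded width that place the strands within interacting range produces the finitely many initial configurations appearing at the heads of Cases~1 through~18.

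For each such configuration I would carry out the branching analysis of Cases~1--18: at each node of the tree, the current generation together with the turning-rule bit assumed there determines, through $\operatorname{child}$, the next generation up to precisely the ambiguity that the next bit resolves; one follows every branch until the sequence of generations either repeats --- yielding a repeating SCA pattern, recorded as some $g_j$ --- or reaches a generation that is a shift of an already-treated initial configuration, in which case the branch is subsumed. Every branch terminates, because each generation occurring along a branch has bounded width with only two strands to place, so only finitely many distinct generations can precede a repeat or a reduction. The crossing rule adds nothing further, since a $2$-stranded pattern uses only the single generic $X$-crossing bit; hence any pattern containing a crossing occurs in exactly two forms --- all crossings $S$, or all crossings $Z$, never both (that would violate bit~$4$ of the crossing rule) --- which is exactly why the items $\{1,2\}$, $\{3,4\}$, $\{14,15\}$, $\{16,17\}$, $\{26,27\}$, $\{28,29\}$ appear in pairs. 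Finally, collect every repeating pattern produced, discard those that are shifts of one already listed, and confirm that the remainder is exactly the stated thirty-one entries with the asserted turning rules, crossing rules, and speeds; since every $2$-stranded repeating pattern is, up to a shift, one of the treated starting configurations, the list is complete.

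I expect the principal obstacle to be organizational rather than conceptual. One must be certain that the list of head configurations is genuinely exhaustive, that every ``see Case~$X$'' redirection terminates (no cycle of mutual reductions that never bottoms out in an explicit period), and that the final identification modulo shifts involves no double-counting or omission. A secondary point needing care is each ``$X$'' in a pattern's generic turning rule: one must check, from the turning configurations that actually occur in that period, that the bit in question is genuinely unconstrained rather than forced.
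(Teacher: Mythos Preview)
Your proposal is correct and follows essentially the same approach as the paper: reduce to speed $\neq\pm1$ via \thref{speed2cor}, invoke \thref{notspeedc2strands} to pin down $\frac{S}{ET},\frac{S}{TE}$, use the no-null-chain condition to force the two strands into one of finitely many narrow initial generations (the paper's Cases~1--18), and then run the branching tree on each, collapsing crossing types into $S/Z$ pairs exactly as you describe. The paper carries out the casework in full detail rather than summarizing it, but the architecture and the supporting lemmas are identical to what you outline.
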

\begin{corollary}
    Let $g$ be a 2-stranded SCA pattern. $g$ is a glider if it is a shift of one of items $14$-$31$ of \thref{2StrandClassification}.
\end{corollary}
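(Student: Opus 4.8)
The plan is to read this off from \thref{2StrandClassification} together with the fact that the glider property is preserved under taking shifts. Recall that a 2-stranded glider is by definition a 2-stranded repeating SCA pattern $g=a^\infty$ (so that $a$ contains no null chains) with $\operatorname{Speed}(g)\neq 0$. Each pattern in items $14$--$31$ is exhibited explicitly in the form $a^\infty$, and \thref{2StrandClassification} already certifies that these are repeating SCA patterns, so the corresponding period $a$ has no null chains; moreover each is listed with a speed in $\{\pm\tfrac{1}{5},\pm\tfrac{1}{3},\pm\tfrac{2}{6},\pm\tfrac{3}{11},\pm1\}$, all of which are nonzero. Hence each of items $14$--$31$ is itself a glider, and since every pattern is a shift of itself (the cyclic permutation with $i=\length(\per(g))$), it suffices to show that any shift of a glider is again a glider.

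So let $g$ be a glider with $\per(g)=a=[\delta_1,\dots,\delta_k]$, let $a'=[\delta_{i+1},\dots,\delta_k,\delta_1,\dots,\delta_i]$ be a cyclic permutation, and put $h=a'^\infty$. First, $h$ is a repeating SCA pattern: $a'$ is a length-$k$ sublist of $a^\infty$, so $\operatorname{child}_{a'}$ is a restriction of $\operatorname{child}_{a^\infty}$, and since the child relation at the ``seam'' of $a^\infty$ is the very one used inside $a$, a null chain in $a'$ would produce a null chain in $a$, which is impossible. Second, $\operatorname{Speed}(h)\neq 0$: the global end index of a finite pattern is the sum over its generations of $\send(\cdot)$ applied to the strand at index $1$ in that generation, and this summand is intrinsic to the generation (it is determined by the cell containing that strand), not by the generation's position in the list. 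Hence $a$ and $a'$, which consist of the same generations in a different order, have the same global end index $d\neq 0$; if $\per(h)=a'$ then $\operatorname{Speed}(h)=\tfrac{d}{\length(a)}=\operatorname{Speed}(g)\neq 0$, and if instead $\per(h)=b$ with $b^y=a'$ for some $y>1$, then $b$ has global end index $d/y\neq 0$, so again $\operatorname{Speed}(h)\neq 0$. Either way $h$ is a repeating SCA pattern of nonzero speed, i.e.\ a glider.

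The only step that is not routine bookkeeping against \thref{2StrandClassification}, and the one I expect to be the main obstacle, is the invariance of the non-vanishing of the speed under cyclic permutation of the period; this reduces to the additivity of the global end index over generations together with the fact that each generation's contribution does not depend on the ordering. (The same reasoning yields the reverse inclusion as well: a 2-stranded glider is a repeating SCA pattern, hence by \thref{2StrandClassification} a shift of one of items $1$--$31$, and items $1$--$13$ are ruled out since each has speed $0$.)
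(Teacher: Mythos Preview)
Your argument is correct and is exactly the reasoning the paper leaves implicit: in the paper the corollary is stated without proof, as an immediate reading of \thref{2StrandClassification} (items $14$--$31$ are precisely the entries with nonzero speed). Your only added content is the verification that the glider property is shift-invariant; your speed argument via the per-generation additivity of the global end index is the right one, and the null-chain step---while compressed---is valid in the $2$-stranded setting since an empty chain separating the two strands in any length-$k$ window of $a^\infty$ forces one in the window $[\delta_1,\dots,\delta_k]$ as well.
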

Notice that the process we used to obtain this result cannot be easily generalized to $n$-stranded repeating SCA patterns where $n > 2$, as we used an essential fact about $2$-stranded repeating SCA patterns in this classification: The fact that the two strands must interact with each other at some point, which means that every $2$-stranded repeating SCA pattern must have a generation with width at most 2 cells. Because there is no guarantee when pairs of strands will interact in $n$-stranded repeating SCA patterns for $n > 2$ two different pairs of strands could interact in separate generations, which means that we no longer have an easy upper bound on the width of some generation of a $n$-stranded SCA pattern. In section \ref{sec:decidabilityAndPure}, we demonstrate a subset of gliders for which a similar method can still be used.
\section{Pure Gliders}\label{sec:Pure}
We must first define a non-crossing glider:
\begin{definition}
    A \textbf{non-crossing glider} $g = ([\delta_1, \delta_2, \cdots, \delta_m])^\infty$ is a glider where there does not exist $C\in\delta_k$ for $1\leq k\leq m$ and $C = \overline{[r_i^{(j)}, l_{i+1}^{(j+1)}]}$.
\end{definition}
We now define pure gliders:
\begin{definition}
    A \textbf{pure positive glider} $g$ is a glider with positive speed that has a turning rule with the bit $\frac{S}{SE}$.
\end{definition}
\begin{definition}
    A \textbf{pure negative glider} $g$ is a glider with negative speed that has a turning rule with the bit $\frac{S}{ES}$.
\end{definition}
A \textbf{pure glider} is a pure positive glider or a pure negative glider. Intuitively, a pure glider is one that doesn't go back-and-forth as time goes on. As we will soon see, the set of pure gliders is a subset of another type of glider. To describe this type of glider, we introduce a few more definitions:
\begin{definition}\thlabel{def}
    The $k-$\textbf{stranded left subpattern} of an $n$-stranded glider $g$, denoted $g_k^l$ for $k\leq n$, is the subpattern that contains exactly the $k$ strands ordered lowest by index in the first generation.
\end{definition}
\begin{definition}
    The $k-$\textbf{stranded right subpattern} of an $n$-stranded glider $g$, denoted $g_k^r$ for $k\leq n$, is the subpattern that contains exactly the $k$ strands ordered highest by index in the first generation.
\end{definition}

\begin{definition}
    A \textbf{positive nested} $n-$stranded glider $g$ is a glider with positive speed such that $g_k^l$ is a glider for all $1\leq k\leq n$.
\end{definition}
\begin{definition}
    A \textbf{negative nested} $n$-stranded glider $g$ is a glider with negative speed such that $g_k^r$ is a glider for all $1\leq k\leq n$.
\end{definition}
A \textbf{nested glider} is a glider that is a positive nested glider or a negative nested glider. Notice that it is not immediate that all gliders are nested gliders, as it is theoretically possible to have a positive (negative) glider such that $g_k^l$ ($g_k^r$) is not an SCA pattern for some $k$.

As we will see, the existence of a non-nested non-crossing SCA glider would imply the existence of a positive non-crossing SCA glider with all turning rules containing $\frac{T}{SE}$. We conjecture that the other direction also holds, and that no such gliders exist. We aim to prove that all pure gliders are nested in \thref{pureisNested}. Intuitively, the following result proves that all speed 1 gliders look like this:
\begin{center}
    \includegraphics[]{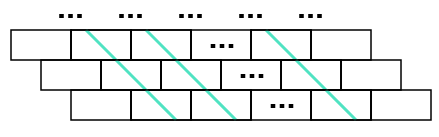}
\end{center}
and that all speed -1 gliders look like this:
\begin{center}
    \includegraphics[]{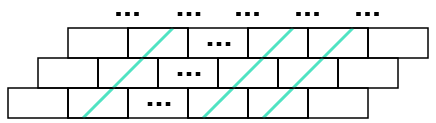}
\end{center}
\begin{lemma}\thlabel{speedcgliders}
    For any $n\geq 1$, there is exactly one speed $1$ glider and exactly one speed $-1$ glider. The $n$-stranded speed $1$ glider is given by $[[n_0, l_1^{(1)}][n_2, l_3^{(2)}]\cdots[n^{\left(\varnothing\right)}_{2(n-1)}, l_{2n - 1}^{(n)}]]^\infty$ and the $n$-stranded speed $-1$ glider is given by $[[r_1^{(1)}, n_2][r_3^{(2)}, n_4]\cdots[r_{2(n-1)}^{(n)}, n^{\left(\varnothing\right)}_{2n - 1}]]^\infty$. For all $n\in\mathbb{N}^+$, all speed $1$ gliders have the same set of turning and crossing rules (turning rule: $XXXX01XX1$ and trivial crossing rule) and are nested gliders. For all $n\in\mathbb{N}^+$, all speed $-1$ gliders have the same set of turning and crossing rules (turning rule: $XXXX0XX11$ and trivial crossing rule) and are nested gliders. 
\end{lemma}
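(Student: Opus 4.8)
The plan is to reduce to the speed $1$ case by a reflection symmetry, verify the claimed pattern is a glider, prove uniqueness by an end-index counting argument combined with the no-null-chain hypothesis, and then read off the rules.

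First I would set up the reflection map on cells that swaps the symbols $l\leftrightarrow r$ and reverses the order of cells within each generation. It carries $n$-stranded continuous grid patterns to $n$-stranded continuous grid patterns, it negates every end index and hence sends a speed-$s$ glider to a speed-$(-s)$ glider, and it exchanges the turning-rule bits in the pairs $SE\leftrightarrow ES$, $ET\leftrightarrow TE$, $TS\leftrightarrow ST$ while fixing $SS$ and $TT$. Thus it suffices to prove the statement for speed $1$, the speed $-1$ assertions following by applying the reflection. Write $P_n=[[n^{\left(\varnothing\right)}_0,l_1^{(1)}][n^{\left(\varnothing\right)}_2,l_3^{(2)}]\cdots[n^{\left(\varnothing\right)}_{2(n-1)},l_{2n-1}^{(n)}]]^\infty$. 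A direct check shows $P_n$ is a glider: its period is a single generation with no empty cell between strands, so it contains no null chain; every strand turns left, so the picture shifts one column left per generation, the global end index of the period is $1$, and $\operatorname{Speed}(P_n)=\frac{1}{1}=1$. Since $(P_n)_k^l=P_k$ for every $k\le n$ and each $P_k$ is a glider, $P_n$ is a positive nested glider.

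For uniqueness, let $g=a^\infty$ be any $n$-stranded speed $1$ glider with $m=\length(a)$. Because $\operatorname{Speed}(g)=\frac{m}{m}$, the global end index of $a$ equals $m$; it is the sum over the $m$ generations of $a$ of the end index of strand $1$, and each of these $m$ summands lies in $\{-1,0,1\}$, so each equals $1$. By the definition of the end index this forces strand $1$ in every generation to be an $l$-strand not lying in a crossing cell, i.e.\ every generation begins with $[n^{\left(\varnothing\right)}_0,l_1^{(1)}]$; in particular every turning rule of $g$ has $\frac{T}{ET}$ and strand $1$ never crosses. Now fix $1\le k\le n$ and consider the $k$-th strand of $g$ as a subpattern. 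Since $g$ is a glider, $\width(g)<\infty$, so this one-strand-per-generation subpattern takes only finitely many positions relative to strand $1$ and is periodic, with period $\ell m$ for some $\ell\in\mathbb{N}^+$; over one such period it returns to the same position relative to the rigidly left-shifting strand $1$, hence moves a net $\ell m$ columns left, that is, one column per generation on average. As a strand moves left by at most one column per generation, and by exactly one only when it turns left and is not part of a crossing, the $k$-th strand turns left in every generation and lies in no crossing cell. Consequently $g$ has no crossings (trivial crossing rule), $g=g_n^l$, and each $(g_n^l)_k^l$ is a glider, so $g$ is nested.

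Finally I would pin $g$ down and compute the turning rule. If two consecutive strands of $g$ were ever at least two columns apart, then by the previous paragraph they would be at least two columns apart in every generation, so the column immediately to the right of the lower strand would be empty in every generation of $a$ and would form a null chain between those two strands, contradicting the glider hypothesis; hence in every generation the $n$ strands occupy $n$ consecutive left-turning cells, which forces $a=[[n^{\left(\varnothing\right)}_0,l_1^{(1)}]\cdots[n^{\left(\varnothing\right)}_{2(n-1)},l_{2n-1}^{(n)}]]$ and $g=P_n$. Enumerating the turning configurations actually realized by $P_n$ then shows its generic turning rule is $XXXX01XX1$, and since there is exactly one speed $1$ glider, this together with the trivial crossing rule is the rule set of every speed $1$ glider, all of which are nested. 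The main obstacle is the periodicity/average-speed step for an arbitrary strand $k$: one must track the index shift built into $\operatorname{child}$ when a generation begins with $[n^{\left(\varnothing\right)}_0,l_1^{(1)}]$ and rule out a priori a strand weaving in and out of crossings over its period. An alternative that localizes this difficulty is an induction on $n$ that applies \thref{glidersSpeedctwoCase} to consecutive pairs of strands, using that the already-determined left subpattern $P_k$ shifts rigidly left so the two-strand argument can be reused verbatim at each stage.
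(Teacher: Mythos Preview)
Your argument is correct and follows the same broad outline as the paper's proof (existence, symmetry for the negative case, uniqueness via a counting/end-index argument, adjacency via the no-null-chain condition), but the core uniqueness step is organized differently. The paper proceeds in two separate passes: it first rules out crossings by showing that if a crossing $[r_z^{(k)},l_{z+1}^{(k+1)}]$ occurs in some $\delta_i$, then the $n-k$ strands ending to its right cannot travel far enough left to match $\delta_j=\delta_i$ at the next recurrence; it then repeats an almost identical position-counting argument to rule out any non-left-turning cell; only after both passes does it invoke the null-chain condition. You instead first pin down strand $1$ via the global end index and then run a single averaging argument uniformly over every strand $k$: periodicity with period $\ell m$ forces the net displacement to equal $\ell m$, each end index is at most $1$, so every end index equals $1$, which simultaneously excludes crossings and non-left-turns. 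Your route is shorter and avoids the paper's repeated case splits on the distance $t+m$ from the crossing; the paper's route has the advantage that it never needs to invoke the permutation of physical strands or the auxiliary period $\ell$, so the ``weaving through crossings'' issue you flag simply does not arise there. Both arguments close with the same null-chain step to force adjacency, and the induction-on-$n$ alternative you sketch at the end is not used in the paper.
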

\begin{proof}
    We prove the result for positive speed gliders, as it follows analogously for negative speed gliders. We first demonstrate the existence of a speed $1$ glider on $n$ strands. Let $a = [n_0, l_1^{(1)}][n_2, l_3^{(2)}]\cdots[n^{\left(\varnothing\right)}_{2(n-1)}, l_{2n - 1}^{(n)}]$ and set $g = a^\infty$. By the definition of speed, $g$ has speed $1$. $g$ only uses $\frac{T}{TT}, \frac{T}{ET}$ by its definition, so it has turning rule $XXXX01XX1$. It contains no crossings, so it has trivial crossing rule. Thus, $g$ is an $n$-stranded repeating SCA pattern. Notice that every strand of $g$ is adjacent to another strand of $g$ in every generation of $g$, so it is an $n$-stranded SCA glider. Now consider $g_l^{(k)}$ for some $1\leq k\leq n$. Notice $g_l^{(k)} = [n_0, l_1^{(1)}][n_2, l_3^{(2)}]\cdots[n^{\left(\varnothing\right)}_{2(k-1)}, l_{2k - 1}^{(k)}]$, so $g_l^{(k)}$ has speed $1$ and every strand of $g_l^{(k)}$ is adjacent to another strand of $g_l^{(k)}$ in every generation of $g$. If $k\neq 1$, $g_l^{(k)}$ only uses $\frac{T}{TT}, \frac{T}{ET}$ so it has turning rule $XXXX01XX1$, and it contains no crossings, so it has trivial crossing rule, so it is a $k$-stranded SCA glider. If $k = 1$, $g_l^{(k)}$ only uses $\frac{T}{ET}$ and contains no crossings, so it has turning rule $XXXX01XXX$, a superset of $XXXX01XX1$ and trivial crossing rule. Thus, it is a $1$-stranded SCA glider, so by the definition of nested glider, $g$ is a nested glider. We now demonstrate the uniqueness of $g$.

    For contradiction, suppose there exists a $n\in\mathbb{N}^+$ such that there exist two different speed $1$ gliders on $n$ strands. 
    Let $g$ be the glider constructed above, and let $h$ be a speed $1$ glider that is distinct from $g$.

    We first consider the case where $h$ contains a crossing $[r_z^{(k)}, l_{z+1}^{(k+1)}]$ in $\delta_i$ where strand $k + 1$ starts at index $t$. Let $j$ be the first $j > i$ such that $\delta_j = \delta_i$. We claim that none of the $n - k$ strands which end at an index greater than or equal to $t$ in $\delta_i$ will reach the starting indices of the first $k + 1$ strands in generation $j$, which will contradict the continuity of $h$. We first consider strand $k$, which ends at index $t  + 1$ in $\delta_i$ and ends at index at least $t + 1$ in $\delta_{i+1}$, as it can either turn right or go straight in $\delta_{i+1}$. Thus, to reach any of the first $k + 1$ strands in $\delta_j$, it must move at least $|t - ((j - i) + 1) - (t + 1)| = j - i$ cells to the left in $j - i - 2$ generations, which is impossible. If there is a strand starting in index $t + 2$ in $\delta_i$, it must be straight or turning right, so it must move at least $|t - (j - i) + 1 - (t + 2)|  = (j - i) + 1$ cells to the left in $(j - i) - 1$ generations, which is also impossible. If there is a strand starting in index $t + m$ for $m\geq 3$, it must move at least $|t - (j - i) + 1 - (t + m)| \geq (j - i) + 2$ cells to the left in $j - i$ generations, also impossible. Thus, $h$ contains no crossings. 

    Now suppose there exists a cell $C_{i, u}'$ $h$ such that $C_{i, u}'\neq[n^{\left(\varnothing\right)}_{2(u-1)}, l_{2u-1}^{(k)}]$ for any $k$ but still contains a strand, call it $m$. Let $j$ be the next generation identical to $i$. Then, because $h$ is speed $1$ and contains no crossings, strand $m$ must move $j - i$ indices to the left in $j - i$ generations for the pattern to be continuous. Because it does not move left in generation $i$, it must move $j-i$ indices to the left in at least $j-i-1$ generations, which is impossible. Thus, $h$ contains no crossings and every cell of $h$ that contains a strand has the strand turning left. Now, because $h\neq g$, there must exist some $C_{i,u}' = [n^{\left(\varnothing\right)}_{2(u-1)}, n^{\left(\varnothing\right)}_{2u-1}]$ and strands $k, k+1$ such that $k$ is in cell $C_{i, t}'$ and $k + 1$ is in cell $C_{i, j}'$ for some $t < u < j$. We claim strands $k$ and $k + 1$ are never in adjacent cells or the same cell. For contradiction, suppose there exists some generation $r$ such that they are. Then, there exists a least $r' > i$ such that $\delta_{r'}' = \delta_r'$ by the definition of glider. By the fact that every cell that contains a strand is of the form $[n_z^{\left(\varnothing\right)}, l_{z+1}^y]$ for some $z, y$, we must have that strand $k$ moves $r' - i$ cells to the left between generations $i$ and $r'$, and that $C_{r', u}' = C_{i, t}'$. This means strand $k + 1$ must move $r' - i + 1$ cells to the left in $r' - i$ generations, a contradiction. Thus, we must have $\delta_i = [n_0, l_1^{(1)}]\cdots[n^{\left(\varnothing\right)}_{2n-2}, l_{2n-1}^{(n)}]$ for all $i$, so $h = [[n_0, l_1^{(1)}]\cdots[n^{\left(\varnothing\right)}_{2n-2}, l_{2n-1}^{(n)}]]^\infty = g$. Thus, $g$ is unique, and the claim follows.
\end{proof}
Recall that ``a glider $g$ has speed $\frac{a}{b}$" means that $g$ moves $a$ positions to the left in $|\per(g)| = b$ generations.

We now introduce the \textbf{valuation} for speed, $\nu$. If $\frac{a}{b}$ is a speed, $\nu(\frac{a}{b})$ returns the rational number corresponding to $\frac{a}{b}$. We call the valuation of a speed the valuated speed.
\begin{lemma}\thlabel{gliderSpeed}
    Let $g$ be a non-crossing SCA repeating grid pattern, and suppose the first strand of $g$ has speed $\frac{a}{b}$ for some $a > 0, b\geq 1$. Then, the speed of $g$ is $\frac{ad}{bd}$ for some $d\geq 1$. If $g$ is a non-crossing glider and the last strand of $g$ has speed $-\frac{a}{b}$ for some $a> 0, b\geq 1$, the speed of $g$ is $-\frac{ad}{bd}$ for some $d\geq 1$. If the speed of the first or last strand of $g$ is 0, the speed of $g$ is 0. Also, all strands have the same valuated speed.
\end{lemma}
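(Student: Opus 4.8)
The plan is to reduce the whole statement to one fact: in a non-crossing repeating pattern every strand has the same net leftward displacement over one period. Throughout, ``non-crossing'' means no cell of $g$ is a crossing; write $P=\per(g)$ and $L=\length(\per(g))$, so $g=P^\infty$ and $\delta_{L+1}=\delta_1$ as generations. Because there are no crossings, no two strands ever swap sides, so the subpattern ``strand $k$'' of $g$ is precisely the $k$th strand counted from the left in \emph{every} generation $\delta_i$; let $\rho_i(k)$ be its cell-position in $\delta_i$, and for adjacent $k,k+1$ call $\rho_i(k+1)-\rho_i(k)\ge 0$ the gap between them (a difference of positions in the same generation, hence independent of how $\delta_i$ is anchored).

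First I would set up the displacement bookkeeping. Unwinding the geometric interpretation of $\gen$, in a continuous non-crossing pattern each strand's leftward displacement between $\delta_i$ and $\delta_{i+1}$ equals $\send_i(k)$ --- which is $+1$, $-1$, or $0$ according as the strand turns left, turns right, or goes straight in $\delta_i$. Consequently the gap between strands $k$ and $k+1$ changes by exactly $\send_i(k)-\send_i(k+1)$ from $\delta_i$ to $\delta_{i+1}$. Summing over one period and using $\delta_{L+1}=\delta_1$, the gap returns to its initial value, so $\sum_{i=1}^{L}\send_i(k)=\sum_{i=1}^{L}\send_i(k+1)$; inducting on $k$ there is a single integer $D$ with $\sum_{i=1}^{L}\send_i(k)=D$ for all strands $k$.

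Next I would translate this into speeds. Let $h_k$ be the strand-$k$ subpattern of $g$; its $i$th generation depends only on $\delta_i$, so $h_k$ is periodic of period $L$, $b_k:=\length(\per(h_k))$ divides $L$, and $\operatorname{Speed}(\text{strand }k)=\operatorname{Speed}(h_k)=\tfrac{v_k}{b_k}$ where $v_k$ is the global end index of $\per(h_k)$. Since $h_k$ has a single strand it has no crossings, and neither does $g$, so end indices computed in $h_k$ coincide with the $\send_i(k)$ computed in $g$; hence over $L$ generations strand $k$ drifts $\tfrac{L}{b_k}v_k=D$, i.e. $v_k=\tfrac{b_k}{L}D$ and $\nu(\operatorname{Speed}(\text{strand }k))=\tfrac{v_k}{b_k}=\tfrac{D}{L}$. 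This proves all strands have the same valuated speed. Moreover $\operatorname{Speed}(g)=\tfrac{e}{L}$ with $e$ the global end index of $\per(g)$; as $\per(g)$ is non-crossing, $e$ is the total leftward displacement of the first strand over the period, namely $\sum_{i=1}^{L}\send_i(1)=D$, so $\operatorname{Speed}(g)=\tfrac{D}{L}$ agrees with every strand's valuated speed as well.

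It remains to read off the numbered claims. If the first strand has speed $\tfrac{a}{b}$ with $a>0$, then $b=b_1\mid L$ and $v_1=a$; with $d:=L/b\ge 1$ we get $D=\tfrac{L}{b}v_1=da$ and $L=db$, hence $\operatorname{Speed}(g)=\tfrac{D}{L}=\tfrac{ad}{bd}$. Symmetrically, if $g$ is moreover a glider whose last strand has speed $-\tfrac{a}{b}$ with $a>0$, then $v_n=-a$ and the same computation gives $\operatorname{Speed}(g)=\tfrac{-ad}{bd}=-\tfrac{ad}{bd}$. If the first or last strand has speed $0$ then the corresponding $v$ is $0$, so $D=0$ and $\operatorname{Speed}(g)=\tfrac{0}{L}=0$. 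The main obstacle is not conceptual but the formalism: one must carefully verify that the leftward displacement of a strand between consecutive generations really is $\send_i(k)$ after the re-anchoring in $\gen$ is unwound, that this re-anchoring cancels in the gap of two strands sharing a generation, and that passing to $h_k$ preserves end indices and yields a period dividing $L$ --- each routine given the constructions of Sections \ref{sec:Construction} and \ref{sec:Gliders}, but that is where the care lies.
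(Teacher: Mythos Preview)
Your proof is correct and is organized somewhat differently from the paper's. The paper handles each numbered claim separately: it takes the one-strand subpattern of the first strand, observes that its period $b$ must divide $m=\length(\per(g))$ (say $m=bd$), and then reads off $\operatorname{Speed}(g)=\frac{ad}{bd}$ directly from the fact that the global end index of $\per(g)$ is by definition the displacement of the first strand over $m$ generations. It then repeats this for the negative and zero cases and simply asserts at the end that ``from here, one can see that all strands of a noncrossing glider have the same valuated speed.''

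You instead prove one structural fact first --- that all strands have the same net leftward displacement $D$ over one period, via the gap-invariance argument --- and then read off every claim uniformly from $\operatorname{Speed}(g)=\tfrac{D}{L}$. This is cleaner: it makes the ``all strands have the same valuated speed'' assertion an actual consequence rather than a remark, and it avoids redoing the argument separately for the last strand. The paper's version is shorter because it leans directly on the definition of speed (which already privileges strand 1), but your route supplies exactly the step the paper leaves to the reader. Your caveat about re-anchoring is well placed: the only real checks are that the gap $\rho_i(k+1)-\rho_i(k)$ is insensitive to how $\delta_i$ is indexed, and that passing to $h_k$ preserves the end-index values $\send_i(k)$ in the absence of crossings --- both routine.
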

\begin{proof}
    Let $g = h^\infty$ for some $h = [\delta_1',\cdots,\delta_m']$. Let the subgrid pattern $u = v^\infty$ where $v = [\delta_1^*,\cdots,\delta_m^*]$ be the subgrid pattern generated by removing all but the first strand of $g$ from each $\delta_i'$ in $h$. If $\delta_i' = [x, \cdots]\cdots$ then $\delta_i^* = [x, n_2]\cdots$ and if $\delta_i' = [\cdots, x]\cdots$ then $\delta_i^* = [n_0, x]\cdots$ for all $1\leq i\leq m$. Note $[\delta_1^*,\cdots, \delta_m^*] = [\delta_1^*,\cdots, \delta_u^*]^d$ for some $1\leq u\leq m$ and $d\geq 1$. Pick the least such $u$, which results in the greatest $d\geq 1$.

   We first prove the lemma for the case where $g$ is a positive glider. The speed of $[\delta_1^*,\cdots, \delta_u^*] $ must then be $\frac{a}{b}$ for some $a\geq 1, b\geq 1$.  Then, by the definition of speed of $g$, the speed of $g$ is $\frac{ad}{bd}$, so the claim for positive non-crossing gliders is satisfied.
     
    Let $g$ be an $n$-stranded negative glider. The speed of $[\delta_1^*,\cdots, \delta_u^*]$ must then be $-\frac{a}{b}$ for some $a\geq 1, b\geq 1$. Because, for any $i\geq 1, \delta_i = \delta_{im + 1}$ in $g$, the leftmost strand moves right $da$ positions in $db = m$ generations, so the speed of $g$ is $-\frac{da}{db}$, and the claim for negative non-crossing gliders is satisfied.

    Let $g$ be an $n$-stranded SCA pattern such that the first strand of $g$ is speed 0. The speed of $[\delta_1^*,\cdots, \delta_u^*] $ must then be $\frac{a}{b}$ for some $a\geq 1, b\geq 1$, but because the first strand has speed 0, $a = 0$. Then, by the definition of speed of $g$, the speed of $g$ is $\frac{ad}{bd} = 0$, so the claim for a glider such that the first strand of $g$ is satisfied. The case where the $n$th strand of $g$ is speed 0 follows similarly to the case where the speed of the $n$th strand is negative.

    From here, one can see that all strands of a noncrossing glider have the same valuated speed.
\end{proof}
\begin{lemma}\thlabel{mustHaveSET}
    Let $g$ be a noncrossing glider that does not have speed $1$ or $-1$. Then there exist turning rules of $g$ with the bits $\frac{S}{TE}, \frac{S}{ET}$.
\end{lemma}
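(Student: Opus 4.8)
The plan is to argue by contradiction along the same skeleton as the proof of \thref{notspeedc2strands}, replacing every step that was special to two strands by an appeal to \thref{gliderSpeed} and \thref{speedcgliders}. Suppose $g$ is a noncrossing glider with $\operatorname{Speed}(g)$ neither $1$ nor $-1$ for which the conclusion fails. The conclusion is the conjunction ``some turning rule of $g$ has $\frac{S}{TE}$'' and ``some turning rule of $g$ has $\frac{S}{ET}$'', so its failure means that either every turning rule of $g$ has $\frac{T}{TE}$, or every turning rule of $g$ has $\frac{T}{ET}$. These two alternatives are interchanged by the left--right reflection of the SCA model (reverse each generation, swap $l\leftrightarrow r$ and the two crossing orientations, and swap the turning-rule bits in the pairs $ET\leftrightarrow TE$, $ES\leftrightarrow SE$, $ST\leftrightarrow TS$), which also swaps ``first strand'' with ``last strand'' and negates every speed; so it suffices to handle the case in which every turning rule of $g$ has $\frac{T}{ET}$.

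I would first record two reductions. Since $g$ is noncrossing, its first strand is a genuine one-stranded subpattern, and because $g$ is periodic this subpattern is a one-stranded repeating SCA pattern, hence one of the six patterns listed in \thref{1StrandClassification}; and because $g$ is a glider, \thref{gliderSpeed} shows every strand of $g$ has the same nonzero valuated speed, so the first strand is item 2, 3, 4 or 5 of \thref{1StrandClassification}. If the first strand is item 4 (speed $1$) or item 2 (speed $-1$), then $\nu(\operatorname{Speed}(g))=\pm1$ by \thref{gliderSpeed}, and then \thref{speedcgliders} forces $g$ to be the unique speed $\pm1$ glider, whose period has length $1$, so $\operatorname{Speed}(g)=\pm1$, a contradiction. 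Hence the first strand of $g$ is item 3 or item 5 of \thref{1StrandClassification}.

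Now I would exploit the assumption that every turning rule of $g$ has $\frac{T}{ET}$: the $ET$ initial condition must occur somewhere in $g$ (otherwise bit $5$ would be unconstrained and some turning rule of $g$ would carry $\frac{S}{ET}$), and at every occurrence the strand turns --- to the left, since $g$ is noncrossing. The first strand of $g$ never has a strand immediately to its left, so inside $g$ its turning is governed only by the bits $EE$, $ES$ and $ET$, and the argument splits, exactly as in \thref{notspeedc2strands}, according to whether $g$ has a turning rule with $\frac{T}{ES}$ or with $\frac{S}{ES}$. Following the first strand together with the strand neighbouring it on the right (which, in the absence of crossings, is the second strand of $g$) through one full period, and using periodicity and continuity to forbid the first strand from ever going straight or turning right, one concludes that the first strand turns left in every generation, i.e. it is the one-stranded speed $1$ glider $([[n_0,l_1^{(1)}]])^\infty$ of item 4 --- contradicting the preceding paragraph. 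This contradiction proves the lemma.

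The main obstacle is precisely this last propagation step: upgrading ``every turning rule of $g$ has $\frac{T}{ET}$'' to ``the first strand turns left in every generation''. This is the same difficulty that makes the proof of \thref{notspeedc2strands} long, and I expect to reuse its two-case structure ($\frac{T}{ES}$ versus $\frac{S}{ES}$) almost verbatim; the only new feature for $n>2$ is that the first strand's right neighbour may temporarily drift away and return, but the no-null-chain and continuity constraints, together with the fact that all strands share a single valuated speed (\thref{gliderSpeed}), keep that behaviour tightly controlled, so nothing genuinely new to larger strand counts appears.
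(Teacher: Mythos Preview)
Your reflection reduction is valid and mirrors the paper's ``the negative case is analogous,'' so the overall architecture is fine. But two points deserve attention.

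First, your appeal to \thref{1StrandClassification} for the first strand assumes that the one-stranded subpattern carried by strand $1$ is itself an SCA pattern. That is not automatic: strand $1$'s behaviour in $g$ is governed by configurations whose right cell may contain strand $2$, so the same isolated configuration (say $(S,E)$ in the subpattern) can correspond to different bits of $g$'s turning rule in different generations, and the subpattern need not admit any single turning rule. This is precisely what \thref{pureAndNested} later establishes, so invoking it here is circular. The detour is in fact unnecessary---all you need is the \emph{speed} of the first strand, which is well defined via \thref{gliderSpeed} without any classification.

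Second, and more seriously, the reflection does not avoid the hard work: after reducing to $\frac{T}{ET}$ you must treat both signs of $\operatorname{Speed}(g)$, and only the positive case is easy (the first strand does turn left somewhere, and $\frac{T}{ET}$ propagates this). In the negative case the first strand has negative valuated speed and need never turn left, so $\frac{T}{ET}$ never applies to it directly; your target ``the first strand turns left in every generation'' cannot be reached by the propagation you sketch. This negative-$\frac{T}{ET}$ case is, via your reflection, exactly the paper's positive-$\frac{T}{TE}$ case, which the paper handles \emph{not} by rerunning \thref{notspeedc2strands} but by a separate pure/non-pure split: in the non-pure subcase one tracks the outermost strand until it acquires an empty outer neighbour and is then forced (by $\frac{T}{SE}$ then $\frac{T}{TE}$) to drift away forever, breaking periodicity; in the pure subcase one locates the first generation in which the configuration $[r_j^{(k)},n][n,n]$ appears and reads off $\frac{T}{SE}$ from the preceding generation, contradicting purity. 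Your $\frac{T}{ES}$/$\frac{S}{ES}$ split is the correct reflected case division, but the assertion that \thref{notspeedc2strands} transfers ``almost verbatim'' is not supported---its $\frac{S}{ES}$ branch is a two-strand computation that does not generalise, and the paper's argument here is genuinely different.
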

\begin{proof}
    We will prove the lemma only for positive noncrossing gliders, as the negative case is analogous. For contradiction, suppose otherwise. Then, all turning rules of some $n-$stranded glider $g = h^\infty$ contain at least one of $\frac{T}{ET}$ or $\frac{T}{TE}$.      
    
    First consider the case where all turning rules of $g$ contain $\frac{T}{ET}$. Because $g$ is a positive glider, there exists some $\delta_i$ with $\delta_i = [n_0, l_1^{(1)}]\cdots$. 

    \hfill\break
    \textbf{Claim 1}: For all $j\geq i, \delta_j = [n_0, l_1^{(1)}]\cdots$.
\hfill\break
    \textbf{Proof}: We proceed by induction.

\hfill\break
    \textbf{Base case}: $\delta_i = [n_0, l_1^{(1)}]\cdots$, so because we have $\frac{T}{ET}$, $\delta_{i+1} = [n_0, l_1^{(1)}]\cdots$.
    
\hfill\break
    \textbf{Inductive step}: Suppose, for some $j\in\mathbb{N}$, that $\delta_j = [n_0, l_1^{(1)}]\cdots$. Then, because we have the bit $\frac{T}{ET}$, $\delta_{j + 1} = [n_0, l_1^{(1)}]\cdots$ and the inductive hypothesis is confirmed.
\begin{flushright}
$\dashv_{\text{Claim}}$
\end{flushright}
    By claim 1, for all $k\geq i, \delta_k = [n_0, l_1^{(1)}]\cdots$. Because $h = [\delta_1', \cdots, \delta_m']$ has finite length $l$, there exists some $r > 0$ such that $lr > i$. Thus, for all $\delta_k$ in $h^{lr}$ with $k> l(r-1)$, $\delta_k = [n_0, l_1^{(1)}]\cdots$, so the first strand of $h$ turns left in every generation. Thus, by \thref{gliderSpeed} $g$ has speed $1$, a contradiction. We now suppose every turning rule of $g$ contains $\frac{T}{TE}$. We break the remainder of the proof into cases:

    \hfill\break
\textbf{Case 1:} $g$ is pure

\hfill\break
    \textbf{Claim 2:} There exists some $i\geq 1, 1\leq k\leq n$ such that $\length(h)\leq 2\length(h)$ and $\delta_{i+1} = \cdots[r_j^{(k)}, n^{\left(\varnothing\right)}_{j+1}][n^{\left(\varnothing\right)}_{j+2}, n^{\left(\varnothing\right)}_{j+3}]\cdots$.
    
    \textbf{Proof:} To prove the claim, first note that $h = [\delta_1',\cdots, \delta_m']$ cannot have $\delta_y' = \cdots[r_j^{(k)}, n^{\left(\varnothing\right)}_{j+1}][n^{\left(\varnothing\right)}_{j+2}, n^{\left(\varnothing\right)}_{j+3}]\cdots$ for all $1\leq y\leq m$ or else $g$ would have speed $-1$ by \thref{gliderSpeed}. Thus, there must exist some $1\leq y\leq m$ such that the configuration $[r_j^{(k)}, n^{\left(\varnothing\right)}_{j+1}][n^{\left(\varnothing\right)}_{j+2}, n^{\left(\varnothing\right)}_{j+3}]$ does not appear in $\delta_y'$.
    \begin{flushright}
$\dashv_{\text{Claim}}$
\end{flushright}
    If there exists a $1\leq x\leq m$ such that $\delta_x'$ does not contain the configuration $[r_j^{(k)}, n^{\left(\varnothing\right)}_{j+1}][n^{\left(\varnothing\right)}_{j+2}, n^{\left(\varnothing\right)}_{j+3}]$ but $\delta_{x+1}'$ does, pick $i = \length(h) + x$. Otherwise, $\delta_1' = \cdots[r_j^{(k)}, n^{\left(\varnothing\right)}_{j+1}][n^{\left(\varnothing\right)}_{j+2}, n^{\left(\varnothing\right)}_{j+3}]\cdots$ and $\delta_m'$ does not contain the configuration (by claim 2) $[r_j^{(k)}, n^{\left(\varnothing\right)}_{j+1}]$, in which case set $i = \length(h) = m$. Because $g$ is a non-crossing glider, strand $k$ in generation $1$ is the same strand in all generations of $g$. Notice that $\delta_{i+1} = \cdots[r_j^{(k)}, n^{\left(\varnothing\right)}_{j+1}][n^{\left(\varnothing\right)}_{j+2}, n^{\left(\varnothing\right)}_{j+3}]\cdots$, $\delta_{i} = \cdots[\cdots][n^{\left(\varnothing\right)}_{v+1}, n^{\left(\varnothing\right)}_{v+2}]\cdots$ such that $\gen([\cdots][n^{\left(\varnothing\right)}_{v+1}, n^{\left(\varnothing\right)}_{v+2}]) = [r_j^{(k)}, n^{\left(\varnothing\right)}_{j+1}]$ and $[\cdots]$ does not contain a right turning strand (as strand $k$ denotes the same strand in generation $i+1$ as in generation $i$). This implies, by the continuity of $g$, that we must have $\delta_i = \cdots[\cdots, s_{v}^{(k)}][n^{\left(\varnothing\right)}_{v+1}, n^{\left(\varnothing\right)}_{v+2}]$, so the rule $\frac{T}{SE}$ is used, a contradiction to the purity of $g$.

    \hfill\break
    \textbf{Case 2:} $g$ is not pure
    \hfill\break
    Consider strand $n$. Because $g$ has positive speed and is noncrossing, there must exist a generation $\delta_i$ in which $[n_j, l_{j+1}^{(n)}]$ occurs in $\delta_i$ by \thref{gliderSpeed} for some $j$. Thus, because strand $n$ cannot turn left in every generation (or else we would reach a contradiction to \thref{gliderSpeed}), there exists some $k > j$ such that $[\dots, s_j^{(n)}][n_{j+1}^{(\varnothing)}, n_{j+2}^{(\varnothing)}]$ occurs in $\delta_k$ for some $j$. Thus, there exists $u\in\mathbb{N}$ such that $[r_u^{(n)}, n_{u+1}^{(\varnothing)}][n_{u+2}^{(\varnothing)},n_{u+3}^{(\varnothing)}]$ occurs in $\delta_{k+1}$. Then, for all $m > k$, there exists a $u_m\in\mathbb{N}$ such that $[r_{u_m}^{(n)}, n_{u_m+1}^{(\varnothing)}][n_{u_m+2}^{(\varnothing)},n_{u_m+3}^{(\varnothing)}]$ occurs in $\delta_{m}$, so there exists no $\delta_m, m  > k$, with $\delta_m = \delta_k$. Thus, $g$ is not a glider, a contradiction.
\end{proof}
\begin{lemma}\thlabel{noconsective3Turns}
    Let $g$ be an $n$-stranded noncrossing glider that does not have speed $1$ or $-1$ and is such that there exist turning rules with the bit $\frac{T}{TT}$. Then, if $g$ is positive, the pattern:
    \begin{center}
        \includegraphics[scale=0.6]{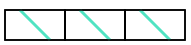}
    \end{center}
    does not occur in $g$, and if $g$ is negative, the pattern:
    \begin{center}
        \includegraphics[scale=0.9]{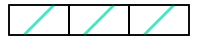}
    \end{center}
    does not occur in $g$.
\end{lemma}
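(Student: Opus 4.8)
The plan is to argue by contradiction, proving the statement for positive gliders; the negative case is the left–right mirror image, with $\frac{T}{TT}$ again available and ``turns left'' replaced throughout by ``turns right''. So suppose $g$ is a positive noncrossing glider with $g = h^\infty$, $h = [\delta_1',\dots,\delta_m']$ finite, suppose some turning rule of $g$ has the bit $\frac{T}{TT}$, and suppose the depicted pattern occurs in $g$ — i.e.\ some strand, which by the noncrossing hypothesis is the same strand $k$ in every generation of $g$, turns left in three consecutive generations $\delta_i,\delta_{i+1},\delta_{i+2}$ with the indicated local neighbourhood. I will show this forces strand $k$ to turn left in every generation from $\delta_i$ onward, which contradicts $g$ not having speed $1$.

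The core of the argument is a propagation claim: whenever the depicted pattern occurs at $\delta_j,\delta_{j+1},\delta_{j+2}$, it recurs, shifted one cell to the left, at $\delta_{j+1},\delta_{j+2},\delta_{j+3}$; in particular strand $k$ turns left in $\delta_{j+3}$. To prove this I would run the same style of finite, picture-driven case analysis used in the proofs of \thref{mustHaveSET} and \thref{speedcgliders}: examine the cell of $\delta_{j+2}$ containing strand $k$ together with its horizontal neighbours, and read off the turning configuration that governs strand $k$ in $\delta_{j+3}$. Either the slot to strand $k$'s left is empty, so the configuration is of $ET$/$TE$ type, or it contains another strand, in which case the only possibility compatible with the noncrossing hypothesis is that both strands turn, i.e.\ the $TT$ configuration; in the latter case, since $\frac{T}{TT}$ is available, strand $k$ turns left again. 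In the former case it is precisely the shape of the forbidden neighbourhood, together with the fact that strand $k$ was already turning left in $\delta_{j+1}$, that pins the configuration down and forces a left turn in $\delta_{j+3}$ as well; the bits $\frac{S}{TE}$ and $\frac{S}{ET}$ guaranteed by \thref{mustHaveSET}, and the bits ($\frac{T}{ST}$, $\frac{T}{SS}$, etc.) that must be realized because they appear inside the depicted pattern itself, are what close off the branches. Any branch that instead forces an S- or Z-crossing is immediately contradictory with $g$ being noncrossing, so such branches close at once. The boundary subcase where strand $k$ is the first strand is easier, not harder: then the slots to its left are permanently empty, so strand $k$'s consecutive-turn behaviour is governed by a single fixed turning-rule bit (the $ET$/$TE$ bit), and two consecutive left turns already force that bit to produce a turn, hence strand $k$ turns left forever outright.

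Granting the propagation claim, induction on $j \geq i$ shows strand $k$ turns left in $\delta_j$ for every $j \geq i$. Pick any window of $\length(\per(g))$ consecutive generations lying entirely beyond $\delta_i$ (such a window exists since $h$ is finite): over it strand $k$ moves $\length(\per(g))$ positions to the left, so the subpattern consisting of strand $k$ alone has speed $1$, hence valuated speed $1$. By \thref{gliderSpeed} all strands of the noncrossing glider $g$ have the same valuated speed, so the first strand of $g$ has valuated speed $1$, and again by \thref{gliderSpeed} this forces $\operatorname{Speed}(g) = 1$, contradicting the hypothesis. This settles the positive case, and the negative case follows by reflecting left and right.

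The main obstacle I expect is the propagation claim — specifically, verifying that the depicted neighbourhood alone determines the turning configuration at strand $k$ in the next generation regardless of what the distant strands $k\pm 1$ are doing, and correctly enumerating the few ways a left turn can be re-enabled (the $\frac{T}{TT}$ branch, the extremal $ET$/$TE$ branch, and their interaction with the straight/turning strand that may sit to the right of strand $k$). This is the same localized case checking as in the two-stranded classification of Section~\ref{sec:TwoStrands}, confined to a two-strand-wide band around strand $k$; the noncrossing hypothesis (so that strands keep their relative order and strand $k$'s identity is stable) is exactly what keeps that band from interacting with the rest of the pattern, and it is what makes the bound ``three turns'' — rather than something depending on $n$ — the right one.
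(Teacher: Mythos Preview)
You have misidentified the pattern being forbidden. In \thref{noconsective3Turns} the depicted configuration is \emph{spatial}, not temporal: it is three \emph{adjacent strands} $u,u+1,u+2$ all turning left in a \emph{single} generation, i.e.\ a sublist of the form $[n,l^{(u)}][n,l^{(u+1)}][n,l^{(u+2)}]$ inside some $\delta_i$. The paper's case split makes this explicit --- each case fixes the cell immediately to the left of that three-cell block within $\delta_i$ (empty, $[s,n]$, $[n,s]$, $[s,s]$, $[r,n]$) and then chases the configuration backward and forward through generations to force either a crossing or the bit $\frac{S}{ES}$, contradicting positivity. The temporal statement you are trying to prove --- one strand turning left in three consecutive generations --- is the content of the \emph{next} lemma, \thref{no3Turns}, which in the paper is proved \emph{using} \thref{noconsective3Turns} as an input.

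Even read as an attempt at the temporal statement, your propagation step has a gap. You assert that in $\delta_{j+2}$ the slot to strand $k$'s left is either empty or occupied by a turning strand, so only the $ET$ or $TT$ configurations arise. But nothing rules out a \emph{straight} strand sitting there, giving the $ST$ configuration; if a turning rule of $g$ has $\frac{S}{ST}$, strand $k$ goes straight in $\delta_{j+3}$ and the induction breaks. The paper's actual proof of \thref{no3Turns} is precisely a careful elimination of these side-cases, and it leans essentially on the spatial three-turn lemma you were supposed to be proving here. So the approach needs to be reset: first prove the horizontal statement by the paper's cell-by-cell case analysis, and only then attack the vertical one.
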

\begin{proof} We only prove the case where $g$ is a positive glider, as the case where $g$ is a negative glider is analogous. We proceed by contradiction. Notice that there are five cases to consider, depending on the cell directly to the left of the leftmost turn which is part of at least three consecutive left turns in a generation:
\begin{enumerate}
    \item The case where $[n_k^{\left(\varnothing\right)}, n^{\left(\varnothing\right)}_{k+1}][n^{\left(\varnothing\right)}_{k+2}, l_{k+3}^{(u)}][n^{\left(\varnothing\right)}_{k+4}, l_{k+5}^{(u+1)}][n^{\left(\varnothing\right)}_{k+6}, l_{k+7}^{(u+2)}]$ occurs as a sublist of  $g$.
    \item The case where $[s_k^{(u)}, n^{\left(\varnothing\right)}_{k+1}][n^{\left(\varnothing\right)}_{k+2}, l_{k+3}^{(u + 1)}][n^{\left(\varnothing\right)}_{k+4}, l_{k+5}^{(u+2)}][n^{\left(\varnothing\right)}_{k+6}, l_{k+7}^{(u+3)}]$ occurs as a sublist of $g$.
    \item The case where $[n_k^{\left(\varnothing\right)}, s_{k+1}^{(u)}][n^{\left(\varnothing\right)}_{k+2}, l_{k+3}^{(u + 1)}][n^{\left(\varnothing\right)}_{k+4}, l_{k+5}^{(u+2)}][n^{\left(\varnothing\right)}_{k+6}, l_{k+7}^{(u+3)}]$ occurs as a sublist of $g$.
    \item The case where $[s_k^{(u)}, s_{k+1}^{(u+1)}][n^{\left(\varnothing\right)}_{k+2}, l_{k+3}^{(u + 2)}][n^{\left(\varnothing\right)}_{k+4}, l_{k+5}^{(u+3)}][n^{\left(\varnothing\right)}_{k+6}, l_{k+7}^{(u+4)}]$ occurs as a sublist of $g$.
    \item The case where $[r_k^{(u)}, n_{k+1}^{(\varnothing)}][n^{\left(\varnothing\right)}_{k+2}, l_{k+3}^{(u + 1)}][n^{\left(\varnothing\right)}_{k+4}, l_{k+5}^{(u+2)}][n^{\left(\varnothing\right)}_{k+6}, l_{k+7}^{(u+3)}]$ occurs as a sublist of $g$.
\end{enumerate}
Notice that case 5 leads to a contradiction because $\frac{T}{TT}$ forces $g$ to contain a crossing.

\hfill\break
    \textbf{Case 1:}  The pattern $[n_k^{\left(\varnothing\right)}, n^{\left(\varnothing\right)}_{k+1}][n^{\left(\varnothing\right)}_{k+2}, l_{k+3}^{(u)}][n^{\left(\varnothing\right)}_{k+4}, l_{k+5}^{(u+1)}][n^{\left(\varnothing\right)}_{k+6}, l_{k+7}^{(u+2)}]$ occurs as a sublist of $g$.
    
    Choose $\delta_i = \cdots[n_k^{\left(\varnothing\right)}, n^{\left(\varnothing\right)}_{k+1}][n^{\left(\varnothing\right)}_{k+2}, l_{k+3}^{(u)}][n^{\left(\varnothing\right)}_{k+4}, l_{k+5}^{(u+1)}][n^{\left(\varnothing\right)}_{k+6}, l_{k+7}^{(u+2)}]\cdots$ such that $i\geq 4$. This is possible by the fact that $g$ is a glider. By $\frac{S}{ET}$ and $\frac{T}{TT}$, we get:
      \begin{center}
        \includegraphics[scale=0.9]{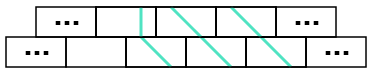}
    \end{center}
    Thus, to avoid a crossing, we must have the rule $\frac{S}{ST}$. Note that the outlined cell must be empty:
      \begin{center}
        \includegraphics[scale=0.6]{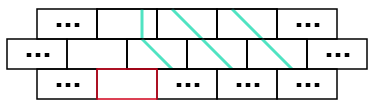}
    \end{center}
    So we must have:
      \begin{center}
        \includegraphics[scale=0.6]{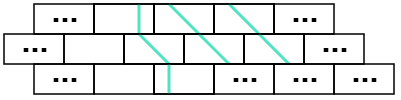}
    \end{center}
    Which, because we have $\frac{S}{ST}$, means we must have:
    \begin{center}
        \includegraphics[scale=0.6]{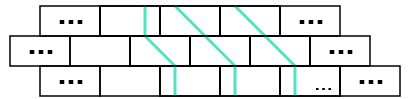}
    \end{center}
    So we also have $\frac{T}{SS}$. By $\frac{T}{SS}$ and $\frac{T}{TT}$, we must have one of the following options:
    \begin{center}
        \includegraphics[scale=0.6]{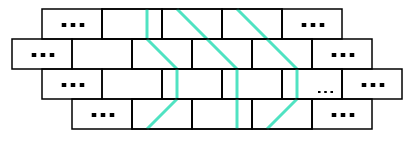}
    \end{center}
    or 
     \begin{center}
        \includegraphics[scale=0.6]{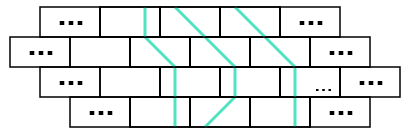}
    \end{center}
    In the first case, we get:
   \begin{center}
        \includegraphics[scale=0.6]{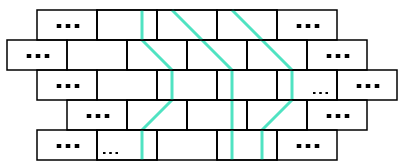}
    \end{center}
    By the fact that we have a turning rule of $g$ with $\frac{S}{TE}$ (\thref{mustHaveSET}). This is a contradiction to the assumption that the glider has positive speed, as it forces all turning rules of $g$ to have $\frac{S}{ES}$. In the second case, we get:
    \begin{center}
        \includegraphics[scale=0.6]{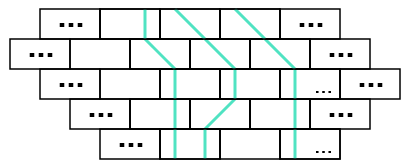}
    \end{center}
    because $g$ is a non-crossing glider. This is also a contradiction, as it forces all turning rules of $g$ to have $\frac{S}{ES}$, which contradicts the fact that all turning rules of $g$ have $\frac{T}{ES}$. Having reached contradictions in all parts of case 1, we move to case 2.

\hfill\break
    \textbf{Case 2:} The sublist $[s_k^{(u)}, n^{\left(\varnothing\right)}_{k+1}][n^{\left(\varnothing\right)}_{k+2}, l_{k+3}^{(u + 1)}][n^{\left(\varnothing\right)}_{k+4}, l_{k+5}^{(u+2)}][n^{\left(\varnothing\right)}_{k+6}, l_{k+7}^{(u+3)}]$ occurs in $g$.
    
    Choose $\delta_i = \cdots[s_k^{(u)}, n^{\left(\varnothing\right)}_{k+1}][n^{\left(\varnothing\right)}_{k+2}, l_{k+3}^{(u + 1)}][n^{\left(\varnothing\right)}_{k+4}, l_{k+5}^{(u+2)}][n^{\left(\varnothing\right)}_{k+6}, l_{k+7}^{(u+3)}]\cdots$ such that $i\geq 4$. This is possible because $g$ is a glider. We have two options because we have $\frac{T}{TT}$:
     \begin{center}
        \includegraphics[scale=0.6]{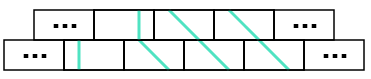}
    \end{center}
    or 
    \begin{center}
        \includegraphics[scale=0.6]{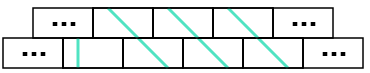}
    \end{center}
     In the first case, because we have $\frac{S}{ET}$ and $\frac{S}{ST}$, we must get:
   \begin{center}
        \includegraphics[scale=0.6]{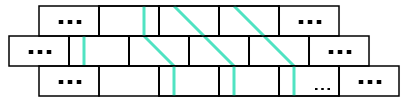}
    \end{center}
     Because we have $\frac{T}{SS}$ and $\frac{T}{TT}$, this means we must have one of the following:
     \begin{center}
        \includegraphics[scale=0.6]{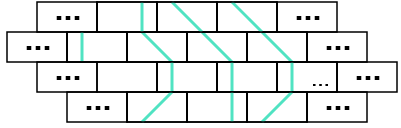}
    \end{center}
    or 
    \begin{center}
        \includegraphics[scale=0.6]{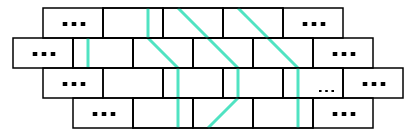}
    \end{center}
     In both cases, we must have $\frac{S}{ST}$. In the first case, we have $\frac{S}{TE}$ and $\frac{S}{ST}$, so we get:
    \begin{center}
        \includegraphics[scale=0.6]{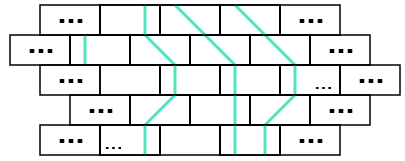}
    \end{center}
    by \thref{gliderSpeed}. This contradicts $\frac{T}{ES}$. In the second case, we have no crossings in $g$, so we get:
   \begin{center}
        \includegraphics[scale=0.6]{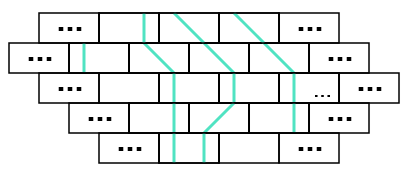}
    \end{center}
    If $g$ is pure, this is an immediate contradiction, so consider the case where all turning rules of $g$ have $\frac{T}{SE}$. Because $g$ has $\frac{T}{SE}$, we get:
    \begin{center}
        \includegraphics[scale=0.9]{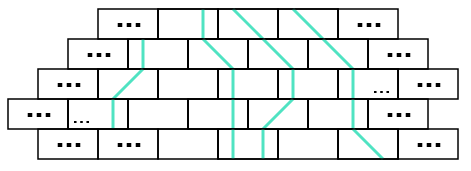}
    \end{center}
     This contradicts the fact that we have  $\frac{T}{ES}$. Thus, we must have:
    \begin{center}
        \includegraphics[scale=0.6]{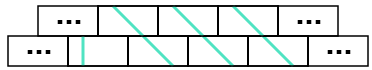}
    \end{center}
    Because we have $\frac{T}{ST}$, we must have:
 \begin{center}
        \includegraphics[scale=0.6]{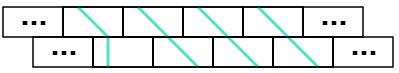}
    \end{center}
    where the leftmost strand turning strand pictured in generation $\delta_i$ is strand $k$. 
    
\hfill\break
    \textbf{Claim 1:} The pattern $p$, pictured below:
     \begin{center}
        \includegraphics[scale=0.6]{scaStuff/scafig4.png}
    \end{center}
    occurs on strands $u + 1- t, u + 2 - t, u + 3 - t$ in generation $\delta_{i + t}$ for all $0\leq t\leq u$.
\hfill\break
    \textbf{Proof:} We proceed by finite induction on $t$.

\hfill\break
    \textbf{Base case:} If $t = 0$, the base case holds by choice of $i$.

\hfill\break
    \textbf{Inductive Step:} Suppose $p$ occurs on strands $u + 1 - t, u + 2 - t, u + 3 - t$ in generation $\delta_{i + t}$ for some $0\leq t\leq u-1$. There are four options for the outlined cell below:
    \begin{center}
        \includegraphics[scale=0.6]{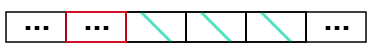}
    \end{center}

    Option 1: The cell is $[n_j^{\left(\varnothing\right)}, n^{\left(\varnothing\right)}_{j+1}]$ for some $j$. Then, we get:
    
    \begin{center}
        \includegraphics[scale=0.6]{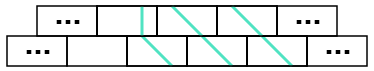}
    \end{center}
    
    because we have the rule $\frac{S}{ET}$. By the rule $\frac{T}{ST}$, the glider contains a crossing, contradicting our assumption that $g$ is pure.

    Option 2: The cell is $[x, s_{j+1}^{(u-t)}]$ for some $x\in\{n_j^{\left(\varnothing\right)}, s_j^{(u-t-1)}\}$. By the rule $\frac{T}{ST}$, there exists a crossing in the next generation, a contradiction to our assumption that $g$ is non-crossing. 

    Option 3: The cell is $[r_j^{(u-t)}, n^{\left(\varnothing\right)}_{j+1}]$. By the fact that we have $\frac{T}{TT}$, we get a crossing in the next generation, a contradiction to our assumption that $g$ contains no crossings.

    Option 4: The cell is $[n_j^{\left(\varnothing\right)}, l_{j+1}^{(u-t)}]$. By the fact that we have $\frac{T}{TT}$, we get:

   \begin{center}
        \includegraphics[scale=0.6]{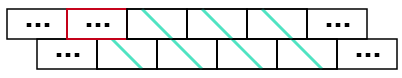}
    \end{center}

   By option 2, the outlined cell cannot have $[x, s_{j+1}^{(u-t-1)}]$ for some $x\in\{n_j^{\left(\varnothing\right)}, s_j^{(u-t-2)}\}$, so it must have $[n_j^{\left(\varnothing\right)}, l_{j+1}^{(u-t-1)}]$ for some $j$.

    Pattern $p$ occurs on strands $u+1 - t - 1 = u + 1 - (t + 1), u - t + 2 - 1 = u + 2 - (t + 1), u + 3 - t - 1 = u + 3 - (t + 1)$ in generation $\delta_{i + t + 1}$, so the inductive hypothesis is confirmed.

    Option 5: The cell is $[s_j^{(u-t)}, n^{\left(\varnothing\right)}_{j+1}]$. We must have, by $\frac{T}{ST}, \frac{T}{TT}$:

  \begin{center}
        \includegraphics[scale=0.6]{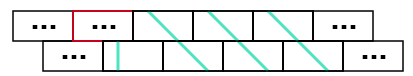}
    \end{center}

    If the outlined cell is $[x, s_{j + 1}^{(u-t-1)}], x\in\{n_j^{\left(\varnothing\right)}, s_j^{(u-t-2)}\}$, we reach a contradiction as in option 2. Thus, we must have:

 \begin{center}
        \includegraphics[scale=0.6]{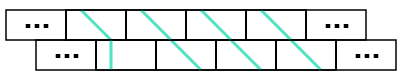}
    \end{center}

    Which confirms the inductive hypothesis.

    We have shown that the inductive hypothesis is confirmed in all possible cases, so the claim is proved.
\begin{flushright}
$\dashv_{\text{Claim}}$
\end{flushright}

By claim 1, let $t = u$. Then, the pattern:

 \begin{center}
        \includegraphics[scale=0.6]{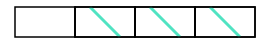}
    \end{center}

occurs in $g$. By case 1, this is a contradiction.

\hfill\break
\textbf{Cases 3 and 4:} The subpattern $[n_j^{\left(\varnothing\right)}, s_{j+1}^{(u)}][n^{\left(\varnothing\right)}_{j+2}, l_{j+3}^{(u+1)}][n^{\left(\varnothing\right)}_{j+4}, l_{j+5}^{(u+2)}][n^{\left(\varnothing\right)}_{j+6}, l_{j + 7}^{(u+3)}]$ occurs in $g$. Notice that considering the pattern as a subpattern instead of a sublist allows us to consider cases 3 and 4 at once.

Let $r$ be the maximum number of consecutive left turns occurring anywhere in the period of $g$. Then, $r\geq 3$, and in any generation $\delta_i$ that contains $r$ consecutive left turns, they must be preceded by $[n_k^{\left(\varnothing\right)}, s_{k+1}^{(u)}]$, or else we would reach a contradiction to case 1 or 2. Choose $\delta_i$ such that $i\geq n + 2$, which is possible by the fact that $g$ is a glider. Let $m = r + 2$.

\hfill\break
\textbf{Claim 2:} Pattern $p$:
 \begin{center}
        \includegraphics[scale=0.9]{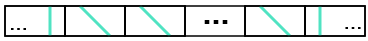}
\end{center}
appears in generation $\delta_{i-k}$ on strands $u - k, \cdots, u + m - k$ for $0\leq k\leq u-1$
\hfill\break
\textbf{Proof:} We proceed by finite induction on $k$.

\hfill\break
\textbf{Base Case:} This follows from our assumption on $i$.

\hfill\break
\textbf{Inductive Step:} 
Suppose $p$ appears in generation $\delta_{i-k}$ on strands $u - k, \cdots, u + m - k$ for some $1\leq k\leq u - 2$.
 To avoid a contradiction to maximality and because we have $\frac{S}{ST}$, we must have: 
 \begin{center}
        \includegraphics[scale=0.6]{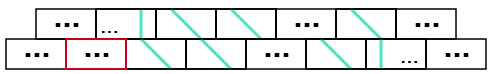}
    \end{center}
so we have the bit $\frac{T}{TS}$. Notice that the outlined cell must contain $[n_j^{\left(\varnothing\right)}, s_{j+1}^{(u-1)}]$ or else we would reach a contradiction to maximality or could apply cases 1 and 2.
Then we have:
 \begin{center}
        \includegraphics[scale=0.6]{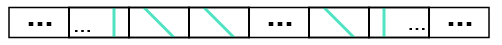}
    \end{center}

in generation $
\delta_{i-k}$.

 $\delta_{i -k + 1}$ has, by the fact that $g$ contains no crossings and the fact that we have $\frac{T}{TT}$ (claim 1):
 \begin{center}
        \includegraphics[scale=0.9]{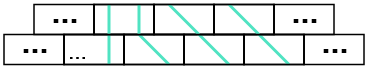}
    \end{center}
Thus, the rule $\frac{S}{ST}$ is used. 

\hfill\break
\textbf{Subclaim 1:} Strand $u-k$ turns in generation $i -k - 1$.
\hfill\break
\textbf{Proof:} For contradiction, suppose otherwise. By $\frac{S}{ST}$, we must have:
 \begin{center}
        \includegraphics[scale=0.9]{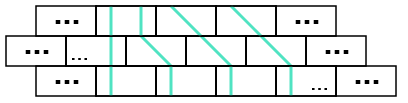}
    \end{center}
Thus the rule $\frac{T}{SS}$ is used. We now have two cases. If strand $u$ turns in $\delta_{i -k- 2}$, we get:
 \begin{center}
        \includegraphics[scale=0.9]{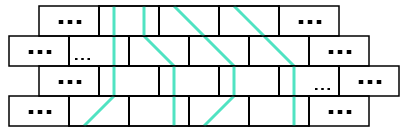}
    \end{center}
as we have $\frac{T}{SS}, \frac{T}{TT}$. To avoid a crossing, in generation $\delta_{i-3}$, we get:

 \begin{center}
        \includegraphics[scale=0.9]{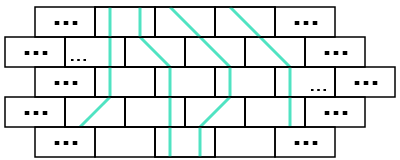}
    \end{center}
a contradiction to $\frac{T}{ES}$. If instead we have that strand $u$ goes straight in $\delta_{i -k- 2}$, we get:
 \begin{center}
        \includegraphics[scale=0.9]{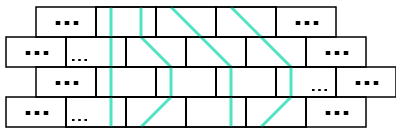}
    \end{center}
because we also have $\frac{T}{SS}, \frac{T}{TT}$. To avoid a crossing in generation $\delta_{i-k-3}$, we have:
 \begin{center}
        \includegraphics[scale=0.9]{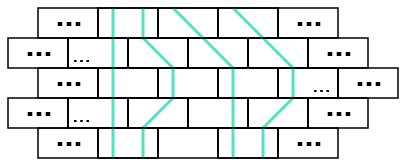}
    \end{center}
which contradicts the fact that we have $\frac{T}{ES}$. 
\begin{flushright}
$\blacktriangle_{\text{Subclaim}}$
\end{flushright}

\hfill\break
Thus, by the bit $\frac{S}{ST}$ and subclaim 1, we must have:
 \begin{center}
        \includegraphics[scale=0.6]{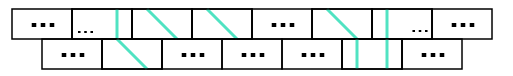}
    \end{center}
in $\delta_{i-k-1}$. If we have:

 \begin{center}
        \includegraphics[scale=0.6]{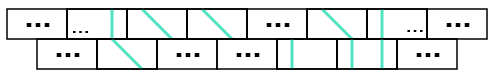}
    \end{center}
We must have $\frac{T}{SS}$ along with $\frac{T}{TS}$, which means we have:

 \begin{center}
        \includegraphics[scale=0.6]{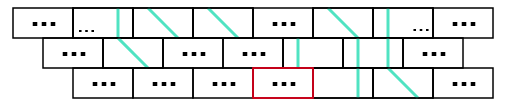}
    \end{center}
Notice that if the strand in the highlighted cell turns, we reach a contradiction to $\frac{T}{TS}$ but if it goes straight, we reach a contradiction to $\frac{T}{SS}$. Thus, we reach a contradiction in all cases where strand $u + m - k - 2$ is straight in generation $\delta_{i-k-1}$. Because we have the rule $\frac{S}{ST}$ and because we already established that strand $u - k - 1$ turns in $\delta_{i-k-1}$, we cannot have strand $u - k + t$ be straight in $\delta_{i-k-1}$ for $1\leq t\leq m - 2$.







Thus, we must have:

 \begin{center}
        \includegraphics[scale=0.6]{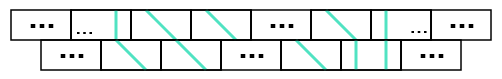}
    \end{center}
This is the same number of left turns as in the base case, so there are more than 2 consecutive left turning strands, so by cases 1 and 2 and because $m - 2$ is the maximum number of consecutive left turns any generation can have, we must have:

\begin{center}
        \includegraphics[scale=0.6]{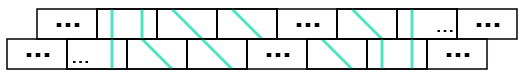}
\end{center}
Thus, the pattern $p$ repeats in $\delta_{i - k}$ on strands $u - (k + 1), u + m - (k + 1)$ and the inductive hypothesis is confirmed.
\begin{flushright}
$\dashv_{\text{Claim}}$
\end{flushright}

By subclaim 1, we have that the pattern $p$ occurs in $\delta_{i - (u - 1)}$ on strands $1$ through $1 + m$. By the bit $\frac{T}{ES}$, we must have:

\begin{center}
        \includegraphics[scale=0.6]{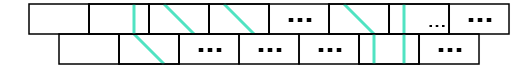}
\end{center}
and by the same reasoning as was given in the proof of the inductive step, we must have:

\begin{center}
        \includegraphics[scale=0.6]{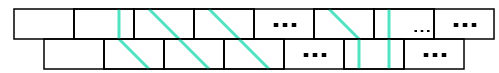}
\end{center}
Thus we have an empty cell followed by at least three left turns, a contradiction to case 1. Recall that case 5 leads to a contradiction because $\frac{T}{TT}$ forces $g$ to contain a crossing.
\end{proof}

\begin{lemma}\thlabel{no3Turns}
    Let $g$ be an $n$-stranded noncrossing glider that does not have speed $1$ or $-1$. If $g$ is positive, then there exists no subpattern of $g$ with:
    \begin{center}
        \includegraphics[scale=0.6]{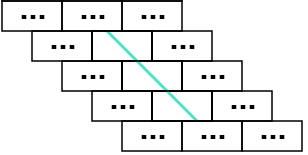}
    \end{center}
    and if $g$ is negative, there exists no subpattern of $g$ with:
  \begin{center}
        \includegraphics[scale=0.6]{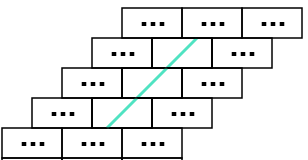}
    \end{center}
\end{lemma}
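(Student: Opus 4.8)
The plan is to split on bit $8$ of the turning rules of $g$ and to inherit all but one case from \thref{noconsective3Turns}; throughout it suffices to treat the positive case, since the left/right reflection that sends $l\mapsto r$, swaps the bits $\frac{S}{ET}\leftrightarrow\frac{S}{TE}$, and fixes bit $8$ turns the positive statement into the negative one. Suppose first that some turning rule of $g$ contains $\frac{T}{TT}$. Then $g$ satisfies the hypotheses of \thref{noconsective3Turns}, whose conclusion already forbids the three-consecutive-left-turns configuration in $g$, and since the case analysis there phrases that configuration so as to cover subpatterns (its cases $3$ and $4$ are merged exactly by passing to a subpattern), no subpattern of $g$ contains three consecutive left turns. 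So from now on assume every turning rule of $g$ contains $\frac{S}{TT}$, and argue by contradiction from a subpattern of $g$ having three consecutive left-turning cells in some generation.

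The first step is to read this back into $g$ itself. In a non-crossing pattern the cell immediately to the left of a left-turning strand must be one of $[n^{\left(\varnothing\right)}_j,n^{\left(\varnothing\right)}_{j+1}]$, $[s,n]$, $[r,n]$, $[n,l]$, and the cell immediately to its right must be one of $[n^{\left(\varnothing\right)}_j,n^{\left(\varnothing\right)}_{j+1}]$, $[n,s]$, $[n,l]$ --- any other choice would force two strands of $g$ to meet along the edge shared by adjacent cells at the top or bottom of a generation, violating continuity. Hence the three left-turning cells of the subpattern come from three left-turning strands $a<b<c$ (by index) of $g$ lying in one generation $\delta_i$, with only $[n^{\left(\varnothing\right)}_j,n^{\left(\varnothing\right)}_{j+1}]$ and $[n,s]$ cells between $a$ and $b$ and between $b$ and $c$; passing to a later occurrence of $\delta_i$ (legitimate since $g$ is periodic, so that $\delta_{i-1}$ exists) and replacing $a$, if necessary, by the leftmost strand of the maximal run of adjacent left turns containing it, we may assume the cell immediately to the left of $a$ in $\delta_i$ is $[n^{\left(\varnothing\right)}_j,n^{\left(\varnothing\right)}_{j+1}]$, $[s,n]$, or $[r,n]$.

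The core of the proof is then a descent on generations, modeled on the argument in \thref{noconsective3Turns} but with $\frac{S}{TT}$ replacing $\frac{T}{TT}$. The point is that, using continuity, the non-crossing hypothesis, and the bits $\frac{S}{ET},\frac{S}{TE}$ supplied by \thref{mustHaveSET}, three left turns in $\delta_i$ cannot be produced from $\delta_{i-1}$ without either invoking a $TT$ turning configuration --- impossible, since $g$ has no turning rule with $\frac{T}{TT}$ --- or creating a crossing, so $\delta_{i-1}$ must itself contain three consecutive left turns sitting one strand lower; this is verified by a case analysis on whether each of strands $a,b,c$, and the straight strands between them, turned left, went straight, or turned right into $\delta_i$. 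Iterating this descent walks the run of left turns down toward strand $1$, eventually forcing the first strand of $g$ to turn left in all sufficiently late generations, whence $g$ has speed $1$ by \thref{gliderSpeed} (equivalently \thref{speedcgliders}), contradicting the hypothesis. The branches of the case analysis that do not descend cleanly instead terminate by creating a crossing or by forcing every turning rule of $g$ to contain $\frac{S}{ES}$, which is incompatible with $g$ being a positive glider, and so are contradictions as well; the negative statement follows by the reflection noted above.

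I expect the main obstacle to be exactly this descent step under $\frac{S}{TT}$: without the bit $\frac{T}{TT}$ that propagated the run of turns in \thref{noconsective3Turns}, one must re-run essentially the entire case analysis of that lemma, and the bookkeeping --- which parent cell in $\delta_{i-1}$ each of the three left-turning strands descends from, together with the empty cells and straight strands that the subpattern reduction hides between the turns --- is what must be handled carefully to guarantee that the run genuinely reaches strand $1$ rather than petering out.
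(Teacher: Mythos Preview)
You have misread the statement. The ``3-turn'' forbidden by this lemma is a \emph{vertical} configuration: a single strand turning left in three consecutive generations $\delta_{i-1},\delta_i,\delta_{i+1}$. The horizontal configuration --- three adjacent strands all turning left in one generation --- is the content of \thref{noconsective3Turns}, not of this lemma. You can see this from how the lemma is used downstream: in \thref{2TurnPossibilities} the hypothesis is that one strand $k$ turns left in two consecutive generations, and the conclusion ``to avoid a contradiction to \thref{no3Turns}'' forces that same strand not to turn left a third time.

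Because of this misreading, your argument is aimed at the wrong target. Your ``descent on generations'' tries to show that a horizontal run of left turns propagates backward in time toward strand $1$; that is a re-derivation (and extension to the $\frac{S}{TT}$ case) of \thref{noconsective3Turns}, not a proof of the present lemma. The paper's actual proof runs in the opposite direction: it starts from the vertical 3-turn, examines the cell immediately to the left of the turning strand in the middle generation, and shows (Claim~1) that this forces every turning rule of $g$ to contain $\frac{T}{TT}$ together with two horizontally adjacent left turns $[n,l][n,l]$ in that row. Only then does \thref{noconsective3Turns} enter, to cap the horizontal run at two; the remainder is a case analysis on what sits to the left of that horizontal pair, each branch contradicting $\frac{T}{ES}$, the non-crossing hypothesis, or the assumed vertical 3-turn itself. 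Your split on bit~$8$ is therefore backwards: rather than assuming $\frac{T}{TT}$ or $\frac{S}{TT}$ at the outset, the paper \emph{derives} $\frac{T}{TT}$ from the existence of the vertical 3-turn, which is what makes \thref{noconsective3Turns} applicable at all.
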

\begin{proof}
    We prove the result for positive gliders. The negative case follows analogously. First, for contradiction, suppose there exists such a $g$ with the subpattern. We refer to
    \begin{center}
        \includegraphics[scale=0.6]{scaStuff/mchafig1.png}
    \end{center}
    as the 3-turn if it is a subpattern of $g$. There are three options for the highlighted cell $C_{i, j}$ in generation $i$, as $\frac{S}{ET}$ and $\frac{S}{TE}$ are in some turning rule of $g$ by \thref{mustHaveSET}:
      \begin{center}
        \includegraphics[scale=0.6]{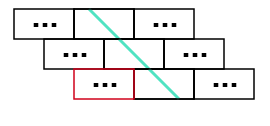}
    \end{center}
    These options are $C_{i, j} = [s_k^{(u)}, n^{\left(\varnothing\right)}_{k+1}], C_{i, j} = [n_k^{\left(\varnothing\right)}, l_{k+1}^{(u)}], C_{i, j} = [n_k^{\left(\varnothing\right)}, n_k^{\left(\varnothing\right)}], C_{i, j} = [n^{\left(\varnothing\right)}_{k}, s_{k+1}^{(u)}], C_{i, j} = [s_k^{(u)}, s_{k+1}^{(u+1)}],$ and $C_{i, j} = [r_k^{(u)}, n^{\left(\varnothing\right)}_{k+1}]$ for some $u, k\geq 1$. Notice that the last three cases are immediately eliminated by the placement of the highlighted cell. The third case is also ruled out by the fact $g$ does not have speed 1 and thus that a turning rule of $g$ has bit $\frac{S}{ET}$ by \thref{mustHaveSET}.  
    \hfill\break
    \textbf{Claim 1:} All turning rules of $g$ contain the bit $\frac{T}{TT}$ and $[n_j^{\left(\varnothing\right)},l_{j+1}^{(u)}], [n^{\left(\varnothing\right)}_{j+2},l_{j+3}^{(u+1)}]$ is such that $[n^{\left(\varnothing\right)}_{j+2},l_{j+3}^{(u+1)}]$ is a subpattern of the 3-turn.
    \hfill\break
    \textbf{Proof:} If $C_{i, j} = [s_k^{(u)}, n^{\left(\varnothing\right)}_{k+1}]$ for some $k, u\geq 1$, we must have the bit $\frac{T}{ST}$. Thus, to avoid a crossing, we must have:
      \begin{center}
        \includegraphics[scale=0.6]{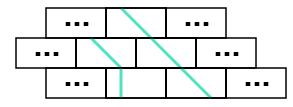}
    \end{center}
    Thus every turning rule of $g$ has the bit $\frac{T}{TT}$ in this case.
    If we have $C_{i, j} = [n_k^{\left(\varnothing\right)}, l_{k+1}^{(u)}]$, we must have the bit $\frac{T}{TT}$ by the next generation.
    \begin{flushright}
$\dashv_{\text{Claim}}$
\end{flushright}

\hfill\break
By claim 1 and \thref{noconsective3Turns}, the subpattern $[n^{\left(\varnothing\right)}_{k}, l_{k+1}^{(u)}][n^{\left(\varnothing\right)}_{k+2}, l_{k+3}^{(u+1)}][n^{\left(\varnothing\right)}_{k+4}, l_{k+5}^{(u+2)}]$ does not occur as a subpattern of the 3-turn in $g$, but the pattern $[n^{\left(\varnothing\right)}_{k}, l_{k+1}^{(u)}][n^{\left(\varnothing\right)}_{k+2}, l_{k+3}^{(u+1)}]$ must occur as a subpattern of the 3-turn as was shown in the proof of claim 1.

\hfill\break
\textbf{Case 1:} The sublist $[n^{\left(\varnothing\right)}_{k}, n^{\left(\varnothing\right)}_{k+1}][n^{\left(\varnothing\right)}_{k+2}, l_{k+3}^{(u)}][n^{\left(\varnothing\right)}_{k+4}, l_{k+5}^{(u+1)}]$ is such that $[n^{\left(\varnothing\right)}_{k+4}, l_{k+5}^{(u+1)}]$ is a subpattern of the 3-turn. Choose $i\geq 3$. We have:
\begin{center}
        \includegraphics[scale=0.6]{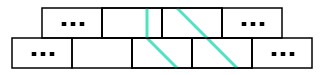}
\end{center}
    as we have the rule $\frac{S}{ET}$. In the first case, if we have $\frac{S}{ST}$, we get:
\begin{center}
        \includegraphics[scale=0.6]{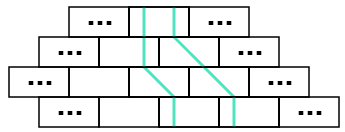}
\end{center}
    which contradicts our assumption that that $[n^{\left(\varnothing\right)}_{k+4}, l_{k+5}^{(u+1)}]$ is a subpattern of a 3-turn. If we instead have $\frac{T}{ST}$, we get a crossing in $\delta_{i + 2}$, which contradicts the fact that $g$ contains no crossings. 

\hfill\break
\textbf{Case 2:} $[s^{(u)}_k, n^{\left(\varnothing\right)}_{k+1}][n^{\left(\varnothing\right)}_{k+2}, l_{k+3}^{(u + 1)}][n^{\left(\varnothing\right)}_{k+4}, l_{k+5}^{(u+2)}]$ is such that $[n^{\left(\varnothing\right)}_{k+4}, l_{k+5}^{(u+2)}]$ is a subpattern of the 3-turn.

Choose $i\geq 2$ such that $\delta_i = \dots[s^{(u)}_k, n^{\left(\varnothing\right)}_{k+1}][n^{\left(\varnothing\right)}_{k+2}, l_{k+3}^{(u + 1)}][n^{\left(\varnothing\right)}_{k+4}, l_{k+5}^{(u+2)}]\dots$ and $[n^{\left(\varnothing\right)}_{k+4}, l_{k+5}^{(u+2)}]$ is a subpattern of the 3-turn. Because we have $\frac{T}{TT}$, we have:
\begin{center}
        \includegraphics[scale=0.6]{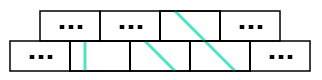}
\end{center}
   If strands $u$ and $u +1$ turn, we get:

\begin{center}
        \includegraphics[scale=0.6]{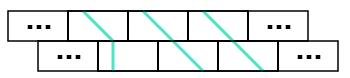}
\end{center}
   This is a contradiction because $[n^{\left(\varnothing\right)}_{k}, l_{k+1}^{(u)}][n^{\left(\varnothing\right)}_{k+2}, l_{k+3}^{(u+1)}][n^{\left(\varnothing\right)}_{k+4}, l_{k+5}^{(u+2)}]$ occurs nowhere in $g$. If strand $u$ turns but strand $u + 1$ does not, we get:
\begin{center}
        \includegraphics[scale=0.6]{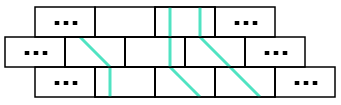}
\end{center}
   Thus, we have $\frac{T}{TT}, \frac{S}{ST}, \frac{S}{ET}$, so we must have:
\begin{center}
        \includegraphics[scale=0.6]{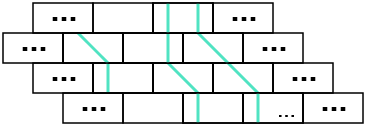}
\end{center}
   This is a contradiction to the assumption that $[n^{\left(\varnothing\right)}_{k+4}, l_{k+5}^{(u+2)}]$ occurs as a subpattern of the 3-turn. 
   
   If instead strand $u$ does not turn, we must consider the case a turning rule has $\frac{T}{ST}$ and the case where a turning rule has $\frac{S}{ST}$. In the first case, we get
\begin{center}
        \includegraphics[scale=0.9]{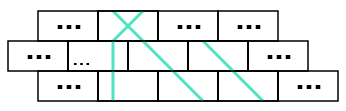}
\end{center}
   a contradiction to the fact that $g$ does not contain any crossings. In the second case, we get:

\begin{center}
        \includegraphics[scale=0.6]{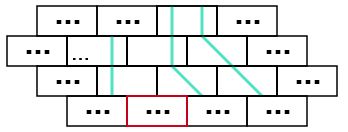}
\end{center}
   Notice that the outlined cell must be empty. Thus, because a turning rule has $\frac{S}{ST}$, we get:

\begin{center}
        \includegraphics[scale=0.6]{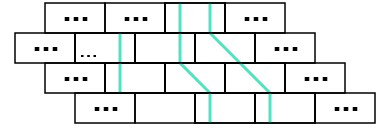}
\end{center}
   This is a contradiction to the fact $[n^{\left(\varnothing\right)}_{k+4}, l_{k+5}^{(u+2)}]$ is a subpattern of the 3-turn.

   Thus, we reach contradictions in all subcases of case 2.

\hfill\break
\textbf{Case 3:} We now consider the case where the subpattern in $\delta_i$, $[n_j^{\left(\varnothing\right)}, s_{j+1}^{(u)}][n^{\left(\varnothing\right)}_{j+2}, l_{j+3}^{(u+1)}][n^{\left(\varnothing\right)}_{j+4}, l_{j+5}^{(u+2)}]$ is such that $[n^{\left(\varnothing\right)}_{j+4}, l_{j+5}^{(u+2)}]$ is a subpattern of the 3-turn. Choose $i\geq 4$. By $\frac{S}{ST}, \frac{T}{TT}$, we get:
\begin{center}
        \includegraphics[scale=0.6]{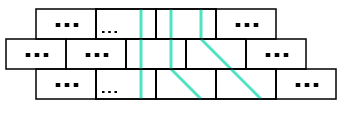}
\end{center}
So consider $\delta_{i-1}$. We must have:

\begin{center}
        \includegraphics[scale=0.6]{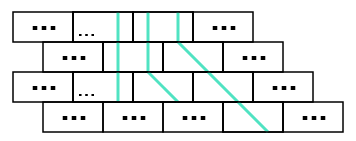}
\end{center}
because $u + 2$ turns in three consecutive generations, one of which is $\delta_i$. So because we have $\frac{S}{ST}$, we get:

\begin{center}
        \includegraphics[scale=0.6]{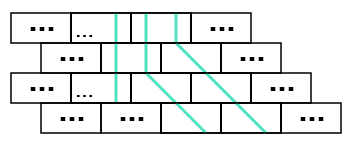}
\end{center}
and then

\begin{center}
        \includegraphics[scale=0.6]{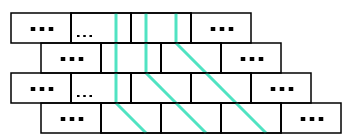}
\end{center}
a contradiction to \thref{noconsective3Turns}.

\end{proof}
\begin{lemma}\thlabel{2TurnPossibilities}
    Let $g$ be a noncrossing positive glider with speed less than $1$. Then, if the subpattern:
    \begin{center}
        \includegraphics[scale=0.6]{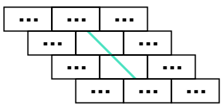}
    \end{center}
    occurs in $g$ on some strand $k$ and $g$ is pure, it must be a subpattern of one of:
    \begin{itemize}
        \item \includegraphics[scale=0.6]{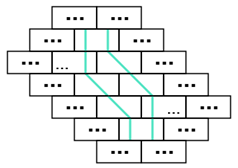}
        \item \includegraphics[scale=0.6]{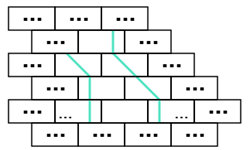}
    \end{itemize}
    If $g$ is not pure, it must be a subpattern of:
\begin{itemize}
    \item 
        \includegraphics[scale=0.5]{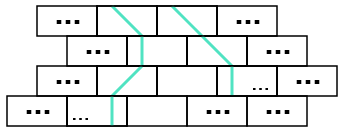}
\end{itemize}
  Let $g$ be a noncrossing negative glider with speed greater than $-1$. Then, if the subpattern:
   \begin{center}
        \includegraphics[scale=0.6]{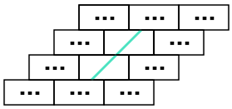}
    \end{center}
  occurs in $g$ and $g$ is pure, it must be a subpattern of one of:
    \begin{itemize}
        \item \includegraphics[scale=0.6]{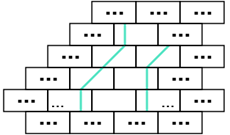}
        \item \includegraphics[scale=0.6]{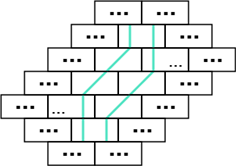}
    \end{itemize}
    if $g$ is not pure, it must be a subpattern of:
\begin{itemize}
    \item \includegraphics[scale=0.7]{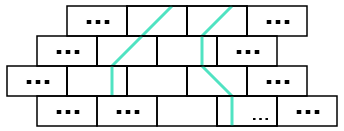}
\end{itemize}
\end{lemma}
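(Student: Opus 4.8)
The plan is to prove the statement for positive gliders and to deduce the negative case from the left–right reflection symmetry of the SCA model. This reflection swaps each $r$-strand with an $l$-strand, interchanges the turning-rule bits in each pair $\{SE,ES\}$, $\{ST,TS\}$, $\{ET,TE\}$ (fixing $SS$, $TT$ and the always-zero bit $4$), negates speed, and therefore sends a noncrossing negative glider of speed greater than $-1$ to a noncrossing positive glider of speed less than $1$. Since it also carries the two target pictures of the negative statement to the two target pictures of the positive statement (and the single non-pure picture to the single non-pure picture), the negative case is immediate once the positive case is known.

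So fix a noncrossing positive glider $g$ with speed strictly between $0$ and $1$, and suppose the displayed $2$-turn occurs on strand $k$: there is a generation $\delta_i$ in which strand $k$ is an $l$-strand and in $\delta_{i+1}$ strand $k$ is again an $l$-strand. Since $g$ is noncrossing, ``strand $k$'' names the same maximal one-strand subpattern in every generation, and since $g=a^\infty$ is periodic the $2$-turn recurs infinitely often, so we may choose an occurrence with $i$ large enough that several generations precede $\delta_i$. The two tools driving the argument are \thref{mustHaveSET}, which says every turning rule of $g$ contains $\frac{S}{TE}$ and $\frac{S}{ET}$, and \thref{no3Turns}, which says the $3$-turn does not occur as a subpattern of $g$; both apply because $g$ is noncrossing and has neither speed $1$ nor $-1$.

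I would then argue in three coordinated steps. \textbf{Backward step:} determine the cell of strand $k$ in $\delta_{i-1}$. By \thref{no3Turns}, strand $k$ cannot turn left in $\delta_{i-1}$, since that would form a $3$-turn with $\delta_i,\delta_{i+1}$, so strand $k$ either goes straight or turns right there; a split on which turning-rule bit produces the left turn of strand $k$ in $\delta_i$, combined with continuity and the presence of $\frac{S}{ET}$, pins down the cell type of strand $k$ in $\delta_{i-1}$ and the content of the cell to its left, yielding the incoming portion of the pictures. \textbf{Forward step:} determine the cell of strand $k$ in $\delta_{i+2}$; again \thref{no3Turns} forbids a left turn, and here the purity hypothesis enters. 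If $g$ is pure it has a turning rule with $\frac{S}{SE}$, which restricts the continuation to the two displayed possibilities; if $g$ is not pure then, being a positive glider, every turning rule of $g$ has $\frac{T}{SE}$, which restricts it to the single displayed possibility. \textbf{Neighbor step:} track the position into which strand $k$ turns and the behaviour of strands $k-1$ and $k+1$ while strand $k$ performs its two left turns; continuity together with the no-crossing hypothesis eliminate every local configuration not already present in the pictures. Assembling the three steps shows that the region of $g$ around the $2$-turn is a subpattern of one of the two displayed patterns when $g$ is pure, and of the single displayed pattern when $g$ is not pure.

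The main obstacle is the neighbor step together with the bookkeeping of when the target cell of a left turn is already occupied: one must verify that \emph{every} assignment of turning-rule bits compatible with \thref{mustHaveSET}, continuity, the no-crossing hypothesis and \thref{no3Turns} falls into one of the listed pictures, and in particular that a right turn of strand $k$ flanking the $2$-turn, or a genuine interaction with a neighbouring strand during the two left turns, is either impossible or already displayed. Keeping the pure and non-pure sub-cases cleanly separated — so that the $\frac{S}{SE}$ versus $\frac{T}{SE}$ dichotomy is invoked exactly where it is needed and nowhere else — is the principal organizational difficulty.
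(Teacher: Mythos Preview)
Your high-level ingredients are right --- \thref{mustHaveSET} and \thref{no3Turns} are exactly the tools, and the pure/non-pure split on $\frac{S}{SE}$ versus $\frac{T}{SE}$ is where that dichotomy enters --- but the ``neighbor step'' substantially underestimates what is required, and as stated the plan would not close.

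The paper does not organize the argument as backward/forward/neighbor on strand $k$; instead it cases on the content of the cell immediately above-left of the $2$-turn (five possibilities $a_1,\dots,a_5$) and chases each case forward. The easy cases die quickly against $\frac{S}{ET}$, $\frac{S}{TE}$, or purity. The hard case is $a_5$ (that cell contains a straight strand $k-1$), and within it the subcase $b_2$ forces $\frac{T}{TT},\frac{S}{ST},\frac{T}{SS}$ and then branches again. Here the contradiction is \emph{not} local to strands $k-1,k,k+1$: one must prove by induction that a fixed two-generation pattern $h$ propagates from strands $k-1,k$ to strands $k-2,k-1$, then $k-3,k-2$, and so on all the way down to strands $1,2$, at which point the boundary forces a violation of $\frac{T}{ES}$ (this is Claim~2 in the paper). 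In the non-pure subcase there is a second induction (Claim~1) propagating a pattern \emph{upward} through strands $k+i,k+i+1$ until it hits strand $n$ and contradicts $\frac{T}{SE}$. Your plan, which only tracks $k-1$ and $k+1$, does not anticipate either of these global propagation arguments; without them the subcase $a_5,b_2$ cannot be eliminated and the proof does not go through.

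A smaller point: your backward step proposes to rule out a left turn of strand $k$ in $\delta_{i-1}$ via \thref{no3Turns}, but that lemma forbids three consecutive turns of \emph{some} strand as a subpattern, and the relevant cell in $\delta_{i-1}$ may belong to a different strand (e.g.\ $k-1$), so the casework is on the content of that cell, not on the behaviour of strand $k$ there.
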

\begin{proof}
    As the negative case is analogous, we will only prove the lemma for positive pure gliders. Consider:
    \begin{center}
        \includegraphics[scale=0.6]{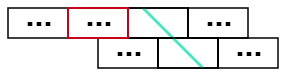}
\end{center}
    We consider some cases:
    \begin{itemize}
        \item $\bm{a_1}:$ The red outlined cell in the above diagram is empty.
        \item $\bm{a_2}:$ The red outlined cell in the above diagram is $[r_j^{(k-1)}, n_{j+1}^{(\varnothing)}]$ for some $j, k\in\mathbb{N}$.
        \item $\bm{a_3}:$ The red outlined cell in the above diagram is $[n_j, l_{j+1}^{(k)}]$ for some $j, k\in\mathbb{N}$.
        \item $\bm{a_4}:$ The red outlined cell in the above diagram is $[s_j^{(k-1)}, n_{j+1}^{(\varnothing)}]$ for some $j, k\in\mathbb{N}$.
        \item $\bm{a_5}:$ The red outlined cell in the above diagram is $[n_j^{(\varnothing)},s_{j+1}^{(k)}]$ for some $j, k\in\mathbb{N}$.
    \end{itemize}
We treat these cases as branch points similar to those used in the proof of the main theorem of section \ref{sec:TwoStrands}. Our decision tree is as follows:
\[\begin{tikzcd}
	&& \bullet \\
	{\bm{a_1}} & {\bm{a_2}} & {\bm{a_3}} & {\bm{a_4}} & {\bm{a_5}} \\
	&&& {\bm{b_1}} && {\bm{b_1}} \\
	&&&& {\bm{c_1}} && {\bm{c_2}} \\
	&&&&& {\bm{d_1}} && {\bm{d_2}}
	\arrow[no head, from=1-3, to=2-1]
	\arrow[no head, from=1-3, to=2-2]
	\arrow[no head, from=1-3, to=2-3]
	\arrow[no head, from=1-3, to=2-4]
	\arrow[no head, from=1-3, to=2-5]
	\arrow[no head, from=2-5, to=3-4]
	\arrow[no head, from=2-5, to=3-6]
	\arrow[no head, from=3-6, to=4-5]
	\arrow[no head, from=3-6, to=4-7]
	\arrow[no head, from=4-7, to=5-6]
	\arrow[no head, from=4-7, to=5-8]
\end{tikzcd}\]
\hfill\break
$\bm{a_1}:$ The outlined cell is empty.
\hfill\break
    This implies:
\begin{center}
        \includegraphics[scale=0.6]{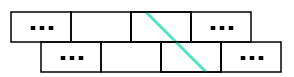}
\end{center}
    a contradiction to $\frac{S}{ET}$ by \thref{mustHaveSET}.

\hfill\break
$\bm{a_2}:$ The outlined cell is $[r_j^{(k-1)}, n_{j+1}^{(\varnothing)}]$ for some $j, k\in\mathbb{N}$.
\hfill\break
We get:
\begin{center}
        \includegraphics[scale=0.6]{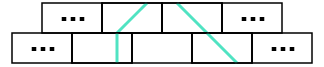}
\end{center}
    by \thref{mustHaveSET}, a contradiction to $\frac{S}{SE}$. 

\hfill\break $\bm{a_3}:$ The outlined cell is $[n_j, l_{j+1}^{(k)}]$ for some $j, k\in\mathbb{N}$.
\hfill\break
Consider the cell below, outlined in green: 
\begin{center}
        \includegraphics[scale=0.925]{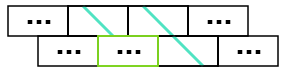}
\end{center}
The outlined cell must contain $[s_{j}^{(k-1)}, n^{\left(\varnothing\right)}_{j+1}]$, or else we would get $\frac{T}{TT}$, which would mean we get:
\begin{center}
        \includegraphics[scale=0.6]{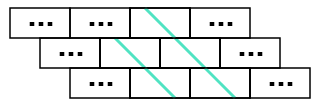}
\end{center}
This contradicts \thref{no3Turns}.

    Now we have
\begin{center}
        \includegraphics[scale=0.6]{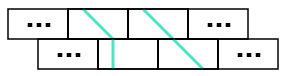}
\end{center}
so by \thref{mustHaveSET} we have:
\begin{center}
        \includegraphics[scale=0.925]{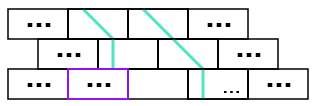}
\end{center}
If we had 
\begin{center}
        \includegraphics[scale=0.6]{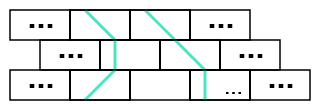}
\end{center}
    by \thref{mustHaveSET} we would have:
\begin{center}
        \includegraphics[scale=0.6]{scaStuff/sca726_9.png}
\end{center}
a contradiction to $\frac{S}{SE}$-- in the case that the glider is pure, we must have:
\begin{center}
        \includegraphics[scale=0.6]{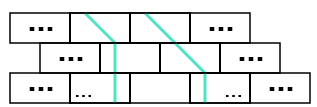}
\end{center}
To avoid a contradiction to \thref{no3Turns}, we must have:

\begin{center}
        \includegraphics[scale=0.6]{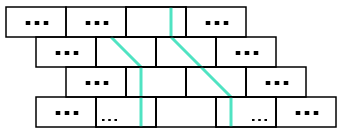}
\end{center}
Thus, the claim is satisfied in this case.

Now consider the case where the glider is not pure. The claim is satisfied in this case as well.



\hfill\break
$\bm{a_4}:$ The outlined cell is $[s_j^{(k-1)}, n_{j+1}^{(\varnothing)}]$ for some $j, k\in\mathbb{N}$.

\hfill\break
We have:
\begin{center}
        \includegraphics[scale=0.6]{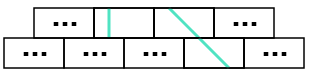}
\end{center}
This means we must have:
\begin{center}
        \includegraphics[scale=0.6]{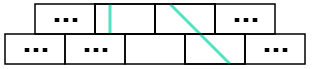}
\end{center}
so all turning rules of the glider have $\frac{T}{ET}$, a contradiction to \thref{mustHaveSET}. 

\hfill\break
$\bm{a_5}:$ The outlined cell is $[n_j^{(\varnothing)},s_{j+1}^{(k)}]$ for some $j, k\in\mathbb{N}$.
\hfill\break
We may finally consider:
\begin{center}
        \includegraphics[scale=0.6]{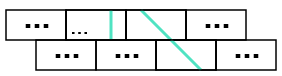}
\end{center}
Immediately, we must have that all turning rules of this glider have $\frac{S}{ST}$ and:
\begin{center}
        \includegraphics[scale=0.6]{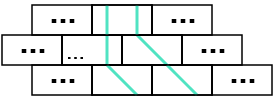}
\end{center}
to avoid a contradiction to the fact that $g$ is a non-crossing glider. To avoid a contradiction to \thref{no3Turns}, we must have:
\begin{center}
        \includegraphics[scale=0.925]{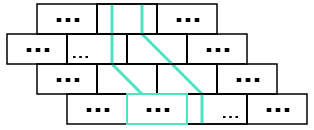}
\end{center}
Notice that there are two possibilities for the cell outlined in cyan:
\begin{itemize}
    \item $\bm{b_1}:$ The outlined cell is $[n_j^{\left(\varnothing\right)}, l_{j+1}^{(k-1)}]$.
    \item $\bm{b_2}:$ The outlined cell is $[s_j^{(k-1)}, n_{j+1}^{\left(\varnothing\right)}]$.
\end{itemize}

\hfill\break
$\bm{b_1}:$ The outlined cell is $[n_j^{\left(\varnothing\right)}, l_{j+1}^{(k-1)}]$.
\hfill\break
To avoid a contradiction to the fact that $g$ is non-crossing, we get:

\begin{center}
        \includegraphics[scale=0.6]{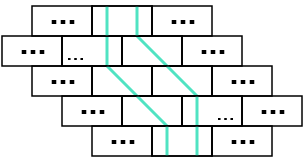}
\end{center}
    which satisfies the claim. 

\hfill\break
$\bm{b_2}:$ The outlined cell is $[s_j^{(k-1)}, n_{j+1}^{\left(\varnothing\right)}]$.
\hfill\break
The outlined cell is $[s_j^{(k-1)}, n^{\left(\varnothing\right)}_{j+1}]$, so we get:

\begin{center}
        \includegraphics[scale=0.6]{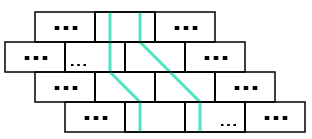}
\end{center}
Thus, all turning rules of $g$ contain $\frac{T}{TT}, \frac{S}{ST}, \frac{T}{SS}$. Because we have $\frac{T}{TT}, \frac{T}{SS}$, we must have one of the following:

\begin{center}
        \includegraphics[scale=0.6]{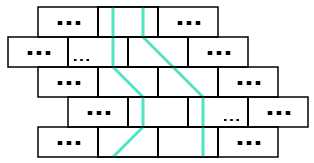}
\end{center}
or
\begin{center}
        \includegraphics[scale=0.6]{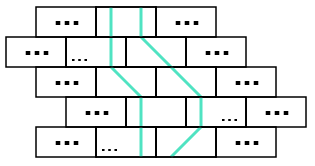}
\end{center}
In the first case, if $g$ is pure, by \thref{mustHaveSET}, we get:

\begin{center}
        \includegraphics[scale=0.6]{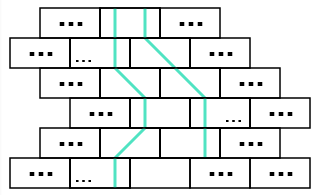}
\end{center}
a contradiction to $\frac{S}{SE}$. If $g$ is not pure, then $g$ has 
\begin{align*}
    \frac{T}{TT}, \frac{S}{ST}, \frac{S}{TS}, \frac{T}{SS}, \frac{S}{ET}, \frac{T}{SE}, \frac{T}{ES}
\end{align*}
in every turning rule. Suppose the strands the pattern occurs on are $k, k + 1$. Because $g$ is a glider, we  may assume the first generation of the pattern is in $\delta_m$ for some $m > 3(n - k-1)$. We now prove the following claim:

\hfill\break
\textbf{Claim 1:} The below pattern occurs
\begin{center}
    \includegraphics[scale=0.9]{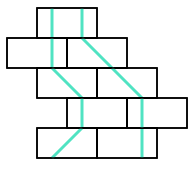}
\end{center}
as a subpattern of $g$ on strands $k + i, k + i + 1$ beginning in generation $\delta_{m - 3i}$ for all $0\leq i\leq n - (k+ 1)$.
\hfill\break
\textbf{Proof:} We proceed by finite induction on $i$.

\hfill\break
\textbf{Base Case:} When $i = 0,$ this follows from the definition of $m, k$.

\hfill\break
\textbf{Inductive Step:} Suppose the pattern occurs on strands $k + i, k + i + 1$ beginning in generation $\delta_{m - 3i}$ for some $0\leq i< n - (k + 1)$. Then, we have:
\begin{center}
    \includegraphics[scale=0.9]{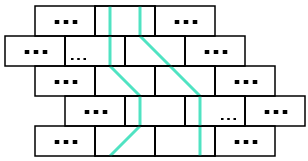}
\end{center}
Because all turning rules of $g$ have $\frac{T}{SS}$, we must have:
\begin{center}
    \includegraphics[scale=0.9]{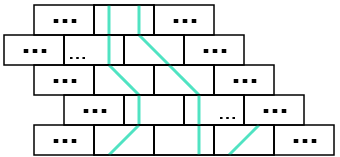}
\end{center}
or:
\begin{center}
    \includegraphics[scale=0.9]{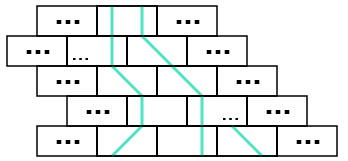}
\end{center}
In the first case, to avoid a crossing, we get:
\begin{center}
    \includegraphics[scale=0.9]{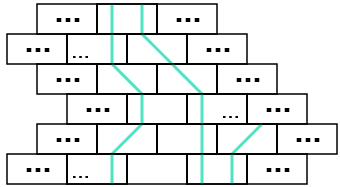}
\end{center}
This is a contradiction to the fact that all turning rules of $g$ have $\frac{T}{ES}$. In the second case, to avoid a crossing:
\begin{center}
    \includegraphics[scale=0.9]{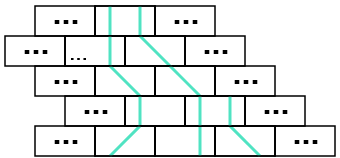}
\end{center}
By $\frac{S}{ET}, \frac{S}{ST}, \frac{T}{TT},$ and $\frac{T}{ES}$ we get:
\begin{center}
    \includegraphics[scale=0.9]{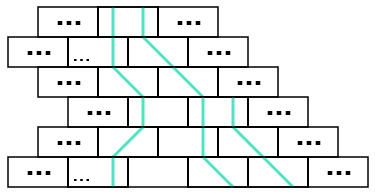}
\end{center}
and then:
\begin{center}
    \includegraphics[scale=0.9]{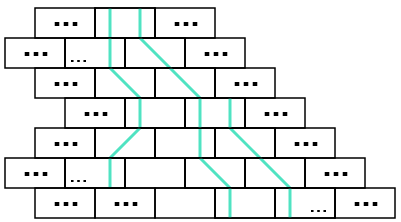}
\end{center}
Now, we must have:
\begin{center}
    \includegraphics[scale=0.9]{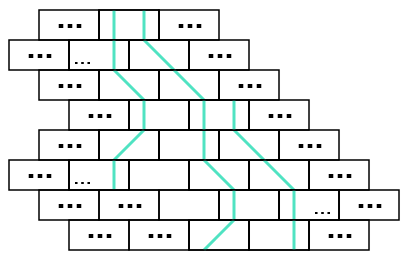}
\end{center}
or
\begin{center}
    \includegraphics[scale=0.9]{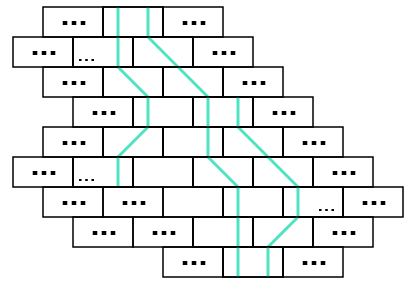}
\end{center}
In the first case, notice that the pattern in the claim occurs on strands $k + i + 1$ and $k + i + 2$, beginning in generation $\delta_{m-3(i+1)}$. Thus, in this case, the inductive hypothesis is confirmed. In the second case, because we have $\frac{T}{ES}, \frac{T}{SS}$, we must have:
\begin{center}
    \includegraphics[scale=0.9]{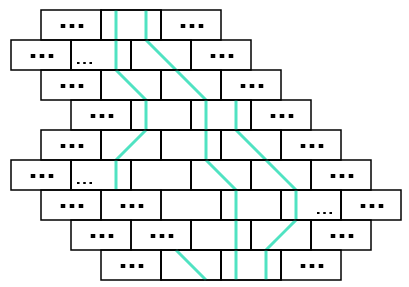}
\end{center}
By continuity, this implies:
\begin{center}
    \includegraphics[scale=0.9]{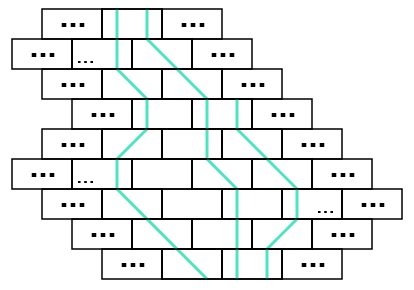}
\end{center}
This contradicts \thref{no3Turns}. 
\begin{flushright}
$\dashv_{\text{Claim}}$
\end{flushright}
By claim 1, 
\begin{center}
    \includegraphics[scale=0.9]{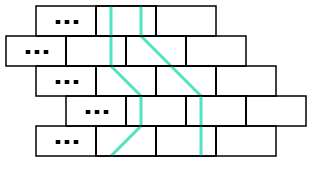}
\end{center}
occurs on strands $n - 1, n$, contradicting the assumption that all turning rules of $g$ have $\frac{T}{SE}$.

In the second case, because $g$ contains no crossings, we have:
\begin{center}
        \includegraphics[scale=0.925]{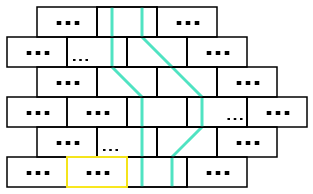}
\end{center}
The above cell, outlined in yellow, must contain a strand, or else we would get a contradiction to $\frac{T}{ES}$. Because we have $\frac{T}{SS}$, we have two cases:
\begin{itemize}
    \item $\bm{c_1}:$ The outlined cell is $[r_j^{(k-2)}, n_{j+1}^{(\varnothing)}]$ for some $j\in\mathbb{N}$.
    \begin{center}
        \includegraphics[scale=0.6]{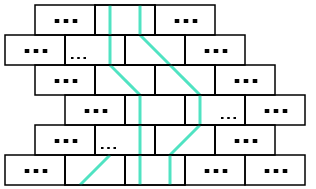}
\end{center}
    \item $\bm{c_2}:$ The outlined cell is $[n_j^{(\varnothing)}, l_{j+1}^{(k-2)}]$ for some $j\in\mathbb{N}$.
    \begin{center}
        \includegraphics[scale=0.6]{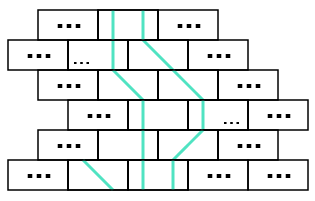}
\end{center}
\end{itemize}

\hfill\break
$\bm{c_1}:$ The outlined cell is $[r_j^{(k-2)}, n_{j+1}^{(\varnothing)}]$ for some $j\in\mathbb{N}$.
\hfill\break
 $g$ does not contain crossings, so:

\begin{center}
        \includegraphics[scale=0.6]{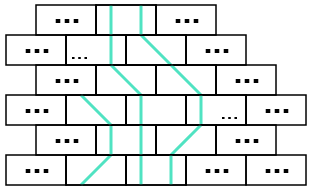}
\end{center}
which forces $g$ to have both $\frac{S}{TS}, \frac{T}{TS}$ in all of its turning rules, a contradiction. 

\hfill\break
$\bm{c_2}:$ The outlined cell is $[n_j^{(\varnothing)}, l_{j+1}^{(k-2)}]$ for some $j\in\mathbb{N}$.
\hfill\break
We have:
\begin{center}
        \includegraphics[scale=0.925]{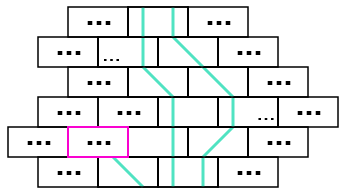}
\end{center}
For the cell outlined in pink above, one of the following must be true:
\begin{itemize}
    \item $\bm{d_1}:$ The outlined cell is $[x, s_{j+1}^{(k-2)}]$ for some $x\in\mathcal{U}$.
    \item $\bm{d_2}:$  The outlined cell is $[n_j^{(\varnothing)}, r_{j+1}^{(k-2)}]$ for some $j\in\mathbb{N}$.
\end{itemize}

\hfill\break
 $\bm{d_1}:$ The outlined cell is $[x, s_{j+1}^{(k-2)}]$ for some $x\in\mathcal{U}$.
 \hfill\break
 We have:
\begin{center}
        \includegraphics[scale=0.6]{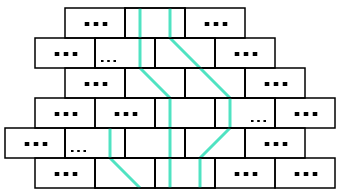}
\end{center}
By $\frac{T}{SS}$, we get:

\begin{center}
        \includegraphics[scale=0.6]{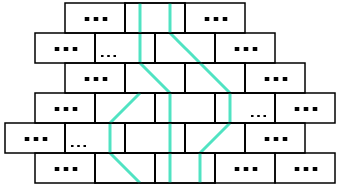}
\end{center}
contradicting the continuity of $g$. 

\hfill\break
$\bm{d_2}:$  The outlined cell is $[n_j^{(\varnothing)}, r_{j+1}^{(k-2)}]$ for some $j\in\mathbb{N}$.
\hfill\break
We have:
\begin{center}
        \includegraphics[scale=0.6]{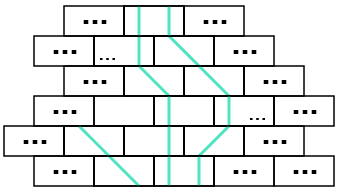}
\end{center}
Because $g$ is a glider, we can choose $z > 6k$ such that the first generation pictured occurs in generation $\delta_z$. Let $h$ be the subpattern:

\begin{center}
        \includegraphics[scale=0.6]{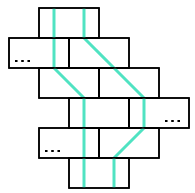}
\end{center}
We will show that the presence of $h$ as a subpattern of $g$ leads to a contradiction.

\hfill\break
    \textbf{Claim 2:} If $z -2(k-2) > 0$, for all $0\leq i\leq k - 2$, pattern $h$ occurs on strands $k - (i +1), k - i$ beginning in generation $\delta_{z-2i}$.
\hfill\break
\textbf{Proof:} We proceed by induction

\hfill\break
    \textbf{Base case:} Let $i = 0$. By the work above, we have:

\begin{center}
        \includegraphics[scale=0.6]{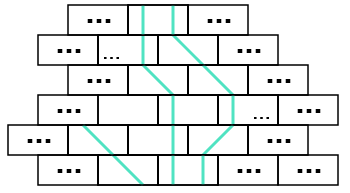}
\end{center}
beginning in generation $\delta_{z - 0}$. We can see $h$ occurs on strands $k - 1, t$ in generation $\delta_{z - 3}$, so the base case is confirmed. We now move to the inductive step:

\hfill\break
    \textbf{Inductive Step:} Let $0\leq i\leq k - 3$, and suppose the claim holds for $i$. Then we have, starting in $\delta_{z-2i}$:
\begin{center}
        \includegraphics[scale=0.6]{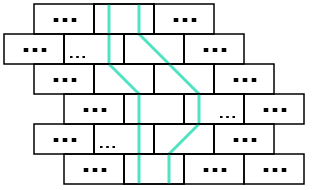}
\end{center}
where the leftmost strand pictured is strand $k - (i + 1)$ and the rightmost strand pictured is $k - i$. By the cases considered earlier, the cell outlined in blue:
\begin{center}
        \includegraphics[scale=0.925]{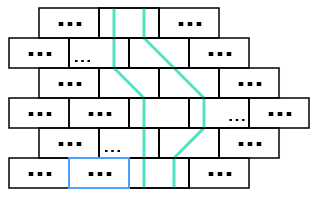}
\end{center}
cannot be empty or $[r_j^{(k-(i+2))}, n^{\left(\varnothing\right)}_{j+1}]$. Because $g$ has $\frac{T}{SS}$ in all of its turning rules, we must instead have:

\begin{center}
        \includegraphics[scale=0.6]{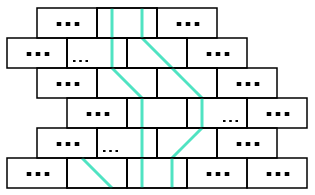}
\end{center}
We have previously proved that, for any strands:

\begin{center}
        \includegraphics[scale=0.6]{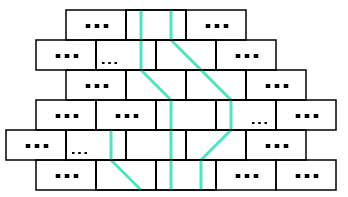}
\end{center}
is impossible, so we must have:

\begin{center}
        \includegraphics[scale=0.6]{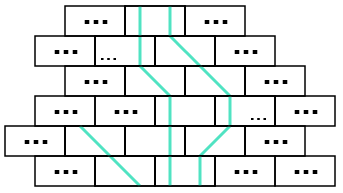}
\end{center}
To avoid a contradiction to \thref{no3Turns}, we have:

\begin{center}
        \includegraphics[scale=0.6]{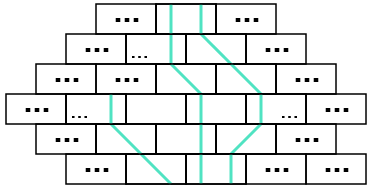}
\end{center}
By the fact that we have $\frac{S}{ET}, \frac{S}{ET},$ and $\frac{T}{TT}$, we must have:
\begin{center}
        \includegraphics[scale=0.6]{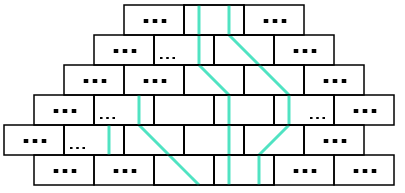}
\end{center}
By the fact that $g$ is continuous, we must have:

\begin{center}
        \includegraphics[scale=0.6]{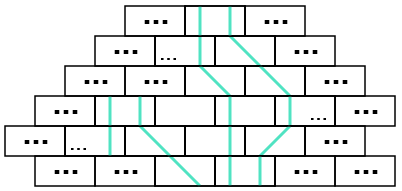}
\end{center}
Because we have $\frac{S}{ST}$, we must have:
\begin{center}
        \includegraphics[scale=0.6]{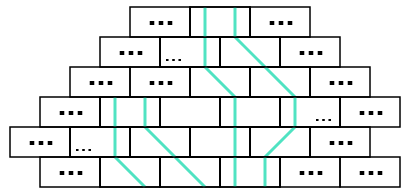}
\end{center}
$g$ contains no crossings, so we have:

\begin{center}
        \includegraphics[scale=0.6]{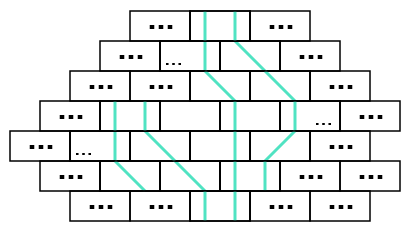}
\end{center}
Because all turning rules of $g$ have $\frac{S}{TS}$, we must have:

\begin{center}
        \includegraphics[scale=0.6]{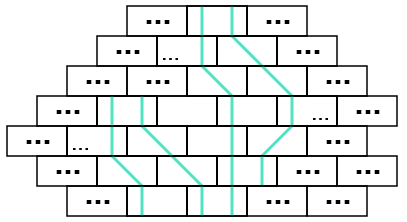}
\end{center}
Notice that $g$ has $\frac{T}{TT}, \frac{T}{SS}$ in all of its turning rules, so we have two options:

\begin{center}
        \includegraphics[scale=0.6]{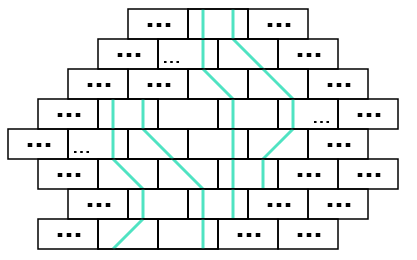}
\end{center}
or
\begin{center}
        \includegraphics[scale=0.6]{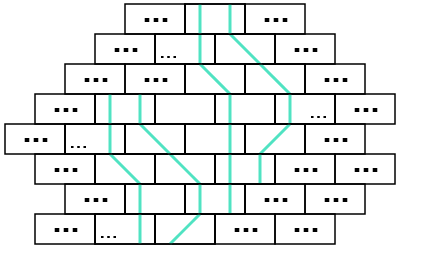}
\end{center}
In the first case, because there exists a turning rule of $g$ with $\frac{S}{TE}$ by \thref{mustHaveSET}, we have:

\begin{center}
        \includegraphics[scale=0.6]{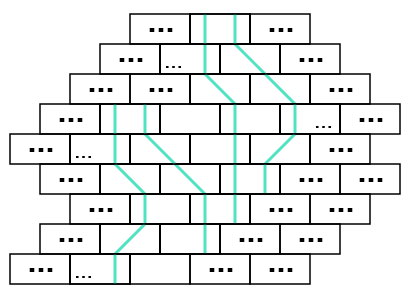}
\end{center}
If $g$ is pure, this is a contradiction to the fact that all turning rules of $g$ have $\frac{S}{SE}$. If $g$ is not pure, notice that, by $\frac{T}{SE}$ and the continuity of $g$, we get:
\begin{center}
    \includegraphics[scale=0.9]{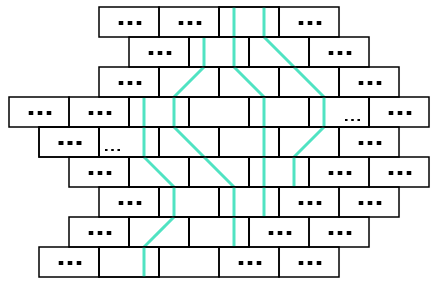}
\end{center}
a contradiction to the fact that all turning rules of $g$ must have $\frac{T}{TT}$. Thus, we must have:

\begin{center}
        \includegraphics[scale=0.6]{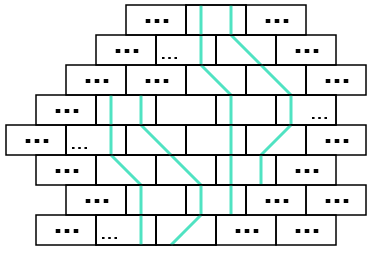}
\end{center}
So by the fact that $g$ cannot contain crossings, we get:

\begin{center}
        \includegraphics[scale=0.6]{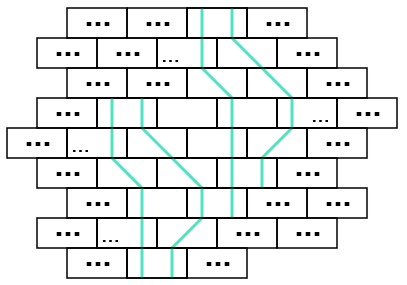}
\end{center}
where $h$ occurs starting in generation $z - 2i - 2 = z - 2(i + 1)$ on strands $k - (i +1) - 1 = k - (i + 2)$ and $k - i - 1 = k - (i + 1)$. Thus, the induction hypothesis is confirmed.
    \begin{flushright}
$\dashv_{\text{Claim}}$
\end{flushright}
Choosing $i= k - 2$ we have $z - 2(k-2) = z-2k-4\geq z-6k >  0$, so by claim 2, we have:
\begin{center}
        \includegraphics[scale=0.6]{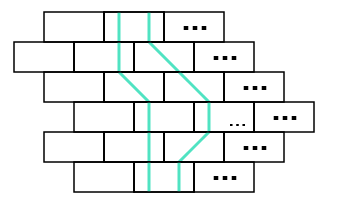}
\end{center}
    Thus, $g$ must have bits $\frac{S}{ES}$ and $\frac{T}{ES}$, a contradiction. Thus, we have proven that in all cases, either the lemma statement is satisfied or we get a contradiction, so we have proven the lemma.
\end{proof}
For the next lemma, recall the notation defined in \thref{def}:
\begin{lemma}\thlabel{pureSubglidersprec}
    Let $g$ be a pure positive $n$-stranded glider $g$. All turning configurations involving strands $n - 1$ and $n$ that occur in $g$ produce the bit $\frac{S}{SE}$. If $g$ is a pure negative glider, all turning configurations involving strands $n - 1$ and $n$ that occur in $g$ produce the bit $\frac{S}{SE}$.
\end{lemma}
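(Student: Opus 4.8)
The plan is to prove the statement for pure positive gliders; the pure negative case follows by the left-right symmetry used throughout Section~\ref{sec:Pure}, and wherever I invoke a lemma that is stated for non-crossing gliders I use that a turning configuration itself contains no crossing (so the relevant neighbourhood of the top two strands behaves like part of a non-crossing glider).

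\textbf{Preliminary reductions.} If $\operatorname{Speed}(g)=1$, then by \thref{speedcgliders} $g=[[n_0,l_1^{(1)}][n_2,l_3^{(2)}]\cdots[n^{\left(\varnothing\right)}_{2(n-1)},l_{2n-1}^{(n)}]]^\infty$; in this pattern the unique strand of each cell is carried into a child cell containing no other strand, so no turning configuration involves two strands and the claim is vacuous. A positive glider has speed in $(0,1]$, since its first strand moves at most one index left per generation and, by \thref{gliderSpeed}, all strands share this valuated speed; hence we may assume $\operatorname{Speed}(g)<1$. Then \thref{mustHaveSET} supplies turning rules of $g$ containing $\frac{S}{TE}$ and $\frac{S}{ET}$, \thref{gliderSpeed} gives that every strand of $g$ — in particular strand $n$ — has positive valuated speed and so turns left in some generation, and \thref{noconsective3Turns}, \thref{no3Turns} and \thref{2TurnPossibilities} are all available.

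\textbf{From the statement to a local case analysis.} Suppose a turning configuration involving strands $n-1$ and $n$ occurs, coming from adjacent cells $A=C_{i,k}$ and $B=C_{i,k+1}$ of a generation $\delta_i$ whose child $\gen(A,B)$ contains both strands. A two-strand cell that is not a crossing must be $[s_j^{(n-1)},s_{j+1}^{(n)}]$, and a turning configuration contains no crossing, so $\gen(A,B)$ has this form, with strand $n-1$ supplied by $A$ and strand $n$ by $B$. Because strand $n$ is the strand of largest index, nothing lies to its right; feeding this into the continuity conditions and deleting the strands that do not continue into $\gen(A,B)$, the configuration $A'B'$ is one of (i) strand $n-1$ straight and strand $n$ straight; (ii) strand $n-1$ straight and strand $n$ turning left; (iii) strand $n-1$ turning right and strand $n$ straight — the remaining combination, strand $n-1$ turning right and strand $n$ turning left, being impossible since it would make $A'B'$ a single crossing cell. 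Inspecting the eight turning-rule bits shows that (ii) is precisely the configuration carrying $\frac{S}{SE}$, so it suffices to rule out (i) and (iii) in a pure positive glider.

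\textbf{Excluding (i) and (iii).} In case (i) the strands $n-1$, $n$ lie one index apart in adjacent cells of $\delta_i$, neither turns, and hence they stay exactly one index apart in every later generation (the gap can never decrease without a crossing); tracking strand $n$ and using that it has positive valuated speed, it can move left only by turning left in lockstep with strand $n-1$, and iterating this while applying \thref{no3Turns} forces a run of consecutive left turns of the shape forbidden by \thref{2TurnPossibilities}. In case (iii) the right turn of strand $n-1$, combined with the bit $\frac{S}{SE}$ (purity), \thref{no3Turns} and \thref{2TurnPossibilities}, cascades downward: it forces strand $n-2$ to turn right in an adjacent generation, then strand $n-3$, and so on, by a finite induction on the strand index modelled on the inductive claims in the proof of \thref{2TurnPossibilities}; at the bottom one reaches strand $1$ with nothing to its left and obtains either a forbidden $3$-turn or a turning rule of $g$ forced to contain both $\frac{T}{ES}$ and $\frac{S}{ES}$, which is incompatible with $g$ being a pure positive glider. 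Hence neither (i) nor (iii) can occur, and every turning configuration involving strands $n-1$ and $n$ produces $\frac{S}{SE}$.

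\textbf{Expected main obstacle.} The delicate part is the downward cascade that excludes (iii) — setting up the induction on the strand index, carrying the exact local picture through every branch at each level, and verifying that all branches terminate in one of the advertised contradictions — together with the slightly fiddly argument for (i); the reduction to the three local configurations is routine once the continuity conditions and the topmost-ness of strand $n$ have been used.
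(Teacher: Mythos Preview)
Your case reduction has two serious gaps.

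First, the reduction to three local configurations is incomplete. You only consider pairs $(A,B)$ for which both strands land in the common child $\gen(A,B)$, but the lemma concerns every configuration in which strand $n-1$ sits in a cell with strand $n$ in the adjacent cell to the right, regardless of where each strand continues. The paper enumerates eight such configurations: four with strand $n-1$ turning right (paired with each of the four possible states of strand $n$ in $B$: $[s^{(n)},n]$, $[n,s^{(n)}]$, $[n,l^{(n)}]$, $[r^{(n)},n]$) and four with strand $n-1$ straight. Your framework sees only four of these (your (i)--(iv)), missing for instance $[r^{(n-1)},n][n,s^{(n)}]$ and $[\cdot,s^{(n-1)}][r^{(n)},n]$; in these the strands go to \emph{different} children, yet deleting strand $n$ still changes the right-hand neighbour of strand $n-1$ from occupied to empty, so the bit produced in $g_{n-1}^l$ differs from the bit in $g$ and must be controlled. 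Your dismissal of (iv) is also mis-stated: $A'B'$ is two cells, not one; what actually rules it out is that the \emph{child} would be a crossing.

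Second, and more damagingly, your case (i) does \emph{not} need to be excluded: it is exactly the paper's Case 6, where the configuration in $g$ is $\frac{S}{SS}$ (forced since $g$ has no crossings) and the corresponding configuration in $g_{n-1}^l$ is $\frac{S}{SE}$, so the claim is satisfied directly. Your attempted exclusion of (i) is therefore an attempt to prove something false, and indeed the ``gap can never decrease'' sketch you give does not produce any contradiction. The genuinely hard case is your (iii) (strand $n-1$ turning right, strand $n$ straight on the left), which is the paper's Case 2 and occupies the bulk of its proof: it splits into four subcases on $\delta_{i-1}$, uses \thref{2TurnPossibilities} and \thref{no3Turns} carefully, and runs a downward finite induction on the strand index terminating in a contradiction with $\frac{S}{ET}$ at strand $1$. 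Your paragraph for (iii) gestures at a cascade of this shape but supplies none of the branch-by-branch verification needed to make it go through.
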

\begin{proof}
    We prove the result for pure positive gliders as the negative case follows analogously. If $g$ has speed 1, the result follows from \thref{speedcgliders}, so consider the case where $g$ does not have speed 1. We list all possible 2-stranded turning configurations involving strand $n$:
\begin{center}
        \includegraphics[scale=1.2]{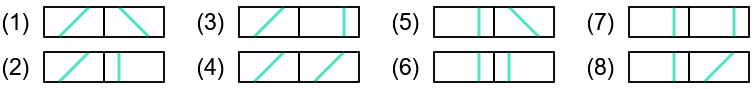}
\end{center}
We proceed by cases:

\hfill\break
    \textbf{Case 1:} We aim to prove the configuration $[r_j^{(n-1)}, n^{\left(\varnothing\right)}_{j+1}][n^{\left(\varnothing\right)}_{j+2}, l_{j+3}^{(n)}]$ 
    \begin{center}
        \includegraphics[scale=0.9]{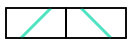}
\end{center}
    cannot occur in $g$. Because $g$ is a glider, there exists $i > 1$ such that $\delta_i = \cdots[r_j^{(n-1)}, n^{\left(\varnothing\right)}_{j+1}][n^{\left(\varnothing\right)}_{j+2}, l_{j+3}^{(n)}]$. Then, because we have rule $\frac{S}{TE}$ and $\frac{S}{ET}$, we must have $\delta_{i-1} = \cdots[\cdots, s_j^{(n-1)}][n^{\left(\varnothing\right)}_{j+1}, n^{\left(\varnothing\right)}_{j+2}][s_{j+3}^{(n)}, n^{\left(\varnothing\right)}_{j+4}]$
    \begin{center}
        \includegraphics[scale=0.9]{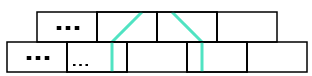}
\end{center}
    This implies that the rule $\frac{T}{SE}$ occurs, a contradiction to the purity of $g$.

\hfill\break
    \textbf{Case 2:} We aim to prove the configuration $[r_j^{(n-1)}, n^{\left(\varnothing\right)}_{j+1}][s_{j+2}^{(n)}, n^{\left(\varnothing\right)}_{j+3}]$
    \begin{center}
        \includegraphics[scale=0.9]{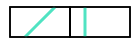}
\end{center}
    cannot occur in $g$. Because $g$ is a glider, we may choose $i\geq 3$ such that $\delta_i = \cdots[r_j^{(n-1)}, n^{\left(\varnothing\right)}_{j+1}][s_{j+2}^{(n)}, n^{\left(\varnothing\right)}_{j+3}]$. Note, because $g$ is a non-crossing glider, we must have $\frac{S}{TS}$ in all turning rules of $g$. We must eliminate all of the following possibilities:
    \begin{itemize}
        \item[a] \includegraphics[scale=0.6]{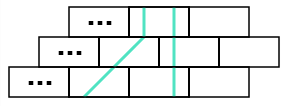}
        \item[b] \includegraphics[scale=0.6]{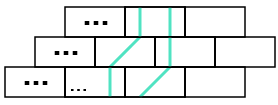}
        \item[c] \includegraphics[scale=0.6]{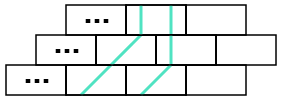}
        \item[d] \includegraphics[scale=0.6]{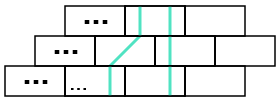}
    \end{itemize}
In case (a), notice that the rules $\frac{T}{TS}$ and $\frac{S}{TS}$ are used, a contradiction. Then, $\delta_{i - 2} = \cdots[s_j^{(n-1)}, s_{j+1}^{(n)}]$ by \thref{mustHaveSET}. $[\delta_{i-2}, \delta_{i-1}, \delta_{i}, \delta_{i+1}]$ is pictured below:
    \begin{center}
        \includegraphics[scale=0.6]{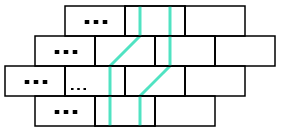}
    \end{center}
    From this we can see the rule $\frac{T}{SE}$ is used, a contradiction to the fact that $g$ is a pure positive glider. Consider case (c). $\delta_{i - 2} = \cdots[n_j^{\left(\varnothing\right)}, s_{j+1}^{(n)}]$ by \thref{mustHaveSET}. $[\delta_{i-2}, \delta_{i-1}, \delta_{i}, \delta_{i+1}]$ is pictured below:
    \begin{center}
        \includegraphics[scale=0.6]{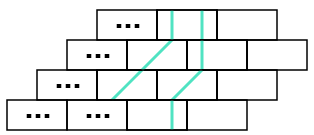}
    \end{center}
    From which we can see that $\frac{T}{SE}$ is used, a contradiction to the fact that $g$ is a pure glider.
    
For case (d), first notice that some turning rule of $g$ has bits $\frac{S}{TS}, \frac{T}{SS}, \frac{T}{ES}, \frac{S}{SE}, \frac{S}{TE}, \frac{S}{ET}$ from the case (d) description, \thref{mustHaveSET}, and the fact that $g$ is a positive pure glider. We now must consider two subcases for any generations following a generation $\delta_x$ such that $\delta_x = \dots[s_{j}^{(n-1)}, s_{j+1}^{(n)}]$. The first subcase is pictured below:
\begin{center}
    \includegraphics[scale=0.6]{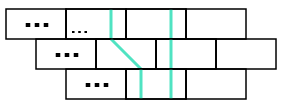}
\end{center}
and the second subcase is pictured below:
\begin{center}
   \includegraphics[scale=0.6]{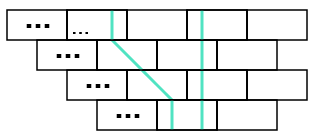}
\end{center}
These subcases are exhaustive by \thref{no3Turns}. We first consider the case where pattern:
\begin{center}
   \includegraphics[scale=0.9]{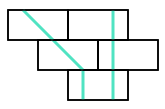}
\end{center}
never occurs in $g$ on strands $n-1$ and $n$. Let $\delta_r$ be a generation of $g$ such that $\delta_r= \cdots[s_{k}^{(n-1)}, s_{k+1}^{(n)}]$ for some $k\in\mathbb{N}$. We consider two cases: The case where the $n-1$st strand goes straight and the case where the $n-1$st strand turns left:
\begin{center}
   \includegraphics[scale=0.9]{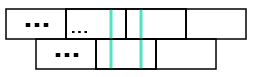}
\end{center}
and
\begin{center}
   \includegraphics[scale=0.9]{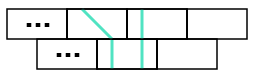}
\end{center}
By \thref{no3Turns} and the fact that there exists a turning rule of $g$ with the bit $\frac{S}{SE}$, these cases are exhaustive. In the first case, by the fact that $g$ contains no crossings, we have:
\begin{center}
   \includegraphics[scale=0.9]{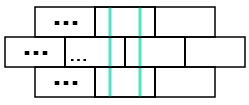}
\end{center}
In the second case, by the fact that the pattern:
\begin{center}
   \includegraphics[scale=0.9]{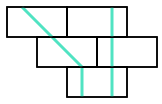}
\end{center}
does not occur in $g$ on strands $n$ and $n - 1$, we get:
\begin{center}
   \includegraphics[scale=0.9]{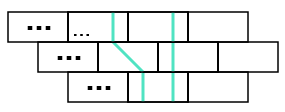}
\end{center}
The next two generations are determined by the rules $\frac{S}{SE}, \frac{T}{SS}, \frac{S}{TS}$:
\begin{center}
    \includegraphics[scale=0.6]{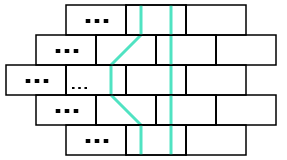}
\end{center}
We have shown that if subcase 2 does not occur in $g$, the $n$th strand of $g$ has speed 0 (as we know $\delta_k = \dots[s_{k}^{(n-1)}, s_{k+1}^{(n)}]$ is a generation of $g$ for some $k\in\mathbb{N}$). This contradicts \thref{gliderSpeed}, as $g$ is a glider and thus does not have speed 0.


Notice that, by the above result and \thref{no3Turns}, the following pattern must occur:
\begin{center}
   \includegraphics[scale=0.6]{scaStuff/sca727_18.png}
\end{center}
on strands $n-1$ and $n$ in $g$. Thus, it suffices to consider the second subcase. By \thref{2TurnPossibilities} and the fact that a turning rule of $g$ has the bit $\frac{S}{TS}$, we get:
\begin{center}
    \includegraphics[scale=0.9]{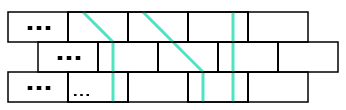}
\end{center}
We may suppose the generation at which the above generations begin is $\delta_m = \dots[s_j^{(n)}, s_{j+1}^{(n+1)}]$ for some $j$.

For contradiction, suppose there exists a positive-speed glider $g$ with strands $n - 1, n$ satisfying the second subcase. Notice that $g$ must have at least 3 strands, otherwise we would reach an immediate contradiction to \thref{mustHaveSET} as this would imply that all turning rules of $g$ have bit $\frac{T}{ET}$. We call the following pattern pattern $p$:
\begin{center}
    \includegraphics[scale=1]{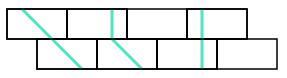}
\end{center}
We now prove the following claim:

\hfill\break
\textbf{Claim:} Pattern $p$ occurs as a sublist in generations $m + 2 + k$ and $m  + k + 3$ on strands $n - k, n-(k + 1),$ and $n - (k + 2)$ for all $0\leq k \leq n - 3$ in $g$.
\hfill\break
\textbf{Proof:} We proceed by induction on $k$.

\hfill\break
\textbf{Base Case:} We aim to prove pattern $p$ occurs on generations $m + 2, m + 3$ on strands $n, n - 1$, and $n - 2$. If strand $n - 2$ does not turn in generation $m + 3$, we get
\begin{center}
    \includegraphics[scale=0.6]{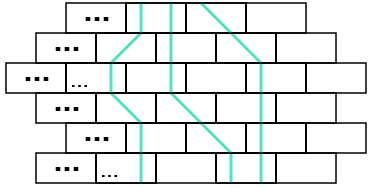}
\end{center}
using the rules $\frac{T}{ES}, \frac{S}{SE}, \frac{S}{TS}, \frac{T}{ST}, \frac{T}{SS}$. Thus, we have $\frac{S}{TT}$ By the rule $\frac{T}{ST}$, the next generation contains a crossing. This is a contradiction to the fact that $g$ is a pure glider. This means strand $n - 2$ must turn in generation $m + 3$, so we get, by \thref{no3Turns} and the bits $\frac{S}{TT}, \frac{S}{SE}$:
\begin{center}
    \includegraphics[scale=0.6]{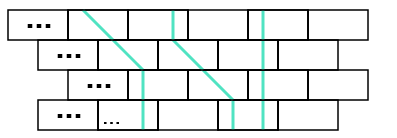}
\end{center}
Pattern $p$ occurs on strands $n, n - 1,$ and $n - 2$ in generations $m + 2, m + 3$.
    

\hfill\break
    \textbf{Inductive Step:} Suppose, for $0\leq k\leq n - 4$, that pattern $p$ occurred on strands $n - k, n - (k + 1), n - (k+2)$ in generations $m + 2 + k$ and $m + 3 + k$ (see the below figure for a visualization)
    \begin{center}
    \includegraphics[scale=0.6]{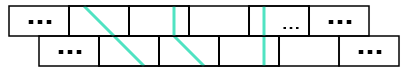}
\end{center}
    We must have:
    \begin{center}
    \includegraphics[scale=0.6]{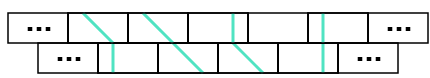}
\end{center}
    by \thref{2TurnPossibilities}, the fact that a turning rule of $g$ has $\frac{S}{TS},$ and \thref{no3Turns}. If $n - (k + 3)$ does not turn in generation $i + k + 4$, we get:
    \begin{center}
    \includegraphics[scale=0.6]{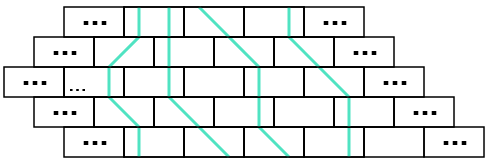}
\end{center}
    by the bits $\frac{T}{SS}, \frac{S}{TT}, \frac{S}{SE}, \frac{T}{ES},$ and $\frac{T}{ST}$. By the bit $\frac{T}{ST}$, strands $n - (k + 1)$ and $n - (k + 2)$ cross in generation $m + k + 7$,   
    a contradiction. Then, we must have:
   \begin{center}
    \includegraphics[scale=0.6]{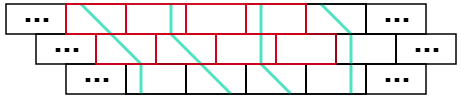}
\end{center}
    and we can see that pattern $p$ (outlined in red) occurs in generations $m + (k + 1) + 2$ and $m + (k + 1) + 3$ on strands $n - (k + 1), n - (k + 2), n - (k + 3)$. Thus, the inductive hypothesis is confirmed.
\begin{center}
        \includegraphics[scale=0.6]{scaStuff/sca727_9.png}
\end{center}
    \begin{flushright}
$\dashv_{\text{Claim}}$
\end{flushright}
Now, by the claim, we have:
\begin{center}
    \includegraphics[scale=0.6]{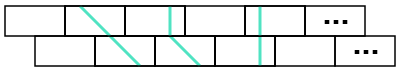}
\end{center}
starting in generation $m + n - 2$ on strands $1, 2, $ and $3$. This contradicts the fact that there exist turning rule of $g$ with $\frac{S}{ET}$ (\thref{mustHaveSET}).



\hfill\break
\textbf{Case 3:} We have the following configuration:
\begin{center}
        \includegraphics[scale=0.9]{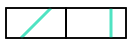}
\end{center}
Choose $i\geq 2$. Then, $\delta_i = \cdots[r_j^{(n-1)}, n^{\left(\varnothing\right)}_{j+1}][n^{\left(\varnothing\right)}_{j+2}, s_{j+3}^{(n)}]$. By \thref{mustHaveSET}, 
\begin{align*}
    \delta_{i - 1} = \cdots[\cdots, s_j^{(n-1)}][n^{\left(\varnothing\right)}_{j+1}, n^{\left(\varnothing\right)}_{j+2}][\cdots]
\end{align*}
\begin{center}
        \includegraphics[scale=0.9]{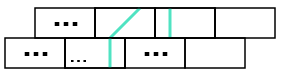}
\end{center}
This implies that the rule $\frac{T}{SE}$ is used, a contradiction.

\hfill\break
\textbf{Case 4:} Consider the configuration:
\begin{center}
        \includegraphics[scale=0.9]{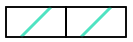}
\end{center}
Choose $i\geq 2$. Then, $\delta_i = \cdots[r_j^{(n-1)}, n^{\left(\varnothing\right)}_{j+1}][r_j^{(n-1)}, n^{\left(\varnothing\right)}_{j+1}]$. By \thref{mustHaveSET}, $\delta_{i - 1} = \cdots[\cdots, s_j^{(n-1)}][n^{\left(\varnothing\right)}_{j+1}, n^{\left(\varnothing\right)}_{j+2}]$.
\begin{center}
        \includegraphics[scale=0.9]{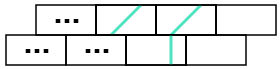}
\end{center}
This implies $\frac{T}{SE}$ is used, a contradiction.

\hfill\break
\textbf{Case 5:} Consider the configuration:
\begin{center}
        \includegraphics[scale=0.9]{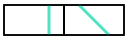}
\end{center}
We must have $\frac{S}{ST}$ in the turning rule of $g$ by the fact that $g$ is a non-crossing glider. The corresponding configuration of $g_{n-1}^l$ is $\frac{S}{SE}$, so the claim is satisfied.

\hfill\break
    \textbf{Case 6:} We may consider the configuration:
    \begin{center}
        \includegraphics[scale=0.9]{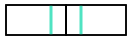}
\end{center}
    Because $g$ is a non-crossing glider, we must have $\frac{S}{SS}$. The corresponding configuration of $g_{n-1}^l$ is $\frac{S}{SE}$, so the bit $\frac{S}{SE}$ is produced. 

\hfill\break
    \textbf{Case 7:} Consider the configuration:
    \begin{center}
        \includegraphics[scale=0.9]{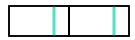}
\end{center}
    If this configuration occurs in $\delta_i$ we must have either $\delta_{i+1} = \cdots, [r_j^{(n-1)}, n^{\left(\varnothing\right)}_{j+1}], [s_{j+2}^{(n)}, n^{\left(\varnothing\right)}_{j+3}]$
    \begin{center}
        \includegraphics[scale=0.9]{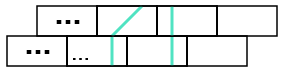}
\end{center}
    or $\delta_{i+1} = \cdots, [s_j^{(n-1)}, n^{\left(\varnothing\right)}_{j+1}], [s_{j+2}^{(n)}, n^{\left(\varnothing\right)}_{j+3}]$
    \begin{center}
        \includegraphics[scale=0.9]{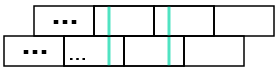}
\end{center}
    It was proven that the first case cannot occur in case 2. In the second case, the corresponding configuration in $g_{n-1}$ is $\delta_i' = \cdots, [n_j^{\left(\varnothing\right)}, s_{j+1}^{(n-1)}], \delta_{i+1}' = \cdots, [s_{j}^{(n)}, n^{\left(\varnothing\right)}_{j+1}]$
    \begin{center}
        \includegraphics[scale=0.9]{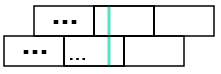}
\end{center}
    which means $g_{n-1}^l$ has $\frac{S}{SE}$.

\hfill\break
    \textbf{Case 8:} We aim to prove the configuration $[x_j, s_{j+1}^{(n-1)}][r_{j+2}^{(n)}, n^{\left(\varnothing\right)}_{j+3}]$
    \begin{center}
        \includegraphics[scale=0.9]{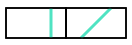}
\end{center}
    cannot occur in a positive pure glider. We can choose $\delta_i = \cdots, [x_j, s_{j+1}^{(n-1)}][r_{j+2}^{(n)}, n^{\left(\varnothing\right)}_{j+3}]$ such that $i > 1$ by the fact that $g$ is a glider. Note that because pure gliders do not contain crossings, if $\delta_i = \cdots, [x_j, s_{j+1}^{(n-1)}][r_{j+2}^{(n)}, n^{\left(\varnothing\right)}_{j+3}]$, then $\delta_{i-1} =\cdots,[s_{k}^{(n-1)}, s_{k+1}^{(n)}]$
    \begin{center}
        \includegraphics[scale=0.9]{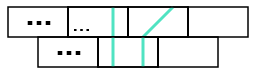}
\end{center}
    so $g$ uses the rule $\frac{T}{SE}$. This is a contradiction because $g$ has turning rules with $\frac{S}{SE}$, so the configuration $[x_j, s_{j+1}^{(n-1)}][r_{j+2}^{(n)}, n^{\left(\varnothing\right)}_{j+3}]$ cannot occur in $g$.
\end{proof}
\begin{lemma}\thlabel{pureSubgliders}
    For any pure positive $n$-stranded glider $g$, $g_{n-1}^l$ is also a pure positive glider under all the turning rules of $g$ with $\frac{S}{SE}$. For any pure negative $n$-stranded glider $g$, $g_{n-1}^r$ is also a pure negative glider under all the turning rules of $g$ with $\frac{S}{ES}$. 
\end{lemma}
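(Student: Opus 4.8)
The plan is to obtain \thref{pureSubgliders} as a short consequence of \thref{pureSubglidersprec}, together with \thref{gliderSpeed} and \thref{speedcgliders}. I argue the positive case; the negative case is the mirror image, with $g_{n-1}^l$ replaced by $g_{n-1}^r$, strand $n$ by strand $1$, and $\frac{S}{SE}$ by $\frac{S}{ES}$. Assume $n\geq 2$. Recall that pure gliders contain no crossings, so $g$ is non-crossing; hence the index-order of the strands of $g$ is preserved in every generation, strand $n$ occupies a cell weakly to the right of every other strand throughout $g$, and the subpattern $g_{n-1}^l$ (continuous by the definition of subpattern) is also non-crossing. If $\operatorname{Speed}(g)=1$, then by \thref{speedcgliders} the $n$-stranded speed-$1$ glider is nested and has the explicit form given there, so $g_{n-1}^l$ is precisely the $(n-1)$-stranded speed-$1$ glider; by the same lemma all speed-$1$ gliders share the same turning rules, so every turning rule of $g$, in particular any with $\frac{S}{SE}$, is a turning rule of $g_{n-1}^l$, and $g_{n-1}^l$ is a pure positive glider. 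Henceforth assume $\operatorname{Speed}(g)\neq 1$.

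Let $T$ be any turning rule of $g$ containing the bit $\frac{S}{SE}$ (one exists by purity). The central step is to show that $T$ is a turning rule of $g_{n-1}^l$. Since $g_{n-1}^l$ is non-crossing, it suffices to check that each generation transition of $g_{n-1}^l$ is the one prescribed by $T$. Deleting strand $n$ from $g$ alters only those turning configurations of $g$ that involve strand $n$; because strand $n$ is the rightmost strand, the only strand of $g_{n-1}^l$ that can appear in such a configuration is strand $n-1$, and only in generations where strands $n-1$ and $n$ interact. Every other turning configuration of $g_{n-1}^l$ coincides verbatim with one occurring in $g$, so $T$, being a turning rule of $g$, prescribes the observed behavior there. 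In a generation where strands $n-1$ and $n$ interact, \thref{pureSubglidersprec} says the configuration induced in $g_{n-1}^l$ produces the bit $\frac{S}{SE}$, i.e.\ strand $n-1$ goes straight in an $\frac{S}{SE}$-configuration of $g_{n-1}^l$; since $T$ has the bit $\frac{S}{SE}$, $T$ again prescribes the observed behavior. Hence $T$ is a turning rule of $g_{n-1}^l$, so $g_{n-1}^l$ is a pure positive pattern provided it is a glider.

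It remains to verify that $g_{n-1}^l$ is a glider. Periodicity is immediate: writing $g=h^\infty$, we get $g_{n-1}^l=\widetilde h^{\,\infty}$, where $\widetilde h$ is $h$ with strand $n$ removed from each generation. For the absence of null chains, suppose $g_{n-1}^l$ had a null chain $C_{1,r_1},C_{2,r_2},\dots$ in a period; each of its cells is empty in $g_{n-1}^l$, hence in $g$ is either empty or contains strand $n$ alone. In its first generation the chain lies strictly to the left of a strand of $g_{n-1}^l$ (of index $\leq n-1$), which occupies a cell weakly to the left of strand $n$; since a null-chain cell can never coincide with a strand cell, monotonicity of the child function keeps the chain strictly to the left of that flanking strand, hence strictly to the left of strand $n$, in every generation. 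Thus no cell of the chain contains strand $n$ in $g$, so the chain is a null chain of $g$, contradicting that $g$ is a glider. Finally, the first strand of $g_{n-1}^l$ is strand $1$ of $g$, which has positive valuated speed because $g$ has positive speed and, by \thref{gliderSpeed}, all strands of $g$ share the same valuated speed; applying \thref{gliderSpeed} to the non-crossing repeating pattern $g_{n-1}^l$ shows $\operatorname{Speed}(g_{n-1}^l)$ is positive, hence nonzero. So $g_{n-1}^l$ is a glider with positive speed, and it has the bit $\frac{S}{SE}$ under $T$: it is a pure positive glider, and the negative statement follows by the symmetric argument applied to $g_{n-1}^r$.

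The main obstacle is the central step of the second paragraph: one must be sure \thref{pureSubglidersprec} accounts for exactly the turning configurations that change when strand $n$ is deleted, which is where the non-crossing hypothesis (forcing strand $n$ to remain rightmost, so that only strand $n-1$ is affected) does the real work. The absence-of-null-chains argument is conceptually routine, using only that the child function is order-preserving, and the speed comparison is a direct application of \thref{gliderSpeed}.
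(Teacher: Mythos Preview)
Your proof is correct and follows essentially the same route as the paper: handle the speed-$1$ case via \thref{speedcgliders}, then in the general case use \thref{pureSubglidersprec} to show that any turning rule $T$ of $g$ with $\frac{S}{SE}$ is a turning rule of $g_{n-1}^l$, and finish with \thref{gliderSpeed} for the speed. The paper packages the central step as a proof by contradiction (splitting into ``$g_{n-1}^l$ has no turning rule'' versus ``all its turning rules have $\frac{T}{SE}$''), whereas you argue directly that $T$ governs every configuration of $g_{n-1}^l$; your version is in fact more explicit about checking periodicity, the absence of null chains, and positivity of the speed, points the paper takes more or less for granted.
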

\begin{proof}
    We will prove this result only for the pure positive case, as the other case follows analogously. We consider two cases: The case where $g$ has speed $1$ and the case where $g$ does not have speed $1$. In the first case, by \thref{speedcgliders}, $g_{n-1}^l$ is a nested speed $1$ glider on $n - 1$ strands. Thus, also by \thref{speedcgliders}, $g_{n-1}^l$ has the same set of turning rules as $g$. In the second case, all turning configurations involving strands $n - 1$ and $n$ that occur in $g$ produce the same bit in $g_{n-1}^l$ as they did in $g$ or produce the bit $\frac{S}{SE}$ by \thref{pureSubglidersprec}.
    
    Now, let $t$ be a turning rule of $g$ with the bit $\frac{S}{SE}$. For contradiction, suppose $g_{n-1}^l$ is not a positive pure SCA glider. By the fact that $g_{n-1}^l$ is a repeating grid pattern, \thref{gliderSpeed}, and the definition of a pure glider, $g_{n-1}^l$ must not have $\frac{S}{SE}$ in any of its turning rules or $g_{n-1}^l$ must not have a turning rule. If $g_{n-1}^l$ does not have $\frac{S}{SE}$ in any of its turning rules but does have a turning rule, $\frac{T}{SE}$ is in all of $g_{n-1}^l$'s turning rules, and the subpattern:
\begin{center}
        \includegraphics[scale=0.9]{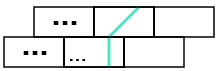}
\end{center}
occurs in $g_{n-1}^l$. The pattern cannot occur on any strand $k$ with $k < n - 1$, or else the pattern would be a subpattern of $g$, a contradiction to the fact that $g$ is pure. This contradicts \thref{pureSubglidersprec}.

Now consider the case where $g_{n-1}^l$ does not have a turning rule. There must be some subpattern of $g_{n-1}^l$ which generates a bit that conflicts with $t$. Consider the corresponding turning configuration:
\begin{center}
        \includegraphics[scale=0.9]{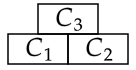}
\end{center}
If strand $n-1$ appears in $C_2$, we reach a contradiction as the same configuration must occur in $g$. If the configuration is a 1-stranded configuration for a strand other than $n-1$, we reach a contradiction for the same reason. Thus, by \thref{pureSubglidersprec}, the configuration must be:
\begin{center}
        \includegraphics[scale=0.9]{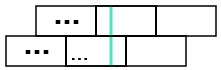}
\end{center}
which produces the bit $\frac{S}{SE}$. This does not conflict with $t$, a contradiction.
\end{proof}
We now introduce some new terminology:
\begin{definition}
    A \textbf{positive nested} $n$\textbf{-stranded glider under} $t$ is a glider $g$ with positive speed such that $g_k^l$ is a glider for all $1\leq k\leq n$ and there exists some turning rule $t$ such that $t$ is a turning rule for $g_k^l$ for all $1\leq k\leq n$.
\end{definition}
Similarly,
\begin{definition}
    A \textbf{negative nested} $n$\textbf{-stranded glider under} $t$ is a glider $g$ with negative speed such that $g_k^r$ is a glider for all $1\leq k\leq n$ and there exists some turning rule $t$ such that $t$ is a turning rule for $g_k^r$ for all $1\leq k\leq n$.
\end{definition}
If $t$ is not specified, we will say $g$ is a \textbf{positive nested} or \textbf{negative nested} $n$\textbf{-stranded glider under a turning rule}. A \textbf{nested} $n$\textbf{-stranded glider under a turning rule} is a positive nested $n$-stranded glider under a turning rule or a negative nested $n$ stranded glider under a turning rule.
\begin{theorem}\thlabel{pureisNested}
    Let $g$ be a pure glider. Then $g$ is a nested $n$-stranded glider under a turning rule.
\end{theorem}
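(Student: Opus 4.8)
The plan is to induct on the number $n$ of strands, with \thref{pureSubgliders} doing essentially all of the work. Since a pure glider is by definition either pure positive or pure negative and the two cases are mirror images of one another, I would carry out the argument only for pure positive gliders; the pure negative case follows from the symmetric statement of \thref{pureSubgliders}, replacing $g_k^l$ by $g_k^r$, the bit $\frac{S}{SE}$ by $\frac{S}{ES}$, and ``lowest by index'' by ``highest by index'' throughout.

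To get an induction that actually produces a \emph{single} turning rule common to every level of nesting (as the definition of a positive nested $n$-stranded glider under a turning rule demands), I would prove the following strengthened statement by induction on $n$: if $g$ is a pure positive $n$-stranded glider and $t$ is any turning rule of $g$ containing the bit $\frac{S}{SE}$, then for every $1\le k\le n$ the subpattern $g_k^l$ is a glider and $t$ is a turning rule of $g_k^l$. The base case $n=1$ is immediate, since $g_1^l=g$, which is a glider with turning rule $t$. For the inductive step, suppose the strengthened statement holds for all pure positive gliders on fewer than $n$ strands; let $g$ be a pure positive $n$-stranded glider, and pick a turning rule $t$ of $g$ containing $\frac{S}{SE}$ (which exists by the definition of a pure positive glider). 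By \thref{pureSubgliders}, $g_{n-1}^l$ is again a pure positive glider and $t$ is one of its turning rules; of course $t$ still contains the bit $\frac{S}{SE}$. Applying the inductive hypothesis to $g_{n-1}^l$ and $t$, each $(g_{n-1}^l)_k^l$ with $1\le k\le n-1$ is a glider having $t$ as a turning rule. An elementary check from \thref{def} shows $(g_{n-1}^l)_k^l=g_k^l$: the $k$ strands of lowest index in the first generation of $g_{n-1}^l$ are exactly the $k$ strands of lowest index in the first generation of $g$. Hence $g_k^l$ is a glider with turning rule $t$ for $1\le k\le n-1$, and for $k=n$ we have $g_n^l=g$, which trivially has turning rule $t$; this closes the induction. (No separate treatment of speed-$1$ gliders is needed, since \thref{pureSubgliders} already absorbs that case via \thref{speedcgliders}.)

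The theorem then follows immediately: given a pure glider $g$, say pure positive on $n$ strands, I would choose a turning rule $t$ of $g$ containing $\frac{S}{SE}$, invoke the strengthened statement to conclude that every $g_k^l$ ($1\le k\le n$) is a glider and that $t$ is a turning rule for all of them simultaneously, and then read off from the definition that $g$ is a positive nested $n$-stranded glider under $t$, hence a nested $n$-stranded glider under a turning rule. The pure negative case produces a negative nested $n$-stranded glider under a turning rule in the same way.

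Since the genuinely hard analysis has already been carried out in \thref{pureSubgliders} (and the lemmas it rests on, \thref{pureSubglidersprec} and \thref{speedcgliders}), I do not expect a real obstacle here. The only points requiring care are structural rather than combinatorial: formulating the induction so that one fixed turning rule $t$ is threaded through every nesting level (rather than merely knowing each level is individually pure), and the routine observation that iterated left subpatterns compose, $(g_{n-1}^l)_k^l=g_k^l$. The preservation of the bit $\frac{S}{SE}$ when $t$ is viewed as a turning rule of $g_{n-1}^l$ is automatic, since $t$ is literally the same rule.
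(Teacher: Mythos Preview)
Your proposal is correct and essentially identical to the paper's proof: both fix a turning rule $t$ of $g$ containing $\frac{S}{SE}$ and repeatedly apply \thref{pureSubgliders} to peel off one strand at a time, using the identification $(g_{m}^l)_{k}^l=g_{k}^l$. The only cosmetic difference is that the paper inducts on $k$ (showing $g_{n-k}^l$ is pure positive under every $\frac{S}{SE}$-rule of $g$ for $0\le k\le n-1$) within a fixed $g$, while you induct on $n$ across all gliders; the content is the same.
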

\begin{proof}
    As usual, we will prove this result only for the pure positive case, as the other case follows analogously. Let $g$ be a pure positive glider. 

\hfill\break
\textbf{Claim:} $g_{n-k}^l$ is a pure positive glider under all turning rules of $g$ with bit $\frac{S}{SE}$ for all $0\leq k\leq n-1$. 
\hfill\break
\textbf{Proof:} The proof is by finite induction on $k$.

    \hfill\break
    \textbf{Base Case}: Let $k = 0$. By assumption, $g_{n-k}^l = g$ is a pure positive glider under all the turning rules of $g$ with the bit $\frac{S}{SE}$.
    
\hfill\break
    \textbf{Inductive Step}: Suppose, for some $k < n - 1$, that $g_{n-k}^l$ is a pure positive glider with all turning rules of $g$ such that $\frac{S}{SE}$. Now consider $g_{n - (k+1)}^l = g_{(n - k) - 1}^l$. Because $n - k > 1$, by \thref{pureSubgliders}, $g_{(n - k) - 1}^l$  is a pure positive glider under all turning rules of $g_{n-k}^l$ with $\frac{S}{SE}$. $g_{n-k}^l$ is a pure positive glider under all turning rules of $g$ with $\frac{S}{SE}$, so $g_{(n - k) - 1}^l = g_{n - (k + 1)}^l$ is a pure positive glider under all the turning rules of $g$ with $\frac{S}{SE}$. Thus, the induction hypothesis is confirmed.
\begin{flushright}
$\dashv_{\text{Claim}}$
\end{flushright}
$g$ is a pure positive glider, so there exists a turning rule $t$ with bit $\frac{S}{SE}$ under which $g$ is a glider. By the claim, $g_{k}^l$ is a glider under $t$ for all $1\leq k\leq n$.
\end{proof}
\begin{theorem}\thlabel{pureAndNested}
    Let $g$ be an $n-$stranded noncrossing glider. The following are equivalent:
    \begin{enumerate}
        \item[1.] $g$ is pure.
        \item[2.] $g$ is a nested $n-$stranded glider under a turning rule.
        \item[3.] $g$ is a nested $n-$stranded glider.
    \end{enumerate}
\end{theorem}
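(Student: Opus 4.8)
The plan is to establish the cycle of implications $(1)\Rightarrow(2)\Rightarrow(3)\Rightarrow(1)$, leaning on the machinery already developed. The implication $(1)\Rightarrow(2)$ is exactly \thref{pureisNested}, so nothing new is needed there. The implication $(2)\Rightarrow(3)$ is a one-line unwinding of the definitions: a positive nested $n$-stranded glider under a turning rule $t$ is, by definition, a glider $g$ of positive speed with $g_k^l$ a glider for every $1\le k\le n$ (plus the extra requirement that a single $t$ serves as a turning rule for all of them), and dropping the clause about $t$ leaves precisely a positive nested $n$-stranded glider; the negative case is identical with $g_k^r$ replacing $g_k^l$.

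The substance is $(3)\Rightarrow(1)$. I would take $g$ to be a nested noncrossing glider and prove the positive case, the negative case following by the mirror argument (with $g_1^r$ and $\frac{S}{ES}$ in place of $g_1^l$ and $\frac{S}{SE}$, and the mirror one-stranded gliders). Since $g$ is a positive nested glider, its leftmost strand $g_1^l$, viewed as a subpattern, is itself a glider, and by \thref{gliderSpeed} every strand of $g$ has the same positive valuated speed, so $g_1^l$ is a $1$-stranded glider of positive speed. By \thref{1StrandClassification}, after discarding the items of speed $0$ and the items of negative speed, $g_1^l$ is a shift of either the speed-$1$ glider $([[n_0,l_1^{(1)}]])^\infty$ or the speed-$\tfrac13$ glider $([[s_1^{(1)},n_2],[n_0,l_1^{(1)}],[n_0,s_1^{(1)}]])^\infty$.

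If $g_1^l$ is a shift of the speed-$1$ glider, then by \thref{gliderSpeed} the pattern $g$ has speed $1$, so by \thref{speedcgliders} $g$ is the unique $n$-stranded speed-$1$ glider, which uses only the turning bits $\frac{T}{ET}$ and $\frac{T}{TT}$. Consequently no turning configuration of $g$ has initial conditions $SE$, so that position of the generic turning rule of $g$ is unconstrained and $g$ has a turning rule containing $\frac{S}{SE}$; being of positive speed, $g$ is a pure positive glider. If instead $g_1^l$ is a shift of the speed-$\tfrac13$ glider, I would use the fact — visible from the derivation of that glider in Section \ref{sec:OneStrand}, which passes through the branch $e_2=\tfrac{S}{SE}$ — that it actually \emph{uses} the bit $\frac{S}{SE}$, i.e.\ the $SE$ turning configuration occurs in $g_1^l$ with the strand going straight. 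Since $g_1^l$ is literally the leftmost strand of $g$ and deleting the other strands does not move it in the grid, this same $SE$ configuration, with the same result, occurs in $g$. But $g$ is a glider, hence a repeating SCA pattern, hence carries a turning rule, and a turning rule assigns exactly one result to the $SE$ configuration; therefore every turning rule of $g$ has bit $\frac{S}{SE}$, so $g$ is a pure positive glider. This proves $(3)\Rightarrow(1)$ and closes the cycle.

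The step I expect to be the only non-mechanical point is recognizing that the speed-$\tfrac13$ one-stranded glider genuinely uses $\frac{S}{SE}$ (and, in the mirror argument, that the speed-$-\tfrac13$ one uses $\frac{S}{ES}$): this is what transports the purity bit from the one-stranded subpattern $g_1^l$ up to $g$, via the fact that a repeating SCA pattern must carry a single consistent turning rule. Everything else is a direct appeal to \thref{pureisNested}, the definitions, \thref{gliderSpeed}, \thref{speedcgliders}, and \thref{1StrandClassification}; the one small verification remaining is that the speed-$1$ glider on $n$ strands has no $SE$ configuration, which is immediate from the form given by \thref{speedcgliders}.
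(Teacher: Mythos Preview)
Your handling of $(1)\Rightarrow(2)$ and $(2)\Rightarrow(3)$ is fine and matches the paper. The problem is in $(3)\Rightarrow(1)$, specifically the sentence ``Since $g_1^l$ is literally the leftmost strand of $g$ and deleting the other strands does not move it in the grid, this same $SE$ configuration, with the same result, occurs in $g$.'' Deleting the other strands does not move strand $1$, but it \emph{does} change the contents of the cell immediately to the right of strand $1$: in $g$ that cell may well contain strand $2$, while in $g_1^l$ it is empty by construction. So what registers as an $SE$ configuration in $g_1^l$ may be an $SS$ or $ST$ configuration in $g$, and you cannot conclude that $g$ ever exhibits the $SE$ configuration with result $S$. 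Nothing you have cited rules out the possibility that every time strand~$1$ of $g_1^l$ sits in the $SE$ situation, strand~$2$ of $g$ is parked in the neighbouring cell.

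The paper closes this gap by working at the \emph{other} end of the glider. Assuming for contradiction that every turning rule of $g$ has $\frac{T}{SE}$, it looks at strand~$n$, whose right neighbour genuinely is always empty in $g$; combining $\frac{T}{SE}$ with $\frac{S}{TE}$ (from \thref{mustHaveSET}) forces the $n$th strand into a slow zig-zag and bounds its valuated speed by $\tfrac15$. Then \thref{gliderSpeed} pushes that bound to all of $g$, hence to $g_1^l$, and now your appeal to \thref{1StrandClassification} does give a contradiction, since the only positive one-stranded gliders have valuated speed $1$ or $\tfrac13$. Your instinct to reduce to the one-stranded classification via $g_1^l$ is exactly the right endgame; what is missing is an honest speed bound obtained on strand $n$, where the $SE$ bit actually bites.
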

\begin{proof}
 $(1)\implies(2):$ Follows from \thref{pureisNested}.
 \hfill\break
 $(2)\implies(3):$ This follows from the definition of a $n-$stranded nested glider.
 \hfill\break
 $(3)\implies (1):$ We prove the result in the case where $g$ is positive, as the negative case is analogous. The case where $g$ is speed 1 follows trivially, so we assume $g$ does not have speed 1. For contradiction, suppose there exists a $n-$stranded nested noncrossing glider $g$ that has $\frac{T}{SE}$ in all of its turning rules. Then, notice that a sublist of the form:
\begin{center}
    \includegraphics[scale=0.9]{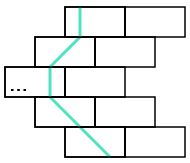}
\end{center}
 must always occur on strand $n$ when the $n$th strand moves exactly two positions to the left over two consecutive generations, and a sublist of the form:
\begin{center}
    \includegraphics[scale=0.9]{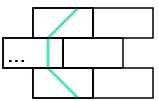}
\end{center}
 must occur on strand $n$ when the $n$th strand moves exactly one position to the left. Thus, the $n$th strand of $g$ has valuated speed at most $\nu(\frac{1}{5}) = \frac{1}{5}$. By \thref{gliderSpeed}, $g$ has valuated speed at most $\nu(\frac{1}{5}) = \frac{1}{5}$. $g$ is nested, so $g_1^l$ has valuated speed at most $\frac{1}{5}$. By \thref{1StrandClassification}, this is a contradiction.
\end{proof}

\begin{corollary}\thlabel{pureGlidercor}
    Let $g$ be a $n$-stranded pure glider. Then there exists a turning rule of $g$ such that $g_m^l$ is an $n$-stranded pure glider for all $1\leq m\leq n$.
\end{corollary}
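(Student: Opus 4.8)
The plan is to recognize that this corollary is an immediate consequence of \thref{pureisNested}; indeed it is essentially the Claim proved inside that theorem's argument, and all that remains is to package it. As is customary in this section, I would argue the positive case in full and note that for a pure negative glider the statement holds with $g_m^r$ in place of $g_m^l$, by the mirror-image argument using the bit $\frac{S}{ES}$.

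Since $g$ is a pure positive glider, by definition there is a turning rule $t$ of $g$ that contains the bit $\frac{S}{SE}$. The core step is the finite induction on $k$ carried out in the proof of \thref{pureisNested}: $g_{n-k}^l$ is a pure positive glider under every turning rule of $g$ that contains $\frac{S}{SE}$, for all $0 \le k \le n-1$. The base case $k = 0$ is trivial, since $g_n^l = g$; the inductive step passes from $g_{n-k}^l$ to $g_{n-k-1}^l = g_{(n-k)-1}^l$ via \thref{pureSubgliders}, which applies because $n - k > 1$ and, by the inductive hypothesis, $g_{n-k}^l$ is a pure positive glider under all turning rules of $g$ with $\frac{S}{SE}$, in particular under $t$. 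Reindexing by $m = n - k$ yields: $g_m^l$ is a pure positive glider under $t$ for every $1 \le m \le n$. Taking this single rule $t$ proves the corollary.

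The only point requiring care — rather than a genuine obstacle — is the uniformity of $t$: we need one turning rule that works for all of $g_1^l, \dots, g_n^l$ simultaneously, not a separate rule for each. This is precisely why \thref{pureSubgliders} and \thref{pureSubglidersprec} were proved in the strengthened form ``under all turning rules of $g$ with $\frac{S}{SE}$'' rather than merely ``is a pure glider'': the bit $\frac{S}{SE}$ is never destroyed when strand $n$ is deleted (all turning configurations on strands $n-1, n$ either keep their bit or produce $\frac{S}{SE}$), so $t$ descends along the entire chain of subpatterns $g_1^l, g_2^l, \dots, g_n^l = g$. The speed-$1$ subcase needs no separate treatment here, as \thref{speedcgliders} (invoked within \thref{pureSubgliders}) already shows every speed-$1$ glider is nested and carries the same turning rule on every number of strands, and that rule contains $\frac{S}{SE}$.
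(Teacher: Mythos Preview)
Your proposal is correct and matches the paper's approach exactly: the paper's own proof simply reads ``This follows from the proof of \thref{pureisNested} and the fact that $g$ contains no crossings,'' and you have spelled out precisely the content of that remark, namely that the Claim inside \thref{pureisNested} (together with the choice of a single $t$ containing $\frac{S}{SE}$) already gives the required uniform turning rule. Your added commentary on why $t$ descends through the whole chain is accurate and helpful but not additional to what the paper intends.
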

\begin{proof}
    This follows from the proof of \thref{pureisNested} and the fact that $g$ contains no crossings.

\end{proof}
Note that it is still open as to whether nonpure noncrossing gliders exist.
\begin{theorem}
    Let $g$ be a noncrossing glider. The following are equivalent:
    \begin{itemize}
        \item[1.] $g$ is nonpure.
        \item[2.] $g$ contains the subpattern:
        \begin{center}
        \includegraphics[scale=0.5]{scaStuff/sca726_9.png}
        \end{center}
        if positive and:
        \begin{center}
            \includegraphics[scale=0.7]{scaStuff/sca1-11.png}
        \end{center}
        if negative.
        \end{itemize}
        \item[3.] The set of the turning rules under which $g$ is an SCA pattern is a subset of $\{011100y00|y\in\{0,1\}\}$.
\end{theorem}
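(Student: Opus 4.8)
The plan is to prove $(2)\Rightarrow(1)$ and $(3)\Rightarrow(1)$ directly, and then to establish $(1)\Rightarrow(2)$ and $(1)\Rightarrow(3)$ by a single argument; throughout I work in the positive-speed case, the negative case being the left--right mirror image (replacing the positive subpattern of (2) by the negative one, and the bits $\frac{T}{SE},\frac{S}{SE}$ by $\frac{T}{ES},\frac{S}{ES}$). Recall first that, since $g$ is a valid SCA pattern, a positive glider is nonpure precisely when the turning configuration producing $\frac{T}{SE}$ occurs in it, equivalently when every one of its turning rules contains $\frac{T}{SE}$ (and none contains $\frac{S}{SE}$); dually for negative gliders and $\frac{T}{ES}$.

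For $(2)\Rightarrow(1)$: the positive subpattern displayed in (2) is, by the derivation carried out in case $a_3$ of the proof of \thref{2TurnPossibilities}, incompatible with the bit $\frac{S}{SE}$ — it forces a cell to turn in a way requiring $\frac{T}{SE}$ — so a noncrossing glider containing it has $\frac{T}{SE}$ in every turning rule and none with $\frac{S}{SE}$; being positive (the subpattern moves its strands leftward), $g$ is then not a pure positive glider, i.e.\ it is nonpure. Equivalently, by \thref{2TurnPossibilities} the $2$-turn sitting inside the subpattern would, were $g$ pure, have to lie inside one of the two ``pure'' windows of that lemma, and the subpattern of (2) is a subpattern of neither — a small finite check. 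For $(3)\Rightarrow(1)$: every rule in $\{011100y00:y\in\{0,1\}\}$ contains both $\frac{T}{SE}$ and $\frac{T}{ES}$ and neither $\frac{S}{SE}$ nor $\frac{S}{ES}$; since a glider has nonzero speed and is therefore positive or negative, in either case $g$ fails the defining condition of a pure glider.

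For $(1)\Rightarrow(2)$ and $(1)\Rightarrow(3)$, let $g$ be a nonpure noncrossing glider of positive speed. First, $g$ does not have speed $1$: by \thref{speedcgliders} the speed-$1$ glider has generic turning rule $XXXX01XX1$, which admits a turning rule containing $\frac{S}{SE}$ and is therefore pure. Hence $\operatorname{Speed}(g)$ is positive and less than $1$, and by \thref{gliderSpeed} the outermost strand, strand $n$, has the same valuated speed; so strand $n$ moves one position to the left in some generation of $\per(g)$ but not in every generation, and by \thref{no3Turns} not in three consecutive generations, so its leftward moves occur in maximal blocks of one or two. Nonpurity gives $\frac{T}{SE}$ in every turning rule, and \thref{mustHaveSET} (applicable as the speed is not $\pm1$) gives $\frac{S}{TE}$ and $\frac{S}{ET}$ there as well. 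Tracing the cells forced in the generations surrounding such a block of left moves — using the non-crossing hypothesis, the bits just listed, \thref{no3Turns}, and the branch analysis of \thref{2TurnPossibilities}, exactly as the analogous sublists on strand $n$ are forced in the proof of \thref{pureAndNested} — shows that this window of $g$ is the subpattern displayed in (2); this is (2). Reading off the turning configurations appearing in that window, together with \thref{mustHaveSET} and the fact that a valid SCA pattern cannot force conflicting bits in one position, pins the generic turning rule of $g$ down to $011100X00$ or a refinement of it, so the turning rules under which $g$ is an SCA pattern form a subset of $\{011100y00:y\in\{0,1\}\}$; this is (3).

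The main obstacle is $(1)\Rightarrow(2)$: showing that nonpurity, together with the non-crossing hypothesis, pins the local structure around every leftward move of strand $n$ down to the single subpattern of (2), with every other branch of the \thref{2TurnPossibilities} case analysis eliminated by \thref{no3Turns}, \thref{mustHaveSET}, and the continuity constraints. That bookkeeping — and its mirror in the negative case, together with the verification that reading configurations off the resulting window yields exactly $011100X00$ — is the delicate step; everything else is assembled from results already established. (Note that, as existence of nonpure noncrossing gliders is open, all three conditions may describe the empty class; the equivalences are proved regardless.)
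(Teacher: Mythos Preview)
Your strategy and the paper's are essentially the same: both lean on \thref{2TurnPossibilities}, \thref{no3Turns}, \thref{mustHaveSET}, \thref{gliderSpeed}, and \thref{speedcgliders}, and both extract the generic turning rule from the forced window around strand $n$. The paper chooses the cycle $(1)\Rightarrow(2)\Rightarrow(3)\Rightarrow(1)$; your four separate implications are logically equivalent and use the same ingredients.

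There are, however, two places where your sketch skips a step the paper does not. First, in $(1)\Rightarrow(2)$ you note that strand $n$'s leftward moves come in maximal blocks of size one or two and then invoke \thref{2TurnPossibilities}; but that lemma only applies to a $2$-turn, so you still need to rule out the possibility that every block has size one. The paper does this explicitly: if strand $n$ never turns left in two consecutive generations, one reads off (using $\frac{T}{SE}$ and $\frac{S}{TE}$) a local pattern on strand $n$ forcing its valuated speed to be $0$, contradicting \thref{gliderSpeed}. Your appeal to ``exactly as \ldots\ in the proof of \thref{pureAndNested}'' does not supply this, since that argument derives a speed bound rather than the specific subpattern in (2).

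Second, in pinning the turning rule down to $011100X00$ you claim that reading configurations off the window plus \thref{mustHaveSET} suffices. The paper notes that the window together with positivity gives only $\frac{T}{ES},\frac{T}{SE},\frac{S}{TE},\frac{T}{ST}$; \thref{no3Turns} and \thref{mustHaveSET} then give $\frac{S}{TT}$ and $\frac{S}{ET}$. To determine the $SS$ bit, the paper performs a separate analysis on strand $1$: since $g$ has positive speed, a short subpattern must occur on strand $1$, and the already-established bits $\frac{T}{ST},\frac{T}{SE}$ force the neighboring cell to carry a straight strand, yielding $\frac{S}{SS}$. This strand-$1$ step is absent from your outline and cannot be recovered from the strand-$n$ window alone.
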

\begin{proof}
    $(1)\implies (2):$ We prove this in the case that $g$ is positive, as the negative case is analogous. Consider strand $n$. Notice that, because $g$ is nonpure, it does not have speed 1. Thus, if strand $n$ does not turn in any two consecutive generations, we have:
\begin{center}
    \includegraphics[scale=0.9]{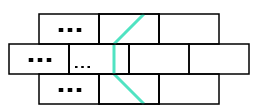}
\end{center}
    so $g$ has speed 0 by \thref{gliderSpeed}. Thus, strand $n$ must turn in at least two consective generations. The claim follows from from \thref{2TurnPossibilities}
\hfill\break
    $(2)\implies (3):$ We prove the result in the positive case, as the negative case is analogous. From this glider and the fact that $g$ is positive, we have bits $\frac{T}{ES}, \frac{T}{SE}, \frac{S}{TE}, \frac{T}{ST}$. By \thref{no3Turns}, we have the bit $\frac{S}{TT}$. Because all rules of $g$ have $\frac{T}{SE}$, it does not have speed $1$ by \thref{speedcgliders}, so it has $\frac{S}{ET}$ in all of its turning rules. $g$ has positive speed, so the following subpattern must occur on strand 1:
\begin{center}
    \includegraphics[scale=0.9]{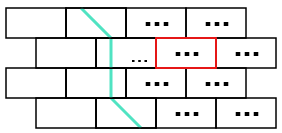}
\end{center}
    Notice that the cell outlined in red must contain a straight strand, as $\frac{T}{ST}$ and $\frac{T}{SE}$. Thus, we have $\frac{S}{SS}$ in all turning rules, so the generic turning rule for $g$ is $011100X00$.
\hfill\break
    $(3)\implies (1):$ Notice that the glider has $\frac{T}{SE}$ in all turning rules because all turning rules under which $g$ is a glider have bit 1 as 1. Thus, $g$ is nonpure.
\end{proof}

\section{Decidability and Pure Gliders}\label{sec:decidabilityAndPure}
In this section, we design an algorithm to enumerate all $n-$stranded pure gliders given a fixed $n$. Recall that the approach we used to classify gliders in \thref{2StrandClassification} cannot be immediately generalized to classify $n$-stranded gliders where $n > 2$ because there could be two different pairs of strands which interact in different generations, and the process we used to obtain \thref{2StrandClassification} relied on all interactions being confined to one generation which had a maximum width. Intuitively, nested gliders allow us to avoid this problem, as we can divide an $n-$stranded nested glider into two parts: The $n-1-$stranded left (right) subpattern and the $n$th (first) strand, from which we can then use a similar process to what we used in \ref{sec:TwoStrands} to classify all $n-$stranded pure gliders. \thref{pureAndNested} allows us to transfer the result for nested gliders to pure gliders. The results we prove before presenting the algorithm are necessary to prove that the algorithm produces all $n-$stranded pure gliders in finite time.

We now introduce some new notation. We denote the set of pure gliders on $n$ strands by $U_n$. Throughout this section, we use the notation ``$n < \infty$" to mean ``$n$ is finite". We now present a proposition:
\begin{proposition}\thlabel{finitePure}
    For all $n\in\mathbb{N}^+$, $|U_n| < \infty$. 
\end{proposition}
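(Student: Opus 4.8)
The plan is to argue by induction on $n$. For $n = 1$, $\thref{1StrandClassification}$ exhibits all $1$-stranded repeating SCA patterns — there are only six — so $U_1$, being a subset of this finite set, is finite. For the inductive step I would fix $n \ge 2$, assume $|U_{n-1}| < \infty$, and bound $|U_n^+|$ and $|U_n^-|$ separately, where $U_n^+$ and $U_n^-$ denote the pure positive and pure negative gliders on $n$ strands and $U_n = U_n^+ \sqcup U_n^-$. I describe the positive case; the negative case is obtained by replacing left subpatterns with right subpatterns and the bit $\frac{S}{SE}$ with $\frac{S}{ES}$ throughout.

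So let $g$ be a pure positive $n$-stranded glider. The unique speed-$1$ glider of $\thref{speedcgliders}$ accounts for at most one element, so I may assume $g$ is not it, which means $\thref{mustHaveSET}$ applies to $g$. Since $g$ is pure it admits a turning rule with the bit $\frac{S}{SE}$; fix a concrete such rule $t$ (refining any $X$'s arbitrarily), of which there are at most $2^8$. By $\thref{pureSubgliders}$ the $(n-1)$-stranded left subpattern $h := g_{n-1}^l$ is a pure positive glider under $t$, so $h \in U_{n-1}$, a finite set by the inductive hypothesis. Recall that pure gliders are noncrossing, so strand $n$ stays strictly to the right of strands $1,\dots,n-1$ in every generation, and because $g_{n-1}^l$ equals the fixed pattern $h$, strands $1,\dots,n-1$ of $g$ evolve exactly as in $h$, regardless of strand $n$. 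Consequently $g$ is completely determined by the triple $(h,\,t,\,\text{the type and position of strand }n\text{ in }\delta_1\text{ relative to strand }n-1)$: applying $t$ forward generation by generation to the local configuration around strand $n$ — whose left neighbour is read off from $h$ (or is empty) and whose right neighbour is empty — recovers strand $n$, hence all of $g$. Since each pure positive $n$-glider yields such a triple and is recovered from it, it suffices to bound the number of possibilities for the position of strand $n$ in $\delta_1$, equivalently (every shift of $g$ being again a pure positive glider) the gap between strand $n-1$ and strand $n$ in an arbitrary generation.

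The heart of the argument, and the main obstacle, is to show this gap is at most an absolute constant $c$, independent of $n$, whence $\width(g) \le \width(h) + c$ is bounded in terms of $n$ by the inductive hypothesis. The idea is as follows. Suppose in some generation the gap between strand $n-1$ and strand $n$ exceeds $c$; then strand $n$ is isolated there, with empty cells immediately to its left. If strand $n$ goes straight there, then — this being precisely the force of the bit $\frac{S}{SE}$, which $g$ possesses — it remains straight in the next generation while still isolated, hence straight forever; then strand $n$ has speed $0$, so by $\thref{gliderSpeed}$ $g$ has speed $0$, contradicting that $g$ is a positive glider. Hence strand $n$ must be turning, but then $\thref{no3Turns}$, $\thref{2TurnPossibilities}$ and $\thref{mustHaveSET}$ sharply restrict the behaviour of an isolated or nearly-isolated strand $n$: it can perform at most two consecutive left turns, and the local shape of such a burst is one of finitely many, so strand $n$ can open only a bounded corridor before being driven back into adjacency with strand $n-1$ (a persistently wide corridor being ruled out by the straightness argument just given). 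Taking $c$ larger than all the constants occurring in these lemmas forces the gap never to exceed $c$.

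Granting the gap bound, strand $n$ in $\delta_1$ has at most $c+2$ positions relative to strand $n-1$ and at most three types, so the determining data ranges over a finite set and $|U_n^+| \le |U_{n-1}|\cdot 2^8 \cdot 3(c+2) + 1 < \infty$; symmetrically $|U_n^-| < \infty$, and hence $|U_n| < \infty$. Everything outside the third paragraph is routine bookkeeping. The delicate points I expect are: making the gap bound precise (where $\thref{gliderSpeed}$, $\thref{mustHaveSET}$, $\thref{no3Turns}$ and $\thref{2TurnPossibilities}$ are used in combination); handling the boundary cases where strand $n$ shares a cell with, or is adjacent to, strand $n-1$, so that "gap" and "configuration around strand $n$" are interpreted correctly; and verifying that an isolated straight strand $n$ is genuinely forced to remain straight, which follows because purity makes $\frac{S}{SE}$ a bit of $g$, so the generic turning rule cannot turn a straight strand whose right neighbour is empty.
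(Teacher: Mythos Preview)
Your inductive framework matches the paper's: reduce to $g_{n-1}^l\in U_{n-1}$ via \thref{pureSubgliders}, then argue that strand $n$ contributes only finitely many degrees of freedom. The gap is in your third paragraph.

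The claim that an isolated straight strand $n$ ``remains straight in the next generation'' by virtue of $\frac{S}{SE}$ is wrong. That bit governs the pair (straight strand, empty cell to its \emph{right}) and only prevents a right turn; it says nothing about the pair (empty cell to the left, straight strand), which is the $ES$ configuration. A pure positive glider has no constraint forcing $\frac{S}{ES}$---indeed the $1$-stranded speed-$\tfrac13$ glider of \thref{1StrandClassification} has $\frac{T}{ES}$ together with $\frac{S}{SE}$, and under that rule an isolated straight strand turns left, then goes straight, then turns left, and so on. More generally, an isolated strand $n$ sees only empty neighbours, so it evolves exactly as a $1$-stranded pattern under the same turning rule; by \thref{gliderSpeed} its valuated speed equals that of $g$, hence equals that of $g_{n-1}^l$, and the gap need not shrink at all on speed grounds. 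The lemmas \thref{no3Turns} and \thref{2TurnPossibilities} constrain local shape but do not, on their own, force strand $n$ back toward strand $n-1$.

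What actually rules out a persistent corridor is the defining requirement that $\per(g)$ contain no null chain: if strand $n$ were separated from strand $n-1$ by an empty cell in every generation of the period, those empty cells would form one. The paper's proof uses precisely this. It does not attempt a uniform gap bound; instead it observes that strand $n$ must share a cell with, or be adjacent to, strand $n-1$ in \emph{some} generation of the period, pigeonholes on which generation of $\per(h)$ that is and on the (at most $17$) possible placements of strand $n$ there, and then notes that a single generation together with a turning rule determines the glider up to finitely many shifts. Your explicit bound $|U_n^+|\le |U_{n-1}|\cdot 2^8\cdot 3(c+2)+1$ also silently drops the shift multiplicity, which would need to be restored (or one argues by contradiction as the paper does) once the gap issue is repaired.
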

\begin{proof}
    For contradiction, suppose otherwise. Note that $|U_1| < \infty$ by \thref{1StrandClassification}. Thus, there exists a least $n$ such that $|U_n| < \infty$ but $|U_{n+1}|$ is infinite.  By \thref{pureAndNested}, \thref{pureGlidercor}, and the fact that $|U_n| < \infty$, there must exist some $h\in U_n$ such that the set $A$ of $a\in U_{n+1}$ with $a_n^l =h$ is infinite. Because there are finitely many turning rules, there exists a turning rule $t$ such that $B = \{g|g\in A\text{ and }g\text{ is a nested glider under } t\}\subseteq A$ such that $|B|$ is infinite. Notice that the period $r$ of $h$ is finite, say it is of length $m$. Then, for each $b\in B$, there are $m$ possible generations for which the strand $n+1$ could be in an adjacent cell to or be in the same cell as strand $n$. Thus, there exists a generation $\delta_i'$ of $r$ such that
    \begin{align*}
        C = \{b|b\in B\text{ and }b\text{ contains a generation in which strand } n\\
        \text{ is in the same cell with or an adjacent cell to strand } \\
        n + 1 \text{ and has the first } n \text{ strands in that generation as they are in }\delta_i'\}
    \end{align*}
    is infinite. For $c\in C$, fix a $\delta_j^c$ such that $\delta_j^c$ is the generation of $c$ with the least $j$ such that the first $n$ strands of $\delta_j^c$ are equal to $\delta_i'$ and strands $n$ and $n +1$ either share a cell or are contained in adjacent cells. Thus, $[\delta_j^c]^l_{n} = \delta_i'$ for all $c$, and $\width(\delta_j^c) \leq \width(\delta_i') + 3$. Because the first $n$ strands of $\delta_j^c$ are fixed for all $c$, there are at most 17 possible equivalence classes of $\delta_j^c$s. Thus, there are at most 17 possible $\delta_k$ such that for every $c$, there exists a $k$ such that $\delta_k = \delta_j^c$ and for all $k$ there exists a $c$ such that $\delta_k = \delta_j^c$. This means that there exists a $\delta_k$ such that:
    \begin{align*}
        D = \{c|c\in C\text{ and there exists a }u\in\mathbb{N}^+\text{ such that } \delta_u \text{ of } c \text{ is such that } \delta_u = \delta_k\}
    \end{align*}
    is infinite. Notice $D\subseteq C\subseteq B$, so all elements of $D$ are nested gliders under $t$, and because every $d\in D$ is a non-crossing glider, every $d\in D$ has crossing rule $000000000$. Thus, for all $d\in D$, $d$ must be some shift of the glider generated by $\delta_j, t, $ and $ 000000000$, so $|D| <\infty$ but $|D|$ is also infinite, a contradiction.
\end{proof}
We now give some intuition for the above proof in the form of an example $\delta_i'$ and the corresponding example $\delta_k$s. If $\delta_i'$ is:
\begin{center}
    \includegraphics[]{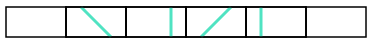}
\end{center}
then there are 5 $\delta_k$, which are exactly the following:
\begin{center}
    \includegraphics[]{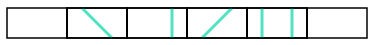}
\end{center}
\begin{center}
    \includegraphics[]{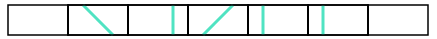}
\end{center}
\begin{center}
    \includegraphics[]{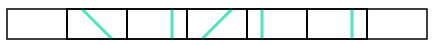}
\end{center}
\begin{center}
    \includegraphics[]{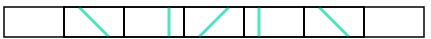}
\end{center}
\begin{center}
    \includegraphics[]{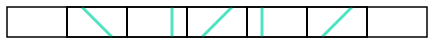}
\end{center}
\begin{lemma}\thlabel{speedcEarly}
    Let $g$ be a noncrossing glider. If $|\per(g)| < 3$, $g$ is the speed $1$ or $-1$ glider.
\end{lemma}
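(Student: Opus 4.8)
The plan is to reduce the statement to the one-stranded classification \thref{1StrandClassification} together with \thref{gliderSpeed} and \thref{speedcgliders}. Write $a=\per(g)$, so that $g=a^\infty$ with $m:=|\per(g)|\in\{1,2\}$. The first move is to record that the first strand $g_1^l$ and the last strand $g_n^r$ of $g$ are each legitimate one-stranded repeating SCA patterns: each has exactly one strand in every generation, so it contains no chains at all (a chain requires at least two strands in its first generation) and hence no null chains, and each is periodic with period dividing $m$, so of period $1$ or $2$. By \thref{1StrandClassification}, a one-stranded repeating SCA pattern of period $2$ has speed $0$, while one of period $1$ is a cyclic shift of item $2$ or item $4$ and so has speed $-1$ or $1$.

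Next I would split according to the period of $g_1^l$. If $|\per(g_1^l)|=2$ then $g_1^l$ has speed $0$, so \thref{gliderSpeed} forces $g$ to have speed $0$, contradicting that a glider has nonzero speed; this case is vacuous. If $|\per(g_1^l)|=1$ with $g_1^l$ of speed $\frac{1}{1}$, then \thref{gliderSpeed} gives that $g$ has speed $\frac{d}{d}$ for some $d\ge 1$, i.e.\ speed $1$ in the convention that $1$ abbreviates $\frac{a}{a}$, and \thref{speedcgliders} then identifies $g$ as the unique speed $1$ glider. If $|\per(g_1^l)|=1$ with $g_1^l$ of speed $-\frac{1}{1}$, then by the final clause of \thref{gliderSpeed} all strands of $g$, in particular $g_n^r$, have valuated speed $-1$; since $g_n^r$ has period at most $2$, the first paragraph forces its speed to be exactly $-\frac{1}{1}$, and the negative-strand clause of \thref{gliderSpeed} then gives that $g$ has speed $-\frac{d}{d}$, i.e.\ speed $-1$, so \thref{speedcgliders} identifies $g$ as the unique speed $-1$ glider. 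These cases are exhaustive, proving the lemma.

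The part I expect to be the only real subtlety --- and the step worth stating carefully --- is the treatment of the symbol ``speed $1$'': since $\frac{a}{a}$ and $\frac{2a}{2a}$ are different symbols both written as $1$, the case $m=2$ might seem to allow a glider of speed $\frac{2}{2}$ with period $2$. It is exactly \thref{speedcgliders}, which asserts that for each $n$ there is a single speed $1$ glider (and it has period $1$), that collapses all such possibilities and makes the final conclusion insensitive to whether $|\per(g)|$ was $1$ or $2$. The remaining care is simply verifying that $g_1^l$ and $g_n^r$ really are among the patterns governed by \thref{1StrandClassification}, which is immediate once one notes that a one-strand-per-generation pattern has no chains; beyond that the argument is a direct chain of appeals to the three cited results.
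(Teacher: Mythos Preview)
Your argument is correct and takes a genuinely different route from the paper. The paper's proof is a direct continuity computation: assuming positive speed and $|\per(g)|=2$, it notes the leftmost strand must turn left in exactly one of the two generations, writes down the two possible shapes of $\per(g)$, and checks by hand (via the child function) that neither can be continuous. Your approach instead reduces everything to already-proved machinery: you pass to the one-strand subpatterns $g_1^l$ and $g_n^r$, read off their speeds from \thref{1StrandClassification} (period~$1$ forces speed $\pm 1$, period~$2$ forces speed~$0$), and then let \thref{gliderSpeed} and \thref{speedcgliders} finish the job. This is more modular and avoids repeating continuity checks, at the cost of depending on three earlier results where the paper's argument is self-contained. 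Your handling of the negative-speed case via the ``same valuated speed'' clause of \thref{gliderSpeed} and the detour through $g_n^r$ is a nice touch, since that lemma's main clauses are asymmetric in which strand they look at.

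One point you should make explicit: to apply \thref{1StrandClassification} you need $g_1^l$ and $g_n^r$ to be \emph{SCA patterns}, i.e., to admit a turning rule, not merely to be continuous, periodic, and free of null chains. You verify the latter three conditions but not the first. It is true---for a one-strand pattern of period at most~$2$ there are too few transitions for any turning-rule bit to be forced to two different values, and indeed the Section~\ref{sec:OneStrand} case analysis exhausts all continuous one-strand transitions---but the statement of \thref{1StrandClassification} is phrased for SCA patterns, so a sentence justifying this is warranted.
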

\begin{proof}
    We prove the result when $g$ has positive speed, as the case where $g$ has negative speed is analogous. Notice $|\per(g)| = 1$ or $|\per(g)| = 2$. If $|\per(g)| = 1$, then the leftmost strand must turn left in every generation, so by the definition of speed, $g$ has speed $1$ by \thref{speedcgliders}. For contradiction, assume $|\per(g)| = 2$. The leftmost strand must turn left in exactly one generation of the period, or we would reach a contradiction to the fact that $g$ is a glider or the fact that $|\per(g)|\neq 1$. Then, $\per(g) = [\delta_1,\delta_2]$ where one of the following is true:
    \begin{itemize}
        \item[1.] $\delta_1 = [s_1^{(1)}, \cdots]\cdots, \delta_2= [n_0, l_1^{(1)}]\cdots$
        \begin{center}
        \includegraphics[scale=1]{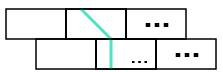}
\end{center}
        \item[2.] $\delta_1 = [n_0, l_1^{(1)}]\cdots, \delta_2 = [n_0, s_1^{(1)}]\cdots$
        \begin{center}
        \includegraphics[scale=1]{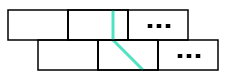}
\end{center}
    \end{itemize}
    In the first case, we have $g = [\delta_1, \delta_2]^\infty$. Notice that the subpattern $[\delta_2, \delta_1]$ cannot be continuous as there is no way for a continuous pattern to have $[s_1^{(1)}, \cdots]$ generated by $[n^{\left(\varnothing\right)}_{-2}, n^{\left(\varnothing\right)}_{-1}]$ and $[n_0, l_1^{(1)}]$. The graphic below illustrates this:
    \begin{center}
        \includegraphics[scale=1]{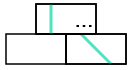}
    \end{center}
    In the second case, we have $g = [\delta_1, \delta_2]^\infty$. Notice there is no way for $[n_0, s_1^{(1)}], [\dots]$ to generate $[n_0, l_1^{(1)}]$ in a continuous pattern. The graphic below illustrates this:

    \begin{center}
        \includegraphics[scale=1]{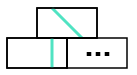}
    \end{center}
Thus, in both cases we have that $g$ is not continuous, a contradiction to the fact that $g$ is a glider.
\end{proof}

\begin{lemma}\thlabel{pigeonhole}
    Let $g$ be an $n+1$-stranded pure glider. Then, the $n+1$st strand must be within two cells of the $n$th strand at least once in each period of $g_{n}^l$.
\end{lemma}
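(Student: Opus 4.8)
The plan is to argue by contradiction, exploiting that a pure glider is nested (\thref{pureAndNested}, \thref{pureGlidercor}) together with the ``no null chains'' condition built into the definition of a repeating SCA pattern. I treat a pure \emph{positive} glider $g$ on $n+1$ strands; the negative case is symmetric. Using \thref{pureGlidercor}, fix a turning rule $t$ of $g$ with the bit $\frac{S}{SE}$ under which $g_m^l$ is a glider for every $1\le m\le n+1$; recall also that pure gliders are noncrossing, so $g$ has no crossings and $g_n^l$ is a glider. Write $\ell=\length(\per(g_n^l))$, and note $\ell$ divides $\length(\per(g))$. By \thref{gliderSpeed} all strands of $g$ and of $g_n^l$ share one valuated speed $\rho$, and since $g_1^l$ is a $1$-stranded pure positive glider, \thref{1StrandClassification} forces $\rho\in\{1,\tfrac13\}$. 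If $\ell\le 2$ then \thref{speedcEarly} makes $g_n^l$ the speed-$1$ glider, so $\rho=1$ and $\nu(\operatorname{Speed}(g))=1$; then $g$ is the unique speed-$1$ glider on $n+1$ strands by \thref{speedcgliders}, whose single generation has strand $n+1$ in a cell adjacent to strand $n$, and the claim is immediate. So assume $\ell\ge 3$, which by \thref{speedcgliders} forces $\rho=\tfrac13$.

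Now suppose, for contradiction, that some window of $\ell$ consecutive generations $\delta_{a+1},\dots,\delta_{a+\ell}$ (a period of $g_n^l$) contains no generation in which strand $n+1$ is within two cells of strand $n$. Since $g$ has no crossings, the strands keep their index order, so throughout this window strand $n+1$ lies to the right of strand $n$ with at least two empty cells between them; hence strand $n+1$ participates in no turning configuration with strands $1,\dots,n$ in these generations, and its behavior (turn-left, turn-right, or straight) in $\delta_{a+1},\dots,\delta_{a+\ell-1}$ is determined by the one-strand bits of $t$ applied to its own local type. The key step is to upgrade this to: strand $n+1$ is more than two cells from strand $n$ in \emph{every} generation from $\delta_{a+1}$ onward. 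Granting that, the (at least two) empty cells strictly between strand $n$ and strand $n+1$ in $\delta_{a+1},\dots,\delta_{a+1+\length(\per(g))}$ form a null chain lying inside a cyclic permutation of $\per(g)$, contradicting that $g=\per(g)^\infty$ contains no null chains.

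To prove the upgrade, let $\delta_{a+1},\dots,\delta_{a+M}$ be the maximal stretch on which strand $n+1$ stays more than two cells from strand $n$, and suppose $M<\infty$, so strand $n+1$ comes within two cells of strand $n$ at $\delta_{a+M+1}$. Over these $M$ steps strand $n$ is periodic with period $\ell$ and valuated speed $\tfrac13$, so it drifts left by $D_n$ with $|D_n-\tfrac{M}{3}|<\ell$; since strand $n+1$ is isolated throughout and the gap between the two strands strictly decreases across the stretch, strand $n+1$ drifts left by $D_{n+1}\ge D_n+1$. But an isolated strand of $g$ turns left at most twice in a row (\thref{no3Turns}), and whenever it turns left twice in a row the five surrounding generations on that strand are pinned down by \thref{2TurnPossibilities}; together with the fact (from \thref{1StrandClassification}) that the only periodic isolated behaviors compatible with $t$ have leftward rate $1$ — excluded here, as rate $1$ would mean three consecutive left turns — or $\tfrac13$, this shows an isolated strand cannot out-drift strand $n$ over a long stretch, forcing $M$ to be a bounded multiple of $\ell$. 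That bound is large enough that, evaluating strand $n+1$'s local type at the generations $\delta_{a+1},\delta_{a+1+\ell},\delta_{a+1+2\ell},\dots$ (at all of which strand $n$ occupies the same phase of its period), the pigeonhole principle produces a repeated type; from the first repetition onward the entire local configuration around strand $n+1$ recurs with a period that is a multiple of $\ell$, so strand $n+1$ stays isolated forever, contradicting $M<\infty$. Hence $M=\infty$, and the null chain above yields the contradiction.

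The main obstacle is the quantitative comparison in the last paragraph: extracting from \thref{no3Turns}, \thref{2TurnPossibilities} and \thref{1StrandClassification} exactly which finite one-strand behaviors the fixed rule $t$ permits, and showing their leftward drift is strictly below strand $n$'s period-rate $\tfrac13$ over long stretches, so that a long isolated stretch must in fact be infinite; the partial-period bookkeeping for $D_n$ and the reduction of the pigeonhole to a window of bounded length are the delicate points. By contrast, the small-period and speed-$1$ cases fall out immediately from \thref{speedcEarly} and \thref{speedcgliders}.
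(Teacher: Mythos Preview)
The upgrade step has a real gap. Your drift comparison does not bound $M$: both strand $n$ and the isolated strand $n+1$ have valuated rate $\tfrac13$, so $D_{n+1}-D_n$ is bounded by a constant depending only on $\ell$ \emph{regardless of how large $M$ is}; the inequality $D_{n+1}\ge D_n+1$ is therefore compatible with every value of $M$, and no bound on $M$ follows. The subsequent pigeonhole on strand $n+1$'s \emph{type} at the sample times $a+1+j\ell$ is also insufficient. A repeated type only shows that strand $n+1$'s one-strand evolution is periodic with some period $(j_2-j_1)\ell$; it does not force the \emph{gap} between strands $n$ and $n+1$ to repeat. Over each such period the gap changes by a fixed signed amount $\Delta$, and nothing in the pigeonhole determines the sign of $\Delta$: if $\Delta<0$ the gap still closes in finite time, so the conclusion ``strand $n+1$ stays isolated forever'' does not follow, and your null-chain contradiction is never reached. (Your invocation of \thref{2TurnPossibilities} is also slightly off: for an isolated strand the cell to its left is empty, which is case $a_1$ there and already rules out two consecutive left turns outright, rather than ``pinning down five generations''.)

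The paper takes a different and cleaner route that avoids both problems. It lets the isolated strand $s$ act as a one-stranded glider with its own speed and splits on whether $\nu(\operatorname{Speed}(s))\le\nu(\operatorname{Speed}(g_n^l))$ or $\nu(\operatorname{Speed}(s))>\nu(\operatorname{Speed}(g_n^l))$. In the first case, working over a whole number of periods of $g_n^l$ ending at the first generation where the gap closes, $s$ cannot have moved enough positions left to catch strand $n$; in the second case the symmetric argument, working backwards from just before the gap opens, shows strand $n$ cannot have been close to $s$. Each case gives a direct arithmetic contradiction using floors and ceilings over integer multiples of $\ell$, with no pigeonhole and no explicit null-chain step (the no-null-chain condition enters only implicitly, to guarantee the maximal isolated stretch is finite). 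If you want to rescue your route, the honest fix is to identify the isolated one-strand behavior exactly from \thref{1StrandClassification} under the bits forced by purity and \thref{mustHaveSET}, observe it has period $3$ and that $3\mid\ell$, and conclude that over every $\ell$ generations both strands drift by exactly $\ell/3$, so the gap is literally preserved and $M=\infty$; but that is a different argument than the one you wrote.
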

\begin{proof}
    As usual, we will prove the result for the case where $g_n^l$ has positive speed, as the result for the case where it has negative speed follows analogously. For contradiction, suppose there exists some $n+1$-stranded pure glider $g$ for which the claim fails. By \thref{pureAndNested}, $g$ is a non-crossing $n+1$-stranded nested glider under some turning rule $t$. Call $h = g_n^l$. Then, there exists some set of $m\geq |\per(h)|$ consecutive generations for which the $n+1$st strand, $s$, is not within two cells of $h$ and such that, for all $y > m$, any set of $y$ consecutive generations must have at least one generation in which $s$ is within two cells of $h$. 
    
   We first consider the case where $m\geq |\per(h)| > 2$. Notice that $s$ is a glider in those $m$ generations, along with the ones immediately preceding and succeeding the $m$ generations (it is still bordered by an empty cell on the left).
     Let $\nu$ be the valuation. We consider two cases: The case where $\nu(\operatorname{Speed}(s)) \leq \nu(\operatorname{Speed}(h))$ and the case where $\nu(\operatorname{Speed}(s)) > \nu(\operatorname{Speed}(h))$. In the first case, choose the least $i\in\mathbb{N}^+$ such that for all $i< j \leq  i + |\per(h)|$, $s$ is not within two cells of $h$. Now choose the least $r$ following $i$ such that $s$ is within two cells of $h$ in $\delta_r$. Then, there exists a greatest $u\in\mathbb{N}^+$ such that $r-u|\per(h)| > i$. Notice that in generation $\delta_{r-u|\per(h)|}$, $s$ was not within two cells of $h$. Thus, $s$ must have moved at least $u|\per(h)|\nu(\operatorname{Speed}(h)) + 1$ indices to the left in $u|\per(h)|$ generations. Because $\nu(\operatorname{Speed}(s))\leq \nu(\operatorname{Speed}(h)), u|\per(h)|\nu(\operatorname{Speed}(s))\leq u|\per(h)|\nu(\operatorname{Speed}(h))$, so $\lceil u|\per(h)|\nu(\operatorname{Speed}(s))\rceil\leq \lceil u|\per(h)|\nu(\operatorname{Speed}(h))\rceil = u|\per(h)|\nu(\operatorname{Speed}(h))$. Thus, $s$ can move at most $u|\per(h)|\nu(\operatorname{Speed}(h))$ indices to the left in $u|\per(h)|$ generations, a contradiction to the fact that $s$ must have moved at least $u|\per(h)|\operatorname{Speed}(h) + 1$ indices to the left in $u|\per(h)|$ generations.

    We now consider the case where $\nu(\operatorname{Speed}(s)) > \nu(\operatorname{Speed}(h))$. There exists an $i > 2m$ such that strand $s$ is within two cells of $h$ in $\delta_i$ and for the previous $m$ generations, strand $s$ is not within two cells of $h$. Consider $\delta_{i-m - 1}$ --- notice that $s$ is within two cells of $h$ in this generation. Then, there exists a greatest $u\in\mathbb{N}^+$ such that $(i-m-1) + u|\per(h)| < i$, and notice that $s$ is not within two cells of $h$ in $\delta_{(i-m-1) + u|\per(h)|}$. Then, $h$ must move at least $\lfloor u|\per(h)|\nu(\operatorname{Speed}(s))\rfloor + 1$ indices to the left in $u|\per(h)|$ generations to be within two cells of $s$ in $\delta_{i-m - 1}$. It can move at most $\lceil u|\per(h)|\nu(\operatorname{Speed}(h))\rceil = u|\per(h)|\nu(\operatorname{Speed}(h))$ indices to the left in that time, and notice that because $u|\per(h)|\nu(\operatorname{Speed}(h)) < u|\per(h)|\nu(\operatorname{Speed}(s))$, $u|\per(h)|\nu(\operatorname{Speed}(h))\leq \lfloor u|\per(h)|\nu(\operatorname{Speed}(s))\rfloor$, so $u|\per(h)|\nu(\operatorname{Speed}(h)) < \lfloor u|\per(h)|\nu(\operatorname{Speed}(s))\rfloor + 1$, a contradiction.

    Now consider the case where $|\per(h)| \leq 2$. By \thref{speedcEarly}, $h$ must be the speed $1$ or $-1$ glider on $n$ strands. Because $g$ is a positive speed glider, by \thref{mustHaveSET}, all turning rules of $g$ contain $\frac{T}{ET}$. Thus, $g$ must have speed $1$, or else we would reach a contradiction to \thref{mustHaveSET}. By \thref{speedcgliders}:
    \begin{align*}
         g = [[n_0, l_1^{(1)}]\cdots[n^{\left(\varnothing\right)}_{2(n-1)}, l_{2n-1}^{(n)}][n^{\left(\varnothing\right)}_{2n}, l_{2n+1}^{(n+1)}]]^\infty,
    \end{align*}
    a contradiction to the claim that $m \geq|\per(h)| = 1 > 0$. Thus, we have reached contradictions in all cases, so the claim follows.    
\end{proof}
We now give a couple of examples to illustrate the idea of the above proof. In the case where $\nu(\operatorname{Speed}(s))\leq\nu(\operatorname{Speed}(h))$, the intuitive idea is that we derive a contradiction from the fact that the $n+1$st strand is ``too slow" to catch up with the rest of the glider before the period repeats. An example of such a pattern is below:
\begin{center}
    \includegraphics[scale=0.9]{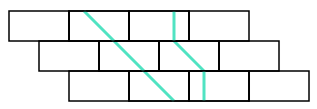}
\end{center}
In the case where $\nu(\operatorname{Speed}(s))>\nu(\operatorname{Speed}(h))$, the  idea is that the subpattern $h$ is ``too slow" to reach the first generation of the period again. An example of what we mean by this is as follows:
\begin{center}
    \includegraphics[scale=0.9]{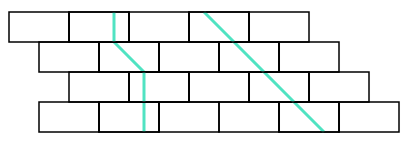}
\end{center}
As we can see in the above figure, the speed of the left subpattern of this grid pattern makes it impossible for the first generation pictured to repeat, unless the second strand were to move to the right in future generations.

We now introduce a few lemmas:
\begin{lemma}\thlabel{shiftSubpattern}
    Let $g$ be a glider. Any subpattern that is a set of consecutive generations of $g$ with the same length as $\per(g)$ is a shift of $\per(g)$.
\end{lemma}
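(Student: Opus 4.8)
The plan is to reduce the statement to a bookkeeping computation modulo $m := \length(\per(g))$. Since $g$ is a glider it is infinite and $\per(g)$ is defined; write $\per(g) = a = [\delta_1', \dots, \delta_m']$ and $g = a^\infty = [\tau_1, \tau_2, \dots]$, so that $\tau_j = \delta'_{((j-1)\bmod m)+1}$ for every $j \ge 1$, and in particular $\tau_{j+m} = \tau_j$ for all $j$. A set of $m$ consecutive generations of $g$ is then a sublist $W = [\tau_i, \tau_{i+1}, \dots, \tau_{i+m-1}]$ for some $i \ge 1$, and the goal becomes: show that $W$ is a cyclic permutation of $\per(g)$, from which $W^\infty$ is a shift of $g$ by the very definition of shift. (I would also remark in passing that ``subpattern'' in the statement must be read as ``sublist'', since deleting strands would destroy the property of $W$ consisting of consecutive generations of $g$ in the first place.)

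First I would carry out the identification of $W$ with a cyclic permutation of $a$. Put $r = ((i-1)\bmod m) + 1 \in \{1, \dots, m\}$. For $0 \le k \le m-1$ we have $\tau_{i+k} = \delta'_{((i+k-1)\bmod m)+1} = \delta'_{((r-1+k)\bmod m)+1}$, and as $k$ runs through $0, 1, \dots, m-1$ the subscript $((r-1+k)\bmod m)+1$ runs through $r, r+1, \dots, m, 1, 2, \dots, r-1$, each value occurring exactly once. Hence $W = [\delta_r', \delta_{r+1}', \dots, \delta_m', \delta_1', \dots, \delta_{r-1}']$, which is precisely the cyclic permutation of $a = \per(g)$ obtained by rotating left by $r-1$ positions (the identity permutation when $r = 1$, which is the case $i = k$ of the definition of cyclic permutation). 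Therefore $W$ is a cyclic permutation of $\per(g)$, so $W^\infty$ is a shift of $g$.

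Finally, to confirm that $W^\infty$ is a genuine continuous periodic grid pattern and not merely a list, I would note that $W^\infty$ coincides with the tail $[\tau_i, \tau_{i+1}, \tau_{i+2}, \dots]$ of $g$: the $(\ell m + k)$-th generation of $W^\infty$ is $\tau_{i+k-1}$ for $1 \le k \le m$ and $\ell \ge 0$, while periodicity gives $\tau_{i+k-1} = \tau_{i+\ell m + k - 1}$, so the two infinite lists agree term by term. Since a tail of a continuous grid pattern is continuous and its child function is the restriction of that of $g$, we get $W^\infty \in G_n$, and it is a shift of $g$ as shown. The only real obstacle here is getting the modular reduction right and checking that the paper's definition of ``cyclic permutation'' (indexed by $1 \le i \le k$) really does capture every window $W$, including the trivial one; beyond that the argument is essentially formal.
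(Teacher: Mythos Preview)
Your proposal is correct and follows essentially the same approach as the paper: both arguments identify a window of $m$ consecutive generations with a cyclic permutation of $\per(g)$ via modular reduction. Your version is cleaner, carrying out the bookkeeping directly with $r = ((i-1)\bmod m)+1$ rather than the paper's indirect choice of ``greatest $r<i$ with $\delta_r = \delta_1$'' followed by a case split, and you add the small verification that $W^\infty$ is continuous (as a tail of $g$), which the paper omits.
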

\begin{proof}
    As usual, we shall only consider the case where $g$ is a positive glider. Let $h = [\delta_i, \delta_{i+1},\cdots, \delta_m]\subseteq g$ be such that $|[\delta_i,\delta_{i+1},\cdots,\delta_m]| = |\per(g)|$. Choose the greatest $r$ such that $r < i$ and $\delta_r = \delta_1$. If $r + |\per_g| - 1 < i$, then $\delta_i = \delta_r = \delta_1$ by choice of $r$, so because $g$ is a glider, $h = \per(g)$. Otherwise, there exists some least $t$ such that $\delta_{r + t} = \delta_i, 1\leq t\leq |\per_g| - 1$. Then, let $\per(g) = [\delta_1',\cdots, \delta_u']$. We have $h = [\cdots, \delta_1',\cdots,\delta_{|\per(g) - t|}']$ by the fact that $g$ is a glider. Also by the fact that $g$ is a glider, $h = [\cdots, \delta_{|\per(g)|}', \delta_1',\cdots,\delta_{|\per(g) - t|}']$. By the fact that $|h| = |\per(g)|$, we must have $h = [\delta_{|\per(g) - t|}',\cdots, \delta_{|\per(g)|}', \delta_1',\cdots,\delta_{|\per(g) - t|}']$, which is an cyclic permutation of $\per(g) = [\delta_1',\cdots, \delta_{u}']$ where $u = |\per(g)|$. Thus, $h$ is a shift of $\per(g)$ by definition.
\end{proof}
\begin{lemma}\thlabel{quotientSCA}
    Let $g$ be a SCA pattern under turning rule $t$. Then, if $h = [\delta_i,\cdots, \delta_m]\subseteq g$, $h$ is an SCA pattern under $t$.
\end{lemma}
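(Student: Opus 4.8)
The plan is to observe that evolution under a turning rule is a \emph{local} operation: the generation $\delta_{k+1}$ is produced from $\delta_k$ by applying $t$ cell-by-cell along $\delta_k$ through the child function, and this depends on no other generation of $g$. Once this is spelled out, the lemma is immediate, since a block of consecutive generations of an orbit of a deterministic rule is again an orbit of that rule.

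First I would unwind the hypothesis: by the definition of a turning rule of a pattern in Section \ref{sec:SCAFacts}, ``$g$ is an SCA pattern under $t$'' means precisely that for every index $k$ for which $\delta_{k+1}$ is defined in $g$, the generation $\delta_{k+1}$ is exactly the generation obtained from $\delta_k$ by the child function $\operatorname{child}_g$, where $\operatorname{child}_g$ is the map determined by $t$ (together with whatever crossing rule $g$ carries, if $g$ has crossings). Next I would invoke the remark following the definition of a sublist in Section \ref{sec:Construction}: when $h = [\delta_i, \dots, \delta_m]$ is regarded as a grid pattern in its own right, $\operatorname{child}_h$ is the restriction of $\operatorname{child}_g$ to the generations $\delta_i, \dots, \delta_m$ --- in particular the case split in the definition of the child function (whether the first cell of the current generation is $[s_1^{(1)}, \cdot]$ or $[n_0^{(\varnothing)}, l_1^{(1)}]$, which controls the index offset) is evaluated on the same generations and so gives the same answer. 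Consequently, for each $k$ with $i \le k < m$, applying $t$ to $\delta_k$ inside $h$ yields the same generation as applying it inside $g$, namely $\delta_{k+1}$.

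Putting these steps together, iterating $t$ starting from the first generation $\delta_i$ of $h$ reproduces $\delta_{i+1}, \dots, \delta_m$ in order, so $h$ is the result of applying $t$ to its first generation for $\length(h)$ generations; by the definition of a turning rule of a pattern, $t$ is a turning rule of $h$, i.e.\ $h$ is an SCA pattern under $t$. I do not expect a genuine obstacle here: the lemma is essentially definitional, and the only point needing care --- that $\operatorname{child}_h$ and $\operatorname{child}_g$ genuinely agree on the overlapping generations despite the index relabelling that accompanies a shift of the leftmost strand --- has already been handled by the remark following the definition of a sublist, which I would simply cite.
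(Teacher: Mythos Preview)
Your proposal is correct and follows essentially the same approach as the paper: both arguments hinge on the locality of the turning rule (each transition $\delta_k \to \delta_{k+1}$ is determined cell-by-cell via $\operatorname{child}$, and these pairs appear identically in $h$ and in $g$), together with the remark after the definition of sublist that $\operatorname{child}_h$ is the restriction of $\operatorname{child}_g$. The only cosmetic difference is that the paper phrases this by contradiction --- a configuration in $h$ conflicting with $t$ would already conflict with $t$ inside $g$ --- whereas you argue directly that iterating $t$ from $\delta_i$ reproduces $\delta_{i+1},\dots,\delta_m$; these are the contrapositive of one another.
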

\begin{proof}
    For contradiction, suppose $h$ is not an SCA pattern under $t$. Then there exist cells $C_{i, j}, C_{i, j + 1}$ in $\delta_i$ of $h$ for some $i$ such that the configuration involving $C_{i, j+1}, C_{i,j},$ $\gen(C_{i, j}, C_{i, j + 1})$ in $\delta_{i+1}$ generates a bit that conflicts with some bit of the turning rule $t$. Because $h = [\delta_i,\cdots, \delta_m]\subseteq g$, $[\delta_i, \delta_{i+1}]\subseteq g$, so the configuration involving $C_{i, j}, C_{i, j + 1}$ and $\gen(C_{i, j}, C_{i, j + 1})$ generates a bit that conflicts with the turning rule of $g$. Thus, $t$ is not a turning rule of $g$, a contradiction.
\end{proof}
We pause to introduce some new notation. For a non-crossing nested glider $g$, let $T_g$ be the set of turning rules under which $g$ is a non-crossing glider. Define $V_n = \{(\per(g), T_g, \width(g))|g\in U_n\}$. Notice that $T_g, \width(g)$ can be extracted from $\per(g)$, but we include them in $V_n$ to simplify the algorithm. The elements of $V_n$ are called \textbf{descriptions} of $n$-stranded gliders. Intuitively, each element of $V_n$ corresponds to an element of $U_n$ and preserves all the necessary information to generate that element, but instead of being an infinite grid pattern, each element of $V_n$ is a finite description of an infinite grid pattern. 

\label{example1}
As an example, we explicitly calculate $V_1$. Notice that the non-crossing nested gliders on one strand are exactly:
\begin{center}
    \includegraphics[]{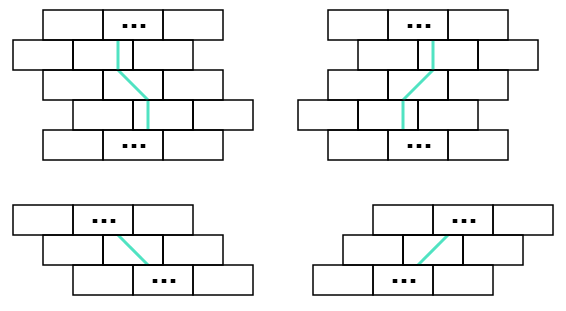}
\end{center}
up to shifts. Thus, the possible periods for a 1-stranded glider are:
\begin{center}
    \includegraphics[]{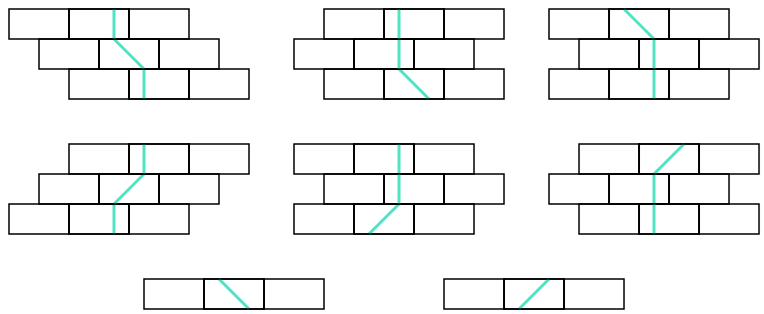}
\end{center}
Define:
\begin{align*}
    T_1 = \{x_00x_2100x_6x_7x_8|x_0,x_2,x_6,x_7,x_8\in\{0,1\}\}\\
    T_2 = \{x_01x_200x_5x_60x_8|x_0,x_2,x_5,x_6,x_8\in\{0,1\}\}\\
    T_3 = \{x_0x_1x_2x_301x_6x_7x_8|x_0,x_1,x_2,x_3,x_6,x_7,x_8\in\{0,1\}\}\\
    T_4 = \{x_0x_1x_2x_30x_5x_61x_8|x_0,x_1,x_2,x_3,x_5,x_6,x_8\in\{0,1\}\}
\end{align*}
Then, by \thref{1StrandClassification}, $V_1$ is the set containing:
\begin{align*}
    ([[s_1^{(1)}, n_2],[n_0,l_1^{(1)}],[n_0, s_1^{(1)}]], T_1, 1)\\
    ([[n_0,l_1^{(1)}],[n_0, s_1^{(1)}],[s_1^{(1)}, n_2]], T_1, 1)\\
    ([[n_0, s_1^{(1)}],[s_1^{(1)}, n_2],[n_0,l_1^{(1)}]], T_1, 1)\\
    ([[n_0, s_1^{(1)}],[r_1^{(1)}, n_2],[s_1^{(1)}, n_2]], T_2, 1)\\
    ([[r_1^{(1)}, n_2],[s_1^{(1)}, n_2],[n_0, s_1^{(1)}]], T_2, 1)\\
    ([[s_1^{(1)}, n_2],[n_0, s_1^{(1)}],[r_1^{(1)}, n_2]], T_2, 1)\\
    ([[n_0, l_1^{(1)}]], T_3,1)\\
    ([[r_1^{(1)}, n_2]], T_4,1).
\end{align*}
\begin{lemma}\thlabel{shiftTurning}
    Let $g$ be a nested non-crossing glider under a turning rule. Let $h$ be a shift of $\per(g)$. Then $T_{h^2} = T_{\per(g)^2}$.
\end{lemma}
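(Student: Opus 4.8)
The plan is to reduce the statement to the elementary fact that the turning rules of a finite non-crossing grid pattern are determined entirely by the \emph{set} of turning configurations occurring in it, and then to check that $h^2$ and $\per(g)^2$ contain exactly the same set of such configurations. Throughout, write $\per(g)=[\delta_1,\dots,\delta_k]$; since $g$ is non-crossing, so are $\per(g)$, $h$, $h^2$, and $\per(g)^2$, hence crossing rules impose no constraint and $T_{h^2}$, $T_{\per(g)^2}$ are governed purely by turning data (here I read ``$h$ is a shift of $\per(g)$'' as ``$h$ is a cyclic permutation of $\per(g)$'', which is forced if $h^2$ is to make sense). First I would record the observation, implicit in Section \ref{sec:SCAFacts}, that a turning rule $t$ lies in $T_p$ for a finite non-crossing grid pattern $p$ if and only if every turning configuration occurring somewhere in $p$ produces a bit agreeing with $t$; consequently, two finite non-crossing grid patterns that contain the same set of turning configurations have the same set of turning rules.

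Next I would note that a turning configuration involves only two adjacent cells of some generation together with their child in the following generation, so the set of turning configurations occurring in a finite grid pattern $[\rho_1,\dots,\rho_m]$ depends only on the collection of consecutive pairs $(\rho_i,\rho_{i+1})$ (together with the child relation between them), $1\le i\le m-1$; I will call these the \emph{transitions} of the pattern, and recall that the child relation between a pair of consecutive generations is itself determined by those generations. Since $g=\per(g)^\infty$, the consecutive pairs of generations of $g$ lying within one period are exactly the $k$ cyclic transitions $\delta_i\to\delta_{(i\bmod k)+1}$ for $i=1,\dots,k$; in particular the ``seam'' transition $\delta_k\to\delta_1$ occurs in $g$, because generation $k$ of $g$ is followed by generation $k+1=\delta_1$. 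Writing $h=[\delta_{j+1},\dots,\delta_k,\delta_1,\dots,\delta_j]$ for the appropriate $j$ with $0\le j\le k-1$ (the case $j=0$ being trivial), the transitions of $\per(g)^2$ are the $k-1$ internal transitions of its first copy, the seam $\delta_k\to\delta_1$, and the $k-1$ internal transitions of its second copy; likewise the transitions of $h^2$ are the $k-1$ internal transitions of its first copy, the seam $\delta_j\to\delta_{j+1}$, and the $k-1$ internal transitions of its second copy. In both cases every one of the $k$ cyclic transitions of $\per(g)$ appears (each is internal to at least one copy unless it is the seam, and the seam is itself one of them), so the set of transitions of $h^2$ equals the set of transitions of $\per(g)^2$, whence the two patterns contain the same turning configurations. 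Combining with the first step yields $T_{h^2}=T_{\per(g)^2}$.

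I expect the only real care needed to be bookkeeping: making precise that a turning configuration depends only on a consecutive pair of generations with its child relation — so that cyclic permutation of the period can neither create nor destroy a configuration — and flagging why the square is genuinely needed. The point is that $h$ by itself need not contain the seam transition $\delta_k\to\delta_1$ (for instance $[\delta_2,\delta_1]$ only exhibits $\delta_2\to\delta_1$), but $h^2$ always does, since its two copies plus the seam between them exhaust all $k$ cyclic transitions; this is precisely why the lemma is stated for $h^2$ rather than $h$. Everything else is a counting check on the list of transitions, together with the already-available facts that sublists of SCA patterns are SCA patterns under the same rule (\thref{quotientSCA}) and that a consecutive-generation block of length $|\per(g)|$ is a shift of $\per(g)$ (\thref{shiftSubpattern}), which one can invoke if one prefers to phrase the transition count in terms of shifts rather than directly.
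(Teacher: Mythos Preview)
Your approach is correct and rests on the same core observation as the paper's: the turning-rule set of a finite non-crossing pattern is determined by its set of consecutive-generation pairs, and $h^2$ and $\per(g)^2$ exhibit precisely the same $k$ cyclic transitions $\delta_i\to\delta_{(i\bmod k)+1}$. The paper packages this as a proof by contradiction---assuming some $t$ lies in one set but not the other, locating a conflicting configuration in a pair $[\delta_j,\delta_{j+1}]$, and then case-splitting on whether $j<m$, $j=m$, or $j>m$ (with $m=|\per(g)|$) to embed that pair as a sublist of the other pattern via \thref{quotientSCA}---whereas you simply enumerate the transitions once and observe the two lists agree; your route is tidier and sidesteps the case split entirely.

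One small point worth flagging: the paper's argument is carried out inside the left subpatterns $(\per(g)^2)_k^l$ and $(h^2)_k^l$ for varying $k$, because in this section $T_{h^2}$ is (per the algorithm that follows) the intersection over all $m$ of the turning-rule sets of $(h^2)_m^l$, reflecting the nested-glider hypothesis. Your write-up treats $T_p$ as the turning rules of $p$ itself. This is harmless---equal transition sets for $h^2$ and $\per(g)^2$ immediately give equal transition sets for each pair $(h^2)_m^l$, $(\per(g)^2)_m^l$, hence equal intersections---but you should say so explicitly to match the definition in force.
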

\begin{proof}
    We only prove the result for positive gliders as the negative case follows analogously. Without loss of generality, assume $g$ is a positive glider. Let $\per(g) = [\delta_1',\cdots, \delta_m']$. Then, $h = [\delta_i',\cdots, \delta_m',\delta_1',\cdots, \delta_{i-1}']$. If $h = g$ the claim follows, so assume $h\neq g$. Then $i\neq 1$. For contradiction, suppose the claim does not hold. Then, there exists a $t\in T_{h^2}\setminus T_{\per(g)^2}$ or a $t\in T_{\per(g)^2}\setminus T_{h^2}$. In the first case, there exists a $1\leq k\leq n$ such that there exist cells $C_{u, j}, C_{u, j+ 1}$ in $\delta_j$ of $(\per(g)^2)_k^l$ such that $t$ conflicts with the bit generated by the configuration corresponding to $C_{u,j}, C_{u,j+1}$, and $\gen(C_{u, j}, C_{u, j + 1})$ in $\delta_{j + 1}$. We claim $[\delta_j, \delta_{j+1}]\subseteq (h^2)_k^n$. Notice $(h^2)_k^l = [(\delta_i')_k^l,\cdots, (\delta_m')_k^l,(\delta_1')_k^l,\cdots, (\delta_{i-1}')_k^l, (\delta_i')_k^l,\cdots, (\delta_m')_k^l,(\delta_1')_k^l,\cdots, (\delta_{i-1}')_k^l]$. Because $i > 1, [(\delta_1')_k^l,\cdots, (\delta_m')_k^l, (\delta_1')_k^l]\subseteq (h^2)_k^l$. There are three cases for $j: j < m, j = m, j > m$. In the first and third case, $[\delta_j, \delta_{j+1}]\subseteq[(\delta_1')_k^l,\cdots, (\delta_m')_k^l] = (\per(g))_k^l$, so $[\delta_j, \delta_{j+1}]\subseteq [(\delta_1')_k^l,\cdots, (\delta_m')_k^l, (\delta_1')_k^l]\subseteq (h^2)_k^l$, so we reach a contradiction to \thref{quotientSCA}. In the second case, $[\delta_j, \delta_{j + 1}] = [(\delta_m')_k^l, (\delta_{1}')_k^l]\subseteq[(\delta_1')_k^l,\cdots, (\delta_m')_k^l, (\delta_1')_k^l]\subseteq (h^2)_k^l$, which contradicts \thref{quotientSCA}.
    
    Now consider the case where $t\in T_{\per(g)^2}\setminus T_{h^2}$. Then, there exists a $1\leq k\leq n$ such that there exist cells $C_{u, j}, C_{u, j+ 1}$ in $\delta_j$ of $(h^2)_k^l$ such that $t$ conflicts with the bit generated by the configuration corresponding to $C_{u,j}, C_{u, j + 1}$ and $\gen(C_{u, j}, C_{u, j + 1})$ in $\delta_{j + 1}$. There are three cases: $[\delta_j]\subseteq[(\delta_i')_k^l,\cdots, (\delta_{m-1}')_k^l]$, $\delta_j = \delta_m'$, or $[\delta_j]\subseteq[(\delta_1')_k^l,\cdots, (\delta_{i-1}')_k^l, (\delta_i')_k^l,\cdots, (\delta_m')_k^l,(\delta_1')_k^l,\cdots, (\delta_{i-1}')_k^l]$. In the first and third cases, because $[\delta_j, \delta_{j+1}]\subseteq [(\delta_i')_k^l,\cdots, (\delta_{m}')_k^l]\subseteq(\per(g))_k^l\subseteq (\per(g))_k^l$ and $[\delta_j, \delta_{j+1}]\subseteq[(\delta_1')_k^l,\cdots, (\delta_{i-1}')_k^l, (\delta_i')_k^l,\cdots, (\delta_m')_k^l,(\delta_1')_k^l,\cdots, (\delta_{i-1}')_k^l]\subseteq(\per(g))_k^l$, we reach contradictions by \thref{quotientSCA}. In the second case, $[\delta_j, \delta_{j+1}] = [(\delta_m')_k^l, (\delta_1')_k^l]\subseteq(\per(g)^2)_k^l$, so we reach a contradiction by \thref{quotientSCA}. Thus, we have reached contradictions in all cases, so the lemma follows. 
\end{proof}
Notice that we need to consider $h^2$ instead of $h$ to account for the bits of the turning rule needed to make the glider repeat after the last generation of its period.
\begin{lemma}\thlabel{lemmaTwo}
    Let $h$ be an $n$-stranded SCA pattern such that $h^\infty$ is a non-crossing nested glider. Then, $T_{h^2} = T_{h^\infty}$.
\end{lemma}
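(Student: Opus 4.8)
The plan is to prove the two inclusions $T_{h^\infty}\subseteq T_{h^2}$ and $T_{h^2}\subseteq T_{h^\infty}$ separately, after unwinding what $T$ means here. Recall that a concrete turning rule $t$ belongs to $T_w$ exactly when $w$ is a non-crossing SCA pattern under $t$, i.e. when $w$ contains no $S$-crossings (automatic here, since $h^\infty$ is non-crossing and hence so is every finite sublist of it) and $t$ agrees with every turning configuration occurring somewhere in $w$; because $h^\infty$ is assumed to be a non-crossing nested glider, for $w=h^\infty$ this agrees with the previously fixed meaning of $T_{h^\infty}$. Write $h=[\delta_1,\dots,\delta_m]$ with $m=\length(h)$. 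The inclusion $T_{h^\infty}\subseteq T_{h^2}$ is then immediate: $h^2$ is precisely the sublist $[\delta_1,\dots,\delta_m,\delta_1,\dots,\delta_m]$ consisting of the first $2m$ generations of $h^\infty$, so by \thref{quotientSCA} every turning rule under which $h^\infty$ is an SCA pattern is also a turning rule of $h^2$, and $h^2$ is non-crossing because $h^\infty$ is.

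For the reverse inclusion the key observation is that $h^2$ and $h^\infty$ realize exactly the same set of ordered pairs of consecutive generations, and therefore produce exactly the same set of turning configurations. The transitions occurring in $h^\infty=[\delta_1,\dots,\delta_m,\delta_1,\dots,\delta_m,\dots]$ are the within-block transitions $\delta_i\to\delta_{i+1}$ for $1\le i\le m-1$ together with the seam transition $\delta_m\to\delta_1$; this last transition is genuinely realized, and $h^2=h\circ h$ is a well-defined grid pattern, because $h^\infty$ is periodic, so $\delta_1$ is indeed the generation the child function yields after $\delta_m$. In $h^2=[\delta_1,\dots,\delta_m,\delta_1,\dots,\delta_m]$ the transitions at positions $1,\dots,2m-1$ are: $\delta_i\to\delta_{i+1}$ for $i\le m-1$ (first block), $\delta_m\to\delta_1$ at position $m$, and $\delta_{i-m}\to\delta_{i-m+1}$ for $m<i\le 2m-1$ (second block) — that is, again exactly the within-block transitions and the single seam transition. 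Since the set of turning configurations occurring in a grid pattern depends only on which ordered pairs of consecutive generations appear in it (each such pair contributes a fixed collection of configurations, the pair $(\delta_i,\delta_{i+1})$ and its child cells being normalized objects), it follows that $h^2$ and $h^\infty$ exhibit the same turning configurations.

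Consequently, a turning rule $t$ agrees with every turning configuration of $h^2$ if and only if it agrees with every turning configuration of $h^\infty$; together with the fact that $h^2$ is non-crossing iff $h^\infty$ is (both hold), this yields $T_{h^2}=T_{h^\infty}$. The only delicate point, and the one I would take care over when writing the details, is the bookkeeping at the seam: one must verify that the transition $\delta_m\to\delta_1$ really occurs inside $h^2$, so that no configuration present in $h^\infty$ is missing from $h^2$; this rests on interpreting $h^2$ through the composition operation $\circ$ and on $h^\infty$ being a genuine periodic pattern. Everything else is routine.
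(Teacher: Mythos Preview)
Your argument is correct and cleaner than the paper's. You isolate the right observation: $h^2$ and $h^\infty$ realize exactly the same set of ordered pairs of consecutive generations, namely the within-block transitions $\delta_i\to\delta_{i+1}$ for $1\le i\le m-1$ together with the seam $\delta_m\to\delta_1$, and therefore exhibit identical turning configurations. From that, both inclusions drop out at once. The paper instead treats the two inclusions asymmetrically: it gets $T_{h^\infty}\subseteq T_{h^2}$ from \thref{quotientSCA} as you do, but for $T_{h^2}\subseteq T_{h^\infty}$ it argues by contradiction, picking a conflicting configuration in $h^\infty$ and then explicitly locating it inside $h^2$ via a ``greatest $j$ with $\delta_j=\delta_1'$'' maneuver. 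Your direct symmetry argument avoids that bookkeeping entirely.

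One point worth flagging. The paper's proof works through the nested structure, checking each left subpattern $(h^\infty)_k^l$ and $(h^2)_k^l$ for $1\le k\le n$; this matches how $T_{h^2}$ is actually computed in the Pure Glider Algorithm, as $\bigcap_m T_m$ over the subpatterns $(h^2)_m^l$. Your write-up takes $T_w$ to mean simply ``turning rules under which $w$ is a non-crossing SCA pattern,'' which is what the stated definition says but is slightly weaker than the nested reading the paper appears to use. This is not a real gap: since $h^\infty$ is non-crossing, $(h^j)_k^l=((h)_k^l)^j$ for each $k$ and each $j\in\{2,\infty\}$, so your same-transitions argument applies verbatim to each subpattern and yields the nested version as well. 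If you want the proof to line up with how $T_{h^2}$ is used downstream, add one sentence making this extension explicit.
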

\begin{proof}
    We prove the result for positive gliders as the negative case follows analogously. Assume $h^\infty$ is a positive glider. We first prove $T_{h^2}\subseteq T_{h^\infty}$. For contradiction, suppose otherwise. Let $h = [\delta_1',\cdots, \delta_m']$. Then, there exists a turning rule $t$ in $T_{h^2}$ that is not in $T_{h^\infty}$. There exists some $1\leq k\leq n$ such that $(h^\infty)_k^l$ is not an SCA pattern under $t$. Then there exist cells $C_{i, j}, C_{i, j + 1}$ in $\delta_i$ for some $i$ such that the bit generated by the configuration corresponding to $C_{i,j}, C_{i,j+1}$, and $\gen(C_{i, j}, C_{i, j + 1})$ in $\delta_{i+1}$ conflicts with the turning rule $t$. Thus, any pattern containing $[\delta_i, \delta_{i+1}]$ is not an SCA pattern under $t$. Choose $j$ as the greatest $j$ such that $\delta_j = \delta_1'$ and $j\leq i$. Then, $[\delta_i, \delta_{i+1}]\subseteq[\delta_j,\cdots,\delta_{j + |h| - 1},\cdots, \delta_{j + 2|h| - 1}] = (h^2)_k^l$ by choice of $j$-- but this implies $h^2$ is not an SCA pattern because $[\delta_i, \delta_{i+1}]\subseteq h^2$, a contradiction. Thus, $T_{h^2}\subseteq T_{h^\infty}$. Now, suppose $t\in T_{h^\infty}$. Let $1\leq k\leq n$, and consider $(h^2)_k^l$. Notice $(h^\infty)_k^l$ is an SCA pattern under $t$, so by \thref{quotientSCA}, $(h^2)_k^l$ is an SCA pattern under $t$. Then, $(h^2)_k^l$ is a non-crossing nested glider for all $1\leq k\leq n$, so $h^2$ is a non-crossing nested glider under $t$, and thus $t\in T_{h^2} $, so $T_{h^\infty}\subseteq T_{h^2}$. 
\end{proof}

To make the notion of the correspondence between $V_n$ and $U_n$ more rigorous, first notice that we can define a bijection $f:U_n \to V_n$ by $f(g) = (\per(g), T_g, \width(g))$. $f$ is surjective by definition, we now show that it is injective. Suppose $f(g_1) = f(g_2)$. Then, $\per(g_1) = \per(g_2)$, so $g_1 = \per(g_1)^\infty = \per(g_2)^\infty = g_2$. Thus, $f$ is a bijection, so $|V_n| = |U_n|$. We claim $f^{-1}$ is $f^{-1}((\per(g), T_g, \width(g))) = \per(g)^\infty$. Let $g\in U_n$. Then, $f^{-1}(f(g)) = f^{-1}((\per(g), T_g, \width(g))) = \per(g)^\infty = g$. As such, to find all pure gliders on $n + 1$ strands given $U_n$, it suffices to find all pure gliders on $n + 1$ strands using $V_n$ instead. We first present a helper algorithm to determine turning rules of a SCA pattern of finite length:

\hfill\break
\textbf{Turning Rule Algorithm:}
Let $s$ be a SCA pattern of finite length. If $s$ is not the empty list, let $\delta_j$ be the $j$th generation of $s$, $1\leq j\leq |s| - 1$. For each $j$, let $i$ range between 1 and $\width(\delta_j)$. Define $\ts(x_i)$ to be $E$ if $x_i = [n_k^{\left(\varnothing\right)}, n^{\left(\varnothing\right)}_{k+1}]$ for some $k$, $S$ if $x_i$ contains $s_k^{(u)}$ for some $k$ and $u$, or $T$ if $x_i$ contains $r_k^{(u)}$ or $l_k^{(u)}$ for some $k$ and $u$. Intuitively, $\operatorname{ts}$ indicates the type of strands in a cell. Each set $A_i$ corresponds to a bit in the turning rule. Notice that we need a union for $A_1$ to deal with the case where the last strand in a generation is straight the case when some strand $k<n$ is straight and is in a cell bordered by a cell containing no strands on the right (recall that, in our description of a generation, the first and last cells in a generation must contain strands so we need to account for cases involving a strand and an empty cell differently based on the number of the strand). The reasoning for why $A_3, A_5$ and $A_7$ need unions in their descriptions is similar.

We may write:
\begin{align*}
    A_0 &= \{\ts(\gen(x_i, x_{i+1}))|\ts(x_i)=S, \ts(x_{i+1}) = S\}\\
    A_1 &= \{\ts(\gen(x_i, x_{i + 1})|\ts(x_i) = S, \ts(x_{i+1}) = E\}\\
    &\cup\{\ts(\gen(x_{\width(\delta_j)}, [n^{\left(\varnothing\right)}_{2\width(\delta_j) + 1}, n^{\left(\varnothing\right)}_{2\width(\delta_j) + 2}])| \ts(x_{\width(\delta_j)}) = S\}\\
    A_2 &= \{\ts(\gen(x_i, x_{i+1}))|\ts(x_i) = S, \ts(x_{i+1}) = T\}\\
    A_3 &= \{\ts(\gen(x_i, x_{i + 1})|\ts(x_i) = E, \ts(x_{i+1}) = S\}\cup\\
    &\{\ts(\gen([n^{\left(\varnothing\right)}_{-2}, n^{\left(\varnothing\right)}_{-1}], x_1))| \ts(x_1) = S\}\\
    A_4 &= \{\ts(\gen(x_i, x_{i+1}))|\ts(x_i)=E, \ts(x_{i+1}) = E\}\\
    A_5 &= \{\ts(\gen(x_i, x_{i + 1})|\ts(x_i) = E, \ts(x_{i+1}) = T\}\cup\\
    &\{\ts(\gen([n^{\left(\varnothing\right)}_{-2}, n^{\left(\varnothing\right)}_{-1}], x_1))| \ts(x_1) = T\}\\
    A_6 &= \{\ts(\gen(x_i, x_{i+1}))|\ts(x_i) = T, \ts(x_{i+1}) = S\}\\
    A_7 &= \{\ts(\gen(x_i, x_{i + 1})|\ts(x_i) = T, \ts(x_{i+1}) = E\}\cup\\
    &\{\ts(\gen(x_{\width(\delta_j)}, [n^{\left(\varnothing\right)}_{2\width(\delta_j) + 1}, n^{\left(\varnothing\right)}_{2\width(\delta_j) + 2}])| \ts(x_{\width(\delta_j)}) = T\}\\
    A_8 &= \{\ts(\gen(x_i, x_{i+1}))|\ts(x_i)=T, \ts(x_{i+1}) = T\}
\end{align*}
We now determine if the pattern has a turning rule, and if so, what the possible turning rules are. If for any $A_i$ with $0\leq i\leq 8$ we have $|A_i| > 1$, then there is no turning rule because the $i$th bit would have to be both 0 and 1. If $E\in A_i$ for any $i\neq 4$, we also do not have a turning rule as this would imply the pattern is not continuous. If $|A_i|\leq 1$ for all $0\leq i\leq 8$, then $s$ has a turning rule. For $1\leq i\leq 8,i\neq 4$, define $s_i$ as:
\begin{center}
    \begin{itemize}
        \item $s_i = 0$ $A_i = \{S\}$
        \item $s_i = 1$ if $A_i = \{T\}$
        \item $s_i = X$ if $A_i =\varnothing$
    \end{itemize}
\end{center}
Note that, if $g$ is continuous, $A_4 = \{E\}$ for every $s$. Thus, if $A_4 = \{E\},$ define $s_4 = 0$. Otherwise, $g$ does not have a turning rule as it is not continuous. Then, a generic turning rule for $s$ is $s_8s_7s_6s_5s_4s_3s_2s_1s_0$.

\hfill\break
We give a small example of this algorithm, applied to the pattern $g = [[n_0, s_1^{(1)}][n_2,l_3^{(2)}],$\\
$[r_1^{(1)},l_2^{(2)}],[s_1^{(1)},s_2^{(2)}]]$, pictured below:
\begin{center}
    \includegraphics[scale=0.9]{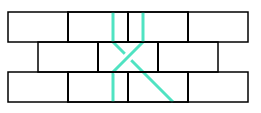}
\end{center}
\begin{center}
\begin{align*}
    A_0 = \varnothing\\
    A_1 = \varnothing\\
    A_2 = \{T\}\\
    A_3 = \varnothing\\
    A_4 = \{E\}\\
    A_5 = \{S\}\\
    A_6 = \varnothing\\
    A_7 = \{S\}\\
    A_8 = \varnothing
\end{align*}
\end{center}
Thus, the algorithm produces the turning rule $XX1X00X0X$ for $g$. Below, we highlight the cells used for each set (except for $A_4$, as that corresponds to the bit that controls whether two empty cells produce a strand in the next generation). 

\hfill\break
$A_2:$
\begin{center}
    \includegraphics[]{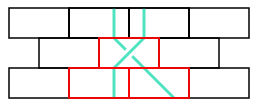}
\end{center}
\hfill\break
$A_5:$
\begin{center}
    \includegraphics[]{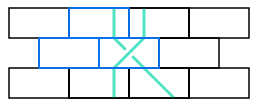}
\end{center}
\hfill\break
$A_7:$
\begin{center}
    \includegraphics[]{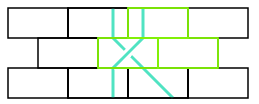}
\end{center}

\hfill\break
We now give a more complicated example. Consider the grid pattern:
\begin{center}
    \includegraphics[]{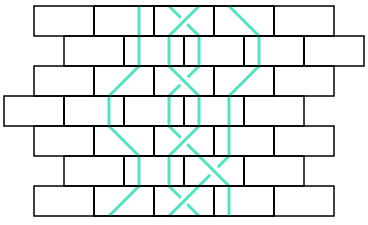}
\end{center}
Notice that:
\begin{align*}
    A_0 = \{T\}\\
    A_1=\{T\}\\
    A_2 = \{T\}\\
    A_3 = \{T, S\}\\
    A_4 = \{E\}\\
    A_5 = \{S\}\\
    A_6 = \{T, S\}\\
    A_7 = \{S\}\\
    A_8 = \{S\}.
\end{align*}
Thus, the grid pattern is not an SCA pattern because it does not have a turning rule, as the sets $A_3$ and $A_6$ have cardinality greater than 1.

\hfill\break
We now define the \textbf{shift set} of a finite grid pattern $g$ to be the set of all grid patterns $h$ such that $h$ is a shift of $g$. We denote this set by $S(g)$. Define the shift set of a glider $g$ to be the set of gliders with period in $S(\per(g))$. Given a generation $\delta$, a turning rule $t$, and a crossing rule $c$, we call the pattern with $k$ generations with turning rule $t$, crossing rule $c$, and first generation $\delta$ the result of \textbf{generating} $\delta$ for $k$ generations.
Finally, we present an algorithm to find $V_{n+1}$ given $V_n$:

\hfill\break
\textbf{Pure Glider Algorithm}
\begin{itemize}
        \item[1.] Enumerate $V_n$ as $\{s_i\}_{i=1}^w$ where $s_i = (\per(g_i), T_{g_i}, \width(g_i))$.
        \item[2.] For each $i$, enumerate $T_{g_i}$ as $\{t_{i, 1},\cdots, t_{i, m_i}\}$ and let $\alpha_i$ be the first generation of $\per(g_i)$. Now define $\{s_{i, j}\}_{i=1, j = 1}^{k, m_i}$ as $s_{i, j} = (\alpha_i,t_{i, j}, \width(g_i), |\per(g_i)|)$.
        \item[3.] For each $i, j$:
        \begin{itemize}
            \item[a.] If $s_{i,j}$ corresponds to a positive glider:
            \begin{itemize}
            \item[i.] There are $v_i<\infty$ many generations with $n + 1$ strands where the first $n$ strands match $\alpha_i$ and the $n + 1$st strand is within two cells to the right of the $n$th strand, so enumerate these generations as $\beta_1^{(i)},\cdots, \beta_{v_i}^{(i)}$. Now, define $\{s_{i, j, k}\}$ by $s_{i, j,k} = (\alpha_i, t_{i, j}, \width(g_i), |\per(g_i)|, \beta_{k}^{(i)})$. 
            \item[ii.] For each $i, j, k$ $s_{i,j,k} = (\alpha_i, t_{i, j}, \width(g_i), |\per(g_i)|, \beta_{k}^{(i)})$, so separately generate $(8^{2 + \width(g_i)} + 1)|\per(g_i)|$ generations of $\alpha_i$ and $\beta_k^{(i)}$ under $t_{i,j}, 00000000$.
            \begin{itemize}
                \item[*] Check if the $n$-stranded left subpattern of the pattern generated by $\beta_k^{(i)}$ matches the pattern generated by $\alpha_i$. If so, move to (**), if not, define $B_{i, j, k} = \varnothing$ (indicating no possible gliders) and move to the next tuple if we have not run out of tuples, or stop if we have.
                \item[**] Check to see if the generated pattern contains any crossings. If so, define $B_{i, j, k} = \varnothing$ and move to  the next tuple if we have not run out of tuples, or stop if we have. If not, move to (***).
                \item[***] Check whether or not there exist two generations $\delta_r, \delta_d, d < r$ of the pattern generated by $\beta_k^{(i)}$ that are identical.  If so, choose the minimal $r$ and $d$, define $h = [\delta_d,\cdots, \delta_r] $.  Calculate $T_{h^2}$ using the following process, for each $1\leq m\leq n + 1$:
                
                Run the turning rule algorithm on $(h^2)_m^l$, and let $T_m$ be the set of turning rules given by substituting $0$ or $1$ in for each $X$ that appears in the output (note that these values need not be identical for $X$s representing different bits).
                
            Define $T_{h^2} = \bigcap_{1\leq m\leq n+ 1} T_m$, and define $B_{i,j,k} = \{(h, T_{h^2}, \width(h))|h\in S([\delta_r,\cdots, \delta_d])\}$. Notice that $T_{h^2}$ contains at least $t_{i,j}$. If no such $\delta_r,\delta_d$ exist, define $B_{i,j,k} = \varnothing$ and move to the next tuple if there are more tuples or stop if there aren't.
    \end{itemize}
        \end{itemize}
        \item[b.] If $s_{i,j}$ corresponds to a negative glider, proceed similarly to part (a).
    \end{itemize}
        \item[5.] Return $\bigcup B_{i,j,k}$.
\end{itemize}

\hfill\break
We must prove that step 5 actually returns $V_{n+1}$. Let $(\per(g), T_g, \width(g))\in V_{n+1}$. Assume $g$ is positive -- the case where $g$ has negative speed is analogous. Then, there exists a least generation $\delta_r$ of $\per(g)$ where the $n + 1$st strand is within two cells of the $n$th strand, and there exists $t\in T_g$ by the definition of a glider. Thus, $(([\delta_r])_n^{l}, t, \width(g_n^l), |\per(g_n^l)|,\delta_r)$ is generated for $(8^{2+\width(g_n^l)} + 1)|\per(g_n^l)|$ generations in step 4. We claim that $\delta_r = \delta_1'$  must repeat (where $\delta_1'$ is the first generation in the generated pattern). By \thref{pigeonhole}, the $n+1$st strand must be within two cells of the $n$th strand at least once in every period of $g_n^l$. By the pigeonhole principle, there must exist a generation in which $\delta_r$ repeats. Thus, take the first $\delta_s'$ in the generated pattern such that $\delta_1' = \delta_s', s > 1$. Notice that, because $g$ is a non-crossing glider and $t$ is a turning rule of the nested glider $g$, $[\delta_1',\cdots, \delta_s'] = [\delta_r,\cdots,\delta_m]$ for some $m > r, \delta_r = \delta_m$. Then, by \thref{shiftSubpattern}, $[\delta_r, \cdots, \delta_{m-1}]$ is a shift of $\per(g)$. By \thref{lemmaTwo}, $T_g = T_{\per(g)^2}$. Thus,  $(\per(g), T_{\per(g)^2}, \width(\per(g)))\in B_{i,j,k}\subseteq\bigcup B_{i,j,k}$ for some $i,j,k$. Thus, $V_{n+1}\subseteq\bigcup B_{i,j,k}$. 

Now let $(h, T_{h^2}, \width(h))\in \bigcup B_{i,j,k}$. Without loss of generality, assume $h^\infty$ is positive. By construction, $h^\infty$ is a nested non-crossing glider with the turning rules $T_{h^\infty}$, which by \thref{lemmaTwo} is $T_{h^2}$. By \thref{pureAndNested}, $h^\infty$ is pure. Thus, $h^\infty\in U_{n+1}$, so $(\per(h^\infty), T_{h^\infty}, \width(h^\infty))\in V_{n+1}$. By construction, $\per(h^\infty) = h$, $T_{h^\infty} = T_{h^2}$, and by the definition of width, $\width(h^\infty) = \width(h)$. Thus, $(h, T_{h^2}, \width(h))\in V_{n+1}$, so $\bigcup B_{i,j,k}\subseteq V_{n+1}$. Thus  $\bigcup B_{i,j,k} =  V_{n+1}$, so step 5 is correct.

Given a description of an $n$-stranded grid pattern $d$, one may determine if $d$ is a pure glider by finding $V_n$ by starting with $V_1$ (constructed on page \pageref{example1}) and iterating the process above and comparing each element to $d$. This process always terminates because $|V_n| = |U_n| < \infty$ for all $n$ (\thref{finitePure}).

\section{Outlook}
This project established some early results on SCA gliders, but there is much more to be done. One direction for future work would be to continue the classification of non-pure gliders. Another would be to examine the interactions of different gliders on the same turning rules and crossing rules. One could also try to enforce some kind of a group structure on certain subsets of periods of gliders on $n$-strands. Yet another direction for future work would be to develop the theory of gliders which contain crossings.
\section*{Acknowledgments}
Thanks to Dr.~Holden for advising this project and the Rose Research Fellows 2023-24 program for (partially) funding it.

\end{document}